\newtheorem{theorem}{Theorem}[section]
\newtheorem{lemma}[theorem]{Lemma}
\newtheorem{proposition}[theorem]{Proposition}
\newtheorem{remark}[theorem]{Remark}
\newtheorem{question}[theorem]{Question}
\newcommand{\brac}[1]{\left( #1\right)}
\newcommand{\sqbrac}[1]{\left[ #1\right]}
\newcommand{\mbb}[1]{\mathbb{#1}}
\newcommand{\mrm}[1]{\mathrm{#1}}
\newcommand{\re}[1]{\textcolor{red}{#1}}
\newcommand{\bl}[1]{\textcolor{blue}{#1}}
\DeclareMathOperator{\tL}{\tilde{L}}
\DeclareMathOperator{\kernel}{kernel}
\DeclareMathOperator{\divi}{div}
\DeclareMathOperator{\Gal}{Gal}
\DeclareMathOperator{\SK1}{SK_1}
\DeclareMathOperator{\SL1}{SL_1}
\DeclareMathOperator{\ram}{ram}
\DeclareMathOperator{\N}{N}
\DeclareMathOperator{\M}{M}
\DeclareMathOperator{\HH}{H}
\DeclareMathOperator{\Nrd}{Nrd}
\DeclareMathOperator{\Cor}{Cor}
\DeclareMathOperator{\rbc}{rbc}
\DeclareMathOperator{\nr}{nr}
\DeclareMathOperator{\Br}{Br}
\DeclareMathOperator{\ind}{index}
\DeclareMathOperator{\inv}{inv}
\DeclareMathOperator{\charac}{char}
\DeclareMathOperator{\Int}{Int}
\DeclareMathOperator{\Spec}{Spec}
\DeclareMathOperator{\supp}{support}
\DeclareMathOperator{\cd}{cd}
\title{Reduced Whitehead groups of prime exponent algebras over $p$-adic curves} 
\author{ Nivedita Bhaskhar\\ \small{Department of Mathematics, University of California Los Angeles, Los Angeles, CA 90095-1555, USA.}\\ \href{mailto:nbhaskh@math.ucla.edu}{\small{\tt nbhaskh@math.ucla.edu}}}
\date{}
\begin{document}
\maketitle

\begin{abstract}
Let $F$ be the function field of a curve over a $p$-adic field. Let $D/F$ be a central division algebra of prime exponent $\ell$ which is different from $p$. Assume that $F$ contains a primitive ${{\ell}^2}^{\mathrm{th}}$ root of unity. Then the abstract group $\SK1(D):=\frac{\SL1(D)}{\left[D^*, D^*\right]}$ is trivial. \end{abstract}

\section{Introduction}
The group $\SL1(A)$ of reduced norm one elements of a finite dimensional central simple algebra $A$ over a field $K$ is one of the main and well-studied examples of simply connected almost simple algebraic groups of type A.  The commutator subgroup $[A^*,A^*]$ is clearly contained in $\SL1(A)$. Whether the reverse inclusion holds is however a far more subtle and difficult question to tackle. This problem was formulated by Tannaka and Artin independently in terms of $\SK1(A)$ which is defined to be the abstract quotient group $\frac{\SL1(A)}{\left[A^*, A^*\right]}$. 

\begin{question}[Tannaka-Artin, 1943]
Is $\SK1(A)$ trivial?
\end{question}

The Tannaka-Artin problem can be rephrased as a special case of the more general Kneser-Tits problem. For $G$, a semisimple simply connected isotropic $K$-group, let $G^+(K)$ denote the normal subgroup generated by the conjugates of the $K$-points of the unipotent radical of a proper $K$ parabolic of $G$. One defines the reduced Whitehead group to be  $W(G, K) := \frac{G(K)}{G^+(K)}$. The Kneser-Tits problem asks whether $W(G, K)$ is trivial.

The Tannaka-Artin problem was answered affirmatively for square-free index algebras over arbitrary fields (\cite{W}). It was also shown that $\SK1(A)$ was trivial for \textit{all} central simple algebras $A$ defined over local or global fields (\cite{Nakayamma},\cite{W}) and it was widely believed that the Tannaka-Artin question had a positive answer in general. However Platonov's famous example (\cite{Plat78}) of a biquarternion division algebra $D$ over an iterated Laurent-series field $\mathbb{Q}_p((x))((y))$ with non-trivial $\SK1(D)$ negatively settled the Tannaka-Artin problem and also gave rise to the first example of a non-rational simply connected almost simple algebraic $K$-group. Note that the cohomological dimension of the base field under consideration is $4$. However, in the same paper by Platonov, it was also shown that the Tannaka-Artin problem has a positive answer for central simple algebras over fields of cohomological dimension $\leq 2$.

In 1991, Suslin conjectured that if the index of the central simple algebra $D/K$ is not square free, then $\SK1(D)$ is \textit{generically non-trivial}, i.e, there exists a field extension $F/K$ such that $\SK1(D\otimes_K F)$ is non-trivial (\cite{Suslin91}). More formally, the Suslin invariant 

\[\rho : \SK1(D)\to \frac{\kernel\left[\HH^4_{et}\left(K, \mu_n^{\otimes 3}\right) \to \HH^4_{et}\left(K(Y), \mu_n^{\otimes 3}\right) \right]}{[D] \bullet \HH^2\left(K, \mu_n\right)},\]

where $Y$ is the Severi-Brauer variety defined by $D$, a central division algebra of degree $n$, was conjectured to send the generic element to a non-trivial image. Suslin's conjecture was settled affirmatively by Merkurjev for algebras with indices divisible by $4$ in (\cite{Me03}, \cite{Me06}).

In the case when the index of $D$ is $4$, it is known that $\rho$ is in fact an isomorphism (Rost, Chapter 17 \cite{KMRT} ; \cite{Merkurjev}). Hence if $\cd K\leq 3$, then $\SK1(D)=\{0\}$. This led Suslin to ask whether $\SK1(D)=\{0\}$ for any central simple algebra $D$ of index $\ell^2$ where $\ell$ is a prime, over fields of  cohomological dimension $3$ (\cite{Suslin91}).

In this paper, we settle this question affirmatively for exponent $\ell$ algebras over function fields of $p$-adic curves where $\ell$ is any odd prime not equal to $p$, assuming that our base field contains a primitive ${\ell^2}^{\mathrm{th}}$ root of unity (Theorem \ref{theoremSK1}). The proof, whose strategy is outlined below, relies on the techniques of patching as developed by Harbater-Hartmann-Krashen (HHK) in (\cite{HH}, \cite{HHK09}, \cite{HHK14} \& \cite{HHK15}) and exploits the arithmetic of the base field to show triviality of the reduced Whitehead group.

Let $F=K(X)$ be the function field of a smooth projective geometrically integral curve $X$ over a $p$-adic field $K$. Let $D$ denote a central division algebra over $F$ of exponent $\ell$ where $\ell$ is an odd prime different from $p$. Let $z\in \SL1(D)$ lie in some maximal subfield $M$ of $D$. We would like to show that $z$ is a product of commutators. The results of Saltman and Wang (\cite{S97}, \cite{S98}, \cite{W}) along with standard Galois theory techniques help reduce to the case when $D$ has index $\ell^2$ and $M$ contains a sub-cyclic degree $\ell$ extension $Y/F$. Let $\N_{M/Y}(z)=a$, which therefore has further norm one to $F$. We then modify Platonov's argument in (\cite{Plat76}) adapting it to our situation as follows:

We split $a$ into a product of suitable elements $a_1$ and $a_2$ in $Y$,  where the case of each $a_i$ is easier to handle. More precisely, we find elements $a_1,a_2\in Y$ and degree $\ell$ sub-field extensions $E_1/F$, $E_2/F$ in $D$ which commute with $Y$ such that $a_j$ is a norm from $YE_j$ of a product of commutators for each $j=1,2$. One can think of having \textit{moved} the problem over to the fields $E_j$s, which by construction are more ``amenable" and where we can solve the problem. We then modify $z$ by commutators so that the modified $z$ (and hence also the original $z$) is a product of commutators (Proposition \ref{propositionEconsequence}).  The required $E_j$s and $a_j$s are constructed by HHK patching by prescribing compatible local data for an appropriate model $\mathcal{X}$ of $X$.

We now briefly mention what each section in the paper is about. The second section collects lemmata about the shape of units, norms of field extensions and reduced norms of algebras defined over some special complete fields encountered in the patching set-up. It also contains some class field theory lemmata which will be useful in approximating local data to get global objects. The third section sets forth patching notations, fixes a preliminary model $\mathcal{X}$ of $X$ arranging some necessary divisors to be in good shape and gives the initial reductions which help simplify the problem. It also spells out the overall strategy adopted in the proof (mentioned above) in more precise detail. 

\newpage

The fourth and fifth sections classify into types, codimension one and closed points of $\mathcal{X}$ lying on the special fiber. Here, we also understand the configuration of the cyclic sub-extension $Y/F$ and the shape of the norm one element $a\in Y$ at the fraction fields of the local rings at these points completed at their maximal ideals. The sixth section discusses further blowing up the model at closed points to eliminate certain types of closed points from the classification. It also constructs a partial dual graph and outputs a nine-colouring of it, which will help in ensuring compatibility of the local data at the \textit{branches} in the patching problem. The seventh section gives patching data $(a_{1,P}, a_{2,P}, E_{1,P}, E_{2,P})$ at closed points $P$ while the next two discuss their structure over the branches.

The tenth and eleven sections give patching data $(a_{1,\eta}, a_{2,\eta}, E_{1,\eta}, E_{2,\eta})$ at codimension one points $\eta$ of $\mathcal{X}$ lying on the special fiber. We patch the data in the twelfth section by \textit{spreading}  $(a_{1,\eta}, a_{2,\eta}, E_{1,\eta}, E_{2,\eta})$ to work over open sets $U_{\eta}\ni \eta$ of the special fiber to get the required elements $a_1, a_2\in Y$ and extensions $E_1, E_2/F$. The final section uses patching again to finally solve the problem over the $E_j$s.
\section{Lemmata}

\subsection{Notations and terminology} \label{sectionnotation}
Let $\ell$ be a prime and let $J$ be a field which is not of characteristic $\ell$ containing $\rho$, a primitive $\ell^{\mathrm{th}}$ root of unity. Then for $a, b\in J^*$, we let the symbol $(a,b)$ denote the $J$-cyclic $\ell$-algebra
\[(a,b) = J\langle i, j | i^{\ell}=a, j^{\ell}=b , ij=\rho ji  \rangle.\]

If $E/J$ is a cyclic extension of degree $\ell$ with $\Gal(E/J)=\langle \sigma\rangle$ and $b\in J^*$, we let the symbol  $\left(E,\sigma, b\right)$ (or $(E,b)$ if the automorphism $\sigma$ is clear from the context) denote the $J$-cyclic $\ell$ algebra
\[\left(E,\sigma, b\right) = \bigoplus_{i=0}^{\ell-1} u^iE, \ u^{\ell}=b, \ eu=u\sigma(e)\ \forall \ e\in E.\]

We also note that for central simple algebras (abbreviated as CSAs) $D_1, D_2$ over $J$, we use $D_1=D_2$ to mean equality in $\Br(J)$, i.e $D_1=D_2$ denotes that $D_1/J$ and $D_2/J$ are Brauer equivalent.

Let $F$ be a complete discretely valued field with ring of integers $R$ and residue field $k$.  Let $\ell$ be a prime which is not equal to $\charac(k)$ such that $F$ contains a primitive $\ell^{\mathrm{th}}$ root of unity. Let $\alpha \in \Br(F)$ be an element of order $\ell$ which is ramified at $R$. Recall the residue map $\partial_F : \HH^2(F, \mu_{\ell}) \to \HH^1(k, \mathbb{Z}/\ell{Z})$. Let $\partial_F(\alpha) = \left(\overline{E}/k, \overline{\sigma}\right)$ where $\overline{E}/k$ is a cyclic extension of degree $\ell$ with Galois group generated by $\overline{\sigma}$. 

\textbf{Residual extension}: 
There is a unique unramified cyclic extension $E/F$ of degree $\ell$ with residue field $\overline{E}$. We call $E$ the \textit{lift of residue} of $\alpha $ at $R$ or the \textit{residual extension} of $\alpha$ at $R$.

\textbf{Residual Brauer class}: 
We define the residual class of $\alpha$ (depending on the choice of a parameter of $R$) as in (\cite{S07}). Given a parameter $\pi$ of $R$, let $L$ denote the totally ramified extension $F\left(\sqrt[\ell]{\pi}\right)$ and $S$ denote the ring of integers of $L$ with residue field also $k$. Then $\alpha_L := \alpha\otimes_F L$ is unramified and hence is in $\Br(S)$.

Let $\beta \in \Br(k)$ denote the image of $\alpha_L$. Then the \textit{residual Brauer class} of $\alpha$, denoted $\alpha_{\rbc}$, is defined to be the image of $\beta$ in the unramified cohomology group $\HH^2_{\nr}\left(F, \mu_{\ell}\right)$ under the isomorphism $i_F : \HH^2(k, \mu_{\ell})\to \HH^2_{\nr}(F, \mu_{\ell}), \  \beta\leadsto \alpha_{\rbc}$.

\begin{lemma}[\cite{S07}, Proof of Proposition 0.6]
\label{lemmaresdiualbrauerclass}
\[ \alpha = \alpha_{\rbc} +  \left(E, \sigma, \pi\right) \ \mathrm{in} \ \Br(F). \]
\end{lemma}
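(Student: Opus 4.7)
The plan is to verify the identity by comparing both sides modulo the residue map and then by base-changing to the totally ramified extension $L = F(\sqrt[\ell]{\pi})$, where the right-hand side simplifies drastically.

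First, I would compute residues of the right-hand side. By construction $\alpha_{\rbc}$ is unramified, so $\partial_F(\alpha_{\rbc}) = 0$. The standard residue formula for cyclic algebras, applied to the unramified cyclic lift $E/F$ with Galois generator $\sigma$ lifting $\overline{\sigma}$ and to the parameter $\pi$ of valuation one, gives $\partial_F(E,\sigma,\pi) = (\overline{E}/k, \overline{\sigma}) = \partial_F(\alpha)$. Consequently the difference
\[ \gamma \;:=\; \alpha - \alpha_{\rbc} - (E,\sigma,\pi) \]
lies in $\HH^2_{\nr}(F,\mu_\ell)$ and equals $i_F(\delta)$ for a unique $\delta \in \HH^2(k,\mu_\ell)$. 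The task thus reduces to showing $\delta = 0$.

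Next, I would restrict to $L$. Since $\pi = (\sqrt[\ell]{\pi})^\ell \in (L^*)^\ell$, the cyclic algebra $(E,\sigma,\pi) \otimes_F L$ is split in $\Br(L)$. Also, $\alpha_L$ is unramified in $\Br(L)$ (the ramification dies because $L/F$ is totally ramified of degree $\ell$) and its image under the specialization isomorphism $\Br(S) \cong \Br(k)[\ell']$ is exactly $\beta$ by definition of $\beta$. The element $(\alpha_{\rbc})_L$ remains unramified and specializes to $\beta$ as well, by the compatibility of the lift-of-residue map $i_{(-)}$ with the base change $F \to L$, which is possible because $R$ and $S$ share the same residue field $k$. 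Thus $\gamma_L$ is unramified and its image in $\Br(k)$ is $\beta - \beta - 0 = 0$, so $\gamma_L = 0$ in $\Br(L)$.

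Finally, since $F$ and $L$ share the residue field $k$, the composition
\[ \HH^2(k,\mu_\ell) \xrightarrow{\; i_F \;} \HH^2_{\nr}(F,\mu_\ell) \xrightarrow{\;\mathrm{res}_{L/F}\;} \HH^2_{\nr}(L,\mu_\ell) \xrightarrow{\;i_L^{-1}\;} \HH^2(k,\mu_\ell) \]
is the identity. Applying this to $\delta$ and using $\gamma_L = 0$ yields $\delta = 0$, hence $\gamma = 0$, as required. The only subtle point is the compatibility of the specialization and lift-of-residue-Brauer-class constructions under the base change $F \to L$; this is the main item to check carefully, but it follows from the functoriality of the purity/specialization sequence for the henselian local extension $S/R$ together with the equality of residue fields. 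The residue formula for cyclic algebras and the fact that $\pi$ becomes an $\ell$-th power in $L$ are routine inputs.
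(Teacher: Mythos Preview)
Your argument is correct. The paper does not give its own proof of this lemma; it simply attributes the statement to \cite{S07}, Proof of Proposition~0.6, and moves on. Your proposal unwinds the definition of $\alpha_{\rbc}$ in exactly the expected way: the residue computation shows $\gamma := \alpha - \alpha_{\rbc} - (E,\sigma,\pi)$ is unramified, and then the base change to $L$ together with the fact that $i_F$ and $i_L$ are compatible (same residue field $k$) forces $\gamma = 0$. This is the standard argument and matches what one finds in Saltman's paper, so there is nothing to correct or compare.
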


\subsection{Norms, reduced norms and index computations}

\begin{lemma}[cf. \cite{PPS}, Lemma 2.7]\label{lemmasimultaneousnormoneandlthpower}
Let $F$ be a field and $\ell$, a prime not equal to the characteristic of $F$ . Let $Y/F$ be a cyclic extension of $F$ or the split extension of degree $\ell$ and $\psi$, a generator of the Galois group of $Y/F$. Suppose that there exists an integer $m \geq 1$ such that $F$ does not contain a primitive ${\ell^{m}}^{\mrm{th}}$ root of unity. Let $\mu\in Y$ with $\N_{Y/F}(\mu)=1$. Further assume that
\begin{itemize}
\item
If $Y/F$ is split, then $\mu = \brac{g_i^{\ell}}\in \prod F$ for some $g_i\in F$.
\item
If $Y/F$ is not split, then $\mu=g^{\ell^{2m}}$ for some $g\in Y$.
\end{itemize}
Then there exists $h\in Y/F$ such that $\mu = h^{-\ell}\psi\brac{h}^{\ell}$.
\end{lemma}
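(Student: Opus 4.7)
The plan is to reduce both cases to the following claim: there exists $\nu \in Y$ with $\nu^\ell = \mu$ and $\N_{Y/F}(\nu) = 1$. Once such a $\nu$ is produced, Hilbert's Theorem 90 (which applies equally to cyclic field extensions and to the split $F$-algebra $F^{\ell}$ under the cyclic shift) yields $h \in Y^*$ with $\nu = \psi(h)/h$, whence
\[
\mu = \nu^\ell = \psi(h)^\ell / h^\ell = h^{-\ell}\psi(h)^\ell
\]
as required. This reduces the lemma to exhibiting such a $\nu$ in each of the two cases.

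For the non-split case, I would take $\nu := g^{\ell^{2m-1}}$, so that $\nu^\ell = g^{\ell^{2m}} = \mu$ automatically; the task is to verify $\N_{Y/F}(\nu) = 1$. Setting $\zeta := \N_{Y/F}(g) \in F^*$, the relation $1 = \N_{Y/F}(\mu) = \zeta^{\ell^{2m}}$ shows $\zeta \in \mu_{\ell^{2m}}(F)$. I would then invoke the hypothesis: since $F$ contains no primitive $\ell^m$-th root of unity, the cyclic subgroup $\mu_{\ell^{2m}}(F)$ of $\mu_{\ell^{2m}}$ has no element of order $\ell^m$, and therefore has order at most $\ell^{m-1}$, i.e., $\mu_{\ell^{2m}}(F) \subseteq \mu_{\ell^{m-1}}$. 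This gives $\zeta^{\ell^{m-1}} = 1$, and consequently
\[
\N_{Y/F}(\nu) = \zeta^{\ell^{2m-1}} = \bigl(\zeta^{\ell^{m-1}}\bigr)^{\ell^m} = 1.
\]

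For the split case, write $Y = F \times \cdots \times F$ with $\psi$ the cyclic shift and $\mu = (g_0^\ell, \ldots, g_{\ell-1}^\ell)$. I would set $\nu := (g_0 \xi_0, \ldots, g_{\ell-1}\xi_{\ell-1})$ for $\xi_i \in \mu_\ell(F)$ to be determined; then $\nu^\ell = \mu$ automatically, and the norm condition $\N_{Y/F}(\nu) = \prod_i g_i \cdot \prod_i \xi_i = 1$ reduces to $\prod_i \xi_i = (\prod_i g_i)^{-1}$. This is solvable because $(\prod_i g_i)^\ell = \N_{Y/F}(\mu) = 1$ forces $\prod_i g_i \in \mu_\ell(F)$; one may simply take $\xi_0 := (\prod_i g_i)^{-1}$ and $\xi_i := 1$ for $i \geq 1$.

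The only non-routine step, and the reason the hypothesis on roots of unity is needed, is the subgroup estimate $\mu_{\ell^{2m}}(F) \subseteq \mu_{\ell^{m-1}}$ in the non-split case; everything else is bookkeeping and a direct appeal to Hilbert 90.
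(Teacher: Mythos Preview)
Your proof is correct. The paper does not supply its own proof of this lemma; it simply records the statement with a reference to \cite{PPS}, Lemma~2.7, so there is no in-paper argument to compare against. Your approach---producing an $\ell$-th root $\nu$ of $\mu$ that already has norm one, then invoking Hilbert~90---is the natural one, and the key observation that $\mu_{\ell^{2m}}(F)\subseteq\mu_{\ell^{m-1}}$ (because any finite subgroup of $F^*$ is cyclic and cannot contain an element of order $\ell^m$) is exactly what makes the exponent $\ell^{2m}$ in the hypothesis sufficient.
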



\begin{lemma}[Totally ramified extensions (dim 1)] Let $R$ be a complete discretely valued ring with fraction field $K$ and residue field $k$. Let $\ell$ be a prime which is not divisible by $\charac(k)$ such that $K$ contains a primitive $\ell^{2\mathrm{th}}$ root of unity. Let $L/K$ be a totally ramified extension of degree $\ell$ and let $S$ be the integral closure of $R$ in $L$.  Then 
\begin{enumerate}
\item[a.]
$L\simeq K\left(\sqrt[\ell]{\pi}\right)$ for some parameter $\pi$ of $K$,
\item[b.]
If $x\in R^*$ is a norm from $L$, then $x\in K^{*\ell}$,
\item[c.]
Norm one elements in $L$ are $\ell^{\mrm{th}}$ powers in $S^*$.
\end{enumerate}
\label{lemmanormoneramified-dim1} 
\label{lemmanormfromramified-dim1} 
\label{totallyramifieddegreel}
\end{lemma}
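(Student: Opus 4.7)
The plan is to address the three parts in sequence, using Kummer theory to pin down the shape of $L$, a valuation computation to show that the $L$-element in question is integral, and Hensel's lemma applied to $T^\ell - x$ (which is available because $\ell$ is a unit in $R$).

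For (a), I would start from $\mu_\ell \subset K$, so Kummer theory writes $L = K(\sqrt[\ell]{a})$ for some $a \in K^*$. Writing $a = u\pi_0^j$ with $u \in R^*$, $\pi_0$ a parameter of $K$, and $0 \le j \le \ell-1$, the case $j=0$ is ruled out: Hensel applied to $T^\ell - u$ shows that $K(\sqrt[\ell]{u})$ is either $K$ itself (if $\bar u \in k^{*\ell}$) or unramified of degree $\ell$ (if $\bar u \notin k^{*\ell}$), contradicting total ramification. Hence $j \in \{1,\ldots,\ell-1\}$, and replacing $a$ by $a^{j'}$ for a multiplicative inverse $j'$ of $j$ modulo $\ell$ realises $L$ as $K(\sqrt[\ell]{\pi})$ for the parameter $\pi := u^{j'}\pi_0$.

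For (b), fix the presentation $L = K(\pi_L)$ with $\pi_L^\ell = \pi$ from (a); then $S = R[\pi_L]$ and $\pi_L$ is a uniformizer of $S$, with $\N_{L/K}(\pi_L) = (-1)^{\ell+1}\pi$ of $K$-valuation $1$. Given $x \in R^*$ with $x = \N_{L/K}(y)$, writing $y = \pi_L^j w$ with $w \in S^*$ and comparing $K$-valuations of $\N_{L/K}(y)$ forces $j = 0$, so $y \in S^*$. Since $L/K$ is totally ramified, every element of $\Gal(L/K)$ acts trivially on the residue field $k$, so reducing $\N_{L/K}(y) = \prod_\sigma \sigma(y)$ modulo the maximal ideal yields $\bar x = \bar y^{\ell}$ in $k^*$. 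Lifting $\bar y$ to a unit $y_0 \in R$ and applying Hensel to $T^\ell - x$ at $y_0$ produces an $\ell$-th root of $x$ in $R^*$, which gives (b).

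For (c), the same valuation argument forces $y \in S^*$, and the residue computation gives $\bar y^\ell = 1$. Because $\ell$ is coprime to $\charac(k)$, the reduction $\mu_\ell(R) \to \mu_\ell(k)$ is an isomorphism (injectivity by Hensel on $1 + \mathfrak{m}_R$, surjectivity by Hensel on $T^\ell - 1$), so $\bar y = \bar\zeta$ for some $\zeta \in \mu_\ell(K) \subset S^*$. Then $y/\zeta \in 1 + \mathfrak{m}_S$, and Hensel applied to $T^\ell - (y/\zeta)$ at $T = 1$ writes $y/\zeta = z^\ell$ with $z \in 1 + \mathfrak{m}_S$. The hypothesis $\mu_{\ell^2} \subset K$ enters only at the final step: it lets us write $\zeta = \omega^\ell$ for some $\omega \in \mu_{\ell^2}(K) \subset S^*$, so that $y = (\omega z)^\ell \in S^{*\ell}$. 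I do not expect serious obstacles — everything reduces to standard complete-local-field manipulation — but it is worth emphasising that the $\mu_{\ell^2}$ hypothesis is indispensable precisely for extracting the $\ell$-th root of $\zeta$; without it, a norm-one $y$ whose residue were a nontrivial $\ell$-th root of unity would obstruct the conclusion.
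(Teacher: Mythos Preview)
Your proposal is correct and essentially the same as what the paper intends: the paper's own proof is a one-line sketch (``a.\ follows from \cite{PPS}, Lemma 2.4, while b.\ and c.\ are easy consequences of Hensel's lemma''), and your argument simply fills in those details---the Kummer-theoretic normalisation for (a), the valuation-and-residue computation followed by Hensel for (b) and (c), including the correct identification of where the $\mu_{\ell^2}$ hypothesis enters.
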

\begin{proof}
a. follows from (\cite{PPS}, Lemma 2.4), while b. and c. are easy consequences of Hensel's lemma.  \end{proof} 

\begin{lemma}
\label{lemmanormfromE-split}
Let $A$ be a complete regular local ring of $\dim 2$ with fraction field $F$ and finite residue field $k$. Let $L/F$ be a cyclic extension of $F$ of degree $\ell$ unramified on $A$, where $\ell$ is a prime not divisible by $\charac(k)$. If $a\in A^*$, then it is a norm from  $L$.
\end{lemma}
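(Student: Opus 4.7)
The plan is to exhibit $a\in A^{*}$ as $\N_{L/F}(b)$ for some $b$ in the integral closure $B$ of $A$ in $L$. Since $L/F$ is cyclic of prime degree $\ell$, unramified on $A$, and $L$ is a field, $B$ is a complete local étale $A$-algebra of rank $\ell$; in particular $\mathfrak{m}_{B}=\mathfrak{m}_{A}B$ and the residue extension $\bar B:=B/\mathfrak{m}_{B}$ is a cyclic field extension of $k$ of degree exactly $\ell$ (it cannot be trivial, since a local étale algebra of rank $\ell$ with trivial residue extension would have $B\otimes_{A}k$ of $k$-dimension $1$, contradicting the rank). I would break the argument into a residue-field step followed by a principal-units step.

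First, reducing modulo $\mathfrak{m}_{A}$, since $k$ is finite the extension $\bar B/k$ is a degree $\ell$ extension of finite fields, and the norm on units of any finite extension of finite fields is surjective. So I pick $\bar b\in\bar B^{*}$ with $\N_{\bar B/k}(\bar b)=\bar a$, lift it to some $b_{0}\in B^{*}$, and write $\N_{L/F}(b_{0})=a\cdot u$ for some $u\in 1+\mathfrak{m}_{A}$. It then suffices to show that every principal unit of $A$ is a norm from $B$.

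For the principal-units step I would work with the $\mathfrak{m}_{B}$-adic filtration $\{1+\mathfrak{m}_{B}^{n}\}$. Using $\mathfrak{m}_{B}=\mathfrak{m}_{A}B$, one has a canonical identification
\[
\mathfrak{m}_{B}^{n}/\mathfrak{m}_{B}^{n+1}\;\cong\;(\mathfrak{m}_{A}^{n}/\mathfrak{m}_{A}^{n+1})\otimes_{k}\bar B.
\]
For $x\in\mathfrak{m}_{B}^{n}$ with $n\ge 1$, expanding $\N_{L/F}(1+x)=\prod_{\sigma\in\Gal(L/F)}(1+\sigma(x))$, the cross terms lie in $\mathfrak{m}_{B}^{2n}\subseteq\mathfrak{m}_{B}^{n+1}$, so modulo $1+\mathfrak{m}_{A}^{n+1}$ the norm becomes $1+\mathrm{Tr}_{\bar B/k}(\bar x)$ (applied to the $\bar B$-factor). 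Since $\bar B/k$ is a separable extension of finite fields, the trace is surjective, hence so is the induced map on each graded piece $(1+\mathfrak{m}_{B}^{n})/(1+\mathfrak{m}_{B}^{n+1})\to (1+\mathfrak{m}_{A}^{n})/(1+\mathfrak{m}_{A}^{n+1})$. A standard successive-approximation argument, using completeness of $B$, then produces $b'\in 1+\mathfrak{m}_{B}$ with $\N_{L/F}(b')=u^{-1}$, and combining with the previous step gives $a=\N_{L/F}(b_{0}b')$.

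The main obstacle is this last step: I need the precise identification of the graded norm with the trace, the nontriviality of $\bar B/k$ so that the trace is actually surjective, and then the assembly of graded surjections into a genuine surjection via completeness. The residue-field reduction is immediate from the finiteness of $k$, and the final combination of the two steps is formal.
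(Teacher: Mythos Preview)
Your argument is correct, but the paper takes a much shorter route. The paper simply observes that the cyclic algebra $(L,\sigma,a)$ is unramified on $A$ (because $L/F$ is unramified and $a\in A^{*}$), hence lies in $\Br(A)$; since $A$ is complete regular local with finite residue field, $\Br(A)\cong\Br(k)=0$, so $(L,\sigma,a)=0$ in $\Br(F)$, which is equivalent to $a\in\N_{L/F}(L^{*})$.

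Your approach is a hands-on successive-approximation argument through the $\mathfrak{m}$-adic filtration, reducing norm surjectivity on $A^{*}$ to norm and trace surjectivity for the finite residue extension $\bar B/k$. This is more elementary in that it avoids Brauer-group machinery entirely, and it is also more general: it never uses cyclicity of $L/F$, so it works verbatim for any finite unramified extension whose residue extension is separable with surjective norm (e.g.\ any finite \'etale domain extension over such an $A$). The paper's argument, by contrast, is tailored to the cyclic case and to the ambient framework of the paper, where cyclic algebras and their ramification behaviour are already the basic currency; it buys brevity and conceptual alignment with the rest of the text at the cost of invoking $\Br(A)=0$.
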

\begin{proof} Let $\sigma$ be a generator of $\Gal(L/F)$. Since $a\in A^*$, the cyclic algebra $\brac{L, \sigma, a}$ is unramified and hence trivial in $\Br\brac{F}$.\end{proof}

\begin{lemma}[Norm one elements of an unramified extension]\label{lemmanormoneunramified}
Let $A$ be a complete regular local ring with fraction field $F$ and finite residue field $k$. Let $\ell$ be a prime which is not divisible by $\charac(k)$. Assume $F$ contains a primitive $\ell^{\mrm{th}}$ root of unity. If $Y$ is a degree $\ell$ field extension of $F$ unramified on $A$, then norm one elements of $Y/F$ which are integral over $A$ are $\ell^{\mathrm{th}}$ powers in $Y$.
\end{lemma}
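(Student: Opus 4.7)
The plan is to reduce to the integral closure $B$ of $A$ in $Y$, to check by a cyclic-group count that the residue $\bar y$ is already an $\ell$-th power in the residue field $\overline Y$ of $B$, and then to lift this by Hensel's lemma.

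Since $\mu_\ell\subset F$, Kummer theory gives $Y=F(\sqrt[\ell]{u})$ for some $u\in F^{*}\setminus F^{*\ell}$. As $A$ is regular local, hence a UFD with trivial divisor class group, and $Y/F$ is unramified on $A$, I can adjust $u$ modulo $F^{*\ell}$ to lie in $A^*$: the divisor of $u$ on $\Spec A$ is $\ell$ times an integral divisor (by unramifiedness at every height one prime), and any such integral divisor is principal. With $u\in A^*$, the Jacobian criterion together with $\ell,u\in A^*$ shows that $B:=A[T]/(T^\ell-u)$ is finite \'etale over $A$. Moreover $\bar u\notin k^{*\ell}$ (else Hensel lifts a root of $T^\ell-\bar u$ to a root of $T^\ell-u$ in $A$, contradicting $Y\neq F$), so $B$ is local with residue field $\overline Y=k(\sqrt[\ell]{\bar u})$ of degree $\ell$ over $k$, and $B$ coincides with the integral closure of $A$ in $Y$.

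Given $y\in Y$ integral over $A$ with $\N_{Y/F}(y)=1$, we have $y\in B$; since $\N_{Y/F}(y)=\pm 1$ appears as the constant term of the characteristic polynomial of $y$ over $A$, $y$ is a unit in $B$. The reduction $\bar y\in \overline Y^*$ has norm one over $k$. As $\mu_\ell\subset F$ with $\ell$ invertible in $A$, the $\ell$-th roots of unity in $A$ reduce injectively into $k$, so $\mu_\ell\subset k$ and $\ell\mid|k|-1$. In the cyclic group $\overline Y^*$ of order $|k|^\ell-1$, the norm-one subgroup has order $(|k|^\ell-1)/(|k|-1)$ and the subgroup of $\ell$-th powers has order $(|k|^\ell-1)/\ell$; the former is contained in the latter precisely because $\ell\mid|k|-1$. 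Hence $\bar y=\bar h^{\,\ell}$ for some $\bar h\in \overline Y^*$.

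Finally, lift $\bar h$ to $h\in B^*$ and set $u_0:=yh^{-\ell}\in 1+\mathfrak m_B$. The polynomial $f(T)=T^\ell-u_0$ satisfies $f(1)\in\mathfrak m_B$ and $f'(1)=\ell\in B^*$, so Hensel's lemma on the complete local ring $B$ produces $v\in 1+\mathfrak m_B$ with $v^\ell=u_0$, and therefore $y=(hv)^\ell$ is an $\ell$-th power in $Y$. The only mildly delicate point in the whole argument is arranging $B$ to be a complete local domain, which uses that $A$ is factorial to replace the Kummer parameter by a unit; once that is in place, the proof consists only of the cyclic-group count at the residue field level and one application of Hensel's lemma.
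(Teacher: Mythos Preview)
Your proof is correct and follows essentially the same strategy as the paper: reduce to the residue field extension $k_1/k$, show that norm-one elements there are $\ell$-th powers by a counting argument in finite cyclic groups, and lift via Hensel. The only cosmetic differences are that you set up $B$ explicitly via Kummer theory (whereas the paper works with the minimal polynomial of $c$ to compare norms), and you phrase the finite-field step as a subgroup containment $\mathrm{ker}\,\N \subseteq (\overline{Y}^*)^{\ell}$ rather than as injectivity of the induced map $k_1^*/k_1^{*\ell}\to k^*/k^{*\ell}$; both amount to the same divisibility $\ell\mid |k|-1$.
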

\begin{proof}
Let $B$ denote the integral closure of $A$ in $Y$ and let $k_1$ be its residue field. Let $c\in Y$ be integral over $A$ such that $\N_{Y/F}(c)=1$. Hence $c\in B^*$, the minimal polynomial $g(t)$ of $c$ in $Y/F$ lies in $A[t]$ and is monic and irreducible. By the Henselian property of $A$, $\overline{g}(t)$ is irreducible (of the same degree) in $k[t]$ and is therefore the minimal polynomial of $\overline{c}$. 

Since $Y/F$ is unramified, $[Y:F] = [k_1:k] =\ell$ and therefore $\N_{k_1/k}(\overline{c}) =  (-1)^{\ell} \overline{g}(0)^{[k_1: k(\bar{c})]} = \overline{(-1)^{\ell} g(0)^{[Y:F(c)]}}  = \overline{\N_{Y/F}(c)} = 1$. Now $k_1/k$ is an extension of finite fields and hence the norm map $\N : k_1^*\to k^*$ is surjective. Since $\N$ is also multiplicative, it induces a surjective map of groups $\tilde{\N} : \frac{k_1^*}{k_1^{*\ell} }\to \frac{k^*}{k^{*\ell}}$. Since ${\ell}$ is a prime not divisible by $\charac(k)$ and $F$ contains a primitive $\ell^{\mrm{th}}$ root of unity, $\ell$ divides $|k^*|$ and $|k_1^*|$. Thus both $\frac{k_1^*}{k_1^{*{\ell}} }$ and $\frac{k_1^*}{k_1^{*{\ell}} }$ are cyclic groups of order ${\ell}$ which shows that $\tilde{N}$ is injective as well. 

Since $\N_{k_1/k}(\overline{c}) = 1$, this shows that $\overline{c} = \lambda^{\ell}$ for some $\lambda\in k_1$. Using the fact that $B$ is Henselian as well, we see that $c$ is also therefore an ${\ell}^{\mathrm{th}}$ power in $Y$. \end{proof}

\begin{lemma}[Norm one elements (dim 2)]
\label{lemmanormfromE-nonsplit}
\label{lemmanormoneramified} 
Let $A$ be a complete regular local ring of $\dim 2$ with fraction field $F$ and finite residue field $k$. Let $\ell$ be a prime not divisible by $\charac(k)$. Assume $F$ contains a primitive $\ell^{2\mrm{th}}$ root of unity. Let $Y=F\left(\sqrt[\ell]{u\pi^i\delta^j}\right)$ be a degree $\ell$ field extension of $F$ where $u\in A^*$, $\left(\pi,\delta\right)$ form a system of parameters of $A$ and $0\leq i, j\leq \ell-1$. Let $b\in Y$ be such that it is integral over $A$. If $\N_{Y/F}(b)=1$, then $b\in Y^{*\ell}$. \end{lemma}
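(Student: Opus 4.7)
The plan is to work with the integral closure $B$ of $A$ in $Y$: show $b \in B^*$, check that its reduction $\bar b \in k_B$ is an $\ell$-th power, and then lift by Hensel's lemma. If $(i,j)=(0,0)$ then $Y/F$ is unramified on $A$ and the claim is exactly Lemma \ref{lemmanormoneunramified}, so I may assume $(i,j) \neq (0,0)$; by the symmetry between $\pi$ and $\delta$, I further assume $i \neq 0$.

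Since $A$ is complete local and $Y/F$ is finite separable, $B$ is a complete normal two-dimensional local domain, finite over $A$. At each height-one prime $\mathfrak p$ of $A$, the standard formula $v_{\mathfrak p}(\N_{Y/F}(b)) = \sum_{\mathfrak q \mid \mathfrak p} f_{\mathfrak q/\mathfrak p}\, v_{\mathfrak q}(b)$, combined with $\N(b)=1$ and integrality $v_{\mathfrak q}(b) \geq 0$, forces $v_{\mathfrak q}(b) = 0$ at every height-one prime $\mathfrak q$ of $B$. Normality then gives $b \in B^*$.

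The main step is identifying $k_B$ and the residual action of Galois. Because $1 \leq i \leq \ell - 1$ is coprime to $\ell$, the completion $Y \otimes_F \widehat F_{(\pi)}$ is a totally ramified degree-$\ell$ extension of $\widehat F_{(\pi)}$, so the unique prime $\mathfrak q$ of $B$ above $(\pi)$ has $(e,f) = (\ell, 1)$. Hence $B/\mathfrak q$ is a finite integral extension of the DVR $A/\pi A$ inside $\mrm{Frac}(A/\pi A)$, which forces $B/\mathfrak q = A/\pi A$; further quotienting by the image of $\delta$ gives $k_B = A/(\pi,\delta) = k$. A generator $\sigma$ of $\Gal(Y/F)$ fixes $A$ pointwise, so it acts trivially on $B/\mathfrak q = A/\pi A$ and hence on $k_B$. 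Reducing the relation $\N_{Y/F}(b) = 1$ modulo $\mathfrak m_B$ now yields $\bar b^{\ell} = 1$, i.e. $\bar b \in \mu_\ell(k)$.

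The hypothesis $\mu_{\ell^2} \subset F$ combined with $\ell \neq \charac(k)$ ensures that $\mu_{\ell^2}$ injects into $k^*$, so $\mu_\ell(k) = \mu_{\ell^2}(k)^{\ell} \subset k^{*\ell}$, giving $\bar b \in k_B^{*\ell}$. Applying Hensel's lemma to $X^\ell - b$ in the complete local ring $B$ (with $\ell \in B^*$) lifts this $\ell$-th root to $B$, concluding $b \in B^{*\ell} \subset Y^{*\ell}$. The main subtlety is the residue-field identification $k_B = k$, which uses the ``totally ramified, residue degree one'' structure at $(\pi)$ together with the normality of $B$; this matters because $B$ need not be regular when both $i,j$ are nonzero. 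The hypothesis $\mu_{\ell^2} \subset F$ is precisely what promotes the $\ell$-th-root-of-unity residue $\bar b$ to an actual $\ell$-th power in $k$.
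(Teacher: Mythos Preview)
Your proof is correct and follows essentially the same route as the paper's: reduce to the ramified case, use that $B$ is complete local with residue field $k$, show $\bar b$ is an $\ell$-th root of unity, invoke $\mu_{\ell^2}\subset F$ to get $\bar b\in k^{*\ell}$, and lift by Hensel. The paper packages the key step slightly differently---it lifts $\bar b$ to $a\in A^*$, writes $b=a\lambda^{\ell^2}$, and takes norms in $Y/F$ to obtain $a\in\mu_\ell\cdot F^{*\ell}$---whereas you reduce modulo $\mathfrak m_B$ and use triviality of the Galois action on $k_B$; your version has the virtue of spelling out why $k_B=k$ via the totally ramified structure at $(\pi)$, which the paper simply asserts.
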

\begin{proof}
We split it into two cases depending on the ramification of $Y/F$. If $Y/F$ is unramified and nonsplit, then by Lemma \ref{lemmanormoneunramified}, $b$ in an $\ell^{\mathrm{th}}$ power.

If $Y/F$ is ramified, then $Y=F\left(\sqrt[\ell]{u\pi^i\delta^j}\right)$ where $0\leq i, j\leq \ell-1$ with at least one of them non-zero. Let $B$ denote the integral closure of $A$ in $Y$. It is a complete local ring (\cite{HS06}, Theorem 4.3.4) with maximal ideal $\mathcal{M}_B$ and residue field $k$. Let $b\in B$ such that $\N_{Y/F}(b)=1$. Let $a\in A^*$ be such that $\overline{a}=\overline{b}$. Thus $ba^{-1} \simeq 1 \mod \mathcal{M}_B$. Since $B$ is complete and $\charac(k)\neq \ell$, $b=a\lambda^{{\ell}^2}$ for some $\lambda\in B$. 
This implies that $\N_{Y/F}(b) = \left(a \N_{Y/F}(\lambda)^{\ell}\right)^{\ell} = 1$. Thus $a\N_{Y/F}(\lambda)^{\ell}=\rho$ where $\rho$ is an ${\ell}^{\mrm{th}}$ root of unity. Hence $a$ is equal to $\rho$ up to ${\ell}^{\mrm{th}}$ powers in $Y$. Since $F$ contains a primitive $\ell^{2\mrm{th}}$ root of unity, this shows $a$ and hence $b$ is an $\ell^{\mathrm{th}}$ power in $Y$. \end{proof}

\begin{lemma}[Reduced norms of an unramified algebra]\label{lemmareducednormsunramified}
Let $R$ be a complete discretely valued ring with fraction field $K$ and residue field $k$ of cohomological dimension $\leq 2$. Let $D_0$ be an unramified central simple algebra over $K$ of index $\ell$ where $\ell$ is a prime not divisible by $\charac(k)$. Then every unit $u\in R^*$ is a reduced norm from $D_0$
\end{lemma}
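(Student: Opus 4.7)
The plan is to reduce the problem modulo the maximal ideal of $R$, solve it over the residue field using the cohomological dimension hypothesis, and then lift the solution by Hensel's lemma. Since the embedding $D' \hookrightarrow M_r(D') = D_0$ of the underlying division algebra $D'$ as block-diagonal elements preserves reduced norms, it suffices to prove the claim when $D_0$ is itself a division algebra of degree $\ell$. Being unramified, $D_0$ then extends to an Azumaya algebra $\mathcal{D}_0$ of degree $\ell$ over $R$, whose reduction modulo the maximal ideal $\mathfrak{m}$ is a central simple $k$-algebra $\overline{D_0}$ of degree $\ell$. Write $\bar{u}\in k^*$ for the image of $u$.

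For the first step, I would invoke a theorem of Merkurjev--Suslin (Serre's Conjecture II for inner forms of type $A$), which asserts that $\HH^1(k,\SL1(\overline{D_0}))=1$ whenever $\cd(k)\leq 2$. The long exact sequence in Galois cohomology attached to
\[
1 \to \SL1(\overline{D_0}) \to \GLl1(\overline{D_0}) \xrightarrow{\Nrd} \mathbb{G}_m \to 1
\]
then forces $\Nrd:\overline{D_0}^*\to k^*$ to be surjective. Pick $\bar{v}\in \overline{D_0}^*$ with $\Nrd(\bar{v})=\bar{u}$ and lift $\bar{v}$ to some $v_1\in \mathcal{D}_0$ (possible because $\mathcal{D}_0$ is a free $R$-module surjecting onto $\overline{D_0}$); the element $v_1$ is automatically a unit in $\mathcal{D}_0$, because its reduced norm reduces to $\bar{u}\in k^*$ and therefore already lies in $R^*$.

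For the second step, set $w = u^{-1}\Nrd(v_1)\in 1+\mathfrak{m}$. Because $\ell \neq \charac(k)$, the prime $\ell$ is a unit in $R$, so Hensel's lemma applied to $f(x)=x^{\ell}-w$ (noting $f(1)\in\mathfrak{m}$ and $f'(1)=\ell\in R^*$) yields $\beta\in 1+\mathfrak{m}$ with $\beta^{\ell}=w$. Then $v=\beta^{-1}v_1\in \mathcal{D}_0^*\subset D_0^*$ satisfies $\Nrd(v)=\beta^{-\ell}\Nrd(v_1)=w^{-1}\cdot u w = u$, completing the argument.

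The main conceptual input is the Merkurjev--Suslin vanishing $\HH^1(k,\SL1(\overline{D_0}))=1$ for $k$ of cohomological dimension $\leq 2$; everything else is elementary Hensel-style lifting, which crucially depends on the invertibility of $\ell$ in $R$ (both for $\overline{D_0}$ having index dividing $\ell$ with a good residue theory, and for extracting $\ell$-th roots in $1+\mathfrak{m}$). I expect the cohomological dimension step to carry the weight of the argument; without it the reduced norm need not even be surjective on the residue field.
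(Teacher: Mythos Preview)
Your proof is correct and follows essentially the same approach as the paper: invoke Merkurjev--Suslin to get surjectivity of $\Nrd$ on $\overline{D_0}$ over the residue field of cohomological dimension $\leq 2$, then lift via Hensel. The only cosmetic difference is that the paper applies Hensel's lemma directly to the polynomial system $\Nrd_{D_0}(\mathbf{x}) - u = 0$, whereas you first lift the solution and then correct by an $\ell$-th root in $1+\mathfrak{m}$; both are standard executions of the same idea.
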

\begin{proof} By the results of Merkurjev and Suslin (\cite{Se}, Chapter II, Sec 4.5, Pg 88), the reduced norm of $\overline{D_0}$ is surjective. Thus the polynomial $\overline{\Nrd_{D_0}(\mathbf{x}) - u}=0$ has a solution over $k$. By Hensel's Lemma, there exists a solution over $K$. \end{proof}


\begin{lemma}[Splitting fields]
\label{lemmasplittingfields2}
Let $A$ be a complete regular local ring of $\dim 2$ with fraction field $F$ and finite residue field $k$. Let $\ell$ be a prime not divisible by $\charac(k)$ such that $F$ contains a primitive $\ell^{\mrm{th}}$ root of unity. Let $D = (v, \pi)$ be an $\ell$ torsion algebra over $F$ and $E=F\left(\sqrt[\ell]{u\pi^i\delta^j}\right)$ be a degree $\ell$ field extension of $F$ where $u, v\in A^*$, $\left(\pi,\delta\right)$ form a system of parameters of $A$ and $0\leq i, j\leq \ell-1$. Let $\widehat{A_{(\pi)}}$ be completion of $A_{(\pi)}$ at its maximal ideal and let its fraction field be denoted by $F_{B}$, which is a complete discretely valued field with parameter $\pi$ and residue field $k_B$. If $D\otimes_F (E\otimes_F F_{B})$ is split, then so is $D\otimes_F E$.
\end{lemma}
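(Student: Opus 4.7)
The strategy is to split on whether $i = 0$ and exploit the Kummer relation $y^\ell = u\pi^i\delta^j$ that holds in $E$, combined with purity of Brauer groups on $2$-dim complete regular local rings. A preliminary fact I will use repeatedly is that for any $x \in A^*$, the symbol $(v,x) \in \Br(F)[\ell]$ is the cup product of two units of $A$ and hence has trivial tame residue at every height-$1$ prime of $A$; by purity for the regular $2$-dim local $A$, $(v,x) \in \Br(A)[\ell]$, and since $A$ is Henselian with finite residue field $k$, $\Br(A)[\ell] \cong \Br(k)[\ell] = 0$. In particular $(v,u) = 0$ in $\Br(F)$, and the same argument will later be used for $(v,\delta)$ once its residue at $(\delta)$ is shown to vanish.

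For the case $i \neq 0$, let $i'$ be the inverse of $i$ modulo $\ell$; the relation $y^\ell = u\pi^i\delta^j$ gives $\pi \equiv u^{-i'}\delta^{-ji'} \pmod{E^{*\ell}}$, so
\[
D_E \;=\; -i'(v,u)_E - ji'(v,\delta)_E \;=\; -ji'(v,\delta)_E
\]
(using $(v,u)_E = 0$). If $j = 0$ we are done. Otherwise the same identity holds in $\Br(E_B)$ and combined with the hypothesis yields $(v,\delta)_{E_B} = 0$. Because $E_B/F_B$ is totally ramified and $v,\delta$ are units of $F_B$, this unramified class corresponds under the residue-field identification to $(\bar v,\bar\delta)_{k_B} \in \Br(k_B)[\ell]$. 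Here $k_B$ is the fraction field of the complete DVR $A/(\pi)$ with uniformizer $\bar\delta$ and residue field $k$, so the vanishing of $(\bar v,\bar\delta)_{k_B}$ forces the tame residue $\overline{\bar v} \in k^*/k^{*\ell}$ to be trivial. By Hensel applied to the complete DVR $A/(\delta)$, this is equivalent to the residue of $(v,\delta)$ at the height-$1$ prime $(\delta)$ of $A$ being trivial. Since $(v,\delta)$ is automatically unramified at every other height-$1$ prime of $A$, the preliminary step (applied to $(v,\delta)$ in place of $(v,u)$) gives $(v,\delta) = 0$ in $\Br(F)$, whence $D_E = 0$.

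For the case $i = 0$, let $B$ denote the integral closure of $A$ in $E$. I will show $B$ is a $2$-dim complete regular local ring with finite residue field, so that $\Br(B)[\ell] = 0$. When $j = 0$, $B = A[y]/(y^\ell - u)$ is \'etale over $A$, hence regular. When $j \neq 0$, choose $j', m$ with $jj' = 1 + m\ell$; the element $z := y^{j'}\delta^{-m}$ lies in $B$ and satisfies $z^\ell = u^{j'}\delta$, and a short computation shows $B = A[z]$ with parameter system $(\pi, z)$, hence regular. Since $D = (v,\pi)$ is unramified away from $(\pi) \subset \Spec A$, $D_E$ is unramified at every height-$1$ prime of $B$ not lying above $(\pi)$; and for primes above $(\pi)$ the residue of $D_E$ is the image of $\bar v$ in $k_{E_B}$, which vanishes by the hypothesis. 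So $D_E$ is unramified on $B$, and purity delivers $D_E \in \Br(B)[\ell] = 0$. The main subtle point is that in the subcase $i \neq 0,\, j \neq 0$ the integral closure $B$ is singular (it has an $A_{\ell-1}$-type singularity when $i=j=1$), which is why case $i \neq 0$ must be handled by detouring through purity on the regular base $A$ rather than on $B$ itself.
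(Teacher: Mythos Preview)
Your proof is correct. In the $i \neq 0$ case it is essentially the paper's argument: both rewrite $D_E$ as a multiple of $(v,\delta)$ (the paper writes $(\delta,v^j)$), pass to the totally ramified extension $E_B/F_B$ to descend to $\Br(k_B)$, and read off $\overline{\overline v} \in k^{*\ell}$; the paper then invokes Hensel on $A$ to conclude $v\in A^{*\ell}$ (so in fact $D=0$), while you reach the equivalent conclusion $(v,\delta)=0$ via purity on $A$. In the $i=0$ case you take a genuinely different route. The paper argues directly: when $j=0$ the extension $E$ is the unique nonsplit unramified degree-$\ell$ extension, so $v \in E^{*\ell}$ without even using the hypothesis; when $j\neq 0$ it notes that $\overline{E_B}/k_B$ is totally ramified with residue field $k$, so the hypothesis forces $\overline{\overline v}\in k^{*\ell}$ and hence $v\in A^{*\ell}$, again giving $D=0$. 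You instead verify that the integral closure $B$ of $A$ in $E$ is a complete regular local ring with finite residue field and run purity on $B$ to get $D_E\in\Br(B)[\ell]=0$. Your approach is more uniform and structural, at the cost of checking regularity of $B$; the paper's direct computation is shorter and extracts the stronger conclusion $D=0$ (not merely $D_E=0$) whenever $j\neq 0$.
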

\begin{proof} If $i=j=0$, then $E$ is the unique unramified (on $A$) extension of $F$. Therefore $v\in E^{*\ell}$ and hence $D\otimes E = 0$. 

If $i=0, j\neq 0$, then $E\otimes F_{B}/F_B$ is an unramified extension, $\overline{E_B} :=\overline{E\otimes F_B}$ is a totally ramified extension over $k_B$ and the further residue field of $\overline{E_B}$ is $k$ . Note that  $D\otimes E\otimes F_{B}=0$ implies that the residue $\overline{v}\in \overline{E_B}^{*\ell}$ and hence $\overline{\overline{v}}\in k^{*\ell}$. This implies $v\in A^{*\ell}$ and hence $D = 0$ to begin with.

If $i\neq 0$, without loss of generality we can assume $i=1$ and $0\leq j < \ell$. Thus $D\otimes E = \brac{v, u^{-1}\delta^{-j}} = \brac{\delta, v^j}\in \Br(E)$. Note that $E\otimes F_B/F_B$ is totally ramified and $\overline{E_B} = k_B$. Since $D\otimes E\otimes F_B = 0$, we see that $\brac{\delta, v^j} = 0 \in \Br(E\otimes F_B)$ and hence $\brac{\overline{\delta}, \overline{v}^j}\in \Br(k_B)$. This implies $\overline{\overline{v}}^j\in k^{*\ell}$ and hence $v\in A^{*\ell}$. Hence $D\otimes E=0$. \end{proof}

\begin{lemma}[Index formula, \cite{JW90}]
\label{lemmaindexformula}
Let $R$ be a complete discretely valued ring with fraction field $F$. Let $E$ be a cyclic unramified extension of $F$ of degree $m$ and let $\alpha = \alpha' + \left(E,\sigma, \pi \right)$ in $\Br(F)$ where $\pi$ is a parameter of $R$, $\sigma$ is a generator of $E/F$ and $\alpha'$ is a central simple algebra of degree $n$ unramified at $R$. Assume $mn$ is invertible in $R$. Then $\ind\left(\alpha\right) = \ind(\alpha'\otimes_F E)\left[E:F\right]$. 
\end{lemma}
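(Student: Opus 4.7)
The plan is to establish both divisibilities $\ind(\alpha) \mid m \cdot \ind(\alpha' \otimes_F E)$ and $m \cdot \ind(\alpha' \otimes_F E) \mid \ind(\alpha)$. For the upper bound, I base change to $E$: since $E$ embeds as a maximal subfield of the cyclic algebra $(E, \sigma, \pi)$, it splits that symbol, so $\alpha \otimes_F E = \alpha' \otimes_F E$ in $\Br(E)$. The general estimate $\ind(\alpha) \mid [L:F] \cdot \ind(\alpha \otimes_F L)$ applied to $L = E$ then gives $\ind(\alpha) \mid m \cdot \ind(\alpha' \otimes_F E)$.

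For the lower bound, let $D$ denote the underlying division algebra of $\alpha$. Its index divides $mn$, which is coprime to $\charac(k)$, so $D$ is tame and the discrete valuation on $F$ extends uniquely to $D$. Computing residues from the given decomposition, $\partial_F(\alpha) = \partial_F(\alpha') + \partial_F((E,\sigma,\pi))$: the first term vanishes because $\alpha'$ is unramified on $R$, and the second equals the character $\chi \in \HH^1(k, \mathbb{Z}/m)$ cutting out the residue extension $\overline{E}/k$. Consequently the ramification index of $D/F$ is exactly $m$ and the center of the residue division algebra $\overline{D}$ equals $\overline{E}$.

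By the tame structure theory, the unramified lift of $\overline{E}/k$, namely $E$ itself, embeds into $D$ as a maximal unramified subfield. The double centralizer theorem then identifies $C_D(E)$ with a central simple $E$-algebra whose Brauer class is $\alpha \otimes_F E = \alpha' \otimes_F E$, and dimension counting gives
\[ \ind(\alpha) = [D:F]^{1/2} = [E:F] \cdot [C_D(E):E]^{1/2} = m \cdot \ind(\alpha' \otimes_F E), \]
which completes the lower bound. The main obstacle is precisely this embedding of the unramified cyclic extension $E$ into $D$ with the predicted centralizer, and this is where the tame-ramification theory for division algebras over Henselian discretely valued fields due to Jacob and Wadsworth enters in an essential way; the upper bound, by contrast, is a formal consequence of the splitting property of cyclic algebras.
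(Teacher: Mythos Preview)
The paper does not supply its own proof of this lemma; it is stated with a bare citation to \cite{JW90} and used as a black box. Your sketch is correct and is essentially the Jacob--Wadsworth argument: the upper bound $\ind(\alpha)\mid m\cdot\ind(\alpha'\otimes_F E)$ is the formal base-change estimate, while the lower bound comes from the tame structure theory identifying $Z(\overline{D})$ with the residue extension $\overline{E}$ (since $\alpha$ is inertially split with residue character of order exactly $m$), lifting to an embedding $E\hookrightarrow D$, and then reading off $\ind(\alpha)=m\cdot\ind(\alpha'\otimes_F E)$ from the double centralizer theorem applied to $C_D(E)$. One small comment: the assertion that ``the ramification index of $D/F$ is exactly $m$'' is not needed separately and does not follow from the residue computation alone without invoking the inertially-split hypothesis; what you actually use (and correctly flag as the Jacob--Wadsworth input) is just the embedding $E\hookrightarrow D$, after which the dimension count does the rest.
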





\subsection{Approximating local data}
For the rest of this section, $\ell$ will denote an odd prime, $F$, a global field with $\charac(F)\neq \ell$ containing a primitive $\ell^{\mrm{th}}$ root of unity and $D'$, a central simple algebra over $F$ of index dividing $\ell$. $F_v$ will denote the completion of $F$ at a place $v$ of $F$ and $k_v$, its residue field. $T=\{v_1, v_2, \ldots ,v_r\}$ will be a finite set of places of $F$ such that $\ell\neq \charac(k_{v_i})$ for each $i\leq r$ and $D'\otimes F_{v}$ is split for every place $v\not\in T$.

\begin{lemma}[An approximate cyclic extension]\label{lemmauglyinitial}

Suppose that there exists $u'\in F^*$ and cyclic or split extensions $E_{v_i}/F_{v_i}$ of degree $\ell$ for each $v_i\in T$ such that
\begin{itemize}
\item
$u'$ is a norm from $E_{v_i}/F_{v_i}$ for each $v_i\in T$,
\item
$D'\otimes_F E_{v_i}$ is split for each $v_i\in T$.
\end{itemize}

Then there exists a cyclic field extension $E/F$ of degree $\ell$ such that 
\begin{itemize}
\item
$E\otimes_F F_{v_i} \simeq E_i$ for each $v_i\in T$,
\item
$u'$ is a norm from $E/F$,
\item
$D'\otimes_F E$ is split.
\end{itemize}

\end{lemma}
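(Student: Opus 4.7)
The strategy is to Kummer-parametrize the degree $\ell$ extensions and use the Hasse norm theorem for the cyclic extension $K := F(\sqrt[\ell]{u'})$ to convert the norm condition on $u'$ into a norm-approximation problem over $K$. I would first write each $E_{v_i} = F_{v_i}(\sqrt[\ell]{a_i})$ for some $a_i \in F_{v_i}^*$. If $u' \in F^{*\ell}$, then $K = F$, the norm condition on $u'$ is automatic, and the lemma reduces to a direct Grunwald-Wang style argument (valid because $\ell$ is odd, so no Wang-type exception arises); henceforth I assume $K/F$ is cyclic of degree $\ell$. By symmetry of the cyclic $\ell$-algebra symbol $(a_i, u')_\ell$, the hypothesis that $u'$ is a local norm from $E_{v_i}$ is equivalent to $a_i$ lying in the image of the local norm map from $(K \otimes_F F_{v_i})^*$ to $F_{v_i}^*$; accordingly I fix lifts $c_i \in (K \otimes_F F_{v_i})^*$ with $N(c_i) = a_i$.

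Next, I would apply weak approximation (Artin-Whaples) to the global field $K$ at the finite set of places above $T$. This yields $c \in K^*$ close enough to each $c_i$ that $a := N_{K/F}(c)$ lies in the coset $a_i F_{v_i}^{*\ell}$ for every $v_i \in T$. The openness of $F_{v_i}^{*\ell}$ in $F_{v_i}^*$ is guaranteed because $\ell$ is coprime to the residue characteristic at $v_i$, while the archimedean places contribute trivially since $\ell$ is odd. If the resulting $a$ happens to lie in $F^{*\ell}$ -- which can only occur when every $E_{v_i}$ is split -- I would perturb $c$ at an auxiliary place of $K$ lying outside those above $T$ to force $a \notin F^{*\ell}$, so that $E := F(\sqrt[\ell]{a})$ is a proper cyclic field extension of degree $\ell$.

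It then remains to verify the three required conclusions. The congruences $a \equiv a_i \pmod{F_{v_i}^{*\ell}}$ immediately give $E \otimes_F F_{v_i} \simeq E_{v_i}$ for each $v_i \in T$. Since $a$ is, by construction, a global norm from $K/F$, the symbol $(a, u')_\ell$ vanishes in $\Br(F)$, which is precisely the assertion that $u'$ is a norm from $E/F$. For the splitting of $D' \otimes_F E$, I would invoke Brauer-Hasse-Noether over the global field $E$: at a place $w$ of $E$ lying over some $v \notin T$, $D' \otimes_F F_v$ is already split by hypothesis, and at a place $w$ over some $v_i \in T$, $D' \otimes_F E \otimes_F F_{v_i} = D' \otimes_F E_{v_i}$ is split by hypothesis.

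The hard part will be the weak-approximation calibration: one must handle all decomposition behaviors of $v_i$ in $K$ (inert, ramified, or split into $\ell$ places) uniformly, tracking the local norm map on $(K \otimes_F F_{v_i})^* = \prod_{w \mid v_i} K_w^*$ so that the global norm $a = N_{K/F}(c)$ lands in the correct coset modulo $F_{v_i}^{*\ell}$, together with the genericity condition $a \notin F^{*\ell}$ which keeps $E$ a field. Once this calibration is made, the remaining verifications are routine consequences of the local-global principles for Brauer groups of global fields and the Hasse norm theorem.
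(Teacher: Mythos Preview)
Your proposal is correct and follows essentially the same route as the paper: write $E_{v_i}=F_{v_i}(\sqrt[\ell]{a_i})$, use symbol symmetry to reinterpret the norm condition on $u'$ as $a_i$ being a local norm from $K=F(\sqrt[\ell]{u'})$, weak-approximate preimages in $K$ to obtain a global $a=N_{K/F}(c)$, and then check the three conclusions via the local congruences and Brauer--Hasse--Noether. The only cosmetic difference is how the field condition $a\notin F^{*\ell}$ is arranged: the paper enlarges $T$ at the outset by adjoining a place where $E_v$ is the nonsplit unramified degree-$\ell$ extension, whereas you propose perturbing $c$ at an auxiliary place afterward; both work. (Note your opening sentence slightly overstates the role of the Hasse norm theorem---you never actually invoke it, since $a$ is constructed as a global norm directly.)
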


\begin{proof}
Without loss of generality, assume that there exists a $v\in T$ such that $E_v/F_v$ is a field extension. This can be done by expanding $T$ to include a place $v$ of $F$ where $u'\in \mathcal{O}_{F_v}^*$ and choosing $E_v$ to be the unique cyclic unramified field extension of degree $\ell$ over $F_v$.

Pick $w_v$ to be so that the given $E_v\simeq \frac{F_v[t]}{\brac{t^{\ell}-w_v}} $ for each $v\in T$. If $u'\in F^{*\ell}$, using weak approximation pick $w\in F$ so that  up to $\ell^{\mrm{th}}$ powers, it matches $w_v\in F_v$ for each $v\in T$. Then the field $E=F[t]/\left(t^{\ell}-w\right)$ satisfies the lemma. So we assume $u'\not\in F^{*\ell}$ in the rest of the proof.

For each place $v\in T$, by hypothesis we know $\left(w_v, u'\right)=0\in \Br\left(F_v\right)$. Hence pick $\theta_v\in \brac{F\left(\sqrt[\ell]{u'}\right)\otimes F_v}^{*}$ so that $\N_{F\left(\sqrt[\ell]{u'}\right)\otimes F_v/F_v}\left(\theta_v\right) = w_v$. By weak approximation, find $\theta\in F\left(\sqrt[\ell]{u'}\right)$ so that it matches $\theta_v$ up to $\ell^{\mrm{th}}$ powers. Set $w=\N_{F\left(\sqrt[\ell]{u'}\right)/F}(\theta)$. Thus $w$ matches with $w_v$ up to $\ell^{\mrm{th}}$ powers and $(u',w)=0\in \Br(F)$.

Set $E=F\left(\sqrt[\ell]{w}\right)$. This is a cyclic Galois extension of $F$ which approximates the $E_v$s for each $v\in T$. By hypothesis, $D'$ is split at places not in $T$ and $E_v\otimes_F D'$ is split for every $v\in T$. Thus $E$ splits $D'$. \end{proof}

\begin{lemma}[Another approximate cyclic extension]
\label{lemmaugly}
Let $Y'=F\left(\sqrt[\ell]{u'}\right)$ be a cyclic field extension of degree $\ell$ where $u'\in  F^*\setminus F^{*\ell}$. Let $a'\in Y'^{*}\setminus Y'^{*\ell}$ and $L$ be the Galois closure of the compositum $Y'\left(\sqrt[\ell]{a'}\right)$ over $F$.  Suppose that for each $v\in T$, there exist $w_v\in F_v^*$ and extensions $E_v := \frac{F_{v}[t]}{(t^{\ell}-w_{v})}$ of $F_v$ with the following properties: 

\begin{itemize}
\item
$w_v$ is a norm from $L\otimes F_v/F_v$
\item
$D'\otimes_F E_v $ is split
\item
$\left(w_v, a'\right)$ is split over $Y'\otimes F_v$.
\end{itemize}

Then there exists a cyclic field extension $E/F$ of degree $\ell$ such that
\begin{itemize}
\item
$E\otimes_F F_{v} \simeq E_v$ for each $v\in T$,
\item
$u'$ is a norm from $E/F$,
\item
$D'\otimes_F E$ is split,
\item
$a'$ is a norm from $E\otimes_F Y'/ Y'$.
\end{itemize}

\end{lemma}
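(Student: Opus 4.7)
The plan is to mimic the construction of Lemma \ref{lemmauglyinitial}, but to replace the role of $Y'$ by the Galois closure $L$: build $w\in F^*$ as a global norm $N_{L/F}(\theta)$ of some $\theta\in L^*$ that approximates prescribed local norms realising $w_v$ at each $v\in T$. This automatically produces the extension $E=F(\sqrt[\ell]{w})$ with the correct local behaviour at $T$, forces the identity $(u',w)_F=0$ because $w$ is already a norm from $Y'\subseteq L$, and reduces the remaining condition $(w,a')_{Y'}=0$ to a local-global check.

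First, after possibly enlarging $T$ by finitely many places where $u'$ is non-unit, where some place of $Y'$ above has $a'$ non-unit, or where $L/F$ is ramified, together with (as in Lemma \ref{lemmauglyinitial}) at least one $v\in T$ for which $w_v\notin F_v^{*\ell}$ so that $E$ is a field, I extend the hypothesised local data by setting $w_v=1$ and $E_v=F_v[t]/(t^\ell-1)$ at each new place. All three hypotheses remain valid there: $1=N(1)$ is trivially a norm from $L\otimes F_v$, $D'\otimes E_v=(D'\otimes F_v)^{\ell}$ is split since $v$ lies outside the original $T$, and $(1,a')_{Y'\otimes F_v}=0$. Then for each $v\in T$ pick $\theta_v\in (L\otimes F_v)^*$ with $N_{L\otimes F_v/F_v}(\theta_v)=w_v$, apply weak approximation in $L$ to find $\theta\in L^*$ matching each $\theta_v$ up to $\ell$-th powers, and set $w=N_{L/F}(\theta)$, $E=F(\sqrt[\ell]{w})$. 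By construction $w\equiv w_v \pmod{F_v^{*\ell}}$ for every $v\in T$, so $E\otimes_F F_v\simeq E_v$.

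The first three required conclusions then follow along the lines of Lemma \ref{lemmauglyinitial}. Since $L\supseteq Y'=F(\sqrt[\ell]{u'})$, transitivity of norms gives $w=N_{Y'/F}(N_{L/Y'}(\theta))$, so $w$ is a norm from $Y'/F$, i.e.\ $(u',w)_F=0$, which is equivalent to $u'\in N_{E/F}(E^*)$. The splitting of $D'\otimes E$ is verified by Brauer-Hasse-Noether applied to $E$: at every place of $E$ over $v\in T$ the class $D'\otimes E$ becomes $D'\otimes E_v$, split by hypothesis; at every place over $v\notin T$, $D'\otimes F_v$ itself is split. For the last condition, that $a'$ is a norm from $EY'/Y'$, one rewrites this as $(w,a')_{Y'}=0$ in $\Br(Y')[\ell]$ and verifies it locally: at places of $Y'$ over $v\in T$, $(w,a')_{Y'_w}=(w_v,a')_{Y'_w}=0$ by hypothesis, and at places over $v\notin T$, $a'$ is a unit by the enlargement, so provided $w$ is also a unit there the algebra $(w,a')_{Y'_w}$ is unramified and hence vanishes, since the Brauer group of the finite residue field is trivial.

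The main obstacle is precisely this last step: although $w\in F^*$ is automatically a unit outside a finite set of places, that set is not a priori contained in $T$. The resolution is a bootstrapping round of enlargement---absorbing into $T$ the (finitely many) places outside it at which $w$ is non-unit and plugging in the trivial local data $(w_v,E_v)=(1,\mathrm{split})$ there---followed by re-running the weak approximation; equivalently, one performs weak approximation in $L$ with simultaneously prescribed $S$-unit behaviour for $\theta$ over the enlarged $T$. This bookkeeping is where the care needs to be spent, the rest being a direct Brauer-Hasse-Noether argument as in Lemma \ref{lemmauglyinitial}.
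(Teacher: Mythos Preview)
Your construction of $w=N_{L/F}(\theta)$ via weak approximation and the verification of the first three conclusions is fine and matches the paper. The gap is in the fourth conclusion. Your proposed local-global argument for $(w,a')_{Y'}=0$ requires $w$ to be a unit at every place of $Y'$ outside $T$, and neither of your fixes delivers this. The ``bootstrapping'' round does not terminate: after enlarging $T$ to contain $\supp(w)$ and re-approximating, the new $w$ has its own (generally different) support, and there is no mechanism forcing this to stabilise. The alternative of ``weak approximation in $L$ with simultaneously prescribed $S$-unit behaviour'' is asking for strong approximation for $\mathbb{G}_m$, which fails: the group of $S_L$-units is finitely generated of rank $|S_L|-1$, while the target $\prod_{v\in S_L} L_v^*/L_v^{*\ell}$ has $\mathbb{F}_\ell$-dimension roughly $2|S_L|$, so you cannot in general hit a prescribed class by an $S$-unit.

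The paper sidesteps this entirely with a purely global argument. Writing $\theta_0:=N_{L/Y'}(z)\in Y'^*$, one has $w=N_{Y'/F}(\theta_0)=\prod_{\psi\in\Gal(Y'/F)}\psi(\theta_0)$. For each $\psi$, lift to $\tilde\psi\in\Gal(L/F)$; then $\psi(\theta_0)=N_{L/Y'}(\tilde\psi(z))$ is a norm from $L/Y'$, hence a fortiori from the intermediate extension $Y'(\sqrt[\ell]{a'})/Y'$, so $(a',\psi(\theta_0))_{Y'}=0$. Multiplying over $\psi$ gives $(a',w)_{Y'}=0$ directly in $\Br(Y')$, with no need to control $\supp(w)$ or invoke Hasse for this step. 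This is the missing idea: exploit that $Y'(\sqrt[\ell]{a'})\subseteq L$ to kill the symbol globally factor by factor, rather than locally.
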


\begin{proof}
Without loss of generality, assume that there exists a $v\in T$ such that $E_v/F_v$ is a field extension. This can be done by expanding $T$ to include a place $v$ of $F$ with the following properties : 1) $u'\in \mathcal{O}_{F_v}^*$, 2) $a'\in \mathcal{O}_{Y'_x}^*$ for any place $x$ of $Y'$ lying over $v$, 3) $L\otimes F_v$ is a unramified extension of $F_v$ (or a product of unramified extensions over $F_v$) and choosing $w_v\in O_{F_v}^*\setminus O_{F_v}^{*\ell}$ and $E_v$ to be the unique cyclic unramified field extension of degree $\ell$ over $F_v$.

Let $z_v\in (L\otimes F_v)^*$ such that $\N_{L\otimes F_v/F_v}(z_v)=w_v$. By weak approximation, find $z\in L$ so that it matches up to $\ell^{\mrm{th}}$ powers with $z_v$ for each $v\in T$. Set $\theta := \N_{L/Y'}(z)$ and set $w:= \N_{Y'/F}(\theta) = \N_{L/F}(z)$ . Thus $w$ matches with the $w_v$ upto $\ell^{\mrm{th}}$ powers. Clearly $w$ is a norm  from $Y'=F\brac{\sqrt[\ell]{u'}}$ also and hence $(u',w)=0\in \Br(F)$. Set $E=F\left(\sqrt[\ell]{w}\right)$. Hence $u'$ is a norm from $E/F$. 

Note that $E$ is an extension of $F$ which approximates the given $E_v$ for each $v\in T$. Since there exists some $v\in T$ such that $E_v$ is a field, $E/F$ is a nonsplit field extension, which is clearly cyclic of degree $\ell$. By hypothesis, $D'/F$ is split at places not in $T$ and $E_v\otimes_F D'$ is split for every $v\in T$. Thus $E$ splits $D'$.

As $\theta=\N_{L/Y'}(z)$ and $Y' \subseteq Y'\left(\sqrt[\ell]{a'}\right)\subseteq L$, we have that $(a',\theta)=0\in \Br(Y')$. Given any $\psi\in \Gal(Y'/F)$, extend it to some $\tilde{\psi}\in \Gal(L/F)$. Then $\N_{L/Y'}\left(\tilde{\psi}(z)\right)=\psi(\theta)$. Hence $\psi(\theta)$ is a norm from $L/Y'$ and so also from $Y'\left(\sqrt[\ell]{a'}\right)/Y'$. Therefore 
\[\left(a', \psi(\theta) \right) = 0 \in \Br(Y') \ \forall \ \psi \in \Gal(Y'/F).\]

Finally, since $\N_{Y'/F}(\theta)=w$ and $Y'/F$ is Galois, we have that $\prod_{\psi\in \Gal(Y/F)}\psi(\theta) = w$. Therefore \[\prod_{\psi\in \Gal(Y',F)} \left(a', \psi(\theta)\right) = (a',w) = 0 \in \Br(Y').\]\end{proof}

\begin{lemma}[Invariant algebras of global fields]\label{lemmainvariantalgebras}
Let $E/F$ be a cyclic extension of global fields of degree $\ell$, where $\ell$ is a prime not divisible by any of the residual characteristics of $F$. Further assume that $F$ contains a primitive $\ell^{\mrm{th}}$ root of unity. Let $E_w$ denote the completion of $E$ at any place $w$ of $E$ and $\mathcal{O}_{E_w}$, its valuation ring. Let $\Gal\left(E/F\right) = \langle \sigma \rangle$. Let $u\in F^{*}$, $b\in E^{*}$ be such that

\begin{itemize}
\item
At every place $w$  of $E$ where $E/F$ is ramified, $u$ is an $\ell^{\mathrm{th}}$ power in ${E}_w^{*}$,
\item
At every place $w$ of ${E}$ where ${E}/F$ is unramified and inert, $u\in \mathcal{O}_{{E}_w}^*$ upto $\ell^{\mathrm{th}}$ powers in ${E}_w^{*}$,
\item
$(u, b) = (u,  \sigma (b)) \ \mathrm{in} \  \HH^2 \left( {E}, \mu_{\ell} \right)$.
\end{itemize}

Additionally, let $T_0$ be a finite set of places of $F$ such that for each place $v\in T_0$, one is given $f_v\in F_v^{*}$ such that for  any place $w$ of ${E}$ lying above $v$, $(u, b) = (u, f_v)\ \mathrm{in} \ \HH^2\left({E}_w, \mu_{\ell}\right)$.

Then there exists $f\in F^*$ such that 
\begin{enumerate}
\item
$f=f_v\theta_v^{\ell}$ in $F_v$ for some $\theta_v\in F_v$ for each $v\in T_0$,
\item
$(u, b) = (u, f) \ \mathrm{in} \ \HH^2\left({E}, \mu_{\ell}\right)$.
\end{enumerate}
\end{lemma}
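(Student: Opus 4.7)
The plan is to reduce the existence of $f$ to finding a Brauer class $\beta \in \Br(F)[\ell]$ with prescribed local invariants, realize $\beta$ as a symbol $(u,f')_F$, and finally adjust $f'$ by a norm from $F_1 := F\brac{\sqrt[\ell]{u}}$ to match the $T_0$ conditions modulo $\ell$-th powers. The case $u \in F^{*\ell}$ is trivial (every symbol $(u,\cdot)$ vanishes and $f$ comes from weak approximation on $T_0$), so I assume $u \notin F^{*\ell}$.

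The crucial local step is to show that $\alpha_w := (u,b)_{E_w} \in \Br(E_w)[\ell]$ vanishes at every place $w$ where $E/F$ is non-split. At ramified $w$ this follows from the first hypothesis. At inert $w$ over $v$, the second hypothesis combined with $e(w\mid v) = 1$ forces $v_v(u) \in \ell \mathbb{Z}$, so up to $\ell$-th powers one may assume $u \in \mathcal{O}_{F_v}^{*}$. Since $\mu_\ell \subset F_v$ one has $\ell \mid q - 1$ with $q = |\kappa(v)|$, and the identity $1 + q + \cdots + q^{\ell-1} \equiv 0 \pmod{\ell}$ shows that every element of $\mathbb{F}_q^{*}$ is an $\ell$-th power in $\kappa(w) = \mathbb{F}_{q^\ell}$; Hensel's lemma then gives $u \in E_w^{*\ell}$, hence $\alpha_w = 0$. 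At split $w$ over $v$, $E_w = F_v$, and under $E \otimes_F F_v \cong \prod_{w \mid v} F_v$ the hypothesis $(u,b) = (u,\sigma(b))$ forces the symbols $(u,b_w)_{F_v}$ to be independent of $w \mid v$; call this common class $\gamma_v \in \Br(F_v)[\ell]$.

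Since $(u,f)_{E_w} = \ell \cdot \inv_v(u,f)_{F_v} = 0$ automatically at non-split $w$, the desired conclusion reduces to producing $f \in F^{*}$ with (i) $f \equiv f_v \pmod{F_v^{*\ell}}$ for $v \in T_0$, and (ii) $(u,f)_{F_v} = \gamma_v$ at every $v$ split in $E$. Compatibility at $v \in T_0$ split in $E$ follows from localizing $(u,b) = (u,f_v)$. To build $f$, I apply Brauer-Hasse-Noether to assemble $\beta \in \Br(F)[\ell]$ with invariants $\inv(\gamma_v)$ at split-in-$E$ places, $\inv((u,f_v)_{F_v})$ at $v \in T_0$ non-split in $E$, and $0$ elsewhere, except at one auxiliary place $v^{*} \notin T_0$ non-split in both $E$ and $F_1$ (available by Chebotarev applied to the abelian extension $E F_1 / F$), whose invariant is adjusted so that all invariants sum to zero. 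At any $v$ splitting in $F_1$ we have $u \in F_v^{*\ell}$, making every prescribed invariant vanish, so $\beta|_{F_1} = 0$; the classical description of $\Br(F_1/F)$ via cyclic algebras then yields $\beta = (u,f')_F$ for some $f' \in F^{*}$.

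Finally, to strengthen the $T_0$ matching from norms-modulo to $\ell$-th-powers-modulo, I would modify $f'$ by a norm $\N_{F_1/F}(y)$ (which preserves $(u,f')_F = \beta$ since $(u, \N_{F_1/F}(y))_F = 0$). For each $v \in T_0$, the ratio $f_v/f'$ is a local norm by the invariant choice, say $f_v/f' = \N(\eta_v)$ for some $\eta_v \in (F_1 \otimes_F F_v)^{*}$; weak approximation in $F_1$ then produces $y \in F_1^{*}$ with $y \equiv \eta_v \pmod{(F_1 \otimes_F F_v)^{*\ell}}$ for every $v \in T_0$, and $f := f' \cdot \N_{F_1/F}(y)$ satisfies all required properties. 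The main obstacle I anticipate is the joint orchestration of the auxiliary Chebotarev place $v^{*}$, the Brauer-Hasse-Noether assembly of $\beta$ with all invariants at $F_1$-split places vanishing, and the final $\ell$-th-power tune-up on $T_0$ via weak approximation in $F_1$, avoiding conflicts between these three specifications.
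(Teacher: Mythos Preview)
Your proposal is correct and follows essentially the same three-stage architecture as the paper: assemble a global Brauer class $\beta\in\Br(F)[\ell]$ via Albert--Hasse--Brauer--Noether with one Chebotarev-chosen auxiliary place to balance the invariants, realize $\beta=(u,f')$ because $\beta|_{F(\sqrt[\ell]{u})}=0$, and then correct $f'$ by a norm from $F(\sqrt[\ell]{u})$ (found via weak approximation) to hit each $f_v$ on $T_0$ modulo $\ell$-th powers. Your anticipated ``obstacle'' is exactly the orchestration the paper carries out, and it presents no difficulty beyond what you already sketched.

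The only organisational difference is that you prove up front that $(u,b)_{E_w}=0$ at \emph{every} non-split place $w$ (ramified by hypothesis; inert via the residue-field argument $\mathbb{F}_q^{*}\subset (\mathbb{F}_{q^\ell}^{*})^{\ell}$ when $\ell\mid q-1$), whereas the paper instead enlarges $T_0$ to a finite set $T$ containing all inert or split places where $u$ or $b$ fails to be a unit, prescribes $f_v$ case-by-case on $T$, and only shows vanishing outside $T$. Your route is slightly cleaner because it removes the need for the enlarged set $T$ and its case analysis; the paper's route is marginally more concrete in that it writes down an explicit $f_v$ at each relevant place. Both lead to the same Chebotarev step (a place unramified in $EF_1/F$ with Frobenius nontrivial on both factors, so that $u\notin F_{v^*}^{*\ell}$ and the invariant at $v^*$ can be freely adjusted while remaining killed in $E_{w^*}$) and the same norm-correction finish.
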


\begin{proof}
By Kummer theory, ${E}=F\left(\sqrt[\ell]{\psi}\right)$ for some $\psi$ which generates  $\frac{F^*}{F^{*\ell}}$. Note that if $u\in {E}^{*\ell}$, we can choose $f$ by weak approximation such that $f$ matches $f_v$ up to $\ell^{\mathrm{th}}$ powers. So for the remainder of the proof, we assume that $u\not\in {E}^{*\ell}$. We also note that if $v\in T_0$ splits completely in ${E}$, then the hypothesis that $(u,b)=(u,\sigma(b))$ guarantees that the same $f_v$ works for each place $w$ above $v$.


Let $T$ denote the union of $T_0$ and the finite set of places $v$ of $F$ which satisfy both the following conditions: 1) $v$ is either unramified and inert or completely split in ${E}$, 2) There exists a place $w$ of ${E}$ lying above $v$ at which either $u$ or $b$ is not a unit in $\mathcal{O}_{{E}_w}$.

For each place $v\in T$, we find $f_v\in F_v^*$  as follows:\\

\textbf{Case 0} : For each $v\in T_0$, we choose the $f_v$ given by the hypothesis.\\

\textbf{Case I} : Let $v\in T\setminus T_0$ be a place of $F$ which is unramified and inert in ${E}$ and let $w$ be the place above $v$. Let $b= \tilde{b}_w \pi_v^s$ where $\tilde{b} \in \mathcal{O}_{{E}_w}^*$ and $\pi_v$ is a parameter for $F_v$. Set $f_v = \pi_v^{s}$. Since by hypothesis, $u$ is in $\mathcal{O}_{{E}_w}^*$ upto $\ell^{\mathrm{th}}$ powers, we have that

\[\left(u, b\right) = \left(u, \tilde{b}_w\right)+\left(u, \pi_v^s\right) =  \left(u, f_v\right) \in \HH^2\left({E}_w, \mu_{\ell}\right) \]

\textbf{Case II} : Let $v\in T\setminus T_0$ be a place of $F$ which splits in ${E}$. Thus ${E}\otimes_F F_v = \prod_{i=1}^{\ell} F_v$ and let $b\in {E} = \left(b_1,b_2, \ldots, b_{\ell} \right)\in {E}\otimes_F F_v$. Thus $(u,b) = (u, \sigma(b))$ implies 
\[\left(u,b_1\right) = \left(u, b_2\right) = \ldots =   \left(u, b_{\ell}\right) \in \HH^2\left(F_v, \mu_{\ell}\right) \hfill{\phantom{- - - - - - - -}(*)}\]

Set $f_v = b_1$. And thus $\left(u, f_v\right)$ matches $(u,b_i)$ over $F_v$ for each $i$.

Since by hypothesis, $u\in {E}_w^{*\ell}$ for every $w/v$ totally ramified, we have that for each place $w$ lying over a place $v$ not in $T$, $(u,b)$ is split over ${E}_w$. 

Note that since $u\not\in {E}^{*\ell}$, it is not in $F^{*\ell}$ either and hence $F\left(\sqrt[\ell]{u}\right)$ is a cyclic Galois extension of degree $\ell$ over $F$. Then $L:=F\left(\sqrt[\ell]{u}, \sqrt[\ell]{\psi} \right)$ is a Galois extension over $F$ with Galois group $\frac{\mbb{Z}}{\ell\mbb{Z}} \times \frac{\mbb{Z}}{\ell\mbb{Z}}$. 

By Chebotarev density, pick a place $\tilde{v}$ of $F$ (there are infinitely many!) which is not in $T$ such that 1) $\tilde{v}$ does not ramify in $L$, 2) $\sigma' \in \Gal(L/F) \leftrightarrow (1,1) \in  \frac{\mbb{Z}}{\ell\mbb{Z}} \times \frac{\mbb{Z}}{\ell\mbb{Z}}$ is the \textit{Frobenius automorphism $\mrm{Frob}_{\tilde{v}}$} of $L_x/F_{\tilde{v}}$ where $x$ is any place lying above $\tilde{v}$. [Since $L/F$ is abelian, the Frobenius automorphism does not depend on the choice of $x$]

Note that the residue field extension degree $\left[l_x : k_{\tilde{v}}\right]\leq \ell$. For if ${E}_w$ is nonsplit unramified extension of $F_{\tilde{v}}$, then since $u\in \mathcal{O}_{F_{\tilde{v}}}^*$, we have $u\in {E}_w^{*\ell}$.

We have chosen $\sigma'$ to be the non-trivial automorphism of $L/F$ of order $\ell$ such that $\sigma'\left(\sqrt[\ell]{u}\right) = \rho\ \sqrt[\ell]{u} $ and $\sigma'\left(\sqrt[\ell]{\psi}\right) = \rho'\ \sqrt[\ell]{\psi}$, where $\rho, \rho'$ are primitive $\ell^{\mrm{th}}$ roots of unity.  This, by the choice of $\tilde{v}$ gives rise to the Frobenius automorphism of the residue field extensions $l_x/k_{\tilde{v}}$. Thus $l_x/k_{\tilde{v}}$ is a non-trivial extension, i.e., $\tilde{v}$ is not completely split in $L$. (The choice of Frobenius for a trivial extension is the identity map.)

Thus the residue field extension $l_x/k_{\tilde{v}}$ is degree $\ell$ with Galois group generated by $\overline{\sigma'}$. Note that since $\sigma'$ fixes neither $\sqrt[\ell]{u}$ nor $\sqrt[\ell]{\psi}$, $u$ and $\psi$ are not $\ell^{\mrm{th}}$ powers in $F_v$. Thus ${E}_w$ is unramified, nonsplit over $F_{\tilde{v}}$ and $u\not\in F_{\tilde{v}}^{*\ell}$.

\subsubsection*{Finding \textit{an} $f'$}
Our first goal is to find an algebra $\alpha=(u,f') \in \Br(F)$ such that $\alpha\otimes_F {E} = (u,b) \in \Br({E})$. We find $\alpha$ by prescribing its shape $\alpha_v$ locally so that $\alpha_v\otimes_F {E} = (u,b) \in \Br\left({E}_w\right)$ where $w$ is any place lying over $v$.

For $v\in T$, choose $\alpha_v = \left(u, f_v\right)$. So $\alpha_v \otimes_F {E} = (u,b)$ in $\Br\left({E}_w\right)$. For $v\not\in T$ and $v\neq \tilde{v}$, choose $\alpha_v = 0\in \Br\left(F_v\right)$. This matches with $(u,b)$ over ${E}_w$ since the latter is also split at these places. For $v = \tilde{v}$, let $\pi_{\tilde{v}}$ be a parameter of $F_{\tilde{v}}$. Choose $\alpha_{\tilde{v}} = \left(u, \pi_{\tilde{v}}^s \right)$ for an appropriate $s$ so that $\sum_{v\in \Omega_F} \inv\left(\alpha_v\right) = 0 \in \frac{\mbb{Q}}{\mbb{Z}}$. This can be done since $u$ is a unit at $\tilde{v}$ and $u\not\in F_{\tilde{v}}^{*\ell}$.

Note that $(u,b)$ is split over ${E}_w$ for any $w|\tilde{v}$. Now since $\Cor$ is injective for local fields, $\alpha_{\tilde{v}}:= \left(u, \pi_{\tilde{v}}^s \right)$ is split over ${E}_w$ for $w|\tilde{v}$ because $\Cor : \HH^2\left({E}_w, \mu_{\ell}\right)\to \HH^2\left(F_{\tilde{v}}, \mu_{\ell}\right)$ sends  $\left(u, \pi_{\tilde{v}}^s  \right)\leadsto \left(u, \pi_{\tilde{v}}^{s\ell}\right) = 0$.

By the Albert-Hasse-Brauer-Noether theorem, there exists an $\alpha\in \Br(F)$ of order dividing $\ell$ such that $\alpha\otimes_F F_v = \alpha_v \in \Br\left(F_v\right)$. Also note that locally at each place, it is split by $F\left(\sqrt[\ell]{u}\right)$, hence there exists an $f'\in F$ such that $\alpha = (u, f') \in \Br(F)$ since $F$ contains a primitive $\ell^{\mrm{th}}$ root of unity.

\subsubsection*{Modifying $f'$ so that it approximates $f_v$ for each $v\in T$}
By the choice of $f'$, we have that $(u,f') = \left(u,f_v\right)\in \Br\left(F_v\right)$ for each $v\in T$. Hence for each $v\in T$, there exists $w_v \in F\left(\sqrt[\ell]{u}\right)\otimes F_v$  such that  $\N_{F_v\left(\sqrt[\ell]{u}\right)\otimes F_v/F_v}\left(w_v\right) = f'^{-1}f_v$.

By weak approximation, there exists a $w\in F\left(\sqrt[\ell]{u}\right)$ such that for each $v\in T$, $w=w_v\gamma_v^{\ell}$ for $\gamma_v\in F\left(\sqrt[\ell]{u}\right)\otimes F_v$. This therefore implies $\N_{F\left(\sqrt[\ell]{u}\right)/F}(w) \in \ f'^{-1} f_v F_v^{*\ell}\ \forall \ v\in T$.

Finally, set $f=f'\N(w)$. Therefore $(u,f') = (u,f)\in \Br(F)$ and $f \in f_vF_v^{*\ell}\ \forall \ v\in T$. \end{proof}

\section{Reductions and strategies}

\subsection{The set-up}
Let $K$ be a $p$-adic field with ring of integers $\mathcal{O}_K$ and residue field $k$. Let $F=K(X)$ be the function field of a smooth projective geometrically integral curve $X$ over $K$. Let $D$ denote a central division algebra over $F$ of exponent $\ell$ where $\ell$ is a prime different from $p$. We want to prove triviality of $\SK1(D)$. Since it is known that the index of $D$ divides ${\ell}^2$ (\cite{S97}, \cite{S98}) and that $\SK1(D)$ is trivial for square-free index algebras (\cite{W}), we assume from now on that the index of $D$ is ${\ell}^2$.

Note that in the case when $\ell=2$, the works of Merkurjev and Rost (\cite{Me03}, \cite{Me06}, Rost, Chapter 17 \cite{KMRT} ; \cite{Merkurjev}) lead to the more general result that $\SK1(D)=\{0\}$ over cohomological dimension $3$ fields. Thus, in this paper, we assume that $\ell\neq 2$. \textit{We also make an additional assumption that $F$ contains a primitive ${\ell^{2}}^{\mrm{th}}$ root of unity.}

Let $z\in \SL1(D)$ and let $M$ be a maximal subfield of $D$ containing $z$. Thus  $\N_{M/F}(z) = \Nrd_{D}(z)= 1$. We would like to show $z\in [D^*, D^*]$. Using (\cite{Plat76}, Lemma 2.2, Section 2.4) and (\cite{Albert}, Chapter IV, Theorem 31), by a coprime to $\ell$ base change, we assume that $M$ contains a cyclic degree $\ell$ sub-extension $Y/F$ with $\Gal(Y/F) = \langle \psi \rangle$. Since $F$ contains a primitive $\ell^{\mrm{th}}$ root of unity, by Kummer theory, we have $Y = F\brac{\sqrt[\ell]{y}}$ for some $y\in F^*$. Since $\N_{M/F}(z)=1$, the element $a := \N_{M/Y}(z)$ is a norm one element of $Y/F$ and by Hilbert 90, $a = b^{-1}\psi(b)$ for some $b\in Y$. 

\subsubsection{A preliminary model}  
 By resolution of singularities (\cite{Lip75}), there exists a regular integral scheme\footnote{We would like to note in advance that we will finally work over a new model obtained from $\mathcal{X}$ by repeatedly blowing up closed points.}  $\mathcal{X}$ with function field $F$ equipped with a proper, flat and projective morphism  $\mathcal{X}\to \Spec\mathcal{O}_K$. Let $X_0$ denote its reduced special fiber. For each $x\in \mathcal{X}$, let the regular local ring at $x$ on $\mathcal{X}$ be denoted by $A_x := \mathcal{O}_{\mathcal{X},x}$. Let the completion of $A_x$ at its maximal ideal be denoted by $\widehat{A_{x}}$, the fraction field of $\widehat{A_{x}}$ by $F_x$ and the residue field of $\widehat{A_{x}}$ by $k_x$. We also let $D_x$ (resp. $Y_x$) denote $D\otimes_F F_x$ (resp. $Y\otimes_F F_x$). If $\eta\in X_0$ is a codimension one point of $\mathcal{X}$ and $P\in X_0$ is a closed point of $\mathcal{X}$ with $P$ lying in the Zariski closure of $\eta$ in $\mathcal{X}$, we let $F_{P,\eta}$ denote the \textit{branch} field. More explicitly, if $\left(\pi_{\eta}\right) \in \widehat{A_P}$ denotes a prime defining $\eta$, then localization at this prime ideal yields a discrete valuation ring $\widehat{A_P}_{(\pi_{\eta})}$. Completing this discrete valuation ring at its maximal ideal and further taking its field of fractions yields the branch field $F_{P,\eta}$. Thus $F_P$ and $F_{\eta}$ are both subfields of $F_{P,\eta}$. Let $k_{P,\eta}$ denote the residue field of $F_{P,\eta}$.

Since $\mathcal{X}$ is normal, any codimension one point $x\in \mathcal{X}^{(1)}$ extends to a\footnote{In case the prime corresponding to $x$ splits in $Y$, then $x$ defines $\ell$ valuations $v_{x_1}, v_{x_2}, \ldots, v_{x_{\ell}}$ on $Y$. However $v_{x_i}(b)=v_{x_1}(\psi^{-i+1}(b))$. Set $v_{(x)} := v_{x_1}$.} discrete valuation $v_{(x)}$ on $Y$. Define $\supp_{\mathcal{X}}(b):=\{x\in \mathcal{X}^{(1)} |  \max_i\brac{|v_{(x)}(\psi^{i}(b))|} > 0\}$ and let $\mathcal{J}_{\mathcal{X}} := \sum_{x\in \supp_{\mathcal{X}}(b) } x$. Further set $\mathcal{H}_{\mathcal{X}}$ to be the divisor corresponding to the union of the reduced special fiber $X_{0}$, $\divi_{\mathcal{X}}(y)$, $\mathcal{J}_{\mathcal{X}}$, the ramification locus of $M$ and the ramification divisor of $D$ in $\mathcal{X}$.

\begin{proposition} 
\label{propgoodshape}
There exists a regular proper model $\mathcal{X}$ of $X$ over $\mathcal{O}_K$ such that $\mathcal{H}_{\mathcal{X}}$ is a union of regular curves in normal crossing in $\mathcal{X}$. Further, let $h : \mathcal{Y}\to \mathcal{X}$ denote the normal closure of the model $\mathcal{X}$ in $Y$. Let $x\in \mathcal{X}$ of codimension $1\leq i\leq 2$ and let $B_x$ denote the integral closure of $A_x=\mathcal{O}_{\mathcal{X},x}$ in $Y$. Then the following hold:

\begin{enumerate}
\item[a.]
If $Y_x$ is a field, then $h^{-1}(x)=\{y\}$ where $y\in \mathcal{Y}$ of codimension $i$ and $B_x$ is a local ring and isomorphic to  $\mathcal{O}_{\mathcal{Y}, y}$.
\item[b.]
If $Y_x\simeq \prod F_x$, then $h^{-1}(x)=\{y_1, y_2, \ldots, y_{\ell}\}$, a set of $\ell$ points in $\mathcal{Y}$ of codimension $i$ and $B_x$ is semi-local with $\ell$ maximal ideals $m_{y_i}$ for $1\leq i \leq \ell$. Further, $(B_x)_{m_{y_i}}\simeq \mathcal{O}_{\mathcal{Y}, y_i}$.
\end{enumerate}

\end{proposition}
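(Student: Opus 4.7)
The plan is to split the proposition into two independent tasks: constructing the regular proper model $\mathcal{X}$ so that $\mathcal{H}_{\mathcal{X}}$ is a normal-crossings union of regular curves, and analyzing the normalization $h:\mathcal{Y}\to\mathcal{X}$ locally at a point $x$.

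For the first task I would start from any regular proper model $\mathcal{X}_0$ of $X$ (which exists by Lipman's resolution of singularities for arithmetic surfaces, as already invoked in the introduction) and observe that $\mathcal{H}_{\mathcal{X}_0}$ is a reduced effective Weil divisor on the regular two-dimensional scheme $\mathcal{X}_0$. Applying embedded resolution of singularities via a finite sequence of blow-ups of closed points, one obtains $\mathcal{X}\to\mathcal{X}_0$ on which the total transform of $\mathcal{H}_{\mathcal{X}_0}$, together with all exceptional components, is a strict normal-crossings divisor whose irreducible components are regular. Since each of the pieces defining $\mathcal{H}_{\mathcal{X}}$ (the reduced special fiber, $\divi(y)$, $\mathcal{J}_{\mathcal{X}}$, and the ramification loci of $M$ and $D$) pulls back to a divisor on $\mathcal{X}$ and is absorbed into this normal-crossings union, the first assertion follows.

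For the second task, fix $x\in\mathcal{X}$ of codimension $i\in\{1,2\}$ and set $A_x=\mathcal{O}_{\mathcal{X},x}$, an excellent regular local ring with fraction field $F$. The Nagata property forces $B_x$ to be a finite $A_x$-module, hence Noetherian and semi-local with every maximal ideal lying over $\mathfrak{m}_{A_x}$. Because $h$ is affine and given locally around $x$ by $\Spec(B_x)\to\Spec(A_x)$, the fiber $h^{-1}(x)$ is in natural bijection with the maximal ideals of $B_x$, and $\mathcal{O}_{\mathcal{Y},y}=(B_x)_{m_y}$ by the very construction of the normalization. Finiteness of $h$ also yields $\dim\mathcal{O}_{\mathcal{Y},y}=\dim A_x=i$ via going-up, settling the codimension claim.

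It remains to count the maximal ideals in the two cases, which is done by completing. Excellence of $A_x$ identifies $\widehat{B_x}:=B_x\otimes_{A_x}\widehat{A_x}$ with the integral closure of $\widehat{A_x}$ in $Y\otimes_F F_x=Y_x$, while the structure theorem for finite modules over a semi-local ring gives $\widehat{B_x}=\prod_j\widehat{(B_x)_{m_j}}$. In case (a), $Y_x$ is a field, and the integral closure of the complete normal local domain $\widehat{A_x}$ in a finite separable extension of its fraction field is again a complete local normal domain; comparing the two expressions forces $B_x$ to be local, whence $h^{-1}(x)$ is a singleton and $B_x=\mathcal{O}_{\mathcal{Y},y}$. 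In case (b), $Y_x\cong\prod_{i=1}^{\ell}F_x$, and since $\widehat{A_x}$ is already its own integral closure in $F_x$, the integral closure in the product is $\prod_{i=1}^{\ell}\widehat{A_x}$, which has exactly $\ell$ maximal ideals; comparing again produces the $\ell$ maximal ideals $m_{y_i}$ of $B_x$ with $(B_x)_{m_{y_i}}\cong\mathcal{O}_{\mathcal{Y},y_i}$. The main technical input to watch is the excellence-based identification of $\widehat{B_x}$ with the integral closure of $\widehat{A_x}$ in $Y_x$; once it is granted, the case split is essentially bookkeeping, while the embedded resolution step, though classical, requires care to ensure that each component of $\mathcal{H}_{\mathcal{X}}$ stays regular and crosses transversally at every intersection.
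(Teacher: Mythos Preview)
Your proof is correct and shares the paper's overall architecture: embedded resolution by blow-ups for the first claim, then a local-to-complete analysis for (a) and (b). The local arguments differ in flavor, however. The paper first uses transitivity of the $\Gal(Y/F)$-action on $h^{-1}(x)$ to pin down $|h^{-1}(x)|\in\{1,\ell\}$, then invokes the decomposition $B_x\otimes_{A_x}\widehat{A_x}\simeq\prod_i\widehat{\mathcal{O}_{\mathcal{Y},y_i}}$ together with an elementary injection $B_x\otimes_{A_x}\widehat{A_x}\hookrightarrow Y_x$ to match ``local domain vs.\ not'' against ``$Y_x$ a field vs.\ split''. You bypass the Galois step by appealing directly to the excellence fact that normalization commutes with completion, identifying $\widehat{B_x}$ with the integral closure of $\widehat{A_x}$ in $Y_x$ and reading off the number of maximal ideals. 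Your route is a bit more conceptual and treats both cases uniformly; the paper's is more hands-on and avoids invoking the full normalization-commutes-with-completion statement. One point worth tightening in your first task: $\mathcal{H}_{\mathcal{X}}$ is defined intrinsically on the new model rather than as a pullback, so, as the paper does explicitly, one should verify that each constituent computed on $\mathcal{X}$ (in particular $\mathcal{J}_{\mathcal{X}}$) is contained in the total transform of $\mathcal{H}_{\mathcal{X}_0}$.
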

\begin{proof}
Fix a preliminary regular proper model $\mathcal{X}'$ of $X$ over $\mathcal{O}_K$. Construct $\mathcal{X}$ by blowing up $\mathcal{X}'$ at closed points of $\mathcal{X}'$ repeatedly ($p : \mathcal{X} \to \mathcal{X}'$) such that $\mathcal{H}'':= p^{-1}\brac{\mathcal{H}_{\mathcal{X}'}}$ is a union of regular curves in normal crossing. To prove that $\mathcal{H}_{\mathcal{X}}$ is in good shape, it suffices to show that $\mathcal{H}_{\mathcal{X}}\subseteq \mathcal{H}''$. By construction, the union of $X_0$, $\divi_{\mathcal{X}}(y)$, $\ram_{\mathcal{X}}(M)$ and $\ram_{\mathcal{X}}([D])$ lies in $\mathcal{H}''$.



Now let $\beta\in \mathcal{J}_{\mathcal{X}}$. If $\beta$ is the generic point of the strict transform of a curve in $\mathcal{X}'$, then $p\brac{\beta}\in \mathcal{J}_{\mathcal{X}'}$ and hence $\beta\in \mathcal{H}''$. On the other hand, if $\beta$ lies on an exceptional curve of $p: \mathcal{X}\to \mathcal{X}'$, then clearly $\beta\in \mathcal{H}''$. Hence $\mathcal{H}_{\mathcal{X}}$ is in good shape. 

We give the proof for the case when $x=P$, a closed point in $\mathcal{X}$. The proof for the case when $x$ has codimension one is similar. Let $U\subset \mathcal{X}$ be an open affine neighbourhood containing $P$ with coordinate ring $A$. Thus $h^{-1}(U)$ is affine with coordinate ring, say $B$, which is the integral closure of $A$ in $Y$. Thus it follows that the integral closure of the local ring $A_P$ in $Y$ is $B$ localized at the multiplicatively closed set $A\setminus P$ which we denoted by $B_P$. Since $B_P$ is integral over $A_P$, the maximal ideals of $B_P$ contract to the unique maximal ideal of $A_P$ and hence correspond to the points in $h^{-1}(P)$. Since $\Gal(Y/F)\simeq \mathbb{Z}/\ell \mathbb{Z}$ acts transitively on $h^{-1}(P)$, it is clear that $h^{-1}(P)$ is either a singleton or a set of size $\ell$. 

Now it only remains to compare the shape of $Y_P:= Y\otimes_F F_P$ and the size of $h^{-1}(P)$. By (\href{https://stacks.math.columbia.edu/tag/07N9}{Lemma 07N9}, stacks-project), $B_P\otimes_{A_P}\widehat{A_P}\simeq_{Q_i\in h^{-1}(P)} \prod \widehat{\mathcal{O}_{\mathcal{Y}, Q_i}}$ which is a (local) domain iff $|h^{-1}(P)|=1$. 

We have the following injective\footnote{As $\widehat{A_P}/A_P$ and $Y/A_P$ are flat and $M\otimes_R N\simeq M\otimes_S N$ for $S$-modules $M, N$ where $S$ is a localisation of $R$} $A_P$-morphism: $B_P\otimes_{A_P} \widehat{A_P}\hookrightarrow Y\otimes_{A_P}\widehat{A_P}\hookrightarrow Y\otimes_{A_P} F_P \simeq Y\otimes_F F_P := Y_P$. Thus if $Y_P$ is a field, $B_P\otimes_{A_P} \widehat{A_P}$ has to be a domain and hence $|h^{-1}(P)|=1$. Conversely, if $h^{-1}(P)=1$, then $B_P\otimes_{A_P} \widehat{A_P}$ has to be a local domain. The above injection shows that $Y_P\simeq Y\otimes_{A_P} F_P$ lies in the fraction field of $B_P\otimes_{A_P} \widehat{A_P}$. Hence $Y_P$ is a domain and hence a field. 
\end{proof}

We continue to work this model $\mathcal{X}$ till the end of Section 5.

\begin{lemma}
\label{lemmaatetasplit-1}
Let $P$ be a closed point in $\mathcal{X}$ lying on the Zariski closure of a codimension one point $\eta\in \mathcal{X}$. If $Y_{\eta}\simeq \prod F_{\eta}$, then $Y_P\simeq \prod F_P$. \end{lemma}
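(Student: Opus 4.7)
The plan is to unwind splitting via the Kummer description: since $F$ contains a primitive $\ell^{\mathrm{th}}$ root of unity and $Y = F(\sqrt[\ell]{y})$, one has $Y_\eta \simeq \prod F_\eta$ iff $y \in F_\eta^{*\ell}$, and similarly $Y_P \simeq \prod F_P$ iff $y \in F_P^{*\ell}$. So the task reduces to showing the implication $y \in F_\eta^{*\ell} \Rightarrow y \in F_P^{*\ell}$.

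Working locally at $P$, I exploit that $\mathcal{H}_\mathcal{X}$ is a union of regular curves in normal crossing with $\divisor(y) \subseteq \mathcal{H}_\mathcal{X}$ to choose a regular system of parameters $(\pi, \delta)$ of $A_P$ such that $\mathfrak{p}_\eta = (\pi)$ (remaining prime in $\widehat{A_P}$ because the regular curve $\overline{\eta}$ is analytically irreducible at $P$, so $\widehat{A_P}/(\pi) = \widehat{A_P/(\pi)}$ is a complete DVR), and such that the other component of $\mathcal{H}$ through $P$ (if any) is cut out by $\delta$. Unique factorization in $A_P$ then gives $y = u'\pi^a\delta^b$ in $F$ with $u' \in A_P^*$ and $a,b \in \mathbb{Z}$; when only one component of $\mathcal{H}$ passes through $P$ one has $b=0$, and the rest of the argument proceeds identically.

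Inside the complete DVR $\widehat{A_\eta}$ (with uniformizer $\pi$), $\delta$ is a unit, so $y \in F_\eta^{*\ell}$ forces $\ell \mid a$ and $u'\delta^b \in \widehat{A_\eta}^{*\ell}$. Hensel's Lemma in $\widehat{A_\eta}$, valid since $\ell$ is coprime to the residue characteristic, reformulates the unit condition as $\overline{u'\delta^b} \in k(\eta)^{*\ell}$, where $k(\eta) = \mrm{Frac}(A_P/(\pi))$ is the fraction field of the DVR $A_P/(\pi)$ with uniformizer $\overline{\delta}$ and residue field $k(P)$. Valuation-theoretic factorization of $\ell^{\mathrm{th}}$ powers in a DVR fraction field then forces $\ell \mid b$ together with $\overline{u'} \in (A_P/(\pi))^{*\ell}$; reducing the latter modulo the maximal ideal shows the image of $u'$ in $k(P)$ is an $\ell^{\mathrm{th}}$ power.

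On the $F_P$ side, $\widehat{A_P}$ is a two-dimensional complete regular local ring and hence a UFD, so $y = u'\pi^a\delta^b \in F_P^{*\ell}$ iff $\ell \mid a$, $\ell \mid b$, and $u' \in \widehat{A_P}^{*\ell}$; Hensel's Lemma in $\widehat{A_P}$ recasts the last as the image of $u'$ in $k(P)$ being an $\ell^{\mathrm{th}}$ power. Every requirement is already secured from the $F_\eta$ side, completing the proof. The main point requiring care is bookkeeping the residue field at each Hensel application: the residue $k(\eta)$ of $\widehat{A_\eta}$ equals the uncompleted $\mrm{Frac}(A_P/(\pi))$, which is strictly smaller than the branch-field residue $\mrm{Frac}(\widehat{A_P}/(\pi))$, and conflating the two would give a weaker statement than the lemma needs.
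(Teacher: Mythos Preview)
Your proof is correct and follows essentially the same approach as the paper's: write $y$ via a regular system of parameters at $P$, use the splitting of $Y_\eta$ to force $\ell$-divisibility of the exponents and an $\ell^{\mathrm{th}}$-power condition on the unit part in the residue field, then lift back by Hensel in $\widehat{A_P}$. The only cosmetic difference is that the paper routes the argument through the branch field $F_{P,\eta}$ and its residue $k_{P,\eta}$, whereas you work directly in $F_\eta$ and $k_\eta = \mathrm{Frac}(A_P/(\pi))$; the two are equivalent here.
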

\begin{proof} Let $(\pi_P, \delta_P)$ be a system of parameters of $A_P$ such that $\pi_P$ cuts out the curve $\overline{\eta}$ at $P$. Recall that $Y=F(\sqrt[\ell]{y})$ and that $\divi(y)$ is arranged to be in good shape in $\mathcal{X}$.  Since $Y_{\eta}$ is split, so is $Y\otimes F_{P,\eta}$. Thus we can assume that up to $\ell^{\mathrm{th}}$ powers, $y = v_P\delta_P^j$ for some unit $v_P\in \widehat{A_P}^*$ and $0\leq j < \ell$ with $\overline{y}\in k_{P,\eta}^{*\ell}$. Recall that $k_{P,\eta}$ is a complete discretely valued field with $\overline{\delta_P}$ as a parameter. Thus $j=0$ and since $v_P\in \widehat{A_P}^*$, $\overline{v_P}\in k_P^{*\ell}$. Hence $v_P\in \widehat{A_P}^{*\ell}$. This immediately implies that $Y_P$ is split. \end{proof}

\subsubsection{Fixing parameters}
\label{subsubsection-modelparameter}

Let ${S_0=\{P_1,P_2,\ldots, P_m\}}$ denote the finite set of closed points of intersection of distinct irreducible curves in ${\mathcal{H}}_{\mathcal{X}}$. Expand $S_0$ if necessary so that it includes at least one closed point from each irreducible curve in ${\mathcal{H}}_{\mathcal{X}}$. We call the elements in the set $S_0$ to be \textit{intersection} points.

Let $N'_0$ denote the set of all codimension one points of $\mathcal{X}$  which lie in ${\mathcal{H}}_{\mathcal{X}}$ and let ${N_0}$ denote the subset $N'_0\cap X_0$. Using (\cite{S98}, Lemma), for each $\eta\in N'_0$, choose a function $\pi_{\eta}\in F$ such that $\divi_{\mathcal{X}}(\pi_{\eta}) = \overline{\eta} +  E_{\eta}$ where $E_{\eta}$ avoids $\mathcal{N}'\cup S_0$. Thus $\pi_{\eta}$ is a parameter of $F_{\eta}$ for each such $\eta$.

Further if $P\in S_0$ lies on two distinct irreducible curves $C$ and $C'$ in ${\mathcal{H}}_{\mathcal{X}}$ with generic points $\eta$ and $\eta'$ respectively. Then $(\pi_{\eta}, \pi_{\eta'})$ form a system of  parameters of $A_P$. If $P\in  S_0$ lies on exactly on one irreducible curve $C$ of $\mathcal{H}_{\mathcal{X}}$ with generic point $\eta$,  then $\pi_{\eta}$ can be extended to a system of parameters $(\pi_{\eta}, \pi_{\eta'})$ of $A_P$ for some prime $\pi_{\eta'}$ defining a curve $C'$ with generic point $\eta'$ cutting $C$ transversally.

We choose this system of parameters for each $P\in S_0$. Let $\pi_P:= \pi_{\eta}$ and $\delta_P:= \pi_{\eta'}$. Since ${\mathcal{H}}_{\mathcal{X}}$ is in good shape and at $P$, the division algebra is ramified at most along $C$ and $E$, using (\cite{S97}, Proposition 1.2) we see that there exist $\alpha'\in \Br\brac{A_P}$, $u_P, v_P\in A_P^*$ and integer $0 < m < \ell$ such that  
\[[D]\in \{ \alpha', \alpha' + \brac{u_P, \pi_{P}}, \alpha' + \brac{v_P, \delta_{P}}, \alpha' + \brac{u_P, \pi_{P}} + \brac{v_P, \delta_{P}}, \alpha'+ \brac{u_P\pi_{P}^m, v_P\delta_{P}}\}\subseteq \Br(F).\]

\subsection{The shape of $a$}
The following propositions specify the shape of $a=\N_{M/Y}(z)$ (which is an element of norm one in $Y/F$) over the model $\mathcal{X}$.

\begin{proposition} 
\label{propaisaunitifYnotsplit}
Let $x\in X_0$ be such that $Y_x$ is a field extension of $F_x$. Let $\widehat{B_x}$ be the integral closure of $\widehat{A_x}$ in $Y_x$. Then $a\in \widehat{B_x}^*$.
\end{proposition}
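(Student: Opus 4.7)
The plan is to split the proof by the codimension of $x$ in $\mathcal{X}$, handling the codimension one case directly and reducing the closed point case to it.

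Suppose first $x=\eta$ is a codimension one point. Then $\widehat{A_\eta}$ is a complete DVR, and since $Y_\eta/F_\eta$ is a degree $\ell$ field extension, the valuation $v_\eta$ extends uniquely to a valuation $w_\eta$ on $Y_\eta$, with $\widehat{B_\eta}$ the associated complete DVR. Using $\N_{Y_\eta/F_\eta}(a)=\N_{Y/F}(a)=1$ together with the standard formula $v_\eta\circ\N_{Y_\eta/F_\eta}=f_\eta\cdot w_\eta$ (where $e_\eta f_\eta=\ell$), one gets $w_\eta(a)=0$, so $a\in\widehat{B_\eta}^*$. This valuation-theoretic argument in fact works for any codimension one point of $\mathcal{X}$ at which $Y$ remains a field, not merely for those on $X_0$.

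For $x=P$ a closed point of $X_0$, the strategy is to reduce to the previous case by working in the uncompleted local ring on $\mathcal{Y}$. By Proposition \ref{propgoodshape}(a) there is a unique point $Q\in\mathcal{Y}$ above $P$ with $\mathcal{O}_{\mathcal{Y},Q}$ local, and excellence of the regular local ring $A_P$ identifies $\widehat{B_P}$ with the completion $\widehat{\mathcal{O}_{\mathcal{Y},Q}}$. Since $\mathcal{O}_{\mathcal{Y},Q}^*\hookrightarrow\widehat{\mathcal{O}_{\mathcal{Y},Q}}^*$, it suffices to show $a\in\mathcal{O}_{\mathcal{Y},Q}^*$. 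Because $\mathcal{O}_{\mathcal{Y},Q}$ is a two-dimensional normal Noetherian local domain with fraction field $Y$, this reduces to verifying $v_\beta(a)=0$ for every height one prime $\beta$ of $\mathcal{O}_{\mathcal{Y},Q}$, equivalently for every codimension one point $\beta$ of $\mathcal{Y}$ with $Q\in\overline{\{\beta\}}$. For such a $\beta$ set $\eta:=h(\beta)$; then $\eta$ is a codimension one point of $\mathcal{X}$ (possibly horizontal) with $P\in\overline{\{\eta\}}$, and the contrapositive of Lemma \ref{lemmaatetasplit-1} ensures $Y_\eta$ is a field. Applying the codimension one argument above to $\eta$ yields $v_\beta(a)=0$, whence $a\in\widehat{B_P}^*$.

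The two points to watch are (i) identifying $\widehat{B_P}$ with $\widehat{\mathcal{O}_{\mathcal{Y},Q}}$ so that unit-hood can be tested on the uncompleted ring, and (ii) confining the check to height one primes of $\mathcal{O}_{\mathcal{Y},Q}$, which correspond to genuine codimension one points of $\mathcal{Y}$. This second point is what makes the approach succeed: there can easily be height one primes of $\widehat{B_P}$ at which the local extension of complete fields splits completely (so that the simple norm argument of the codimension one case fails at the level of the completion), but these are "formal" primes which do not arise from curves on $\mathcal{Y}$ and which therefore need not be analyzed.
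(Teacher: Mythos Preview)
Your codimension one argument is fine, and reducing the closed point case to unit-hood in the uncompleted ring $\mathcal{O}_{\mathcal{Y},Q}$ is the right move. The gap is in the step where you invoke the contrapositive of Lemma~\ref{lemmaatetasplit-1} for an \emph{arbitrary} codimension one point $\eta=h(\beta)$ through $P$. That lemma, as its proof reveals, only works when $\overline{\eta}$ meets $\operatorname{div}_{\mathcal X}(y)$ with normal crossings at $P$ (so that one can choose a regular system $(\pi_P,\delta_P)$ with $\pi_P$ cutting out $\overline{\eta}$ and $y=v_P\pi_P^i\delta_P^j$ up to $\ell$-th powers). For a general curve through $P$ this fails. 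Concretely, if $(s,t)$ are regular parameters at $P$ with $y=s$, then for $\eta$ the generic point of $V(s-t^{\ell})$ one has $s\equiv t^{\ell}$ in $k_\eta$, so $s\in F_\eta^{*\ell}$ and $Y_\eta$ is split, while $Y_P=F_P(\sqrt[\ell]{s})$ is a field. Over such an $\eta$ there are $\ell$ distinct height one primes $\beta_i=(\sqrt[\ell]{s}-\zeta^i t)$ of $B_P$, and the condition $\N_{Y/F}(a)=1$ only yields $\sum_i v_{\beta_i}(a)=0$, not each $v_{\beta_i}(a)=0$. Indeed, the element $a=(\zeta\sqrt[\ell]{s}-t)/(\sqrt[\ell]{s}-t)$ has norm $1$ but is not a unit in $B_P$, so the norm-one hypothesis alone is genuinely insufficient.

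What the paper's proof uses, and what your argument is missing, is the extra structure $a=b^{-1}\psi(b)$ together with the arrangement that $\operatorname{supp}_{\mathcal X}(b)\subset\mathcal{H}_{\mathcal X}$ is in good shape. This guarantees that $b$ (and hence $a$) is automatically a unit at every height one prime of $B_P$ lying over a curve \emph{not} in $\mathcal{H}_{\mathcal X}$; for the at most two components of $\mathcal{H}_{\mathcal X}$ through $P$, the normal-crossing condition makes Lemma~\ref{lemmaatetasplit-1} applicable, and then your norm argument finishes. So the fix is not to check all height one primes via the codimension one case, but to first use the support control on $b$ to reduce to the primes over $(\pi_P)$ and $(\delta_P)$.
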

\begin{proof} Let us first look at the case when $x\in X_0$ is a  codimension one point of $\mathcal{X}$. Thus $F_x$ is a complete discretely valued field and therefore so is $Y_x$. Let $\pi_{Y_x}$ be a parameter of $Y_x$ and $\pi_{F_x}$ be a parameter of $F_x$. Thus $a = u_x \pi_{Y_x}^{j}$ for some $u_x\in \widehat{B_x}^*$ and $j\in \mathbb{Z}$. Let $e$ be the ramification degree of $Y_x/F_x$. Then there exists $v_x\in \widehat{A_x}^*$ such that $1 = \N_{Y_x/F_x}(a) = \N_{Y_x/F_x}\left(u_x \pi_{Y_x}^j\right) = v_x \pi_{F_x}^{\frac{j\ell}{e}}$. This implies $\frac{j\ell}{e} = 0$ which shows that $j=0$ and that $a\in \widehat{B_x}^*$.

Now let $x=P\in X_0$ be a closed point of $\mathcal{X}$ and let $B_P$ denote the integral closure of $A_P$ in $Y$. By Proposition \ref{propgoodshape}, $B_P$ is local and isomorphic to $\mathcal{O}_{\mathcal{Y}, Q}$ where $h : \mathcal{Y}\to \mathcal{X}$ denotes the normal closure of $\mathcal{X}$ in $Y$ and $h^{-1}(P)=\{Q\}$.

If $P\not\in \mathcal{H}_{\mathcal{X}}$, then $a = b^{-1}\psi(b)\in {(B_P)_{I}}^*$ for any height one prime ideal $I$ of $B_P$. Since $B_P$ is normal, we have $\cap_{I} (B_P)_{I} = B_P$. Therefore $a\in B_P$ and further since $a$ is not contained in any height one prime ideal, $a\in B_P^*$. 

Let $P\in \mathcal{H}_{\mathcal{X}}$ and $(\pi_P, \delta_P)$ be a system of parameters of $A_P$ such that they cut out the irreducible curves in $\mathcal{H}_{\mathcal{X}}$ on which $P$ lies.  Thus $\divi_{\Spec B_P}(a)$ is supported at most along primes of $B_P$ lying over $(\pi_P)$ and $(\delta_P)$. By Lemma \ref{lemmaatetasplit-1} and Proposition \ref{propgoodshape}, there exists exactly one prime lying over $(\pi_P)$ and one over $(\delta_P)$. Since $B_P$ is normal and $\N_{Y/F}(a)=1$, we see that $a\in B_P^*$.

The canonical $A_P$-morphism $i: B_P\to Y\otimes_F F_P = Y_P$ sending $b'\leadsto b'\otimes 1$ is an injection. Since $B_P$ is integral over $A_P$, we see that $i(B_P)$ is integral over $\widehat{A_P}$ and hence $i(B_P)\subseteq \widehat{B_P}$. Hence $a\in B_P^*$ implies $a\in \widehat{B_P}^*$ also. \end{proof}

\begin{proposition} Let $P\in S_0$ such that $Y_P\simeq \prod_{i=1}^{\ell} F_P$.  Let $\brac{\pi_P, \delta_P}$ be the system of regular parameters at $A_P$ fixed as in Section \ref{subsubsection-modelparameter} and let $a = \brac{a'_i}_i$ where $a'_i\in F_P$. Then there exist $z_{i,P}\in \widehat{A_P}^*$ and $m_i, n_i\in \mathbb{Z}$ such that $a'_i = z_{i,P}\pi_P^{m_i}\delta_P^{n_i}$. Further $\sum m_i=\sum n_i=0$ and $\prod z_{i,P}=1$.
\label{propaisingoodshape}
\end{proposition}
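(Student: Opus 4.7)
The plan is to read off each component $a'_i$ from the splitting $B_P \otimes_{A_P}\widehat{A_P} \simeq \prod_{i=1}^{\ell}\widehat{A_P}$ and then constrain the local data using $\N_{Y/F}(a) = 1$.

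First, I would invoke Proposition \ref{propgoodshape}(b): since $Y_P \simeq \prod_{i=1}^{\ell} F_P$ is split, the normal closure $h : \mathcal{Y} \to \mathcal{X}$ has $\ell$ preimages $\{y_1, \ldots, y_{\ell}\}$ above $P$ with $B_P$ semi-local and $\brac{B_P}_{m_{y_i}} \simeq \mathcal{O}_{\mathcal{Y}, y_i}$. Lemma \ref{lemmaatetasplit-1} guarantees that each of the codimension-one points through $P$ in $\mathcal{H}_{\mathcal{X}}$ also splits in $Y$, so that completion gives $B_P \otimes_{A_P}\widehat{A_P} \simeq \prod_{i=1}^{\ell}\widehat{A_P}$ where each $\widehat{\mathcal{O}_{\mathcal{Y}, y_i}}$ is canonically identified with $\widehat{A_P}$. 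Under the resulting embedding $Y \hookrightarrow Y_P \simeq \prod_{i=1}^{\ell} F_P$, the element $a$ becomes $\brac{a'_1, \ldots, a'_\ell}$ with each $a'_i \in F_P$.

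Next, I would use the Hilbert~90 expression $a = b^{-1}\psi(b)$ and the definition of $\supp_{\mathcal{X}}(b)$ to localize the divisor of $a$. If $x$ is a codimension-one point of $\mathcal{Y}$ mapping to some $\tilde{x} \notin \supp_{\mathcal{X}}(b)$, then $b$ and $\psi(b)$ are units at $v_{(x)}$, hence so is $a$. Therefore $\divi_{\mathcal{Y}}(a)$ is supported inside $h^{-1}\brac{\supp_{\mathcal{X}}(b)} \subseteq h^{-1}\brac{\mathcal{H}_{\mathcal{X}}}$. By the choice of system of parameters at $P$ in Section~\ref{subsubsection-modelparameter} and the good-shape assumption, the only irreducible components of $\mathcal{H}_{\mathcal{X}}$ through $P$ are those cut out by $\pi_P$ and $\delta_P$. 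Translating via the identification $\widehat{\mathcal{O}_{\mathcal{Y}, y_i}} \simeq \widehat{A_P}$, the divisor of $a'_i \in F_P$ on $\widehat{A_P}$ is supported in $\{(\pi_P), (\delta_P)\}$. Since $\widehat{A_P}$ is a two-dimensional regular local ring, and in particular a UFD, this forces
\[a'_i = z_{i,P}\,\pi_P^{m_i}\,\delta_P^{n_i}\]
for some $z_{i,P} \in \widehat{A_P}^*$ and $m_i, n_i \in \mathbb{Z}$.

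Finally, I would apply the global constraint $\N_{Y/F}(a)=1$. Under the diagonal $F \hookrightarrow F_P$ this reads $\prod_{i=1}^{\ell} a'_i = 1$ in $F_P$, i.e.
\[\brac{\prod_i z_{i,P}}\pi_P^{\sum_i m_i}\,\delta_P^{\sum_i n_i} = 1.\]
Taking the $(\pi_P)$-adic and $(\delta_P)$-adic valuations forces $\sum_i m_i = 0 = \sum_i n_i$, and what remains gives $\prod_i z_{i,P} = 1$ in $\widehat{A_P}^*$.

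The only real bookkeeping step is the second one: carefully translating the statement \emph{``$\divi_{\mathcal{Y}}(a)$ is supported over $\mathcal{H}_{\mathcal{X}}$''} into a statement about valuations of $a'_i$ on each of the $\ell$ copies of $\widehat{A_P}$. Once the normal-closure picture from Proposition~\ref{propgoodshape} is in place and Lemma~\ref{lemmaatetasplit-1} ensures that the curves through $P$ in $\mathcal{H}_{\mathcal{X}}$ split in $Y$, the support claim reduces to the observation that the only height-one primes of $\widehat{\mathcal{O}_{\mathcal{Y}, y_i}} \simeq \widehat{A_P}$ mapping into $\mathcal{H}_{\mathcal{X}}$ at $P$ are $(\pi_P)$ and $(\delta_P)$, after which everything else is unique factorization in $\widehat{A_P}$ and a valuation comparison.
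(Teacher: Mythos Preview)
Your proposal is correct and follows essentially the same route as the paper: invoke Proposition~\ref{propgoodshape}(b) to get $\ell$ points $Q_i$ above $P$ with $\widehat{\mathcal{O}_{\mathcal{Y},Q_i}}\simeq \widehat{A_P}$, observe that $\divi(a)$ on $\Spec B_P$ is supported only above $(\pi_P)$ and $(\delta_P)$ (via $a=b^{-1}\psi(b)$ and the definition of $\mathcal{H}_{\mathcal{X}}$), translate through the identification to get each $a'_i = z_{i,P}\pi_P^{m_i}\delta_P^{n_i}$, and read off the constraints from $\N_{Y/F}(a)=1$.

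One correction: your invocation of Lemma~\ref{lemmaatetasplit-1} is backwards. That lemma says $Y_\eta$ split $\Rightarrow$ $Y_P$ split, not the converse; and the converse is false (see for instance the NONRES/NONRES rows in the tables of Section~5, where $Y_P$ can be $\prod F_P$ while neither $Y_{\eta}$ nor $Y_{\eta'}$ is split). Fortunately you do not need it. The identification $\widehat{\mathcal{O}_{\mathcal{Y},Q_i}}\simeq \widehat{A_P}$ follows instead from $Y_P$ being split together with $\divi_{\mathcal{X}}(y)$ being in good shape: writing $y=u\pi_P^r\delta_P^s$ up to $\ell$-th powers with $u\in\widehat{A_P}^*$, splitness of $Y_P$ forces $r\equiv s\equiv 0\pmod{\ell}$ and $u\in\widehat{A_P}^{*\ell}$, so $B_P\otimes_{A_P}\widehat{A_P}$ is \'etale split over $\widehat{A_P}$. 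Once you have this $\widehat{A_P}$-algebra isomorphism, the only height-one primes of $\widehat{\mathcal{O}_{\mathcal{Y},Q_i}}$ lying over $(\pi_P)$, $(\delta_P)$ are $(\pi_P)$, $(\delta_P)$ themselves, which is exactly what the rest of your argument uses.
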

\begin{proof}
By Proposition \ref{propgoodshape}, if $h : \mathcal{Y}\to \mathcal{X}$ denotes the normal closure of $\mathcal{X}$ in $Y$, then $h^{-1}(P)=\{Q_1, \ldots, Q_{\ell}\}$. Further if $B_P$ denotes the integral closure of $A_P$ in $Y$, $B_P$ is semi-local with maximal ideals $\{m_{Q_1}, \ldots, m_{Q_{\ell}}\}$ with $(B_P)_{m_{Q_i}}\simeq \mathcal{O}_{\mathcal{Y}, Q_i}$. 

Let $(\pi_P, \delta_P)$ be a system of parameters of $A_P$ such that they cut out the irreducible curves in $\mathcal{H}_{\mathcal{X}}$ on which $P$ lies. As in the proof of Proposition \ref{propaisaunitifYnotsplit}, $\divi_{\Spec B_P}(a)$ is supported at most along primes lying above $(\pi_P)$ and $(\delta_P)$. 

Since $Y=F(\sqrt[\ell]{y})$ where $\divi(y)$ is arranged to be in good shape in $\mathcal{X}$ and $Y_P$ is split, $\mathcal{O}_{\mathcal{Y}, Q_i}$ is a regular local ring. Further $\widehat{\mathcal{O}_{\mathcal{Y}, Q_i}}\simeq \widehat{A_P}$. Let $(\pi'_{Q_i}, \delta'_{Q_i})$ be a system of regular parameters where $\pi'_{Q_i}$ (resp. $\delta'_{Q_i}$) lies over $\pi_P$ (resp. $\delta_P)$. Using the identification\footnote{Note that if $Y\subseteq F_P$ via a different $Q_i$, then the new identification of $Y\otimes F_P\simeq_{Q_i}\prod F_P$ differs from the old one $Y\otimes F_P \simeq_{Q_1} \prod F_P$ by an automorphism $\prod F_P \simeq \prod F_P$ permuting the components.} $Y\subseteq F_P$ (via $Q_1$ say), we identify $Y\otimes F_P$ with $\prod F_P$. 

Note that $\pi'_{Q_i}\in Y\otimes F_P$ gets identified  with $\brac{\pi'_{Q_i}, \pi'_{Q_{i+1}}, \ldots, \pi'_{Q_{i-1}}}\in \prod F_P$ where each $\pi'_{Q_j}$ is supported at most along $(\pi_P)$ in $\widehat{A_P}$. Similarly $\delta'_{Q_i}$ gets identified with $\brac{\delta'_{Q_i}, \delta'_{Q_{i+1}}, \ldots, \delta'_{Q_{i-1}}}$ with each $\delta'_{Q_j}$ being supported at most along $(\delta_P)$ in $\widehat{A_P}$. Since $a$ has norm $1$, the proposition about the shape of $a$ follows.\end{proof}

\newpage

\begin{proposition} Let $P\in X_0\setminus S_0$ be a closed point of $\mathcal{X}$ such that it lies on exactly one irreducible curve (say $C$)  of $\mathcal{H}_{\mathcal{X}}$. Further assume $Y_P\simeq \prod_{i=1}^{\ell} F_P$.  Let $\brac{\pi_{P}, \delta_P}$ be a system of regular parameters at $A_P$ such that $\pi_P$ defines $C$ at $P$. Let $a = \brac{a'_i}_i$ where $a'_i\in F_P$. Then there exist $z_{i,P}\in \widehat{A_P}^*$ and $m_i\in \mathbb{Z}$ such that $a'_i = z_{i,P}\pi_P^{m_i}$. Further $\sum m_i=0$ and $\prod z_{i,P}=1$.
\label{propaisingoodshape2}
\end{proposition}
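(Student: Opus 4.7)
The plan is to mimic the proof of Proposition \ref{propaisingoodshape} in the simpler setting where $P$ lies on only one irreducible curve of $\mathcal{H}_{\mathcal{X}}$. First, I would invoke Proposition \ref{propgoodshape}: since $Y_P \simeq \prod F_P$, the normal closure $h: \mathcal{Y}\to \mathcal{X}$ satisfies $h^{-1}(P)=\{Q_1,\ldots,Q_\ell\}$, and the integral closure $B_P$ of $A_P$ in $Y$ is semi-local with maximal ideals $m_{Q_1},\ldots,m_{Q_\ell}$ and $(B_P)_{m_{Q_i}}\simeq \mathcal{O}_{\mathcal{Y}, Q_i}$.

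The key divisor-support observation is: since $a = b^{-1}\psi(b)$ and $\supp_{\mathcal{X}}(b) \subseteq \mathcal{J}_{\mathcal{X}} \subseteq \mathcal{H}_{\mathcal{X}}$, the divisor $\divi_{\Spec B_P}(a)$ is supported at most along primes of $B_P$ lying over primes of $A_P$ corresponding to curves in $\mathcal{H}_{\mathcal{X}}$ through $P$. By hypothesis, $P \notin S_0$ lies on the unique such curve $C$, defined by $\pi_P$. Hence $\divi_{\Spec B_P}(a)$ is supported at most along primes lying over $(\pi_P)$ (in particular, nothing above $(\delta_P)$).

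Next, because $Y_P$ is split and $Y=F(\sqrt[\ell]{y})$ with $\divi(y)$ in good shape in $\mathcal{X}$, Hensel's lemma gives $y\in \widehat{A_P}^{*\ell}$, so $B_P\otimes_{A_P}\widehat{A_P}\simeq \prod_{i=1}^{\ell}\widehat{A_P}$ and each $\widehat{\mathcal{O}_{\mathcal{Y}, Q_i}}\simeq \widehat{A_P}$. Fix an identification $Y\otimes_F F_P \simeq \prod F_P$ via $Q_1$, and write $a=(a'_1,\ldots,a'_\ell)$. Each $a'_i$, viewed in $\widehat{\mathcal{O}_{\mathcal{Y}, Q_i}}\simeq \widehat{A_P}$, has divisor supported at most along $(\pi_P)$, so $a'_i = z_{i,P}\,\pi_P^{m_i}$ for some $z_{i,P}\in \widehat{A_P}^*$ and $m_i\in \mathbb{Z}$.

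Finally, $\N_{Y/F}(a)=1$ becomes $\prod_i a'_i = 1$ in $F_P$ under the identification $Y\otimes F_P \simeq \prod F_P$, which forces $\prod_i z_{i,P} \cdot \pi_P^{\sum m_i} = 1$ in $\widehat{A_P}$ and hence $\sum m_i=0$ and $\prod z_{i,P}=1$. The only real subtlety, already met in Proposition \ref{propaisingoodshape}, is keeping the identifications at the different $Q_i$'s straight, but since $P$ sees only one curve of $\mathcal{H}_{\mathcal{X}}$ the $\delta_P$-coordinate simply drops out and no additional argument is required.
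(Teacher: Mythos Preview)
Your proposal is correct and follows exactly the approach the paper indicates: the paper's own proof consists of the single sentence ``The proof is similar to that of the previous proposition except that $a$ is now supported at most at primes lying above $\pi_P$,'' and you have faithfully unwound that reference by repeating the argument of Proposition~\ref{propaisingoodshape} with the $\delta_P$-contribution removed. There is nothing to add.
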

\begin{proof}
The proof is similar to that of the previous proposition except that $a$ is now supported at most at primes lying above $\pi_P$.
\end{proof}

\subsection{Strategy}
Recall that we have $z\in M\cap \SL1(D)$ with $\N_{M/Y}(z)=a$ and $\N_{Y/F}(a)=1$ where $M$ is a maximal subfield containing a cyclic subfield $Y$ of degree $\ell$. The goal is to show $z\in \sqbrac{D^*,D^*}$. We would like to split $a$ into a product of \textit{suitable} elements $a_1$ and $a_2$ lying in \textit{nicer} subfields $E_1$ and $E_2$ respectively. More precisely, we would like to find elements $a_1,a_2\in Y$ and field extensions $E_1, E_2$ such that for each $i=1,2$, the following hold:

\begin{enumerate}
\item 
$a_1a_2=a$.
\item
$E_i/F$ is a subfield of $D$ of degree $\ell$.
\item
$E_i\subseteq C_D(Y)$ and $D\otimes E_i\otimes Y$ is split.
\item
There exists $\theta_i\in YE_i \subseteq D$ such that $\N_{YE_i/Y}(\theta_i) = a_i$.
\item 
$\theta_i \in \left[D^*,D^*\right]$.
\end{enumerate}

Note that properties (3), (4) and (5) force that $a_i \in \Nrd_{Y}\left(C_D(Y)\right)$ and that $\N_{Y/F}\left(a_i\right)=1$. The construction of such subfields $E_i/F$  is useful in modifying $z$ by commutators so that it is a product of commutators, as shown by the proposition below.

\begin{proposition}
\label{propositionEconsequence} Let $D, M, Y, z, a$ be as before. If there exist elements $a_1,a_2\in Y$ and subfields $E_1/F$ and $E_2/F$ with properties (1) - (5) above, then $z$ is a product of commutators.
\end{proposition}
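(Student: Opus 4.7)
The plan is to identify the centralizer $C_D(Y)$ as a central simple $Y$-algebra of degree $\ell$ in which $M$, $L_1:=YE_1$, and $L_2:=YE_2$ all sit as maximal subfields, and then to finish by invoking the vanishing of $\SK1$ in prime degree.

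First I would apply the double centralizer theorem: since $D/F$ is central simple of degree $\ell^2$ and $Y/F$ is a subfield of degree $\ell$, $C_D(Y)$ is a CSA over $Y$ of degree $\ell$ (equivalently, of $F$-dimension $\ell^3$). Because $M$ is a maximal commutative subfield of $D$ containing $Y$, it lies in $C_D(Y)$, and $[M:Y]=\ell$ makes it a maximal subfield there. By condition (3) each $E_i$ lies in $C_D(Y)$, and the assertion that $D\otimes_F E_i\otimes_F Y$ be split, together with $[Y\otimes_F E_i : F]\leq \ell^2 = \ind(D)$, forces $L_i\simeq Y\otimes_F E_i$ to be a field of degree exactly $\ell^2$ over $F$, hence a maximal subfield of $C_D(Y)$ as well.

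Since the reduced norm on a CSA restricted to a maximal subfield coincides with the corresponding field norm, I would record $\Nrd_{C_D(Y)/Y}(z) = \N_{M/Y}(z) = a$ and $\Nrd_{C_D(Y)/Y}(\theta_i) = \N_{L_i/Y}(\theta_i) = a_i$, so by condition (1),
\[
\Nrd_{C_D(Y)/Y}\brac{z\theta_1^{-1}\theta_2^{-1}} \;=\; a\,a_1^{-1}\,a_2^{-1} \;=\; 1,
\]
placing $z\theta_1^{-1}\theta_2^{-1}$ in $\SL1(C_D(Y))$. Wang's theorem (\cite{W}) then gives $\SK1(C_D(Y)) = 0$, since $C_D(Y)$ has prime (hence square-free) degree $\ell$, so $z\theta_1^{-1}\theta_2^{-1} \in [C_D(Y)^*, C_D(Y)^*] \subseteq [D^*, D^*]$. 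Combined with condition (5), which guarantees $\theta_1,\theta_2 \in [D^*, D^*]$, this yields $z \in [D^*, D^*]$. The only real content is the observation that properties (1)--(5) were engineered precisely so the calculation descends into the prime-degree algebra $C_D(Y)$, where Wang kills the obstruction; I do not foresee any substantive obstacle beyond this reduction.
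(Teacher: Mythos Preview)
Your proof is correct and follows the same approach as the paper: pass to the degree-$\ell$ central simple $Y$-algebra $D':=C_D(Y)$, use that $M$ and the $YE_i$ are maximal subfields there so that the reduced norm agrees with the relevant field norms, compute $\Nrd_{D'/Y}(z\theta_1^{-1}\theta_2^{-1})=aa_1^{-1}a_2^{-1}=1$, and conclude via Wang's theorem and property~(5). Your extra justification that $Y\otimes_F E_i$ is a field (using that a splitting field of $D$ must have degree at least $\ind(D)=\ell^2$) is a small elaboration the paper leaves implicit, but the argument is otherwise identical.
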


\begin{proof}
Let $D':= C_D(Y)$ which is a central divison algebra of index $\ell$ over $Y$. Since $E_i$ commutes with $Y$ in $D$ , $\theta_i \in D'$. Since $z\in D'$, we have that $z\theta_2^{-1}\theta_1^{-1}\in D'$. Note that $YE_i$ and $M$ are maximal subfields of $D'/Y$. Thus \[\Nrd_{D'}\left(z\theta_2^{-1}\theta_1^{-1}\right) = \N_{M/Y}(z) \N_{YE_2/Y}\left(\theta_2^{-1}\right) \N_{YE_1/Y}\left(\theta_1^{-1}\right) = aa_2^{-1}a_1^{-1}  = 1.\] 

Since $D'$ is a central division algebra with square-free index, every reduced norm one element is a product of commutators (\cite{W}). Thus $z\theta_2^{-1}\theta_1^{-1} \subseteq \left[D'^*, D'^*\right] \subseteq \left[D^*, D^*\right]$. Since each $\theta_i\in \sqbrac{D^*, D^*}$ by hypothesis, $z\in \left[D^*, D^*\right]$ also. 
\end{proof}

The rest of the paper is devoted to constructing $E_i$ and $a_i$ satisfying properties (1)-(5) listed above. This is done by applying the techniques of patching developed by Harbater-Hartmann-Krashen.

\section{At codimension one points}
Recall that $N'_0$ denotes the set of all codimension one points of $\mathcal{X}$ which lie in $\mathcal{H}_{\mathcal{X}}$. For each $\eta\in N'_0$, let $\pi_{\eta}$ be the parameter of $F_{\eta}$ fixed as in Section \ref{subsubsection-modelparameter}. 

\subsubsection*{Classification of points of $N'_0$}
\label{subsectioncodimonepointsclassification}
We say that $\eta \in N'_0$ is of 
\begin{itemize}
\item \textbf{Type 0} if the index of $D_{\eta}$ is $1$. Thus $\eta\not\in \ram_{\mathcal{X}}(D)$.
\item \textbf{Type 1} if the index of $D_{\eta}$ is $\ell$. We further classify these points into subtypes. 
\begin{itemize}
\item
\textbf{Type 1a}:  if $\eta\not\in \ram_{\mathcal{X}}(D)$. Thus $D_{\eta}/F_{\eta}$ is an unramified index $\ell$ CSA.
\item
\textbf{Type 1b}: if $\eta \in \ram_{\mathcal{X}}(D)$. Thus $D_{\eta} = D_0 + (u_{\eta}, \pi_{\eta}) = \M_{\ell}\left(u_{\eta}, v_{\eta}\pi_{\eta}\right) $ where $u_{\eta}, v_{\eta}$ are units in $\widehat{A_{\eta}}$ and $D_0/F_{\eta}$ is an unramified CSA.
\end{itemize}
\item \textbf{Type 2} if the index of $D_{\eta}$ is $\ell^2$. Thus $\eta\in \ram_{\mathcal{X}}(D)$ and $D_{\eta} = D_0 + (u_{\eta}, \pi_{\eta})$ where $u_{\eta}\in \widehat{A_{\eta}}^*$ is not an $\ell^{\mathrm{th}}$ power and $D_0/F_{\eta}$ is an unramified CSA such that $D_0\otimes F_{\eta}(\sqrt[\ell]{u_{\eta}})$ has index $\ell$ (Lemma \ref{lemmaindexformula}).
\end{itemize}

\subsubsection*{Shapes of $Y$ and $a$}
\label{subsectionYDateta}
For $\eta\in N'_0$, let $\widehat{B_{\eta}}$ denote the integral closure of $\widehat{A_{\eta}}$ in $Y_{\eta}$ whenever the latter is a field extension of $F_{\eta}$. If $Y_{\eta} \simeq \prod F_{\eta}$, we let $a = \left(a'_{i,\eta}\right)_i$ where $a'_{i,\eta}\in F_{\eta}$. Since $\N_{Y/F}(a)=1$, we have $\prod a'_{i,\eta}=1 \in F_{\eta}$. We now classify $Y_{\eta}$ into four types as follows:

\begin{itemize}
\item \textbf{RAM} : $Y_{\eta}$ is of Type RAM if $Y_{\eta}/F_{\eta}$ is a ramified extension. 
\item \textbf{RES} : Let $\eta$ be of Type 1b or 2 (i.e $\eta \in \ram_{\mathcal{X}}(D)$). Then $Y_{\eta}$ is of Type RES if it is the lift of residues as defined in Section \ref{sectionnotation}. In particular, it is an unramified nonsplit extension of $F_{\eta}$.
\item\textbf{SPLIT} : $Y_{\eta}$ is of Type SPLIT if $Y_{\eta}\simeq \prod_{i=1}^{\ell} F_{\eta}$.
\item\textbf{NONRES} : $Y_{\eta}$ is of Type NONRES if it is none of the above types. That is, it is an unramified nonsplit extension of $F_{\eta}$ and if $\eta\in \ram_{\mathcal{X}}(D)$, it is NOT the lift of residues.
\end{itemize}

\begin{remark} \label{remarktype2Ynotsplit} Thus if $\eta$ is of Type 2, then $Y_{\eta}$ cannot be of Type SPLIT.
\end{remark}

\begin{lemma}[Along $\eta$ of Type 1a]
\label{lemmaalong1a}
Let $\eta\in N'_0$ be of Type 1a. Further assume that $Y_{\eta}\simeq \prod F_{\eta}$. Let $a = \left(a'_{i,\eta}\right)\in \prod F_{\eta}$ where each $a'_{i,\eta}\in F_{\eta}$. Then $a'_{i,\eta} = z_{i,\eta}\pi_{\eta}^{\ell m'_i}\in F_{\eta}$ where  $z_{i,\eta}\in \widehat{A_{\eta}}^*$ and $m'_i\in \mathbb{Z}$.
\end{lemma}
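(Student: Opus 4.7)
The plan is to express each $a'_{i,\eta}$ as a reduced norm of an element in an unramified division algebra of degree $\ell$ over $F_\eta$, and then invoke the standard valuation formula for reduced norms in such an algebra.

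First, I would consider the centralizer $D':=C_D(Y)$, a central simple algebra of degree $\ell$ over $Y$ with Brauer class $[D\otimes_F Y]\in \Br(Y)$. Since $z\in M\subseteq D'$ and $M$ is a maximal subfield of $D'/Y$ (as $[M:Y]=\ell$), we have $\Nrd_{D'}(z)=\N_{M/Y}(z)=a$. Because $Y_\eta$ is split, let $\eta_1,\ldots,\eta_\ell$ denote the primes of $Y$ above $\eta$ (so that each $Y_{\eta_i}\simeq F_\eta$); this yields the decomposition
\[
D'\otimes_Y Y_\eta\;\simeq\;\prod_{i=1}^{\ell}\bigl(D'\otimes_Y Y_{\eta_i}\bigr),
\]
where each factor is a CSA of degree $\ell$ over $F_\eta$ with Brauer class $[D_\eta]$. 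Since $\eta$ is of Type $1a$, $D_\eta$ has index $\ell$, so $D_\eta\simeq \M_\ell(D_0)$ for an unramified division algebra $D_0/F_\eta$ of degree $\ell$. Each $D'\otimes_Y Y_{\eta_i}$ has degree equal to its index ($=\ell$) and hence is itself a division algebra, isomorphic to $D_0$.

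Under this identification, $z\otimes 1$ corresponds to a tuple $(z_1,\ldots,z_\ell)\in \prod D_0$, and reading off components gives $a'_{i,\eta}=\Nrd_{D_0}(z_i)$. Finally, since $D_0$ is an unramified division algebra of degree $\ell$ over the complete discretely valued field $F_\eta$, the valuation $v_\eta$ extends uniquely to an integer-valued valuation $v_{D_0}$ on $D_0^*$ with ramification index $1$ and residue degree $\ell$, and it satisfies the classical formula $v_\eta\bigl(\Nrd_{D_0}(x)\bigr)=\ell\cdot v_{D_0}(x)$ for every $x\in D_0^*$. Therefore $v_\eta(a'_{i,\eta})\in \ell\mathbb{Z}$; setting $m'_i := v_{D_0}(z_i)$ and $z_{i,\eta}:=a'_{i,\eta}\pi_\eta^{-\ell m'_i}\in \widehat{A_\eta}^*$ then yields the desired expression.

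The substantive step is the valuation formula in the unramified division algebra $D_0$, which is classical; the remainder is bookkeeping to identify each component algebra $D'\otimes_Y Y_{\eta_i}$ with $D_0$ via the centralizer/Brauer-class correspondence. I do not expect a serious obstacle, since the Type $1a$ hypothesis is precisely what places us in the unramified-of-index-$\ell$ regime where the valuation formula gives divisibility by $\ell$.
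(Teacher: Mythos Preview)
Your argument is correct, but it takes a different route from the paper's. Both begin by observing that each component $a'_{i,\eta}$ is a reduced norm from the index-$\ell$, unramified algebra $D_\eta$ (equivalently from its underlying division algebra $D_0$). From there the paper proceeds cohomologically: it writes $a'_{i,\eta}=z_{i,\eta}\pi_\eta^{m_i}$, uses Lemma~\ref{lemmareducednormsunramified} to see that $(z_{i,\eta})\cup[D_\eta]=0$ in $\HH^3(F_\eta,\mu_\ell)$, and then takes the residue of $(\pi_\eta^{m_i})\cup[D_\eta]$ along $\pi_\eta$ to force $\ell\mid m_i$. You instead use valuation theory on $D_0$: since $D_0$ is an inertial division algebra, the canonical valuation has ramification index one, and the formula $v_\eta(\Nrd_{D_0}(x))=\ell\cdot v_{D_0}(x)$ immediately gives $\ell\mid v_\eta(a'_{i,\eta})$. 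Your approach is more elementary in that it avoids $\HH^3$ and residue maps entirely; the paper's approach, on the other hand, fits the cohomological framework used throughout (cf.\ Proposition~\ref{propositionaatchillypoint}) and only needs vanishing of the Suslin invariant rather than an explicit realization of $a'_{i,\eta}$ as $\Nrd_{D_0}(z_i)$.
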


\begin{proof}
Let $a'_{i,\eta} = z_{i,\eta}\pi_{\eta}^{m_i}$ for $z_{i,\eta}\in \widehat{A_{\eta}}^*$ and $m_i\in \mathbb{Z}$. Since $a$ is a reduced norm from $D\otimes Y$, we have $\brac{ z_{i,\eta}\pi_{\eta}^{m_i}}[D_{\eta}] =  0 \in \HH^3\brac{F_{\eta}, \mu_{\ell}}$ for each $i$. Since $D_{\eta}$ is unramified and has index $\ell$, by Lemma \ref{lemmareducednormsunramified} $\brac{ z_{i,\eta}}[D_{\eta}] =  0$. Thus, taking residues along $\pi_{\eta}$ shows that $m_i \cong 0 \mod \ell$.
\end{proof}

For ease of reference, we summarize possible shapes of $Y$ and $a$ at points of $N'_0$ in the following table (cf. Lemmata \ref{lemmanormoneramified-dim1}, \ref{lemmaalong1a}, Proposition \ref{propaisaunitifYnotsplit}) where we use the notations that $w'_{\eta}, z_{\eta}\in \widehat{B_{\eta}}^*$,   $u_{\eta}, z_{i, \eta} \in \widehat{A_{\eta}}^*$ and $u_{\eta}\not\in\widehat{A_{\eta}}^{\ell}$, $m_i, m'_i \in \mathbb{Z}$ and $D_0/F_{\eta}$ is an unramified CSA.   Further $\N_{Y_{\eta}/F_{\eta}}\left(z_{\eta}\right) =1$,  $\N_{Y_{\eta}/F_{\eta}}\left(w'_{\eta}\right)^{\ell} =1$, $\sum_{i=1}^{\ell} m_i = \sum_{i=1}^{\ell} m'_i  = 0$ and $\prod_{i=1}^{\ell} z_{i, \eta} = 1$.

\begin{center}
$
\begin{array}{|c|c|c|c|c|}
\hline 
\mrm{{Type\ of \ \eta}} & {D_{\eta}} & \mrm{{More\ information}} & {Y_{\eta}} & a \in \ Y_{\eta} \\
\hline
0 & D_0 & \ind(D_0)=1 &{\mrm{RAM}} & {w'}_{\eta}^{\ell} \\
\hline
0 &  D_0  & \ind(D_0)=1 & {\mrm{SPLIT}} & {\left(a'_{i,\eta} = z_{i, \eta}\pi_{\eta}^{m_i}\right)_{i}}\\
\hline
0 &  D_0  & \ind(D_0)=1 &{\mrm{NONRES}} & z_{\eta} \\
\hline
\hline
1a & {D_0}&  \ind(D_0)=\ell \ & {\mrm{RAM}} &  {w'}_{\eta}^{\ell} \\
\hline
1a &  {D_0} & \ind(D_0)=\ell  \ &{\mrm{SPLIT}} & {\left(a'_{i,\eta} = z_{i, \eta}\pi_{\eta}^{\ell m'_i}\right)_{i}}\\
\hline
1a &  {D_0} &  \ind(D_0)=\ell   \ &{\mrm{NONRES}} & z_{\eta} \\
\hline
\hline
1b & D_0 + \left(u_{\eta}, \pi_{\eta}\right) & \ind(D_0\otimes F_{\eta}(\sqrt[\ell]u_{\eta})))=1 & {\mrm{RAM}} &   {w'}_{\eta}^{\ell}\\
\hline
1b &  D_0 + \left(u_{\eta}, \pi_{\eta}\right) &\ind(D_0\otimes F_{\eta}(\sqrt[\ell]u_{\eta})))=1 & {\mrm{RES}} & z_{\eta} \\
\hline
1b &  D_0 + \left(u_{\eta}, \pi_{\eta}\right) & \ind(D_0\otimes F_{\eta}(\sqrt[\ell]u_{\eta})))=1 &{\mrm{SPLIT}} & {\left(a'_{i,\eta} = z_{i, \eta}\pi_{\eta}^{m_i}\right)_{i}}\\
\hline
1b &   D_0 + \left(u_{\eta}, \pi_{\eta}\right) &\ind(D_0\otimes F_{\eta}(\sqrt[\ell]u_{\eta})))=1 & {\mrm{NONRES}} & z_{\eta} \\
\hline
\hline
2 & D_0 + \left(u_{\eta}, \pi_{\eta}\right) & \ind(D_0\otimes F_{\eta}(\sqrt[\ell]u_{\eta})))=\ell &{\mrm{RAM}} &  {w'}_{\eta}^{\ell}\\
\hline
2 &  D_0 + \left(u_{\eta}, \pi_{\eta}\right) &  \ind(D_0\otimes F_{\eta}(\sqrt[\ell]u_{\eta})))=\ell  & {\mrm{RES}} & z_{\eta} \\
\hline
2 &   D_0 + \left(u_{\eta}, \pi_{\eta}\right) & \ind(D_0\otimes F_{\eta}(\sqrt[\ell]u_{\eta})))=\ell  & {\mrm{NONRES}} & z_{\eta} \\
\hline
\end{array}$
\label{TableaandYatcodimonepoints}
\captionof{table}{Shape of $D$, $Y$ and $a$ at $\eta\in N'_0$}
\end{center}

\subsubsection*{Fixing residual Brauer classes for points in $N'_0$ along which $D$ is ramified}
\label{section-fixingresidualbrauerclass}
For each $\eta\in N'_0$ of Type 1b or 2, we define $\beta_{rbc, \eta}\in \Br(F_{\eta})$ as follows: 

If $Y_{\eta}$ is RAM, (so $Y_{\eta} = F_{\eta}\brac{\sqrt[\ell]{w_{\eta}\pi_{\eta}}}$ for some $w_{\eta}\in \widehat{A_{\eta}}^*$), then there exists an unramified algebra ${D_{0}}_{\eta}$ such that $D_{\eta} = {D_{0}}_{\eta} + \brac{u_{\eta}, w_{\eta}\pi_{\eta}} \in \Br\brac{F_{\eta}}$. Set $\beta_{rbc, \eta}=[{D_{0}}_{\eta}] \in \Br\brac{F_{\eta}}$. In all other cases, set $\beta_{rbc, \eta}$ to be the residual Brauer class of $D$ with respect to parameter $\pi_{\eta}$ (cf. Section \ref{sectionnotation}). Note that $\beta_{rbc, \eta}$ has index at most $\ell$. 

\section{At closed points}
Recall that $S_0$ denotes the finite set of closed points lying on $\mathcal{H}_{\mathcal{X}}$ chosen as in Section \ref{subsubsection-modelparameter}. We refer to points $P$ in $S_0$ as \textit{marked points} occasionally. In this section, we classify points in $S_0$ following (\cite{S07}) in essence, study the configuration of $Y$ at these points and also investigate the shape of $a$ at some types of closed points $P$ when $Y_P \simeq \prod F_P$.

Let $P\in S_0$ be the intersection of two distinct irreducible curves $C$ and $C'$ of  $\mathcal{H}_{\mathcal{X}}$ with generic points $\eta$ and $\eta'$ in $N'_0$ respectively. Let $\pi_P$ and $\delta_P$ be primes defining $C$ and $C'$ at $P$ be as fixed in Section \ref{subsubsection-modelparameter}. 

\subsection{Classification of marked points}
\label{subsectionclosedpointsclassification}
We use the following notations: $u_P, v_P$ will denote units in $A_P$, $D_{00}$, the Brauer class of an algebra of $\Br\left(F\right)$ unramified at $A_P$, i.e. $D_{00}\in \Br\left(A_P\right)$. Superscripts $s$ and $ns$ on $D_P$ are used to denote that the algebra $D_P$ is split and non-split respectively. We sometimes refer to the irreducible curve with generic point $\eta\in N'_0$ as $\overline{\eta}$. We begin with a lemma (similar to Lemma \ref{lemmaatetasplit-1}) relating the shapes of $D_{\eta}$ and $D_P$.

\begin{lemma}
\label{lemmaatetasplit}
If $D_{\eta} = 0 \in \Br(F_{\eta})$, then $D_P = 0\in \Br(F_P)$. \end{lemma}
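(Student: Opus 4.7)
The plan is to exploit the explicit description of $[D]$ over $\widehat{A_P}$ given in Section \ref{subsubsection-modelparameter} and dispatch each of the five possible shapes in turn. Write $[D] = \alpha' + \gamma$ with $\alpha' \in \Br(\widehat{A_P})$ unramified and $\gamma$ one of the five listed symbol forms. Since $\widehat{A_P}$ is a complete regular local ring of dimension $2$ with finite residue field $k_P$, the canonical isomorphism $\Br(\widehat{A_P}) \simeq \Br(k_P) = 0$ together with the Auslander--Goldman injection $\Br(\widehat{A_P}) \hookrightarrow \Br(F_P)$ forces $\alpha' = 0$ in $\Br(F_P)$. Hence it is enough to verify the conclusion when $[D] \in \{0,\ (u_P, \pi_P),\ (v_P, \delta_P),\ (u_P, \pi_P) + (v_P, \delta_P),\ (u_P \pi_P^m, v_P \delta_P)\}$ in $\Br(F_P)$.

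The two cases involving the summand $(u_P, \pi_P)$ are handled by the residue at $\pi_P$. The tame symbol places the residue of $(u_P, \pi_P)$ along the uniformizer $\pi_P$ of $F_\eta$ at $\overline{u_P} \in k_\eta^* / k_\eta^{*\ell}$, so vanishing of $D_\eta$ forces $\overline{u_P} \in k_\eta^{*\ell}$. A valuation-zero $\ell^{\mrm{th}}$ root in $k_\eta$ already lives in the complete one-dimensional local ring $\widehat{A_P}/(\pi_P)$, and Hensel's lemma there (residue field $k_P$) pushes the $\ell^{\mrm{th}}$-power property down to $k_P$; a second application of Hensel inside $\widehat{A_P}$ then yields $u_P \in \widehat{A_P}^{*\ell}$, so $(u_P, \pi_P) = 0$ in $\Br(F_P)$.

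For the two cases involving $(v_P, \delta_P)$, both $v_P$ and $\delta_P$ lie in $\widehat{A_{\eta}}^*$, so $(v_P, \delta_P)_\eta$ is unramified at $\pi_P$ and equals $(\overline{v_P}, \overline{\delta_P}) \in \Br(k_\eta)$. Since $k_\eta$ is the function field of the smooth curve $\overline{\eta}$ over the finite field $k$, vanishing in $\Br(k_\eta)$ implies vanishing after completion at the place corresponding to $P$, i.e.\ in $\Br(k_{P,\eta})$. In this complete discretely valued field $\overline{\delta_P}$ is a uniformizer and $\overline{v_P}$ is a unit, so triviality of the symbol amounts to $\overline{v_P}|_{k_P} \in k_P^{*\ell}$; Hensel in $\widehat{A_P}$ then delivers $v_P \in \widehat{A_P}^{*\ell}$ and $(v_P, \delta_P) = 0$ in $\Br(F_P)$. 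The mixed case $(u_P, \pi_P) + (v_P, \delta_P)$ follows by combining the two preceding analyses on the residue and the unramified part.

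The delicate case, which I expect to be the real obstacle, is $[D] = (u_P \pi_P^m, v_P \delta_P)$ with $0 < m < \ell$. Bilinearity plus the tame symbol gives residue $(v_P \delta_P)^m$ modulo $\ell^{\mrm{th}}$ powers along $\pi_P$, so vanishing of $D_\eta$ would force $v_P \delta_P \in k_\eta^{*\ell}$ (using $\gcd(m, \ell) = 1$). But at the place of $k_\eta$ corresponding to $P$, which is a smooth point of $\overline{\eta}$ by the good-shape arrangement of $\mathcal{H}_{\mathcal{X}}$, $\overline{\delta_P}$ is a uniformizer and $\overline{v_P}$ a unit, so $v_P \delta_P$ has valuation $1$ there, incompatible with being an $\ell^{\mrm{th}}$ power. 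Hence this shape cannot occur under the hypothesis, completing the case analysis. The subtle point is this last exclusion: one must leverage the global (function-field) nature of $k_\eta$ and the normal-crossing choice of parameters to produce the valuation contradiction, whereas the remaining cases reduce to routine Hensel and symbol-algebra computations.
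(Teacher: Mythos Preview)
Your proof is correct. The difference from the paper's argument is one of economy: the paper observes at the outset that $D_\eta = 0$ forces $D$ to be unramified at $\eta$, which immediately eliminates any shape carrying a nontrivial $\pi_P$-contribution and leaves only $D_P = (v_P,\delta_P)$; it then handles this single case by passing to the branch field $F_{P,\eta}$ (rather than to $k_\eta$ first) and reading off $\overline{v_P}\in k_P^{*\ell}$ from $(\overline{v_P},\overline{\delta_P})=0$ in $\Br(k_{P,\eta})$. Your case-by-case analysis effectively reproves this reduction: the $(u_P,\pi_P)$ case amounts to showing that a trivial residue at $\eta$ forces $u_P\in\widehat{A_P}^{*\ell}$, and your valuation contradiction in the cold case is exactly the statement that such a shape is genuinely ramified at $\eta$. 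What your approach buys is self-containment (you do not invoke the implicit ``unramified at $\eta$ $\Rightarrow$ shape without $\pi_P$'' step), at the cost of length; what the paper's buys is a two-line proof once Saltman's decomposition is taken for granted.
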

\begin{proof} Since $D$ is unramified at $\eta$, $D_P = \left(v_P, \delta_P\right)$. Further as $D_{\eta}$ is split, so is $D\otimes F_{P,\eta}$. This implies $\left(v_P, \delta_P\right)=0 \in \Br\left(F_{P,\eta}\right)$. That is $\left(\overline{v_P}, \overline{\delta_P}\right) = 0 \in \Br\left(k_{P,\eta}\right)$. Recall that $k_{P,\eta}$ is a complete discretely valued field with $\overline{\delta_P}$ as a parameter. Since $v_P\in \widehat{A_P}^*$, $\overline{v_P}\in k_P^{*\ell}$ and hence $v_P\in \widehat{A_P}^{*\ell}$. This immediately implies that $D_P = 0$ in $\Br\left(F_P\right)$. 
\end{proof}

\begin{remark}\label{remark-ram-split-not-intersect} Lemma \ref{lemmaatetasplit-1} implies that if $\eta, \eta' \in N'_0$ are such that $Y_{\eta}$ is of Type RAM and $Y_{\eta'}$ is of Type SPLIT, then $\overline{\eta}$ and $\overline{\eta'}$ cannot intersect. 
\end{remark}

We now list\footnote{The order of the subscripts in the types of points do not matter. So for instance we will use both $C_{12}^{Cold}$ and $C_{21}^{Cold}$ to mean the same type of point.} the types\footnote{It will be shown in Proposition \ref{propnospecialpoints} following the classification that the starred ones can be eliminated by blowing up our model repeatedly.} of closed points in $S_0$ possible. 

\textbf{Type A}: $P$ is of Type A if both $C$ and $C'$ do not lie in the ramification locus of $D$. Further $D$ is unramified at $P$ and because the residue field is finite, $D_P$ is split. Type $A$ points are further subdivided as follows:
\begin{itemize}
\item[-]
\textbf{Type $A_{00}^{s}$}: Both $\eta$ and $\eta'$ are of Type 0. Thus $D_{\eta}$ and $D_{\eta'}$ are split.
\item[-]
\textbf{Type $A_{10}^{s}$}: Exactly one of $\eta, \eta'$ is of Type 0. Thus the other, say $\eta$, is of Type 1a.  So $D_{\eta'}$ is split whereas $D_{\eta}$ is an unramified index $\ell$ CSA.
\item[-]
*Type $A_{11}^{s}$: Both  $\eta$ and  $\eta'$ are of Type 1a.
\end{itemize} 

\textbf{Type B}: $P$ is of Type B if exactly one of $C$ and $C'$ lies in the ramification locus of $D$ (say $C$). Thus $\eta$ is of Type 1b or 2 and $\eta'$, of Type 0 or 1a. Further $D=D_{00}+ (u_P, \pi_P)$ in $\Br(F)$ and because the residue field is finite,  $D_P =\left(u_P, \pi_P\right)$ in $\Br\left(F_P\right)$. Type $B$ points are further subdivided as follows:

\begin{itemize}
\item[-]
\textbf{Type $B_{10}^{s}$}: $\eta$ is of Type 1b and $\eta'$ is of Type 0. Note that by Lemma \ref{lemmaatetasplit}, $D_P$ is split.
\item[-]
*Type $B_{11}^{s}$: $\eta$ is of Type 1b, $\eta'$ is of Type 1a and $D_P$ is split.
\item[-]
\textbf{Type $B_{11}^{ns}$}: $\eta$ is of Type 1b, $\eta'$ is of Type 1a and $D_P$ is non-split.
\item[-] 
\textbf{Type $B_{20}^{s}$}: $\eta$ is of Type 2 and $\eta'$ is of Type 0. Note that by Lemma \ref{lemmaatetasplit}, $D_P$ is split.
\item[-]
*Type $B_{21}^{s}$: $\eta$ is of Type 2, $\eta'$ is of Type 1a and $D_P$ is split.
\item[-]
\textbf{Type $B_{21}^{ns}$}: $\eta$ is of Type 2, $\eta'$ is of Type 1a and $D_P$ is non-split.
\end{itemize}

\textbf{Type C}: $P$ is of Type C if both $C$ and $C'$ lie in the ramification locus of $D$. Thus $\eta$ and $\eta'$ are of Type 1b or 2. Further $D=D_{00}+ (u_P, \pi_P) + (v_P, \delta_P)$ or $D_{00} + \left(u_P\pi_P^m, v_P\delta_P \right)$ in $\Br\left(F\right)$ for an integer $m$ coprime to $\ell$ in $\Br(F)$. 

Points $P$ where $D=D_{00} + \left(u_P\pi_P^m, v_P\delta_P \right)$ were labelled \textit{cold} points in (\cite{S07}). Thus $D_P =\left(u_P\pi_P^m, v_P\delta_P \right)$ at a cold point $P$ and the ramification data at $C$, $\partial_C([D])$ is given by $\left(\brac{v_P\delta_P}^{-m}\right)^{\frac{1}{\ell}}$ at $P$. Points $P$ where $D=D_{00}+ (u_P, \pi_P) + (v_P, \delta_P)$  were further subdivided depending on the shape of the finite subgroups $x= \langle \overline{u_P}\rangle$ and $y =\langle \overline{v_P}\rangle$ in $k_P^{*}/k_P^{*\ell}$ into \textit{chilly} points (when $x = y\neq \{1\}$), \textit{cool} points (when $x=y=\{1\}$), and \textit{hot} points (when $x\neq y$). Since $k_P^*/k_P^{*\ell}$ is a cyclic group of order $\ell$, the subgroups $x$, $y$ have to be either trivial or all of $k_P^{*}/k_P^{*\ell}$. 

If $P$ is a chilly point, without loss of generality assume $ \overline{u_P}=\overline{v_P}^j$ for some $j$ coprime to $\ell$. Thus $D_P = \left(v_P, \pi_P^j\delta_P\right)\in \Br(F_P)$ and the ramification data at $C$, $\partial_C([D])$ is given by $\left(v_P^{j}\right)^{\frac{1}{\ell}}$ at $P$. If $P$ is a cool point, $D_P=\{0\}\in \Br(F_P)$. If $P$ is a hot point, assume without loss of generality that $y=\{1\}$. Thus $D_P = \left(u_P, \pi_P\right)\in Br(F_P)$ and  the ramification data at $C$, $\partial_C([D])$ is given by $\left(u_P\right)^{\frac{1}{\ell}}$ at $P$. We also recall that in this case, $D\otimes F_{\eta'}$ has index $\ell^2$ (\cite{S07}, Proposition 0.5, Theorem 2.5) and hence $\eta'$ is of Type $2$. We continue to follow Saltman's convention while refining the classification as follows:

\begin{itemize}
 \item[-]
\textbf{Type $C_{11}^{Cold}$} :  $\eta$ is of Type 1b, $\eta'$ is of Type 1b and $P$ is cold.
\item[-]
\textbf{Type $C_{11}^{Chilly}$} :  $\eta$ is of Type 1b, $\eta'$ is of Type 1b and $P$ is chilly.
\item[-]
*Type $C_{11}^{Cool}$ :  $\eta$ is of Type 1b, $\eta'$ is of Type 1b and $P$ is cool.
\item[-]
\textbf{Type $C_{12}^{Cold}$} :  $\eta$ is of Type 1b, $\eta'$ is of Type 2 and $P$ is cold.
\item[-]
*Type $C_{12}^{Chilly}$ :  $\eta$ is of Type 1b, $\eta'$ is of Type 2 and $P$ is chilly.
  \item[-]
*Type $C_{12}^{Cool}$ :  $\eta$ is of Type 1b, $\eta'$ is of Type 2 and $P$ is cool.
\item[-]
\textbf{Type $C_{12}^{Hot}$} :  $\eta$ is of Type 1b, $\eta'$ is of Type 2 and $P$ is hot.
\item[-]
*Type $C_{22}^{-}$ :  $\eta$ is of Type 2 and $\eta'$ is of Type 2.
\end{itemize}

\subsection{Shape of $a$ when $Y_P$ is split}
\label{subsectionaatP}
We investigate the shape of $a$ at some types of closed points $P\in S_0$ when $Y_P \simeq \prod F_P$. By Proposition \ref{propaisingoodshape}, $a = \left(a'_{i,P}\right)_i\in \prod F_P$ where $a'_{i,P} = z_{i,P}\pi_P^{m_i}\delta_P^{n_i}$, $z_{i,P}\in \widehat{A_P}^*$ and $m_i, n_i\in \mathbb{Z}$ with $\sum m_i = \sum n_i = 0$.

\begin{proposition}
\label{propositionaatchillypoint}
\label{propositionaatcoldpoint}
\label{propositionaathotpoint}
\label{propositionaatB11nspoint} 
\label{propositionaatB21nspoint} 
 Let $P\in S_0$ such that $Y_P\simeq \prod F_P$ and let $a = \left(a'_{i,P}\right)_i\in \prod F_P$ where $a'_{i,P} = z_{i,P}\pi_P^{m_i}\delta_P^{n_i}$ as above. 
 
\begin{enumerate}
\item
If $P$ is a cold point with $D_P = \left(u_P\pi_P^m, v_P\delta_P\right)$ where $0 < m < \ell$, then $a'_{i,P} = \left(u_P\pi_P^m\right)^{sm_i}\left(v_P\delta_P\right)^{n_i}{\brac{{w'}_{i,P}}^{\ell}}{\brac{\pi_P^{-rm_i}}^{\ell}}$ for some ${w'}_{i,P}\in \widehat{A_P}^*$ and $s, r\in \mathbb{Z}$ such that $sm = r\ell +1$.
\item
If $P$ is a chilly point with  $D_P = \left(v_P, \pi_P^j\delta_P\right)$ where $0<j<\ell$, then $m_i = r_i\ell + jn_i$ where $r_i\in \mathbb{Z}$. Thus $\sum r_i = 0$ and $a'_{i,P} = z_{i,P}\left(\pi_P^j\delta_P\right)^{n_i}{\brac{\pi_P^{ r_i}}^{\ell}}$.
\item
If $P$ is a hot point\footnote{Here $\eta$ is of Type 1b whereas $\eta'$ is of Type 2 (\cite{S07}, Proposition 0.5, Theorem 2.5).} or a Type $B_{11}^{ns}$  point\footnote{Here $\eta$ is of Type 1b whereas $\eta'$ is of Type 1a}  with $D_P = \left(u_P, \pi_P\right)$, then $n_i = n'_i\ell$ where $n'_i\in \mathbb{Z}$. Thus $\sum n'_i = 0$ and $a'_{i,P} = z_{i,P}\pi_P^{m_i}{\brac{\delta_P^{n'_i}}^{\ell}}$.
\item
If $P$ be a Type $B_{21}^{ns}$ point\footnote{Here $\eta$ is of Type 2 whereas $\eta'$ is of Type 1a} with $D_P = \left(u_P, \pi_P\right)$, then $m_i=0$ and $n_i = n'_i\ell$ where $n'_i\in \mathbb{Z}$. Thus $\sum n'_i= 0$ and $a'_{i,P} = z_{i,P}{\brac{\delta_P^{n'_i}}^{\ell}}$.
\end{enumerate}
\end{proposition}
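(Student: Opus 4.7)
The common input across all four cases is that $a = \N_{M/Y}(z)$ with $z \in \SL1(D) \cap M$, so $a$ lies in the image of the reduced norm $\Nrd_{D'}\colon D'^* \to Y^*$ where $D' = C_D(Y)$ is a CSA of degree $\ell$ over $Y$ whose Brauer class equals that of $D\otimes_F Y$. Localising through $Y \hookrightarrow Y_P \simeq \prod F_P$, each component $a'_{i,P}$ becomes a reduced norm from $D_P$, which, since $F_P$ contains a primitive $\ell^{\mrm{th}}$ root of unity, is equivalent to the vanishing
\[ [a'_{i,P}] \cup [D_P] = 0 \;\in\; \HH^3(F_P,\mu_\ell^{\otimes 2}).\]
The plan is to extract the claimed divisibilities of the exponents $m_i, n_i$ from this vanishing by taking residues along $\pi_P$ (into $\HH^2(k_{P,\eta},\mu_\ell)$) and then along $\overline{\delta_P}$ (into $k_P^*/k_P^{*\ell}$), and then to apply Hensel's lemma on $\widehat{A_P}$ to promote $\ell^{\mrm{th}}$-power conclusions from $k_P$ back to $\widehat{A_P}^*$.

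For each case I would first simplify $[a'_{i,P}]\cup[D_P]$ by subtracting classes whose cup with $[D_P]$ vanishes for Steinberg-type reasons, using $(x,x) = (-1,x) = 0$ (the latter since $-1 \in F_P^{*\ell}$ as $\ell$ is odd). In case (i), choose $s,r \in \mathbb{Z}$ with $sm = r\ell + 1$; subtracting $sm_i\cdot[u_P\pi_P^m]$ and $n_i\cdot[v_P\delta_P]$ leaves $[c'] \cup [D_P] = 0$ with $c' = z_{i,P}u_P^{-sm_i}v_P^{-n_i} \in \widehat{A_P}^*$. Double residues give $\overline{c'}^{-m} \in k_P^{*\ell}$; coprimality of $m$ and $\ell$ combined with Hensel yield $c' = (w'_{i,P})^\ell$, and rewriting using $sm - r\ell = 1$ produces the claimed form. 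In case (ii), subtracting $n_i\cdot[\pi_P^j\delta_P]$ reduces to $[z_{i,P}\pi_P^{m_i-jn_i}] \cup [D_P] = 0$; double residues give $\overline{v_P}^{m_i-jn_i} \in k_P^{*\ell}$, and since at a chilly point $\overline{v_P}$ generates $k_P^*/k_P^{*\ell}$ we get $m_i \equiv jn_i\pmod{\ell}$ and hence $r_i := (m_i - jn_i)/\ell$.

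For case (iii), $D_P = (u_P,\pi_P)$ and $[\pi_P]\cup[D_P] = 0$, so the vanishing reduces to $[z_{i,P}\delta_P^{n_i}] \cup (u_P,\pi_P) = 0$; the $\pi_P$-residue yields $(\overline{z_{i,P}\delta_P^{n_i}}, \overline{u_P}) = 0$ in $\Br(k_{P,\eta})$, and the further $\overline{\delta_P}$-residue yields $\overline{u_P}^{n_i} \in k_P^{*\ell}$; non-splitness of $D_P$ over $F_P$ is equivalent to $\overline{u_P}$ generating $k_P^*/k_P^{*\ell}$, forcing $\ell \mid n_i$. For case (iv), the same computation handles $\ell \mid n_i$. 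For $m_i = 0$, I would observe that $\eta$ is of Type 2, so by Remark \ref{remarktype2Ynotsplit} $Y_\eta$ is nonsplit, and since $Y_P$ is split the good-shape arrangement of $\divi(y)$ forces $Y_\eta$ to be unramified (otherwise $y$ would have $\pi_P$-valuation $1$ and $Y_P$ would be totally ramified). Thus $Y_\eta/F_\eta$ is an unramified nonsplit field, and $a \in \widehat{B_\eta}^*$ by Proposition \ref{propaisaunitifYnotsplit}. The splitting $Y_\eta \otimes_{F_\eta} F_{P,\eta} = \prod F_{P,\eta}$ embeds the unit $a$ componentwise into $\mathcal{O}_{F_{P,\eta}}^*$, giving $v_{\pi_P}(a'_{i,P}) = 0$. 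The remaining side assertions ($\sum r_i = 0$, $\prod z_{i,P} = 1$, etc.) follow immediately from $\prod_i a'_{i,P} = \N_{Y/F}(a) = 1$, exactly as in Proposition \ref{propaisingoodshape}.

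The main obstacle is keeping the sign conventions and the Milnor K-theoretic residue formulas on $\HH^3$ consistent, especially in cases where $\pi_P$ appears both in the cohomology class being cupped and in $[D_P]$; once the residue calculus is set up cleanly, each reduction is a short symbol manipulation combined with Hensel's lemma.
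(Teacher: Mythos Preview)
Your proposal is correct and follows essentially the same route as the paper's proof: both start from $(a'_{i,P})\cup[D_P]=0$ in $\HH^3(F_P,\mu_\ell^{\otimes 2})$, reduce via symbol identities (using that $(-1)$ is an $\ell^{\mrm{th}}$ power) to an expression of the form $(\text{unit})\cup(\pi_P,\delta_P)=0$, and then take successive residues along $\pi_P$ and $\overline{\delta_P}$ to land in $k_P^*/k_P^{*\ell}$, where nontriviality of $\overline{u_P}$ or $\overline{v_P}$ forces the claimed divisibilities. The only organizational difference is that you subtract the Steinberg-trivial pieces $[u_P\pi_P^m]\cup[D_P]$ and $[v_P\delta_P]\cup[D_P]$ (or their analogues) up front, whereas the paper expands the cup product fully and then collects; your argument for $m_i=0$ in case~(iv) is likewise the same as the paper's (invoke Remark~\ref{remarktype2Ynotsplit} and Proposition~\ref{propaisaunitifYnotsplit} to see $a$ is a unit along $\eta$), just with the ``$Y_\eta$ not RAM'' step spelled out more explicitly.
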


\begin{proof}
Since $a$ is a reduced norm from $D\otimes Y$, for each $i$, $\brac{a'_{i,P}}[D] = 0 \in \HH^3\brac{F_P, \mu_{\ell}}$.

\textit{At a cold point}: 
\begin{align*}
& \phantom{implies} \brac{ z_{i,P}\pi_P^{m_i}\delta_P^{n_i}} \brac{u_P\pi_P^m, v_P\delta_P} =  0\\
& \implies  \brac{ z_{i,P}} \brac{\pi_P^m, \delta_P} +  \brac{ \pi_P^{m_i}} \brac{u_P, \delta_P} +  \brac{ \delta_P^{n_i}} \brac{\pi_P^m, v_P} =0\\
& \implies  \brac{ z_{i,P}^m} \brac{\pi_P, \delta_P} +  \brac{ u_P^{-m_i}} \brac{\pi_P, \delta_P} +  \brac{ v_P^{-mn_i}} \brac{\pi_P,\delta_P} =0\\
& \implies  \brac{ z_{i,P}^m u_P^{-m_i}v_P^{-mn_i}} \brac{\pi_P, \delta_P} =0\\
\end{align*}
Taking residues along $\pi_P$ and then along $\overline{\delta_P}$, we see that $z_{i,P}^m=  u_P^{m_i}v_P^{mn_i}{w''}_{i,P}^{\ell}$ for some ${w''}_{i,P}\in \widehat{A_P}^*$. Since $0< m< \ell$, let $0<s<\ell$ such that $sm = r\ell + 1$ for some $r\in \mathbb{Z}$. Taking $s^{\mrm{th}}$ powers, we have $z_{i,P}^{r\ell+1}= u_P^{sm_i}v_P^{n_ir\ell + n_i}{w''}_{i,P}^{s\ell}$. Hence for some $w'_{i,P}\in \widehat{A_P}^*$,\[a'_{i,P} = z_{i,P}\pi_P^{m_i}\delta_P^{n_i} = \brac{u_P^{s}\pi_P}^{m_i}\brac{v_P\delta_P}^{n_i}{\brac{v_P^{n_ir}{w''}_{i,P}^{s}z_{i,P}^{-r}}^{\ell}} = \brac{u_P\pi_P^m}^{sm_i}{\brac{\pi_P^{-rm_i\ell}}}\brac{v_P\delta_P}^{n_i}{{w'}_{i,P}^{\ell}} \]

\textit{At a chilly point}: 
\begin{align*}
& \phantom{implies} \brac{ z_{i,P}\pi_P^{m_i}\delta_P^{n_i}} \brac{v_P, \pi_P^j\delta_P} =  0\\
& \implies  \brac{\pi_P}\brac{v_P^{m_i}, \delta_P} +  \brac{\delta_P}\brac{v_P^{jn_i}, \pi_P} = 0\\
& \implies \brac{v_P^{-m_i}}\brac{\pi_P, \delta_P} +  \brac{v_P^{jn_i}}\brac{\pi_P, \delta_P}  = 0\\
& \implies \brac{v_P^{jn_i-m_i}}\brac{\pi_P, \delta_P}=0.
\end{align*}
Taking residues along $\pi_P$ and then along $\overline{\delta_P}$, we see that $m_i \cong jn_i \mod \ell$. Since $\sum m_i=\sum n_i=0$ and $0< j < \ell$, $\sum r_i =0$.

\textit{At a hot\ /\ $B_{11}^{ns}$ point} : $\brac{ z_{i,P}\pi_P^{m_i}\delta_P^{n_i}} \brac{u_P, \pi_P} =  0$. Hence $\brac{\delta_P^{n_i}} \brac{u_P, \pi_P} = 0$ and therefore $\brac{u_P^{n_i}}\brac{\pi_P, \delta_P} = 0$. Taking residues along $\pi_P$ and then along $\overline{\delta_P}$, we see that $n_i = n'_i\ell$ for some $n'_i\in \mathbb{Z}$. Since $\sum n_i = 0$, $\sum n'_i=0$ also. 


\textit{At a $B_{21}^{ns}$ point} : 
Since $Y_P$ is split, $Y_{\eta}$ is not of Type RAM. By Remark \ref{remarktype2Ynotsplit} and Proposition \ref{propaisaunitifYnotsplit}, $a$ is a unit along $\eta$. Since $a$ is arranged to be in good shape, we have $m_i =0$. The same proof as in the previous case shows $n_i = n'_i\ell$ and $\sum n'_i=0$. \end{proof}

\subsection{Configuration of $Y$ at marked points in $S_0$}
\label{subsectionYatP}
We record the configuration of $Y$ at some types of the marked points in $S_0$. This is possible since the $\divi_{\mathcal{X}}(y)$ is arranged to be in good shape where $Y = F(\sqrt[\ell]{y})$. We spell out the proof in the case when $P$ is a $C_{11}^{Cold}$ point. The other proofs follow in a similar fashion by using Lemma \ref{lemmaatetasplit-1} and the fact that the shape of $Y_P$ can be determined from that of $Y_{\eta}$ and $Y_{\eta'}$ by going to the branch fields $Y\otimes F_{P,eta}$ and $Y\otimes F_{P,\eta'}$ (c.f proof of Lemma \ref{lemmaatetasplit-1}) along with Remarks \ref{remarktype2Ynotsplit} and \ref{remark-ram-split-not-intersect}.

In this subsection, we use the following notations in the tables: $0 < r < \ell$ and $w, u_P, v_P \in \widehat{A_P}^*$. $L_P$ refers to the unique cyclic degree $\ell$ field extension of $F_P$ unramified at $\widehat{A_P}$.

%

\begin{proposition}[At $C_{11}^{Cold}$ points]
\label{propcoldexamine1}
Let $P$ be a $C_{11}^{Cold}$ point and let $D_P = \left(u_P\pi_P^m, v_P\delta_P\right)$ for $0 < m < \ell$. Then the following table gives the possible configurations (including some symmetric situations) of $Y$ at $P$.

\centering
$
\begin{array}{|c|c|c|}
\hline 
 Y_{\eta'} & Y_{\eta} & Y_P \\
\hline
 {\mrm{RAM}}& {\mrm{RAM}} &  F_P\brac{\sqrt[\ell]{w\pi_P\delta_P^r}} \\
\hline
 {\mrm{RAM}}& {\mrm{RES}} &  {F_P\brac{\sqrt[\ell]{v_P\delta_P}}} \\
\hline
{\mrm{RAM}}& {\mrm{NONRES}} &  F_P\brac{\sqrt[\ell]{w\delta_P}} \\
\hline
{\mrm{RES}}& {\mrm{RAM}} &  {F_P\brac{\sqrt[\ell]{u_P\pi_P^m}}} \\
\hline
 {\mrm{SPLIT}}& {\mrm{SPLIT}} &  \prod F_P \\
\hline
{\mrm{SPLIT}}& {\mrm{NONRES}} &  \prod F_P \\
\hline
 {\mrm{NONRES}}& {\mrm{RAM}} &  F_P\brac{\sqrt[\ell]{w\pi_P}} \\
\hline
 {\mrm{NONRES}}& {\mrm{SPLIT}} &  \prod F_P \\
\hline
 {\mrm{NONRES}}& {\mrm{NONRES}} &  L_P\  \mathrm{or} \  \prod F_P \\
\hline
\end{array}$
\label{TableYatcoldpt1}
\captionof{table}{Shape of $Y$ at $C_{11}^{Cold}$ point $P$}

\end{proposition}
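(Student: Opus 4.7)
The plan is to normalize $y$ at $P$ using the good shape of $\divi_{\mathcal{X}}(y)$, read off the types of $Y_{\eta}$ and $Y_{\eta'}$ as constraints on this normalized form, and then combine the two constraints row by row to determine $Y_P$.

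Since $P\in \mathcal{H}_{\mathcal{X}}$ and $\divi_{\mathcal{X}}(y)$ is in good shape by Proposition \ref{propgoodshape}, up to $\ell^{\mrm{th}}$ powers in $\widehat{A_P}^{*}$ we may write $y = u\pi_P^{a}\delta_P^{b}$ with $u\in \widehat{A_P}^{*}$ and $0\le a,b<\ell$, so $Y_P = F_P(\sqrt[\ell]{u\pi_P^{a}\delta_P^{b}})$. Replacing $y$ by $y^{c}$ for $(c,\ell)=1$ does not change $Y$, and this normalization freedom will be used throughout. A tame symbol computation from $D_P=(u_P\pi_P^{m},v_P\delta_P)$ (with $0<m<\ell$) yields $\partial_{\pi_P}([D])=\overline{(v_P\delta_P)^{-m}}$ and $\partial_{\delta_P}([D])=\overline{u_P\pi_P^{m}}$; since $(m,\ell)=1$, the residual extensions at $\eta$ and $\eta'$ are $F_\eta(\sqrt[\ell]{v_P\delta_P})$ and $F_{\eta'}(\sqrt[\ell]{u_P\pi_P^{m}})$ respectively.

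Viewing $y$ at $\eta$, where $\pi_P$ is a parameter and $\delta_P\in \widehat{A_\eta}^{*}$: $Y_\eta$ is RAM iff $a\ne 0$; otherwise $a=0$ and $Y_\eta = F_\eta(\sqrt[\ell]{u\delta_P^{b}})$ is unramified, and by Hensel on $\widehat{A_\eta}$ it is SPLIT iff $\overline{u\delta_P^{b}}\in k_\eta^{*\ell}$, RES iff $F_\eta(\sqrt[\ell]{u\delta_P^{b}}) = F_\eta(\sqrt[\ell]{v_P\delta_P})$, and NONRES otherwise. An analogous classification holds at $\eta'$ with $\pi_P$, $\delta_P$ swapped and the RES-generator replaced by $u_P\pi_P^{m}$. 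Each row of the table follows by combining the two constraints. For example:
\begin{itemize}
\item RAM-RAM forces $a,b\ne 0$; taking $c\equiv a^{-1}\pmod{\ell}$ normalizes $y$ to the form $w\pi_P\delta_P^{r}$ with $0<r<\ell$.
\item Any row containing SPLIT reduces immediately via Lemma \ref{lemmaatetasplit-1}, which propagates SPLIT at a codimension one point to its closed points, giving $Y_P\simeq \prod F_P$.
\item RES-RAM forces $b=0$, $a\ne 0$ and a residual match at $\eta'$ against $u_P\pi_P^{m}$; taking an $m^{-1}\bmod\ell$ power of $y$ then identifies $Y_P = F_P(\sqrt[\ell]{u_P\pi_P^{m}})$. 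The remaining RES, RAM and NONRES cross-cases follow by similar valuation bookkeeping and a coprime-power normalization.
\item NONRES-NONRES forces $a=b=0$, so $y=u\in \widehat{A_P}^{*}$ and $Y_P = F_P(\sqrt[\ell]{u})$ is unramified, equal to the unique unramified degree $\ell$ extension $L_P$ when $\bar u\notin k_P^{*\ell}$ and to $\prod F_P$ otherwise.
\end{itemize}

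The main obstacle is the RES and mixed-RES rows: the residue constraint lives in $k_\eta^{*}/k_\eta^{*\ell}$ (resp.\ $k_{\eta'}^{*}/k_{\eta'}^{*\ell}$) and must be lifted through Hensel on $\widehat{A_P}$ to a constraint in $\widehat{A_P}^{*}/\widehat{A_P}^{*\ell}$ matching the stated representative exactly. Executing this cleanly requires tracking valuations at $\pi_P$ and $\delta_P$ separately, exploiting the flexibility of replacing $y$ by a coprime-to-$\ell$ power to absorb the exponent $m$ from the residual extension, and the completeness of $\widehat{A_P}$ to detect $\ell^{\mrm{th}}$-power status of units via the residue field.
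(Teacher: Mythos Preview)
Your approach is correct and essentially the same as the paper's: both normalize $y$ at $P$ via the good shape of $\divi_{\mathcal{X}}(y)$ and read off the types of $Y_\eta$, $Y_{\eta'}$ through the branch fields. The paper's proof, however, places the emphasis differently. It spends almost all its effort on \emph{exhaustiveness} --- explaining why the seven absent combinations (RAM--SPLIT and the RES--$\{$RES, SPLIT, NONRES$\}$ cases, plus symmetrics) cannot occur --- and then dismisses the computation of $Y_P$ in each surviving row with a one-line reference to good shape. You do the reverse: you verify each listed row carefully but never explicitly state that the table is complete. Your own framework already contains the key exclusion (RES at $\eta'$ means matching $u_P\pi_P^m$ in $k_{P,\eta'}^*/k_{P,\eta'}^{*\ell}$, and since $\overline{\pi_P}$ is a uniformizer there with $m\not\equiv 0$, this forces $a\not\equiv 0$, hence $Y_\eta$ is RAM); you should make this explicit, since the proposition is asserting that these nine rows are the \emph{only} possibilities, not merely that they are internally consistent.

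One notational slip: when you write ``the residual extensions at $\eta$ and $\eta'$ are $F_\eta(\sqrt[\ell]{v_P\delta_P})$ and $F_{\eta'}(\sqrt[\ell]{u_P\pi_P^m})$,'' these identifications hold over the branch fields $F_{P,\eta}$, $F_{P,\eta'}$, not over $F_\eta$, $F_{\eta'}$ themselves. This is harmless for the argument (type detection passes to the branch), but the paper is careful about this distinction throughout.
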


\begin{proof}
If $Y_{\eta'}$ is RAM, by Remark \ref{remark-ram-split-not-intersect}, $Y_{\eta}$ cannot be SPLIT. If $Y_{\eta'}$ is RES, then $Y_{P,\eta'} \simeq F_{P,\eta'}\brac{\sqrt[\ell]{u_P\pi_P^m}}$, a field extension and $\overline{Y_{P,\eta'}}/k_{P,\eta'}$ is ramified. Hence $Y_{\eta}$ is RAM. If $Y_{\eta'}$ is SPLIT, then $Y_P\simeq \prod F_P$ by Lemma \ref{lemmaatetasplit-1}. Hence $Y_{\eta}$ cannot be RAM. It also cannot be RES by the same argument as above. Finally, if $Y_{\eta'}$ is NONRES, the same argument shows  $Y_{\eta}$ cannot be RES.  Since $Y/F$ is arranged to be in good shape in $\mathcal{X}$, the shape of $Y_P$ can be determined from that of $Y_{\eta}$ and $Y_{\eta'}$ in a similar manner as that in the proof of Lemma \ref{lemmaatetasplit-1}. \end{proof}

%
%
%
%

\begin{proposition}[At $C_{12}^{Cold}$ points]
\label{propcoldexamine2}
Let $P$ be a $C_{12}^{Cold}$ point and let $D_P = \left(u_P\pi_P^m, v_P\delta_P\right)$ for $0 < m < \ell$. Assume without loss of generality that $\eta'$ is of Type 2. Then the following table gives the possible configurations of $Y$ at $P$. 

\centering
$
\begin{array}{|c|c|c|}
\hline 
Y_{\eta'} & Y_{\eta} & Y_P \\
\hline
{\mrm{RAM}}& {\mrm{RAM}} &  F_P\brac{\sqrt[\ell]{w\pi_P\delta_P^r}} \\
\hline
{\mrm{RAM}}& {\mrm{RES}} &  {F_P\brac{\sqrt[\ell]{v_P\delta_P}}} \\
\hline
{\mrm{RAM}}& {\mrm{NONRES}} &  F_P\brac{\sqrt[\ell]{w\delta_P}} \\
\hline
{\mrm{RES}}& {\mrm{RAM}} &  {F_P\brac{\sqrt[\ell]{u_P\pi_P^m}}} \\
\hline
{\mrm{NONRES}}& {\mrm{RAM}} &  F_P\brac{\sqrt[\ell]{w\pi_P}} \\
\hline
{\mrm{NONRES}}& {\mrm{SPLIT}} &  \prod F_P \\
\hline
{\mrm{NONRES}}& {\mrm{NONRES}} &  L_P\  \mathrm{or} \  \prod F_P \\
\hline
\end{array}$
\label{TableYatcoldpt2}
\captionof{table}{Shape of $Y$ at $C_{12}^{Cold}$ point $P$}

\end{proposition}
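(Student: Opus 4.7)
The plan is to adapt essentially verbatim the argument of Proposition \ref{propcoldexamine1} for the $C_{11}^{Cold}$ case, while exploiting the extra constraint that $\eta'$ is of Type 2. First I would note that since $\divi_{\mathcal{X}}(y)$ is arranged to be in good shape on $\mathcal{X}$, up to $\ell^{\mathrm{th}}$ powers in $F_P$, $y$ has the form $w\pi_P^a\delta_P^b$ for some $w\in \widehat{A_P}^*$ and $0\leq a,b < \ell$, so that the shape of $Y_P = F_P(\sqrt[\ell]{y})$ is determined by this triple. As in the proof of Lemma \ref{lemmaatetasplit-1}, the branch fields $F_{P,\eta}$ and $F_{P,\eta'}$ detect the ramification behaviour of $Y_P$ along each curve and hence pin down the exponents $a$ and $b$ (and the unit $w$ up to $\ell^{\mathrm{th}}$ powers) from the prescribed types of $Y_\eta$ and $Y_{\eta'}$.

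The first step is to eliminate impossible combinations of $(Y_{\eta'}, Y_{\eta})$. By Remark \ref{remarktype2Ynotsplit}, since $\eta'$ is of Type 2, $Y_{\eta'}$ cannot be SPLIT; this already removes the two rows appearing in the $C_{11}^{Cold}$ table in which $Y_{\eta'}$ was SPLIT. Next, Remark \ref{remark-ram-split-not-intersect} precludes any pair in which one of $Y_\eta, Y_{\eta'}$ is RAM and the other is SPLIT. Finally, the RES-mixed pairs are constrained: if $Y_{\eta'}$ is RES, then $Y\otimes F_{P,\eta'} \simeq F_{P,\eta'}(\sqrt[\ell]{u_P\pi_P^m})$ is a field whose residue extension over $k_{P,\eta'}$ is ramified (since $\overline{\pi_P}$ is a parameter of $k_{P,\eta'}$), forcing $Y_\eta$ to be RAM; a symmetric argument treats the case where $Y_\eta$ is RES. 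This leaves exactly the seven rows listed in the table.

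For each admissible pair, one then reads off $(a,b)$ and $\overline{w}$ from the requirement that the base change of $y$ along each of $F_{P,\eta}$ and $F_{P,\eta'}$ represents the prescribed Kummer class of $Y_\eta$ and $Y_{\eta'}$ respectively. For instance, $(Y_{\eta'}, Y_\eta) = (\mathrm{RAM}, \mathrm{RES})$ forces $a=1$, $b=0$ and $\overline{w}$ equal to $\overline{u_P}$ up to $\ell^{\mathrm{th}}$ powers in $k_{P,\eta'}^*$, which after adjusting $w$ by an $\ell^{\mathrm{th}}$ power yields $Y_P = F_P(\sqrt[\ell]{v_P\delta_P})$. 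The (NONRES, NONRES) row retains two possibilities ($L_P$ or $\prod F_P$) corresponding to whether $\overline{w}$ is or is not an $\ell^{\mathrm{th}}$ power in $k_P$. The only genuinely new observation beyond the $C_{11}^{Cold}$ proof is the elimination of the SPLIT row for $Y_{\eta'}$.

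The main obstacle is purely bookkeeping: ensuring that each admissible pair $(Y_{\eta'}, Y_\eta)$ is correctly matched with its Kummer representative in $F_P$ and that the unit $w$ is adjusted coherently across the two branch computations. No arithmetic input beyond Lemma \ref{lemmaatetasplit-1}, Remarks \ref{remarktype2Ynotsplit} and \ref{remark-ram-split-not-intersect}, and the good shape of $\divi_{\mathcal{X}}(y)$ is required, so this proposition is essentially a corollary of the $C_{11}^{Cold}$ analysis with the SPLIT case for $Y_{\eta'}$ excised.
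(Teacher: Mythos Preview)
Your approach is correct and matches the paper's own treatment, which simply says the proof follows in the same fashion as Proposition \ref{propcoldexamine1} using Lemma \ref{lemmaatetasplit-1} together with Remarks \ref{remarktype2Ynotsplit} and \ref{remark-ram-split-not-intersect}. One small slip in your worked example: for $(Y_{\eta'}, Y_\eta) = (\mathrm{RAM}, \mathrm{RES})$, since $\pi_P$ cuts out $\overline{\eta}$ and $Y_\eta$ is the (unramified) lift of residues along $\eta$, you should get $a=0$, $b=1$ and $w \equiv v_P$ (the residue of $D$ along $\eta$ being $(v_P\delta_P)^{-m}$), not $a=1$, $b=0$, $w \equiv u_P$; this is exactly the bookkeeping you flagged as the only real hazard.
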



\begin{proposition}[At $C_{11}^{Chilly}$ points]
\label{propchillyexamine}
Let $P$ be a $C_{11}^{Chilly}$ point and let $D_P = \left(v_P, \pi_P^j\delta_P\right)$ where $0<j<\ell$. Then the following table\footnote{It includes some symmetric situations.} gives the possible configurations of $Y$ at $P$.


\centering
$
\begin{array}{|c|c|c|}
\hline 
Y_{\eta'} & Y_{\eta} & Y_P \\
\hline
{\mrm{RAM}}& {\mrm{RAM}} &  F_P\brac{\sqrt[\ell]{w\pi_P\delta_P^r}} \\
\hline
{\mrm{RAM}}& {\mrm{NONRES}} &  F_P\brac{\sqrt[\ell]{w\delta_P}} \\
\hline
{\mrm{RES}}& {\mrm{RES}} &  {L_P}\\
\hline
{\mrm{RES}}& {\mrm{NONRES}} &  {L_P} \\
\hline
{\mrm{SPLIT}}& {\mrm{SPLIT}} &  \prod F_P \\
\hline
{\mrm{SPLIT}}& {\mrm{NONRES}} &  \prod F_P \\
\hline
{\mrm{NONRES}}& {\mrm{RAM}} &  F_P\brac{\sqrt[\ell]{w\pi_P}} \\
\hline
{\mrm{NONRES}}& {\mrm{RES}} &  {L_P} \\
\hline
{\mrm{NONRES}}& {\mrm{SPLIT}} &  \prod F_P \\
\hline
{\mrm{NONRES}}& {\mrm{NONRES}} &  L_P\  \mathrm{or} \  \prod F_P \\
\hline
\end{array}$
\label{TableYatchillypt}
\captionof{table}{Shape of $Y$ at chilly point $P$}

\end{proposition}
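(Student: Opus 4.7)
The proof strategy mirrors that of Proposition \ref{propcoldexamine1} for $C_{11}^{Cold}$ points. The ingredients I would use are Remark \ref{remark-ram-split-not-intersect} (which rules out RAM--SPLIT adjacencies via the good shape of $\divi_{\mathcal{X}}(y)$), Lemma \ref{lemmaatetasplit-1} (which propagates SPLIT from a generic point to the closed point and then to the other generic point), the good shape of $\divi_{\mathcal{X}}(y)$ where $Y = F(\sqrt[\ell]{y})$, and the explicit form $D_P = (v_P, \pi_P^j \delta_P)$ together with the chilly identification $\overline{u_P} \equiv \overline{v_P}^j \pmod{k_P^{*\ell}}$ with $0 < j < \ell$. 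As in the proofs of Propositions \ref{propcoldexamine1} and \ref{propcoldexamine2}, the shape of $Y_P$ is determined by reading off the types of $Y \otimes F_{P,\eta}$ and $Y \otimes F_{P,\eta'}$ on the two branches at $P$.

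I would proceed row by row. The RAM--SPLIT and SPLIT--RAM rows are impossible by Remark \ref{remark-ram-split-not-intersect}. The rows with a SPLIT entry (SPLIT--SPLIT, SPLIT--NONRES, NONRES--SPLIT) all give $Y_P \simeq \prod F_P$ directly by Lemma \ref{lemmaatetasplit-1}. The RAM--RAM, RAM--NONRES, NONRES--RAM rows are handled exactly as in the cold case: the good shape of $\divi_{\mathcal{X}}(y)$ pins down $y$, modulo $\ell^{\mathrm{th}}$ powers in $\widehat{A_P}$, as a unit times an appropriate monomial in $\pi_P$ and $\delta_P$ dictated by the ramification/unramification at each generic point, and this determines $Y_P$ up to isomorphism with the explicit shapes listed in the table.

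The \emph{new} phenomenon in the chilly case, distinguishing it from the cold cases, is the appearance of the RES--RES, RES--NONRES, and NONRES--RES rows, each yielding $Y_P = L_P$. Here the chilly condition enters essentially: the residue of $[D]$ at $\eta'$ along $\delta_P$ is $\overline{v_P}$ (up to $\ell^{\mathrm{th}}$ powers), while the residue at $\eta$ along $\pi_P$ is $\overline{v_P}^j$, and these generate the \emph{same} subgroup of $k_P^*/k_P^{*\ell}$ because $j$ is coprime to $\ell$. Consequently, both lifts of residues become, after completion at $P$, the unique unramified degree $\ell$ extension $L_P = F_P(\sqrt[\ell]{v_P})$ (up to units). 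The same coincidence explains why the RAM--RES and RES--RAM combinations of the cold table do not appear here: if $Y_{\eta'}$ is RES then $Y \otimes F_{P,\eta'} = F_{P,\eta'}(\sqrt[\ell]{v_P})$ is unramified along $\overline{\eta'}$, and since $v_P$ is a unit in $\widehat{A_P}$, the extension $F_P(\sqrt[\ell]{v_P})$ is also unramified along $\overline{\eta}$, forcing $Y_\eta$ not to be RAM.

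The main obstacle will be the bookkeeping in the NONRES--NONRES row, which genuinely admits both possibilities $L_P$ and $\prod F_P$. Here the chilly identification plays no role, and one must distinguish whether the class of $y$ in $k_P^*/k_P^{*\ell}$, which by hypothesis is non-trivial along each of $\overline{\eta}$ and $\overline{\eta'}$ individually and yet unramified at $P$, reduces to a nontrivial class at the closed point (yielding the unramified field $L_P$) or becomes trivial after further specialization (yielding $\prod F_P$). Carefully verifying that the good shape of $\divi_{\mathcal{X}}(y)$ is preserved across the branches and that both outcomes are consistent with the classification is where this proof requires the most attention.
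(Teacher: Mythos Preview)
Your proposal is correct and follows exactly the approach the paper indicates: the paper does not give a separate proof for this proposition but simply states that it follows ``in a similar fashion'' to the $C_{11}^{Cold}$ case using Lemma~\ref{lemmaatetasplit-1}, Remark~\ref{remark-ram-split-not-intersect}, and passage to the branch fields, which is precisely what you do. Your treatment of the RES rows via the chilly identification $\overline{u_P} \equiv \overline{v_P}^j$ is the right additional observation; note that the same reasoning (that $Y_{\eta'}$ RES forces $Y_P = L_P$ nonsplit) also immediately excludes RES--SPLIT and SPLIT--RES via Lemma~\ref{lemmaatetasplit-1}, which you do not state explicitly but is implicit in your argument.
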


\begin{proposition}[At $C_{12}^{Hot}$ points]
\label{propositionYathotpoints}
Let $P$ be a $C_{12}^{Hot}$ point and let\footnote{Thus $\eta$ is of Type 1b whereas $\eta'$ is of Type 2 (\cite{S07}, Proposition 0.5, Theorem 2.5).} $D_P = \left(u_P, \pi_P\right)$. If $Y_{\eta'}/F_{\eta'}$ is an unramified extension which is not RES, then it must be of Type NONRES. Further, $Y_P$ is a non-split extension and hence $Y\otimes_ F D\otimes _ F F_P$ is split. 
\end{proposition}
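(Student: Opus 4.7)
The first assertion is a matter of classification. Since $P$ is a $C_{12}^{Hot}$ point, $\eta'$ is of Type~$2$, so by Remark~\ref{remarktype2Ynotsplit}, $Y_{\eta'}$ cannot be of Type SPLIT. The hypothesis that $Y_{\eta'}/F_{\eta'}$ is unramified rules out RAM, and the hypothesis that it is not RES leaves only NONRES.

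To prove that $Y_P$ is non-split, the plan is to exploit the good shape of $\divi_{\mathcal{X}}(y)$ at $P$. Writing $y \equiv u_y \pi_P^{a_y} \delta_P^{b_y} \pmod{F_P^{*\ell}}$ with $u_y \in \widehat{A_P}^*$ and $0 \le a_y, b_y < \ell$, the fact that $Y_{\eta'}$ is unramified at $\delta_P$ forces $b_y = 0$. Since $\widehat{A_P}^*/\widehat{A_P}^{*\ell} \simeq k_P^*/k_P^{*\ell}$ is cyclic of order $\ell$ generated (say) by $u_P$, we further write $u_y \equiv u_P^a \pmod{\widehat{A_P}^{*\ell}}$, so that $Y_P$ is split precisely when $(a, a_y) \equiv (0,0) \bmod \ell$. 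If $a_y \neq 0$, the $\pi_P$-valuation of $y$ in $F_P$ is nonzero modulo $\ell$, and $Y_P$ is already non-split. The \emph{main obstacle} is the case $a_y = 0$, where $a = 0$ must be ruled out: the plan is a branch-field argument in the spirit of Lemma~\ref{lemmaatetasplit-1}, passing through $F_{P,\eta'}$ and exploiting the specific hot-point structure ($\overline{v_P} \in k_P^{*\ell}$, which ties the lift-of-residues data at $\eta'$ to the closed point $P$), to show that $u_y \in \widehat{A_P}^{*\ell}$ would force $Y_{\eta'}$ to be SPLIT or RES, contradicting NONRES.

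Once $Y_P$ is a non-split Kummer extension $F_P\!\left(\sqrt[\ell]{u_P^a \pi_P^{a_y}}\right)$ with $(a, a_y) \not\equiv (0,0) \bmod \ell$, the final assertion follows by a direct symbol computation. Because $u_P^a \pi_P^{a_y}$ is an $\ell^{\mathrm{th}}$ power in $Y_P$, the symbol $(u_P^a \pi_P^{a_y}, \pi_P)$ vanishes in $\Br(Y_P)$; expanding it and using $(\pi_P, \pi_P) = 0$ (valid for odd $\ell$) gives $a\,(u_P, \pi_P) = 0$ in $\Br(Y_P)$, which yields $D_P \otimes Y_P = 0$ when $a \neq 0$. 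The analogous expansion of $(u_P^a \pi_P^{a_y}, u_P)$, together with $(u_P, u_P) = 0$, handles the case $a = 0$, $a_y \neq 0$. Hence $D \otimes_F Y \otimes_F F_P$ is split.
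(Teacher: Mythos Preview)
Your first assertion and the final splitting computation are both fine and match the paper in spirit. The genuine problem is in the middle step, where you argue that $Y_P$ is non-split.

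In the case $a_y=0$, your plan is to show that $u_y\in\widehat{A_P}^{*\ell}$ (i.e.\ $Y_P$ split) would force $Y_{\eta'}$ to be SPLIT or RES via a branch-field argument ``in the spirit of Lemma~\ref{lemmaatetasplit-1}''. But Lemma~\ref{lemmaatetasplit-1} goes from $\eta$ to $P$, not the reverse: knowing that $y$ becomes an $\ell^{\mathrm{th}}$ power in $F_P$ (hence in $F_{P,\eta'}$, hence $\overline y\in k_{P,\eta'}^{*\ell}$) tells you only that $Y_{\eta'}\otimes F_{P,\eta'}$ is split, \emph{not} that $Y_{\eta'}$ is split or RES over $F_{\eta'}$. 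A NONRES extension can perfectly well split at a branch; the residue field $k_{\eta'}$ is a global field and the map $k_{\eta'}^*/k_{\eta'}^{*\ell}\to k_{P,\eta'}^*/k_{P,\eta'}^{*\ell}$ has large kernel. The hot-point condition $\overline{v_P}\in k_P^{*\ell}$ only tells you that the lift of residues at $\eta'$ \emph{also} splits at the branch, which gives no relation between $\overline y$ and $\overline{v_{\eta'}}$ in $k_{\eta'}^*/k_{\eta'}^{*\ell}$. So the sketched argument does not close.

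The paper's proof uses a genuinely different mechanism: the index formula (Lemma~\ref{lemmaindexformula}) applied to $D\otimes Y_{\eta'}$. Writing $D=D_0+(v_P,\delta_P)$ over $F_{\eta'}$ with $D_0$ unramified, NONRES guarantees $[Y_{\eta'}(\sqrt[\ell]{v_P}):Y_{\eta'}]=\ell$, so the index formula forces $Y_{\eta'}(\sqrt[\ell]{v_P})$ to split $D_0$. Passing to the branch $F_{P,\eta'}$, one has $[D_0]=(u_P,\pi_P)\neq 0$; but if $Y_P$ were split, then $Y_P(\sqrt[\ell]{v_P})\simeq\prod F_P$ (since $v_P\in\widehat{A_P}^{*\ell}$ at a hot point), so $\prod F_{P,\eta'}$ would have to split the nontrivial class $(u_P,\pi_P)$---a contradiction. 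This is the missing ingredient: the obstruction to $Y_P$ being split is not a direct comparison of $Y_{\eta'}$ with the RES class, but the splitting constraint on $D_0$ coming from $\ind(D\otimes Y)\le\ell$.
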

\begin{proof}
By (\cite{S97}, \cite{S98}), $[D] = \left[D_{00} \right] + \left(u_P, \pi_P\right) + \left(v_P, \delta_P\right) \in \Br\left(F\right)$ where $D_{00}$ is unramified at $A_P$. By (\cite{S07}, Proposition 0.5, Theorem 2.5),  $D\otimes_F F_{\eta'}$ is a division algebra and hence $Y_{\eta'}$ has to be a non-split field extension. Thus it is of Type NONRES. 

Now $D = D_{0} + \left(v_P, \delta_P\right) \in \Br\left(F_{\eta'}\right)$ where $D_{0}=\left[D_{00}\right] + \left(u_P, \pi_P\right)$ is unramified at $\eta'$. Since $P$ is a hot point, $D$ is ramified along both $\eta$ and $\eta'$. Thus $v_P$ is not an $\ell^{\mathrm{th}}$ power in $F_{\eta'}$. Since $Y_{\eta'}$ is unramified and not RES, $\left[Y_{\eta'}\left(\sqrt[\ell]{v_P}\right) : Y_{\eta'}\right] = \ell$. 

Thus by Lemma \ref{lemmaindexformula}, $\ell  = \ind\left(D\otimes Y_{\eta'}\right) = \ind\left(D_0\otimes Y_{\eta'}\left(\sqrt[\ell]{v_P}\right)\right)\left[Y_{\eta'}\left(\sqrt[\ell]{v_P}\right) : Y_{\eta'}\right]$ = $\ell\brac{\ind\left(D_0\otimes Y_{\eta'}\left(\sqrt[\ell]{v_P}\right)\right)}$.
Thus $Y_{\eta'}\left(\sqrt[\ell]{v_P}\right)$ splits $D_0$ over $F_{\eta'}$ and hence also over the branch field $F_{P, \eta'}$. Note that $[D_0] = \left(u_P, \pi_P\right) \neq 0 \in \Br\left(F_{P,\eta'}\right)$.

Suppose that $Y_P$ is split. Since $P$ is a hot point, $\overline{v_P}$ is an $\ell^{\mathrm{th}}$ power in $k_P$ and hence $Y_P\left(\sqrt[\ell]{v_P}\right) \simeq \prod F_P$. Thus along the branch field, $Y_{P, \eta'}\left(\sqrt[\ell]{v_P}\right) \simeq \prod F_{P,\eta'}$ which cannot split the non-trivial algebra $D_0$. Thus we conclude that $Y_P$ is a non-split field extension.

Since we have assumed that $Y$ is in good shape and that $Y_{\eta'}$ is unramified at $\eta'$, there exists $j\in \{0,1\}$ such that \[Y_P = F_P\left(\sqrt[\ell]{w_P\pi_P^{j}}\right), w_P\in \widehat{A_P}^*.\]

If $j=0$, $Y_P$ is the unique non-split unramified extension at $P$ and has to be isomorphic to $F_P\left(\sqrt[\ell]{u_P}\right)$, which splits $D_P$. If $j=1$, then let $\lambda^{\ell} = w_P\pi_P$ for $\lambda\in Y_P$. Thus $D_P\otimes  Y_P = \left(u_P, \pi_P\right) = \left(u_P, w_P^{-1}\right) \in \Br\left(Y_P\right)$ and hence split. \end{proof}

\begin{proposition}[At $B_{10}^{s}$ points]
\label{propB10sexamine}
Let $P$ be a $B_{10}^{s}$ point. Assume without loss of generality that $\eta$ is of Type 1b and $\eta'$ is of Type 0. Then the following table gives the possible configurations of $Y$ at $P$. 


\centering
$
\begin{array}{|c|c|c|}
\hline 
Y_{\eta'} & Y_{\eta} & Y_P \\
\hline
{\mrm{RAM}}& {\mrm{RAM}} &  F_P\brac{\sqrt[\ell]{w\pi_P^r\delta_P}} \\
\hline
{\mrm{RAM}}& {\mrm{NONRES}} &  F_P\brac{\sqrt[\ell]{w\delta_P}} \\
\hline
{\mrm{SPLIT}}& {\mrm{RES}} &  \prod F_P \\
\hline
{\mrm{SPLIT}}& {\mrm{SPLIT}} &  \prod F_P \\
\hline
{\mrm{SPLIT}}& {\mrm{NONRES}} &  \prod F_P \\
\hline
{\mrm{NONRES}}& {\mrm{RAM}} &  F_P\brac{\sqrt[\ell]{w\pi_P}} \\
\hline
{\mrm{NONRES}}& {\mrm{RES}} &    {\prod F_P} \\
\hline
{\mrm{NONRES}}& {\mrm{SPLIT}} &  \prod F_P \\
\hline
{\mrm{NONRES}}& {\mrm{NONRES}} &  L_P\  \mathrm{or} \  \prod F_P \\
\hline
\end{array}$
\label{TableYatB10spoint}
\captionof{table}{Shape of $Y$ at $B_{10}^{s}$ point $P$}

\end{proposition}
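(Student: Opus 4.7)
The plan is to imitate the proof of Proposition \ref{propcoldexamine1}. Since $y\in F^*$ defining $Y=F(\sqrt[\ell]{y})$ has been arranged in good shape on $\mathcal{X}$ and $(\pi_P,\delta_P)$ cut out $\overline{\eta}$ and $\overline{\eta'}$ at $P$, we may write, modulo $F^{*\ell}$,
\[ y \equiv w\,\pi_P^a\,\delta_P^b,\qquad w\in \widehat{A_P}^*,\ 0\leq a,b<\ell. \]
Then $a\neq 0$ exactly when $Y_\eta$ is RAM, and $b\neq 0$ exactly when $Y_{\eta'}$ is RAM. Remark \ref{remark-ram-split-not-intersect} rules out mixing RAM and SPLIT between $\eta$ and $\eta'$, which explains the combinations absent from the table.

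Whenever $Y_\eta$ or $Y_{\eta'}$ is SPLIT, Lemma \ref{lemmaatetasplit-1} immediately gives $Y_P\simeq\prod F_P$ (rows 3, 4, 5, 8). For the RAM rows (1, 2, 6), $Y_P=F_P(\sqrt[\ell]{w\pi_P^a\delta_P^b})$ with the appropriate exponents nonzero; a standard change of radical via a power coprime to $\ell$ normalises the nonzero exponent(s) to the displayed forms $F_P(\sqrt[\ell]{w'\pi_P^r\delta_P})$, $F_P(\sqrt[\ell]{w'\delta_P})$ and $F_P(\sqrt[\ell]{w'\pi_P})$ respectively.

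In the remaining two rows $a=b=0$, so $Y$ is unramified at $P$ and $Y_P=F_P(\sqrt[\ell]{w})$ is either $L_P$ or $\prod F_P$ according to whether $\overline{w}\in k_P^{*\ell}$. For row 9 (both NONRES) no further constraint is imposed and both outcomes are possible. Row 7 ($Y_{\eta'}$ NONRES, $Y_\eta$ RES) is the crucial case: by the RES condition, $\overline{w}$ is congruent to $\overline{u_P}$ modulo $\ell^{\mrm{th}}$ powers in $k_\eta^*$, and since $P$ is a $B_{10}^{s}$ point, the splitting $D_P=(u_P,\pi_P)=0\in\Br(F_P)$ forces the $\pi_P$-residue $\overline{u_P}$ to lie in $k_\eta^{*\ell}$. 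Reducing further to $k_P$ shows $\overline{w}\in k_P^{*\ell}$, whence $Y_P\simeq\prod F_P$.

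The main obstacle is precisely row 7: one must trace the RES constraint and the splitting of $D_P$ through two successive residues ($F_P \to k_\eta \to k_P$) to conclude that $\overline{w}$ becomes an $\ell^{\mrm{th}}$ power at $P$. Once this is in hand, the remaining rows are formal consequences of the local description of $y$ at $P$ together with Lemma \ref{lemmaatetasplit-1} and Remark \ref{remark-ram-split-not-intersect}.
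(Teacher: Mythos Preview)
Your approach mirrors the paper's own (which defers to Proposition~\ref{propcoldexamine1}): write $y\equiv w\pi_P^a\delta_P^b$, invoke Lemma~\ref{lemmaatetasplit-1} and Remark~\ref{remark-ram-split-not-intersect} for most cases, and treat RES via the splitting of $D_P$. There is one gap, however. You claim Remark~\ref{remark-ram-split-not-intersect} explains all absent combinations, but three pairs $(Y_{\eta'},Y_\eta)$ are missing from the table: $(\mathrm{RAM},\mathrm{SPLIT})$, $(\mathrm{SPLIT},\mathrm{RAM})$, and $(\mathrm{RAM},\mathrm{RES})$. The remark only rules out the first two. Excluding $(\mathrm{RAM},\mathrm{RES})$ requires the same mechanism as your row~7 argument: since $D_P$ is split at a $B_{10}^s$ point, the lift of residue at $\eta$ splits over $F_{P,\eta}$ (Table~\ref{Tableliftofresidues}, row~a), so $Y_\eta$ of type RES forces $Y\otimes F_{P,\eta}\simeq\prod F_{P,\eta}$ and hence $Y_P\simeq\prod F_P$ by Lemma~\ref{lemmaatetasplit-1}; but $Y_{\eta'}$ of type RAM would make $Y_P$ ramified along $\delta_P$, a contradiction.

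One notational correction in row~7: what the splitting $D_P=0$ actually gives is $u_P\in\widehat{A_P}^{*\ell}$ (cf.\ the proof of Lemma~\ref{lemmaatetasplit}), hence $\overline{u_P}\in k_{P,\eta}^{*\ell}$, not $k_\eta^{*\ell}$ as you wrote. The RES congruence $\overline{w}\equiv\overline{u_P}$ lives in $k_\eta^*/k_\eta^{*\ell}$ and then specialises to $k_{P,\eta}$; combining these gives $\overline{w}\in k_{P,\eta}^{*\ell}$, from which the reduction to $k_P$ proceeds as you describe.
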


\begin{proposition}[At $B_{11}^{ns}$ points]
\label{propB11nsexamine}
Let $P$ be a $B_{11}^{s}$ point. Assume without loss of generality that $\eta$ is of Type 1b and $\eta'$ is of Type 1a. Then the following table gives the possible configurations of $Y$ at $P$. 


\centering
$
\begin{array}{|c|c|c|}
\hline 
Y_{\eta'} & Y_{\eta} & Y_P \\
\hline
{\mrm{RAM}}& {\mrm{RAM}} &  F_P\brac{\sqrt[\ell]{w\pi_P^r\delta_P}} \\
\hline
{\mrm{RAM}}& {\mrm{NONRES}} &  F_P\brac{\sqrt[\ell]{w\delta_P}} \\
\hline
{\mrm{SPLIT}}& {\mrm{SPLIT}} &  \prod F_P \\
\hline
{\mrm{SPLIT}}& {\mrm{NONRES}} &  \prod F_P \\
\hline
{\mrm{NONRES}}& {\mrm{RAM}} &  F_P\brac{\sqrt[\ell]{w\pi_P}} \\
\hline
{\mrm{NONRES}}& {\mrm{RES}} &    {L_P}  \\
\hline
{\mrm{NONRES}}& {\mrm{SPLIT}} &  \prod F_P \\
\hline
{\mrm{NONRES}}& {\mrm{NONRES}} &  L_P\  \mathrm{or} \  \prod F_P \\
\hline
\end{array}$
\label{TableYatB11nspoint}
\captionof{table}{Shape of $Y$ at $B_{11}^{ns}$ point $P$}
\end{proposition}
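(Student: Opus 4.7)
The plan is to imitate the proofs of Propositions \ref{propcoldexamine1}, \ref{propcoldexamine2} and \ref{propB10sexamine}: since $Y/F$ has been arranged to be in good shape on $\mathcal{X}$ and $(\pi_P,\delta_P)$ is a system of regular parameters at $P$ with $\pi_P$ defining $\overline{\eta}$ and $\delta_P$ defining $\overline{\eta'}$, Kummer theory gives
\[Y_P \;=\; F_P\bigl(\sqrt[\ell]{w\,\pi_P^{\,i}\delta_P^{\,j}}\bigr)\quad\text{or}\quad Y_P\simeq \prod F_P,\]
with $w\in\widehat{A_P}^{*}$ and $0\le i,j<\ell$. The types of $Y_\eta$ and $Y_{\eta'}$ then dictate the allowed values of $i$, $j$ and the class of $\overline w$ in $k_P^{*}/k_P^{*\ell}$, read off the two branch fields $F_{P,\eta}$ and $F_{P,\eta'}$ as in the proof of Lemma \ref{lemmaatetasplit-1}.

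First I would eliminate the rows that cannot occur. Because $\eta'$ is of Type 1a, $D$ is unramified at $\eta'$, so by definition $Y_{\eta'}$ cannot be of Type RES; this suppresses every entry with $Y_{\eta'}=\mrm{RES}$. Remark \ref{remark-ram-split-not-intersect} further suppresses (RAM, SPLIT) and (SPLIT, RAM). For each surviving pair I would then argue as in Proposition \ref{propB10sexamine}: $i=1$ exactly when $Y_\eta$ is RAM and $i=0$ when $Y_\eta$ is RES, NONRES or SPLIT; similarly $j$ is dictated by $Y_{\eta'}$. The unit part $w$ is pinned down modulo $\ell^{\mrm{th}}$ powers by the residue requirements coming from each branch, and the rows $(i,j)=(0,0)$ collapse either to $\prod F_P$ (if $\overline w \in k_P^{*\ell}$) or to the unique unramified nonsplit degree $\ell$ extension $L_P$ (otherwise).

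The one row whose outcome is not forced by this bookkeeping alone is (NONRES, RES), and here I would use the non-splitness of $D_P$. By the Type RES hypothesis $Y_\eta = F_\eta(\sqrt[\ell]{u_\eta})$, where $u_\eta$ is the unit ramifying $D_\eta = D_0+(u_\eta,\pi_\eta)$; combined with $Y_\eta$ being unramified, this forces $Y_P=F_P(\sqrt[\ell]{u_P})$ for the very unit $u_P$ appearing in $D_P=(u_P,\pi_P)$. Since $P$ is a $B_{11}^{ns}$ point, $D_P$ is non-split, and the symbol presentation $(u_P,\pi_P)\neq 0$ in $\Br(F_P)$ forces $\overline{u_P}\notin k_P^{*\ell}$. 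Hence $Y_P$ is the unique unramified nonsplit cyclic degree $\ell$ extension $L_P$, as asserted in the table. The (NONRES, NONRES) row, by contrast, leaves $\overline w$ genuinely unconstrained modulo $\ell^{\mrm{th}}$ powers, which is exactly why both $L_P$ and $\prod F_P$ are listed there.

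The main obstacle I expect is this single use of $D_P$ being non-split in the (NONRES, RES) row; once this observation is in place, the rest of the table reduces to the same routine comparison of Kummer generators along the two branches that was carried out for $C_{11}^{Cold}$, $C_{11}^{Chilly}$, $C_{12}^{Cold}$ and $B_{10}^{s}$ points, together with the combinatorial exclusions coming from Lemma \ref{lemmaatetasplit-1} and Remark \ref{remark-ram-split-not-intersect}.
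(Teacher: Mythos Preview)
Your approach is the same as the paper's (which simply refers back to the proof of Proposition~\ref{propcoldexamine1} together with Lemma~\ref{lemmaatetasplit-1} and Remark~\ref{remark-ram-split-not-intersect}), and the use of $D_P$ non-split in the $(\mrm{NONRES},\mrm{RES})$ row is exactly the point that distinguishes this table from the $B_{10}^{s}$ table.

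There is one small oversight: you say that $(\mrm{NONRES},\mrm{RES})$ is \emph{the} row where non-splitness of $D_P$ enters, but in fact $(\mrm{SPLIT},\mrm{RES})$ must also be excluded, and this exclusion uses the same hypothesis. Your own argument shows that whenever $Y_\eta$ is RES one has $Y_P=F_P(\sqrt[\ell]{u_P})$, and since $P$ is $B_{11}^{ns}$ this is $L_P$, nonsplit. Combined with Lemma~\ref{lemmaatetasplit-1} (which forces $Y_P\simeq\prod F_P$ whenever $Y_{\eta'}$ is SPLIT), this rules out $(\mrm{SPLIT},\mrm{RES})$. By contrast, in the $B_{10}^{s}$ table the row $(\mrm{SPLIT},\mrm{RES})$ does appear, precisely because there $D_P$ is split and $u_P\in\widehat{A_P}^{*\ell}$, so $Y_P=\prod F_P$ is consistent. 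So the non-split hypothesis is needed in two places, not one; once you apply your RES argument uniformly, both $(\mrm{SPLIT},\mrm{RES})$ and $(\mrm{RAM},\mrm{RES})$ drop out (the latter because $Y_P=L_P$ is unramified at $\widehat{A_P}$, as you already implicitly have), and the table reduces to the eight rows listed.
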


\section{Blowups}
We repeatedly exploit the trick of blowing up\footnote{Note that with each blow up, the set $S_0$ for the new model is enlarged to include the intersection points of the exceptional curve and the closure of the strict transforms of $\mathcal{H}_{\mathcal{X}}$ and the set $N'_0$ is expanded to include the generic point of the exceptional curve. } our model at closed points to make the model more amenable for patching. In this section, assume $P\in C\cap C'$ where $C, C'$ are distinct irreducible curves in $\mathcal{H}_{\mathcal{X}}$ with generic points $\eta$ and $\eta'$ respectively. Let $\pi_P$ and $\delta_P$ be primes defining $C$ and $C'$ at $P$ as before. After blowing up the model at $P$ once, let $\Sigma$ denote the exceptional curve with generic point $\epsilon$ and let $\tilde{C}$ and $\tilde{C}'$ denote the strict transforms of $C$ and $C'$ respectively. Let the two new intersection points be $Q_1$ (where $\epsilon$ intersects $\tilde{C}$) and $Q_2$ (where $\epsilon$ intersects $\tilde{C}'$). 


\begin{lemma}[Blowing up a cold point]
\label{propblowupcold}
Let $P$ be a cold point and let $D_P = \brac{u_P\pi_P^m, v_P\delta_P}$ where $0<m<\ell$. Let $\phi : \mathcal{X}_{1}\to \mathcal{X}$ denote the blowup at point $P$. Then the exceptional curve $\Sigma$ obtained is of Type 1b and both $Q_1= C\cap \Sigma$ and $Q_2= C'\cap \Sigma$ are cold points.
\end{lemma}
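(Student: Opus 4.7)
The plan is to work in the two standard affine charts of $\phi:\mathcal{X}_1 \to \mathcal{X}$ and read off the form of $D$ in each. Near $Q_1 = \Sigma \cap \tilde C$, set $s := \pi_P/\delta_P$, so that $\pi_P = s\delta_P$ and $(s,\delta_P)$ is a regular system of parameters at $Q_1$, with $s$ cutting out $\tilde C$ and $\delta_P$ cutting out $\Sigma$. Symmetrically, near $Q_2 = \Sigma \cap \tilde{C}'$, take $t := \delta_P/\pi_P$, so that $(\pi_P,t)$ is the regular system at $Q_2$. Since $\widehat{A_P}$ is a complete regular local ring with finite residue field $k_P$, one has $\Br(\widehat{A_P})[\ell] \hookrightarrow \Br(k_P) = 0$; consequently the unramified summand $D_{00}$ trivializes on $\widehat{A_P}$ and, by pullback through $\phi$, on $\widehat{A_\epsilon}$ and each $\widehat{A_{Q_i}}$. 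Hence one may work with $D = (u_P\pi_P^m,\, v_P\delta_P)$ throughout the local analysis.

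In the $Q_1$-chart, substituting $\pi_P = s\delta_P$ and repeatedly applying bilinearity of the symbol together with the $\ell$-torsion identities $(a,a)=(a,-1)=0$ (valid for odd $\ell$) gives, after a short computation,
\[
D_{Q_1} \;=\; \bigl(u_P v_P^{-m}\, s^m,\; v_P\, \delta_P\bigr),
\]
which is the cold form at $Q_1$ with units $u_{Q_1} = u_P v_P^{-m}$, $v_{Q_1} = v_P$ and the same exponent $m$, $0 < m < \ell$. In the $Q_2$-chart, the substitution $\delta_P = t\pi_P$ together with the analogous symbol manipulation produces
\[
D_{Q_2} \;=\; \bigl(u_P\, \pi_P^{m},\; v_P\, u_P^{-m^{-1}}\, t\bigr),
\]
where $m^{-1}$ denotes the multiplicative inverse of $m$ modulo $\ell$; again the cold form with exponent $m$. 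Hence both $Q_1$ and $Q_2$ are cold.

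For the exceptional curve $\Sigma$, examine its generic point $\epsilon$ in the $Q_1$-chart, where $\delta_P$ is the uniformizer and the residue field is $k(\Sigma) = k_P(\overline{s})$. Decomposing
\[
D_\epsilon \;=\; \bigl(u_P v_P^{-m}\, s^m,\; v_P\bigr) \;+\; \bigl(u_P v_P^{-m}\, s^m,\; \delta_P\bigr),
\]
the first summand is unramified at $\epsilon$, while the second has tame residue $\overline{u_P v_P^{-m}\, s^m} \in k(\Sigma)^*/k(\Sigma)^{*\ell}$; this class is nontrivial since $0 < m < \ell$ and $\overline{s}$ is transcendental over $k_P$. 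So $D$ is tamely ramified along $\Sigma$, with residual cyclic degree $\ell$ extension $E_\epsilon = F_\epsilon\bigl(\sqrt[\ell]{u_P v_P^{-m}\, s^m}\bigr)$, and over $E_\epsilon$ the unramified summand trivializes because its first argument becomes an $\ell$-th power. Lemma \ref{lemmaindexformula} then yields $\ind(D_\epsilon) = 1\cdot\ell = \ell$, so $\epsilon$ is of Type 1b. The main bookkeeping obstacle is the $Q_2$-chart calculation, since a naive substitution does not immediately exhibit the cold shape; matching the tame residues of a cold ansatz $(u\pi_P^m,\, v t)$ against those of $D_{Q_2}$ along $\Sigma$ and $\tilde{C}'$ forces $u = u_P$ and $v = v_P\, u_P^{-m^{-1}}$ modulo $\ell$-th powers, after which equality in $\Br(F)$ follows up to unramified classes that vanish on $\widehat{A_{Q_2}}$.
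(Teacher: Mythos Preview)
Your proposal is correct and follows essentially the same approach as the paper: work in the two standard blow-up charts, rewrite the symbol $(u_P\pi_P^m,\,v_P\delta_P)$ in the new parameters, and read off the cold form at $Q_1$, $Q_2$ together with the nontrivial residue along $\Sigma$. The paper's argument is marginally more direct in two places: for $\Sigma$ it simply observes that $D_\epsilon$ is a degree-$\ell$ symbol (hence index $\leq\ell$) with nontrivial residue (hence not split), rather than invoking Lemma~\ref{lemmaindexformula}; and at $Q_2$ it obtains the cold form by straight symbol manipulation---using $(u_P\pi_P^m,\pi_P)=(u_P\pi_P^m,u_P^{-s})$ with $ms\equiv 1\pmod\ell$---rather than your residue-matching ansatz, though both routes yield the identical expression $D_{Q_2}=(u_P\pi_P^m,\,u_P^{-m^{-1}}v_P\,t)$.
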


\begin{figure}[hh]
\[
\begin{tikzcd}
\phantom{.} &   \re{C'}  \arrow[dd, dash] & \phantom{.}  \\
\re{C} \arrow[rr, dash, "\hspace*{3mm}\\ \bl{P}"] &  & \phantom{.}  \\
\phantom{.}  & \phantom{.}   &  \phantom{.} 
\end{tikzcd} \leadsto 
 \begin{tikzcd}
 \phantom{.}  & \re{\Sigma} \arrow[ddd, dash]  & \phantom{.}  \\
    \re{\tilde{C}} \arrow[rr, dash, "\hspace*{5mm}\\ \bl{Q_1}"]  & \phantom{.} & \phantom{.} \\
   \re{\tilde{C}'} \arrow[rr, dash , "\hspace*{5mm}\\ \bl{Q_2}"]  &  \phantom{.}   & \phantom{.} & \phantom{.} \\
\phantom{.}  & \phantom{.} & \phantom{.} & \phantom{.} 
\end{tikzcd}\]
\caption{Blowup of a point}
\end{figure}
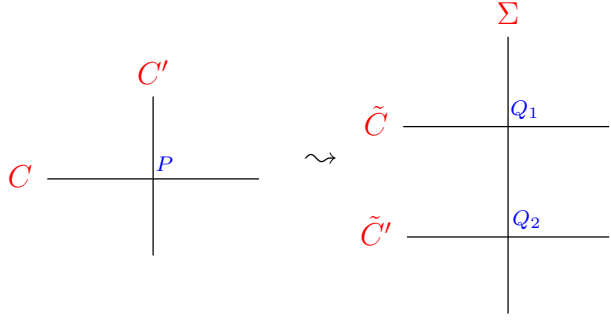

\begin{proof} Look at the local blow-up $\mathcal{Z}:= \mathrm{Proj} \brac{\frac{\widehat{A_P}[x, y]}{(x\pi_P -y \delta_P)}} \to \Spec(\widehat{A_P})$ at the maximal ideal of $\widehat{A_P}$. Setting $t = y/x$, we have $\pi_P = t\delta_P$.  Thus $\mathcal{Z}$ is the union of open affines $\Spec \frac{\widehat{A_P}[t]}{(\pi_P-t\delta_P)}$ and $\Spec \frac{\widehat{A_P}[\frac{1}{t}]}{\brac{\frac{\pi_P}{t}-\delta_P}}$ glued appropriately.

 By (\href{https://stacks.math.columbia.edu/tag/085S}{Lemma 085S}, stacks-project), we have $\widehat{\mathcal{O}_{\mathcal{X}_1, \epsilon}}=: \widehat{A_{\epsilon}} = \brac{\frac{\widehat{A_P}[t]}{\brac{\pi_P-t\delta_P}}}_{\brac{\pi_P,\delta_P}}^{\widehat{\phantom{xxxxxxx}}} $. Thus in $F_{\epsilon}$, the fraction field of $\widehat{A_{\epsilon}}$, both $\pi_P$ and $\delta_P$ are parameters.  Since $D_{\epsilon} = \brac{u_P\pi_P^m, v_P\delta_P}$, it has index at most $\ell$. The residue of $D_{\epsilon}$ is equal to $\overline{\frac{u_P\pi_P^m}{v_P^m\delta_P^m}} = \overline{u_Pv_P^{-m}t^m}$ which is non-trivial in the residue field $k_{\epsilon}=k_P(t)$. Therefore $\epsilon$ is of Type 1b.

Note that $\widehat{\mathcal{O}_{\mathcal{X}_1, Q_1}}=: \widehat{A_{Q_1}} = \brac{\frac{\widehat{A_P}[t]}{\brac{\pi_P-t\delta_P}}}_{\brac{t,\delta_P}}^{\widehat{\phantom{xxxxxxx}}} $ where $t$ defines $\tilde{C}$ and $\delta_P$ defines $\Sigma$ at $Q_1$ (cf. \cite{S07}, Pg 832, paragraph 1). Thus  over $F_{Q_1}$, the fraction field of $\widehat{A_{Q_1}}$, 
\[D_{Q_1}  = \brac{u_P{\delta}_P^mt^m, v_P{\delta}_{P}} = \brac{u_Pt^m, v_P{\delta}_{P}} + \brac{{\delta}_P^m,v_P{\delta}_{P}} = \brac{u_Pv_P^{-m}t^m, v_P{\delta}_{P}} \in \Br(F_{Q_1}).\]

Similarly $\widehat{\mathcal{O}_{\mathcal{X}_1, Q_2}}=: \widehat{A_{Q_2}} = \brac{\frac{\widehat{A_P}[1/t]}{\brac{\pi_P/t-\delta_P}}}_{\brac{1/t, \pi_P}}^{\widehat{\phantom{xxxxxxx}}} $ where $1/t$ defines $\tilde{C}'$ and $\pi_P$ defines $\Sigma$ at $Q_2$. Thus  over $F_{Q_2}$, the fraction field of $\widehat{A_{Q_2}}$ and for $s$ with $ms\cong 1\mod \ell$,
\[D_{Q_2}  = \brac{u_P{\pi}_P^m, v_P\frac{\pi_P}{t}} = \brac{u_P\pi_P^m, v_P\frac{1}{t}} + \brac{u_P{\pi}_P^m,\pi_{P}} = \brac{u_P\pi_P^m, u_P^{-s}v_P\frac{1}{t}} \in \Br(F_{Q_2}).\]

Hence both $Q_1$ and $Q_2$ are cold points. \end{proof}

We now  eliminate certain types of closed points listed in the classification in Section \ref{subsectionclosedpointsclassification}. 

\begin{proposition} \label{propnospecialpoints}
There exists a regular proper model such that $S_0$ does not contain points of Type $A_{11}^{s}, B_{11}^{s}, B_{21}^{s}, C_{11}^{Cool}, C_{12}^{Cool}, C_{12}^{Chilly}$ and  $C_{22}^{-}$.
\end{proposition}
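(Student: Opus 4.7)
The plan is to eliminate each bad type by a sequence of blowups at appropriate closed points of the starred types, analyzing the exceptional curve and the two newly created intersection points in each case. I work analogously to Lemma \ref{propblowupcold}: after a single blowup at $P$ with local equations $\pi_P = t\delta_P$, the exceptional divisor $\Sigma$ has parameter $t$ (resp.\ $1/t$) near $Q_1$ (resp.\ $Q_2$), and I compute the residue of $D$ along $\Sigma$ together with the Brauer classes $D_{Q_1}$, $D_{Q_2}$ at the new intersection points. The generic type of $\Sigma$ is determined by how the units $u_P, v_P$ combine after blowup.

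For the split types $A_{11}^{s}, B_{11}^{s}, B_{21}^{s}$, the hypothesis forces $D_P=0$ in $\Br(F_P)$, which places strong constraints on the relevant units. For instance at a $B_{11}^{s}$ point one has $D_P = (u_P,\pi_P) = 0$, forcing $\overline{u_P} \in k_P^{*\ell}$; the residue of $D$ along $\Sigma$ is then trivial, so $\epsilon$ is of Type $0$ or $1a$, and a further check using that the residual Brauer class pulls back from the split $D_P$ shows $\epsilon$ is Type $0$. The points $Q_1,Q_2$ then fall into the allowed $B_{10}^{s}$ or $A_{10}^{s}$ types, since at each new point at most one passing curve is in $\ram_{\mathcal{X}}(D)$. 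The arguments for $A_{11}^{s}$ and $B_{21}^{s}$ are completely analogous.

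For the cool points $C_{11}^{Cool}$ and $C_{12}^{Cool}$, where $D_P = D_{00} + (u_P, \pi_P) + (v_P, \delta_P)$ with $\overline{u_P}, \overline{v_P} \in k_P^{*\ell}$, the blowup rewrites $D$ near $Q_1$ as $D_{00} + (u_P, t) + (u_P v_P, \delta_P)$ (and symmetrically near $Q_2$). The residue along $\Sigma$ is $\overline{u_P v_P}$, an $\ell$-th power in $k_P$ and hence in $k_\Sigma$; thus $\epsilon$ is unramified for $D$, and its class has index at most $\ell$. The new intersection points $Q_1,Q_2$ then have only one ramified curve passing through them, landing them in allowed B-types. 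An essentially identical computation handles $C_{22}^{-}$ after a case split on whether $P$ is cold, chilly, or hot: the ramification contribution at $\Sigma$ is tamer than at $\eta$ or $\eta'$, and the exceptional curve is at worst Type 1b, reducing the Type 2 load so that the new intersection points are Type $C_{12}^{?}$ or simpler.

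The most delicate case is $C_{12}^{Chilly}$, since a single blowup can recreate a $C_{12}^{Chilly}$ point at $Q_2$: starting with $D_P = (v_P, \pi_P^j\delta_P)$, $0<j<\ell$, the computation yields $D_{Q_2} = (v_P^{j+1},\pi_P) + (v_P^{-1}, t)$, which is again chilly (ratio of residue classes is $\overline{v_P^j}$, still a generator of the cyclic group $k_P^{*}/k_P^{*\ell}$) with the strict transform $\tilde{C}'$ still of Type $2$. The resolution, following the strategy of Saltman in \cite{S07}, is to introduce a strictly decreasing invariant — a \emph{chilly distance}, or equivalently a weighted count of chilly intersections along the Type 2 curve $C'$ — and to check that each blowup reduces it. Finite termination then follows. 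This termination argument is the main technical obstacle: it requires precise bookkeeping of how the ramification divisor and residue valuations transform, essentially reproducing the framework of \cite{S07}. Once $C_{12}^{Chilly}$ points are eliminated, the earlier case analyses show that all the starred types are resolved on the resulting model.
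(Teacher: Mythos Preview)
Your overall approach---blow up each starred type, compute $D$ at the exceptional curve $\epsilon$ and at $Q_1,Q_2$, and check the new types are better---is exactly the paper's. The split types and $C_{22}^{-}$ are handled correctly (for $C_{22}^{-}$ the paper, like you, reduces to the earlier cases after one blowup and then recurses). Two points need tightening.

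For the cool points you stop at ``$\epsilon$ unramified, index at most $\ell$'' and then assert the new points land in \emph{allowed} B-types. That is not yet justified: if $\epsilon$ were Type~1a you could produce $B_{11}^{s}$ or $B_{21}^{s}$ again. The missing observation is that at a cool point $D_P=0$ in $\Br(F_P)$ (both $\overline{u_P},\overline{v_P}\in k_P^{*\ell}$), hence $D_\epsilon=0$ and $\epsilon$ is Type~0---the same one-line argument you already gave for the split types. Then the new points are $B_{10}^{s}$ and $B_{i0}^{s}$, which are allowed.

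The real gap is your $C_{12}^{Chilly}$ termination. You correctly compute $D_{Q_2}=(v_P^{j+1},\pi_P)+(v_P^{-1},t)$, but then reach for an unspecified ``chilly distance'' invariant \`a la Saltman. That machinery in \cite{S07} is about breaking chilly \emph{loops} in the dual graph and is not what is needed here. The termination is already contained in your formula: rewrite $D_{Q_2}=(v_P,\pi_P^{\,j+1}(1/t))$ with $\pi_P$ cutting $\Sigma$ and $1/t$ cutting $\tilde{C}'$. The chilly exponent has gone from $j$ to $j+1$. Repeating the blowup at the new $C_{12}^{Chilly}$ point increments it again, and after at most $\ell-j$ steps one reaches $j+1\equiv 0\pmod\ell$, at which point the residue $\overline{v_P^{\,j+1}}$ is trivial, $\Sigma$ is Type~1a (not Type~0, since $D_{Q_2}\neq 0$), and the intersection with $\tilde{C}'$ is of Type $B_{21}^{ns}$. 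No auxiliary invariant is needed; the paper's argument is just this increment-and-terminate observation.
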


\begin{proof}
Let $P$ denote an intersection point of one of types listed in the proposition and let $\Sigma$ and $\epsilon$ denote the exceptional curve and its generic point obtained after blowing up $P$ once. The following subtypes can be avoided by blowing up the model once at $P$. 

\textbf{Type $A_{11}^{s}$}: Since $D_P$ is split, $D\otimes F_{\epsilon}$ is split too and hence $\epsilon$ is of Type 0. Thus the two new intersection points are obtained by Type 1a curves ($\tilde{C}$ or $\tilde{C}'$) intersecting a curve of Type 0 ($\Sigma$). $\sqbrac{{A_{11}^{s}\mapsto A_{10}^{s} + A_{10}^s}}$.

\textbf{Type $B_{i1}^{s}$ ($i=1,2$)}: Since $D_P$ is split, $D\otimes F_{\epsilon}$ is split too and hence $\epsilon$ is of Type 0. Thus the two new intersection points are obtained by Type 1b/2 or 1a curves ($\tilde{C}$ or $\tilde{C}'$) intersecting a curve of Type 0 ($\Sigma$).  $\sqbrac{{B_{i1}^{s}\mapsto B_{i0}^{s} + A_{10}^s}}$.

\textbf{Type $C_{1i}^{Cool}$ ($i=1,2$)}:  $\sqbrac{{C_{11}^{Cool}\mapsto B_{10}^{s} + B_{i0}^s}}$ (cf. \cite{S07}, Theorem 2.6).

\textbf{Type $C_{12}^{Chilly}$}: This subtype can be avoided by blowing up the model $\mathcal{X}$ consecutively. Since $D_P = (v_P, \pi_P^j\delta_P)$, after one blowup at $P$, $D\otimes F_{\epsilon}$ has index at most $\ell$ and hence $\epsilon$ is of Type 0 or 1. Thus the two new intersection points are $Q_1$ (Type 11/01 : where $\Sigma$ intersects $\tilde{C}$) and $Q_2$ (Type 12/02 : where $\Sigma$ intersects $\tilde{C}'$).
 
Let us investigate the case when $\epsilon$ is of Type 1b. Then as in the proof of Lemma \ref{propblowupcold}, $D_{Q_2} = \left(v_P, \pi_P^{j+1}\frac{1}{t}\right)$ where $1/t$ defines $\tilde{C}'$ and $\pi_P$ defines $\Sigma$ at $Q_2$ with $\delta_P = \frac{\pi_P}{t}$. Hence $Q_2$ is again of Type $C_{12}^{Chilly}$. However the ramification along the Type 1b curve ($C\leadsto \Sigma$) has changed as evinced by the increase $j\leadsto j+1$. We can keep blowing up the intersection points of the strict transforms of $C'$ and the exceptional curve repeatedly till the new exceptional curve is of Type 0 or 1a and thus eliminate intersection points of the shape $C_{12}^{Chilly}$.

\textbf{Type $C_{22}^{-}$}:  This subtype can again be avoided by blowing up the model $\mathcal{X}$ an appropriate number of times at $P$.  Since $D_P$ has index at most $\ell$, after one blowup, $\epsilon$ is of Type 0, 1a or 1b. Thus the two new intersection points are obtained by Type 2 curves ($\tilde{C}$ or $\tilde{C}'$) intersecting a curve of Type 0,1a or 1b ($\Sigma$). In case $B_{21}^{s}$ or $C_{12}^{Chilly}$ points are generated, further blow up as in the previous steps to eliminate them. \end{proof}

\subsection{Limiting neighbours} 
We introduce the terminology that the closed points $P$ and $Q$ in $S_0$ are \textit{Type $x$ neighbours} if they both lie on the closure (denoted $\overline{\eta}$) of some $\eta\in N'_0$ of Type $x$ where $x\in \{0, 1a, 1b, 2\}$. Let $P, C, C', \eta, \eta', \pi_P, \delta_P, \tilde{C}, \tilde{C}', \Sigma, \epsilon$ be as before. We first begin with the following proposition that records the configuration of $Y$ when $\mathcal{X}$ is blown up at a hot point $P$ once. 

\begin{proposition}
\label{propblowuphotexamine} Let $P$ be a hot point of $\mathcal{X}$ and let $\phi : \mathcal{X}_{new}\to \mathcal{X}$ be the blowup at $P$. Without loss of generality, let $D_P = (u_P, \pi_P)$. Then $Q_2$ is a hot point in $\mathcal{X}_{new}$ while $Q_1$ is a chilly point. Further the following table records possible configurations of $Y_{\eta}$, $Y_{\eta'}$ and $Y_{\epsilon}$. In particular if $Y_{\eta'}$ is not of Type RAM, then $Y_{\epsilon}$ is not of Type NONRES. \end{proposition}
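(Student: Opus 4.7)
The plan is to mimic Lemma \ref{propblowupcold}: first extract the local structure at $\epsilon$, $Q_1$, $Q_2$ from the affine blowup charts, then read off the Brauer data of $D$ at each new point, and finally analyze $Y = F(\sqrt[\ell]{y})$ using the good-shape decomposition of $y$ at $P$.

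Following the local blowup at the maximal ideal of $\widehat{A_P}$ with relation $\pi_P = t\delta_P$ (and $s = 1/t$ on the opposite chart so that $\delta_P = s\pi_P$), one identifies $\widehat{A_\epsilon}$ as a complete DVR with parameter $\delta_P$ and residue field $k_P(t)$; $\widehat{A_{Q_1}}$ as complete regular local with parameters $(t, \delta_P)$ and residue field $k_P$; and $\widehat{A_{Q_2}}$ with parameters $(s, \pi_P)$ and residue field $k_P$. Since $P$ is hot with (WLOG) $y=\{1\}$, we have $[D] = D_{00} + (u_P, \pi_P) + (v_P, \delta_P) \in \Br(F)$ where $D_{00}$ is unramified at $A_P$, $\bar u_P \notin k_P^{*\ell}$, and $v_P \in \widehat{A_P}^{*\ell}$. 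Because $k_P$ is finite, $D_{00}$ is already split in $\Br(\widehat{A_P})$ and hence at all three new points; and the $(v_P, \cdot)$ contributions vanish since $v_P$ remains an $\ell$-th power in each completion. Substituting the blowup relations gives
\[D_{Q_1} = (u_P, t) + (u_P, \delta_P), \quad D_{Q_2} = (u_P v_P, \pi_P) + (v_P, s) = (u_P, \pi_P), \quad D_\epsilon = (u_P, t) + (u_P, \delta_P).\]
At $Q_1$ the ramification subgroups along both $t$ and $\delta_P$ equal $\langle \bar u_P \rangle \neq \{1\}$, so $Q_1$ is chilly. At $Q_2$ the $\pi_P$-subgroup is $\langle \bar u_P\bar v_P\rangle = \langle \bar u_P\rangle \neq \{1\}$ while the $s$-subgroup is $\langle \bar v_P\rangle = \{1\}$, so $Q_2$ is hot. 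At $\epsilon$ the residue along $\delta_P$ is $\bar u_P$, non-trivial in $k_P(t)^*/k_P(t)^{*\ell}$; by Lemma \ref{lemmaindexformula} $\ind(D_\epsilon) = \ell$, so $\epsilon$ is of Type 1b with residual extension $F_\epsilon(\sqrt[\ell]{u_P})$.

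For the shape of $Y$, the good-shape hypothesis on $\divi(y)$ gives $y \equiv w \pi_P^i \delta_P^j \pmod{\widehat{A_P}^{*\ell}}$ for some $w \in \widehat{A_P}^*$ and $0 \le i, j < \ell$. The hypothesis that $Y_{\eta'}$ is not of Type RAM forces $j = 0$; substituting $\pi_P = t\delta_P$ yields $y \equiv w t^i \delta_P^i$ modulo $\ell$-th powers in $F_\epsilon$. If $i \neq 0$, then $Y_\epsilon = F_\epsilon(\sqrt[\ell]{wt^i\delta_P^i})$ is ramified, so of Type RAM. If $i = 0$, then $Y_\epsilon = F_\epsilon(\sqrt[\ell]{w})$ is unramified; it is SPLIT iff $\bar w \in k_P^{*\ell}$. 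Otherwise $\bar w$ and $\bar u_P$ are both non-trivial in the cyclic group $k_P^*/k_P^{*\ell}$ of order $\ell$, hence generate the same subgroup, so $F_\epsilon(\sqrt[\ell]{w}) = F_\epsilon(\sqrt[\ell]{u_P})$, which is precisely the RES of $D_\epsilon$. Thus $Y_\epsilon \in \{\mrm{RAM}, \mrm{SPLIT}, \mrm{RES}\}$ in every case, never NONRES. The remaining entries of the configuration table are populated by tabulating the compatible choices of $(Y_\eta, Y_{\eta'})$ in each of these subcases, using Remark \ref{remarktype2Ynotsplit} and Proposition \ref{propositionYathotpoints} to eliminate impossible combinations.

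The main obstacle is the careful bookkeeping: (i) extracting parameters and residue fields cleanly from both blowup charts; (ii) verifying that $v_P$ persists as an $\ell$-th power and that $D_{00}$ remains split at each new point, so only the $(u_P, \pi_P)$ part contributes to ramification; and (iii) pinning down the RES-versus-NONRES distinction at $\epsilon$ via the crucial arithmetic input that $k_P^*/k_P^{*\ell}$ is cyclic of prime order $\ell$, which forces all non-trivial residue classes in $k_P$ to generate the same subgroup in the larger $k_P(t)^*/k_P(t)^{*\ell}$.
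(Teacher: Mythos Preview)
Your proof is correct and follows essentially the same route as the paper: compute $D$ at $\epsilon$, $Q_1$, $Q_2$ via the two affine blowup charts, check the ramification subgroups to classify $Q_1$ as chilly and $Q_2$ as hot, and then analyze $Y$ through the good-shape decomposition $y \equiv w\pi_P^i\delta_P^j$.

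The one organizational difference worth noting: for the ``in particular'' claim, the paper builds the full table of $(Y_{\eta'},Y_\eta,Y_\epsilon)$ configurations row by row and reads off that NONRES never appears in the $Y_\epsilon$ column when $Y_{\eta'}\neq\mathrm{RAM}$. You instead argue directly: $Y_{\eta'}$ not RAM forces $j=0$, whence $Y_\epsilon$ is either ramified ($i\neq 0$) or unramified with residue $\bar w\in k_P^*$; and since $k_P^*/k_P^{*\ell}$ is cyclic of prime order, any nontrivial $\bar w$ must agree with $\bar u_P$ up to $\ell$-th powers in $k_P(t)$, forcing RES. This is a cleaner conceptual explanation of \emph{why} NONRES cannot occur, whereas the paper's tabulation verifies the same fact case by case while simultaneously producing the full configuration table needed later. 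Both approaches rest on the same arithmetic input.
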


\begin{proof}
Since $D_P=\brac{u_P, \pi_P}$ where $u_P\in \widehat{A_P}^*$, $\eta$ is of Type 1b while $\eta'$ is of Type 2. Thus $D_{Q_1} = \brac{u_P, \delta_Pt}$ where $t$ defines $\tilde{C}$ and $\delta_P$ defines $\Sigma$ at $Q_1$ where $\pi_P = t\delta_P$. Similarly $D_{Q_2} = \brac{u_P, \pi_P}$ where $1/t$ defines $\tilde{C}'$ and $\pi_P$ defines $\Sigma$ at $Q_2$. Thus we have replaced $P$ with hot point $Q_2$ and chilly point $Q_1$ in $\mathcal{X}_{new}$.

Let $\epsilon\in \mathcal{X}_{new}$ denote the generic point of the exceptional curve $\Sigma$ and by abuse of notation, a parameter of $F_{\epsilon}$. Since $D_{\eta'}$ is division (\cite{S07}, Theorem 2.5, Proposition 0.5), $Y_{\eta'}$ cannot be SPLIT. If $Y_{\eta'}$ is of Type RAM, then $Y_{\eta}$ cannot be SPLIT or RES. If $Y_{\eta'}$ is of Type RES, then $Y_P \simeq \prod F_P$ and hence $Y_{\eta}$ can only be SPLIT or  NONRES.  Finally observe that if $Y_{\eta'}$ is NONRES, then $Y_{P}$ is non-split by Proposition \ref{propositionYathotpoints} and hence $Y_{\eta}$ cannot be SPLIT by Lemma \ref{lemmaatetasplit-1}. 

Thus we have the following table (in which we use the notations $v\in \widehat{A_P}^*$, $w\in \widehat{A_{\epsilon}}^*$, $0<r<\ell$ and $F_P\brac{\sqrt[\ell]{u_P}}$ to be the unique degree $\ell$ unramified field extension of $F_P$).

\centering
$
\begin{array}{|c|c|c|c|c|}
\hline 
Y_{\eta'} & Y_{\eta} & Y_P &  Y_{\epsilon}  & \mrm{Type\ of \ }Y_{\epsilon}\\
\hline
{\mrm{RAM}}& \mrm{RAM} & F_P\brac{\sqrt[\ell]{v\pi_P^r\delta_P}} & F_{\epsilon}\brac{\sqrt[\ell]{w\epsilon^{r+1}}} & {{\mrm{RAM/NONRES}}} \\
\hline
{\mrm{RAM}}& \mrm{NONRES} & F_P\brac{\sqrt[\ell]{v\delta_P}} & F_{\epsilon}\brac{\sqrt[\ell]{w\epsilon}} & {\mrm{RAM}}\\
\hline
{\mrm{RES }}& \mrm{SPLIT} & \prod F_P & \prod F_{\epsilon} & {\mrm{SPLIT}}\\
\hline
{\mrm{RES}}& \mrm{NONRES} & \prod F_P & \prod F_{\epsilon}  & {\mrm{SPLIT}}\\
\hline
{\mrm{NONRES}}& \mrm{RAM} & F_P\brac{\sqrt[\ell]{v\pi_P}} & F_{\epsilon}\brac{\sqrt[\ell]{w\epsilon}} & {\mrm{RAM}}\\
\hline
{\mrm{NONRES}}& \mrm{RES} & F_P\brac{\sqrt[\ell]{u_P}} & F_{\epsilon}\brac{\sqrt[\ell]{u_P}} & {\mrm{RES}}\\
\hline
{\mrm{NONRES}}& \mrm{NONRES} & F_P\brac{\sqrt[\ell]{u_P}}  & F_{\epsilon}\brac{\sqrt[\ell]{u_P}} & {\mrm{RES}}\\
\hline
\end{array}$
\label{TableYathotpointQ2}
\captionof{table}{Table giving shape of $Y$ at hot point $Q_2$}\end{proof}

In the following proposition, we blow up further so as to arrange for a model $\mathcal{X}$ such that its marked points do not have any `difficult' neighbours. This will be helpful when constructing $E_{1,\eta}$ and $E_{2,\eta}$ along codimension one points $\eta$ lying in the special fiber $X_0$.

\begin{proposition}
\label{propblowupchilly}
\label{propblowuphot}
\label{propblowupB10}

There exists a sequence of blowups $\phi : \mathcal{X}_{new}\to \mathcal{X}$ such that for any $\eta\in \brac{N'_0}_{\mathcal{X}_{new}}$, the following hold: 

\begin{enumerate}
\item
If $\eta$ is of Type 0 containing a $A_{10}^{s}$, $B_{10}^{s}$ or $B_{20}^{s}$ marked point $P$, then there is at most only one other marked point $Q\in \overline{\eta}$ and it is of Type $A_{00}^{s}$.
\item
If $\eta$ is of Type 1b containing a $C_{11}^{Chilly}$ marked point $P$, then there is at most only one other marked point $Q\in \overline{\eta}$ and it is of Type $B_{11}^{ns}$ or $C_{11}^{Chilly}$.
\item
If $\eta$ is of Type 1b containing a $C_{12}^{Hot}$ marked point $P$, then there is at most only one other marked point $Q\in \overline{\eta}$ and it is of Type $B_{11}^{ns}$. 
\end{enumerate}

\end{proposition}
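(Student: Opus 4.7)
The plan is an iterative blow-up procedure tailored to each case. For the given $\eta$, I will identify the set of \emph{forbidden} neighbour types on $\overline{\eta}$ (those absent from the allowed list in the conclusion) and blow up each forbidden $Q \in \overline{\eta}\setminus\{P\}$. At every such blow-up, the types of the new exceptional curve $\Sigma$ and of the new intersection $Q_1 = \tilde{\eta}\cap\Sigma$ are read off from the local Brauer class $D_Q$ and the chosen parameters exactly as in the proof of Lemma \ref{propblowupcold}, extended to chilly, hot and unramified analogues. Termination is enforced by a complexity measure on the set of marked points: a weighted count of forbidden neighbours, augmented by the ramification data of $D$ and $Y$ along the relevant generic points.

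Case (1) is the cleanest. Since $\eta$ is of Type $0$, the only neighbour types on $\overline{\eta}$ surviving Proposition \ref{propnospecialpoints} are $A_{00}^s$, $A_{10}^s$, $B_{10}^s$, $B_{20}^s$. At every bad $Q$ among the last three, $D_Q$ is split: for $A_{10}^s$ because $D$ is unramified at $Q$ with finite residue field, and for $B_{10}^s$ and $B_{20}^s$ by Lemma \ref{lemmaatetasplit}. The blow-up formulas then force $\Sigma$ to be of Type $0$ with $D_\epsilon$ split, and $Q_1$ to sit at an intersection of two Type-$0$ curves with $D_{Q_1} = 0$, i.e.\ of type $A_{00}^s$. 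No new forbidden neighbour appears on $\overline{\tilde{\eta}}$, so a single pass through the bad neighbours produces the desired configuration.

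Cases (2) and (3) follow the same template but with significantly more bookkeeping. For each forbidden neighbour type I apply the blow-up formulas of Lemma \ref{propblowupcold} and Proposition \ref{propblowuphotexamine}: at a $C_{12}^{Hot}$ neighbour these produce a chilly intersection on $\overline{\tilde{\eta}}$ together with a new hot intersection on the strict transform of the Type~2 curve, so iterating the blow-up of hot points drives them off $\overline{\eta}$; at $C_{11}^{Cold}$ and $C_{12}^{Cold}$ neighbours the cold blow-up of Lemma \ref{propblowupcold} modifies the ramification exponent in a controlled way and, after finitely many iterations, leaves only allowed types on $\overline{\tilde{\eta}}$. The main obstacle is a $B_{10}^s$ neighbour of a Type~1b $\eta$: a single blow-up does not remove it, since $\Sigma$ is of Type $0$ and $Q_1$ is again $B_{10}^s$. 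Overcoming this requires carefully tracking the chain of $A_{00}^s$ points produced by iterated blow-ups and using them to separate the displaced $B_{10}^s$ from $P$, so that after finitely many steps at most one intersection of allowed type remains on $\overline{\tilde{\eta}}$. Verifying termination of this procedure, and checking that no forbidden neighbour reappears on some later $\tilde{\eta}$ further out in the blow-up tower, is the technical heart of the proposition.
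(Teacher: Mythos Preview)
Your proposal has a structural gap already visible in Case~(1). The conclusion requires not only that the other marked points on $\overline{\eta}$ be of type $A_{00}^s$, but that there be \emph{at most one} such point. Blowing up a forbidden neighbour $Q$ replaces it on the strict transform $\tilde{\eta}$ by an $A_{00}^s$ point $Q_1$, but the total number of marked points on $\tilde{\eta}$ is unchanged, and $P$ remains on $\tilde{\eta}$. If the original $\eta$ carries $P$ together with several neighbours (forbidden or already $A_{00}^s$, the latter of which you do not touch), your procedure terminates with $P$ still on $\tilde{\eta}$ and multiple marked neighbours, violating the count condition. The same defect infects Cases~(2) and~(3): converting neighbour types on $\tilde{\eta}$ cannot by itself enforce that $\tilde{\eta}$ has at most two marked points. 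Your acknowledged $B_{10}^s$ obstacle is one symptom; another is that by Lemma~\ref{propblowupcold} a cold neighbour remains cold after blow-up, so your iteration on cold neighbours never reaches an allowed type.

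The paper's proof proceeds in the opposite direction: one blows up $P$ itself, not its neighbours. After the blow-up the offending point type migrates from $\tilde{\eta}$ onto the fresh exceptional curve $\Sigma$, which has exactly two marked points $Q_1,Q_2$ by construction; the count condition for $\Sigma$ is then automatic. In Case~(1), $\Sigma$ is of Type~$0$, $Q_1=\tilde{\eta}\cap\Sigma$ is $A_{00}^s$, and $Q_2=\tilde{\eta}'\cap\Sigma$ inherits the type of $P$; the strict transform $\tilde{\eta}$ has lost one bad point, and once all bad points are blown up the hypothesis becomes vacuous on it. In Cases~(2) and~(3) one blows up the chilly (resp.\ hot) point $P$ and iterates, tracking the exponent $j$ in the shape $[1,j]$ of $D_P$: each blow-up increments $j$, so after finitely many steps $j+1\equiv 0\pmod\ell$, the exceptional curve becomes of Type~$1a$, and the relevant intersection is $B_{11}^{ns}$. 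The $B_{10}^s$ and cold-neighbour difficulties simply do not arise when one blows up the correct point.
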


\begin{proof} Let $P, C, C', \eta, \eta', \pi_P, \delta_P, \tilde{C}, \tilde{C}', \Sigma, \epsilon$ be as before. Recall that $Q_1 = \tilde{C}\cap \Sigma$ while $Q_2 = \tilde{C}'\cap \Sigma$ are the two new marked points obtained after blowing up at $P$. We investigate each case separately.

1. Let $\eta$ be of Type 0 with a marked point $P$ as above. Since $D_P$ is split, $\epsilon$ is of Type 0 and has exactly two marked points $Q_1$ and $Q_2$ lying on it. For $\{e,f\}=\{1,2\}$, we see that $Q_e$ replaces $P$ and has at most one Type 0 neighbour $Q_f$ which is necessarily of Type $A_{00}^{s}$.

2. Let $\eta$ be of Type 1b with a chilly point $P$. This case is reminiscent of the breaking of chilly loops in (\cite{S07}, Corollary 2.9).

If $D_P = \left(u_P, \pi_P^m\delta_P^n\right)$ for some unit $u_P$ and $0 < m, n < \ell$, we say the algebra is of the shape $[m,n]_{C, C'}$. Let $x^{-1} = \frac{1}{x}\in \brac{\mathbb{Z}/\ell\mathbb{Z}}^*$. Then $[m,n]_{C, C'} = [1, nm^{-1}]_{C, C'} = [mn^{-1}, 1]_{C, C'}$ as $\brac{u_P, \pi_P^m\delta_P^n}  = \brac{u_P, \brac{\pi_P^m\delta_P^n}^{m^{-1}m}} =  m \brac{u_P, \brac{\pi_P^m\delta_P^n}^{m^{-1}}} = \brac{u_P^m , \pi_P\delta_P^{nm^{-1}}}$.

Since $P$ is a $C_{11}^{Chilly}$ point, $D_P$ is of the shape $[1,j]_{C, C'}$ for some $0<j<\ell$. After a single blow up, as in the proof of Proposition \ref{propnospecialpoints}, $D_{Q_1} = [j, j+1]_{\tilde{C}, \Sigma}$ and $D_{Q_2}  = [1, j+1]_{\tilde{C}', \Sigma}$. Hence either $j+1\cong 0\mod \ell$ and $\epsilon$ is a Type 1a curve \footnote{Note that when $j+1\cong 0\mod \ell$, $D_{Q_2}$ is still not a split algebra and hence $\epsilon$ cannot be a Type $0$ curve by Lemma \ref{lemmaatetasplit}.} or $j+1<\ell$, $\epsilon$ is a Type 1b curve and both $Q_1$ and $Q_2$ are $C_{11}^{Chilly}$ points again.  If $j+1<\ell$, blow up the point $Q_2$ again. Repeating this process, we get a model $\mathcal{X}_{1}$ where the closure of the strict transform of $C'$ intersects an exceptional curve of Type 1a. Carry out the same procedure on $Q_1$, the other intersection point till the closure of the strict transform of $C$ also intersects an exceptional curve of Type 1a.

3. Let $\eta$ be of Type 1b with a hot point $P$. By Proposition \ref{propblowuphotexamine}, blowing up the model at $P$ yields a hot point $Q_2$ which has only $Q_1$, a chilly point, as a Type 1b neighbour. Now following the proof of the previous case and blowing up the chilly point $Q_1$ repeatedly, we see that $Q_2$ will only have a $B_{11}^{ns}$ point as a Type 1b neighbour at most. \end{proof}

\subsection{The final model $\mathcal{X}$}
\label{subsectionfinalmodel}
Recall that $\mathcal{X}$ is arranged such that the divisor $\mathcal{H}_{\mathcal{X}}$ is in good shape. We note that this property is preserved under blowups (cf. proof of Proposition \ref{propgoodshape}). Thus using Propositions \ref{propnospecialpoints}, \ref{propblowupchilly}, from now on we can and do assume that our model $\mathcal{X}$ has no marked points of Type $A_{11}^{s}$, $B_{11}^{s}$, $B_{21}^{s}$, $C_{11}^{Cool}$, $C_{12}^{Cool}$, $C_{12}^{Chilly}$ and  $C_{22}^{-}$. Further we also assume that any $C_{11}^{Chilly}$ point has only Type 1b neighbours which are either again $C_{11}^{Chilly}$ or $B_{11}^{ns}$, any $C_{12}^{Hot}$ point can be a Type 1b neighbour at most of one other point which should be of Type $B_{11}^{ns}$ and any $A_{10}^{s}$, $B_{10}^s$ or $B_{20}^s$ point has at most one Type 0 neighbour which will necessarily be of Type $A_{00}^s$. Note also that in constructing such a model (cf. the proof of Proposition \ref{propblowuphot}, hot point case), we would have blown up the original hot points exactly once and hence would have arranged for the shape of $Y$ at any hot point in the final model to be as given by Proposition \ref{propblowuphotexamine}. We finally fix parameters $\pi_{\eta}$ for each $\eta\in N'_0$ as in Section \ref{subsubsection-modelparameter}, which further determine a system of parameters for each $P\in S_0$.

\subsection{Graphs}
\subsubsection{Labelling curves with $\{$Ch, C, H, Z$\}$ labels}
\label{ChCHZlabels}
Let $\gamma\in N'_0$ be of Type 1b with $Y_{\gamma}\simeq \prod F_{\gamma}$. Using Proposition \ref{propblowupchilly}, we label it as follows:

\begin{itemize}
\item[-]{$\gamma$ is a \textbf{Ch-curve} if $\overline{\gamma}\cap S_0$ contains a chilly point.} Note that $\overline{\gamma}\cap S_0$ will consist of marked points of Types  $B_{11}^{ns}$ and $C_{11}^{Chilly}$ only.
\item[-]{$\gamma$ is a \textbf{C-curve} if $\overline{\gamma}\cap S_0$ contains a cold point.} Note that $\overline{\gamma}\cap S_0$ will consist of marked points of Types $B_{10}^{s}$, $B_{11}^{ns}$, $C_{11}^{Cold}$ and $C_{12}^{Cold}$ only. 
\item[-]{$\gamma$ is a \textbf{H-curve} if $\overline{\gamma}\cap S_0$ contains a hot point.} Note that $\overline{\gamma}\cap S_0$ will consist of marked points of Types  $B_{11}^{ns}$ and $C_{12}^{Hot}$ only.
\item[-]{$\gamma$ is a \textbf{Z-curve} if it is not a Ch, C or H-curve.} Note that $\overline{\gamma}\cap S_0$ will consist of marked points of Types $B_{10}^{s}$ or $B_{11}^{ns}$ only.
\end{itemize}

Thus the sets of Ch, C, H and Z-curves are mutually disjoint. Note also that when you blow up a cold point $P$ on a C-curve $\eta$, then the exceptional curve obtained is again a C-curve and the two new marked points obtained are again cold points (Lemmata \ref{lemmaatetasplit-1} and \ref{propblowupcold}).

\subsubsection{A partial dual graph}
In subsequent sections, we will prescribe patching data $E_{1,\eta}$ and $E_{2,\eta}$ for $\eta\in N_0:= N'_0\cap X_0$. 
Ensuring compatibility at branches can be, in part, turned into a colouring problem for \textit{a partial dual graph} built as follows: 

Construct an undirected graph $\Delta$ with vertex set $V_{\Delta}$ consisting of $\eta\in N'_0$ of Type 1b or 2.  The edge set $J_{\Delta}$ consists of cold points in $S_0$. So if $\overline{\eta}, \overline{\eta'}\in V_{\Delta}$ intersect at a cold point $P$ in our model, then they are joined by an edge labelled $P$. Note that therefore multiple edges between distinct vertices are allowed, while self loops are not. Blowing up a cold point $P$ has the effect of adding a vertex in middle of the edge $P$ in $\Delta$.

\[
\begin{tikzcd}
\phantom{.} &   \re{C'}  \arrow[dd, dash] & \phantom{.}  \\
\re{C} \arrow[rr, dash, "\hspace*{3mm}\\ \bl{P}"] &  & \phantom{.}  \\
\phantom{.}  & \phantom{.}   &  \phantom{.} 
\end{tikzcd} \leadsto 
 \begin{tikzcd}
 \phantom{.}  & \re{\epsilon} \arrow[ddd, dash]  & \phantom{.}  \\
    \re{C} \arrow[rr, dash, "\hspace*{5mm}\\ \bl{Q_1}"]  & \phantom{.} & \phantom{.} \\
   \re{C'} \arrow[rr, dash , "\hspace*{5mm}\\ \bl{Q_2}"]  &  \phantom{.}   & \phantom{.} & \phantom{.} \\
\phantom{.}  & \phantom{.} & \phantom{.} & \phantom{.} 
\end{tikzcd}\]

\[
\begin{tikzcd}
\re{C}  \arrow[rr, dash, "\bl{P}"] & \phantom{.} & \re{C'}
\end{tikzcd} \leadsto 
 \begin{tikzcd}
\re{C}  \arrow[r, dash, "\bl{Q_1}"] & \re{\epsilon}  \arrow[r, dash, "\bl{Q_2}"] & \re{C'}
\end{tikzcd}\]

\subsubsection{Primary colouring of $\Delta$}
\label{sectionfirstcolouring}
We now present a combinatorial colouring proposition, reserving for later the explanation of the precise relevance of this to the patching problem. The following guarantees that after finitely many blowups of cold points, there exists a `suitable' colouring of the vertices of $\Delta$ with the colours red (R), green (G), and blue (B). More precisely


\begin{proposition}
\label{propgraphcolour}
There exists a sequence of blowups of cold points on C-curves, $\phi : \mathcal{X}_{new}\to \mathcal{X}$, such that the vertices of the new partial graph $\Delta_{new}$ can be coloured with colours blue (B), green (G) and red (R) such that 
\begin{enumerate}
\item
$\eta\in V_{\Delta_{new}}$ is coloured green if and only if $\eta$ is not a C-curve,
\item
Any non-green vertex with an edge to a green vertex is coloured red,
\item
Any non-green vertex with an edge to a red vertex is blue,
\item
Any non-green vertex with an edge to a blue vertex is red.
\end{enumerate}
\end{proposition}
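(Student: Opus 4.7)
The plan is to reformulate the question as a constrained 2-colouring problem on the subgraph $\Delta_C$ of $\Delta$ induced by the C-curves. Condition (1) forces the colour G on exactly the non-C-curve vertices, while the remaining conditions (2)--(4) together are equivalent to demanding a proper $\{R,B\}$-colouring of $\Delta_C$ subject to the constraint that every ``boundary'' C-curve --- one sharing a cold-point edge of $\Delta$ with some non-C-curve vertex --- be coloured R. Such a colouring of $\Delta_C$ exists if and only if (i) $\Delta_C$ is bipartite, and (ii) within each connected component of $\Delta_C$ all boundary C-curves lie on a single side of the bipartition. So the task becomes: find a finite sequence of blowups of cold points on C-curves that makes (i) and (ii) hold for $\Delta_{new,C}$.

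Next I would catalogue the effect of a single blowup of a cold point $P$ sitting on an edge $\eta$-$P$-$\eta'$ of $\Delta$. By Lemma \ref{propblowupcold} the exceptional curve $\Sigma$ is of Type 1b and carries two new cold points $Q_1, Q_2$, so in $\Delta_{new}$ the edge $P$ is replaced by a length-two path $\eta$-$Q_1$-$\epsilon$-$Q_2$-$\eta'$, where $\epsilon$ is the generic point of $\Sigma$. Whether $\epsilon$ is itself a C-curve (equivalently, whether $Y_\epsilon \simeq \prod F_\epsilon$) can be read off from the shape of $y$ at $P$ tabulated in Propositions \ref{propcoldexamine1} and \ref{propcoldexamine2}, since the valuation of $y$ along $\Sigma$ is determined by its factorization into $\pi_P$ and $\delta_P$. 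In the first case, the corresponding edge of $\Delta_C$ is subdivided, so every cycle or path of $\Delta_C$ passing through $P$ gains one in length (parity flips). In the second case, the edge is removed from $\Delta_C$ and both $\eta$ and $\eta'$ become new boundary C-curves; any cycle through $P$ of odd length $k$ becomes a path of even length $k-1$ whose two endpoints are independently forced to R, which a short parity check shows is automatically consistent with alternation along the path.

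I then iterate. First, for each odd cycle of $\Delta_C$ (say, from a cycle basis), blow up any cold point on it: in either of the two cases above the parity obstruction contributed by that cycle is removed, so after finitely many such blowups $\Delta_{new,C}$ becomes bipartite. Second, address condition (ii): for each component of $\Delta_{new,C}$ whose boundary vertices are split across both sides of its bipartition, pick two mismatched boundary C-curves and blow up a cold point on a connecting path. By the dichotomy of the previous paragraph this either flips the path's parity (when $\epsilon$ is a C-curve) or severs the path and introduces two new R-forced boundary vertices that slot consistently into the colouring (when $\epsilon$ is green). Since $\Delta$ is finite and one verifies that each blowup strictly reduces the combined count of odd cycles and boundary mismatches, the process terminates, and the bipartition colouring (boundary vertices on R, the other side B, everything else G) satisfies all four conditions of the proposition. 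The main obstacle is that the C-curve-versus-green status of each exceptional vertex $\epsilon$ is not a free choice but is dictated by the residue data of $y$ at $P$; the technical heart of the proof is thus the uniform parity bookkeeping verifying that both possible outcomes of a blowup strictly reduce the colouring obstruction, regardless of the arithmetic forcing one outcome or the other.
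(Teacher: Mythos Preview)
Your reformulation of the problem as a constrained proper $\{R,B\}$-colouring of the C-curve subgraph $\Delta_C$ is correct and clarifying. However, there is a gap in your case analysis. The paper observes (in the paragraph immediately preceding this proposition, via Lemmata~\ref{lemmaatetasplit-1} and~\ref{propblowupcold}) that when one blows up a cold point $P$ lying on a C-curve, the exceptional curve $\epsilon$ is \emph{always} again a C-curve: since $Y_\eta$ is split, Lemma~\ref{lemmaatetasplit-1} forces $Y_P \simeq \prod F_P$, whence $y \in \widehat{A_P}^{*\ell}$ and so $Y_\epsilon$ is split as well. Your ``second case'' (where $\epsilon$ is green) therefore never occurs for the blowups you perform on edges of $\Delta_C$, so the path-severing mechanism you invoke to resolve boundary mismatches is unavailable. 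With only subdivision at your disposal, the claim that each blowup strictly decreases the combined count of odd cycles and boundary mismatches is unsubstantiated: subdividing an edge lying on both an odd and an even cycle swaps their parities without net gain, and subdividing an edge in an already-bipartite component can destroy bipartiteness.

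That said, once one knows $\epsilon$ is always a C-curve, your structural viewpoint leads to a proof even shorter than the paper's: simply blow up \emph{every} cold point lying on two C-curves. Each edge of the new $\Delta_C$ then joins an original C-curve to an exceptional one, so $\Delta_{new,C}$ is bipartite with all original C-curves on one side; since the exceptional curves acquire only C-curve neighbours, the boundary vertices are precisely the original boundary C-curves and hence all lie on that same side. Colouring original C-curves red and exceptional C-curves blue then satisfies (1)--(4). The paper instead runs a greedy BFS-style colouring, blowing up and inserting a vertex of the opposite colour only when a monochromatic edge appears; this is more economical in blowups but less transparent than the wholesale subdivision your reformulation suggests.
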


\begin{proof}
Without loss of generality, assume $\Delta$ is connected (otherwise repeat the same proof for each connected component). Let $W \subseteq V_{\Delta}$ denote the set of C-curves. Colour every $\eta\in V_{\Delta}\setminus W$ with green. Thus $\Delta$ is partially coloured. For $v\in V_{\Delta}$ which is uncoloured and $X\in \{R, G, B\}$, define the function $d(v, X)$ for any partial colouring of $\Delta$ as follows: 

\newpage

\begin{itemize}
\item
Set $d(v, X)=1$ if there is an edge between  $v$ and a vertex coloured $X$.
\item
Set $d(v, X) = 0$ if there is no edge between  $v$ and any vertex coloured $X$.
\end{itemize}

The following algorithm colours vertices in $W$ with $R$ and $B$ in a compatible fashion.

\textit{Step 1}:  Colour with red (R), all uncoloured vertices $v\in W$ such that $d(v, G) =1$. If no such vertices exist, colour an arbitrary uncoloured vertex with red (R).\\
\textit{Step 2}: The previous step might lead to a situation where two red vertices are connected by an edge. For every such edge $P : \eta-\eta'$, blow up the cold point $P$ (which note, is on two C-curves). As we have already observed, the exceptional curve obtained is again a C-curve  and the new marked points are cold points. In the new partial dual graph, this introduces a new vertex (corresponding to the exceptional curve) breaking the edge $P$ into two edges $Q_1$ and $Q_2$. Colour this new vertex with blue (B). If all vertices are coloured, terminate. \\
\textit{Step 3}: Colour with blue(B), all uncoloured vertices $v$ such that $d(v, R) =1$.  If no such vertices exist, colour an arbitrary uncoloured vertex with blue (B).\\
\textit{Step 4}: The previous step might lead to a situation where two blue vertices are connected by an edge. For every such edge $P : \eta-\eta'$, blow up the cold point $P$ (which note, is on two C-curves). As before, in the new partial dual graph, this introduces a new vertex (corresponding to the exceptional curve) breaking the edge $P$ into two edges $Q_1$ and $Q_2$. Colour this new vertex with red (R). If all vertices are coloured, terminate. \\
\textit{Step 5}: Colour with red (R), all uncoloured vertices $v\in W$ such that $d(v, B) =1$. If no such vertices exist, colour an arbitrary uncoloured vertex with red (R).\\
\textit{Step 6}: Go to Step 2.

Note that in Steps 1, 3 and 5 we colour at least one uncoloured vertex each time. In Steps 2 and 4, though we introduce new vertices, they always correspond to C-curves and we colour them with R or B in the same step. Since $|V_{\Delta}| < \infty$, the algorithm terminates after finitely many steps. Each partial colouring obtained satisfies Properties 1-4. Hence when the algorithm terminates, we will end up with a compatible colouring of $V_{\Delta}$. \end{proof}

\subsubsection{An extended rainbow colouring of $\Delta$}
\label{section-finalcolouring}
We refine the colouring of $\Delta$ by colouring \textbf{over} $\eta$ which are Ch, H or Z-curves as follows: Let $\eta\in V_{\Delta}$ be a Ch, H or Z-curve and let $a = \brac{a'_{i,\eta}}_i\in \prod F_{\eta}$.

- If each $a'_{i,\eta}$ is a unit (up to $\ell^{\mrm{th}}$ powers) in $\widehat{A_{\eta}}$, then colour $\eta$ violet (V) if it is a Ch-curve, indigo (I) if it is a H-curve, and black (Bl) if it is a Z-curve.

- If at least one $a'_{i,\eta}$ is not a unit (up to $\ell^{\mrm{th}}$ powers) in $\widehat{A_{\eta}}$, then colour $\eta$ yellow (Ye) if it is a Ch-curve, orange (O) if it is a H-curve, and white (W) if it is a Z-curve.

Thus we get a nine-colouring of $V_{\Delta}$ with colours violet (V), indigo (I), blue (B), green (G), yellow (Ye), orange (O), red (R), black (Bl) and white(W). 

\newpage

\section{Patching data at marked points in $S_0$}
Let $P\in S_0$ be the intersection of two distinct irreducible curves $C_1$ and $C_2$ of $\mathcal{H}_{\mathcal{X}}$. Let $\eta_1$ and $\eta_2$ denote the generic points of $C_1$ and $C_2$ respectively. Let $\pi_P$ and $\delta_P$ be primes defining $C_1$ and $C_2$ at $P$ fixed as in Section \ref{subsectionfinalmodel}. As before, if $Y_x \simeq \prod F_x$, we let $a = \left(a'_{i,x}\right)_i$, where $a'_{i,x}\in F_x$. We will now prescribe $E_{j,P}$ for $j=1,2$ at $P\in S_0$ in accordance with the following heuristic:
\begin{itemize}
\item[-]
If $\eta\in N'_0$ is of Type 0 or 1a, then both $E_{1,P}$ and $E_{2,P}$ should be unramified along $\eta$,
\item[-]
If $\eta\in N'_0$ is coloured G, V, I or Bl, then both $E_{1,P}$ and $E_{2,P}$ should be unramified along $\eta$,
\item[-]
If $\eta\in N'_0$ is coloured R, O, Ye or W, then $E_{1,P}$ should be ramified along $\eta$ while $E_{2,P}$ should be unramified along $\eta$,
\item[-]
If $\eta\in N'_0$ is coloured B, then $E_{1,P}$ should be unramified along $\eta$ while $E_{2,P}$ should be ramified along $\eta$.
\end{itemize}

\subsection{Points not of Type $A_{00}^s$ }
\begin{proposition}
\label{propositionEatpoints} 
Let $P\in S_0$ be such that it is not of Type $A_{00}^s$. Then for each $j=1,2$, there exist cyclic degree $\ell$ extensions $E_{j,P}/F_P$ and elements ${a}_{j,P}\in Y_P$ such that 
\begin{enumerate}
\item 
$a_{1,P}a_{2,P}= a$.

\item
$D\otimes {E}_{j,P}$ has index at most $\ell$. 

  \item
  $D\otimes Y\otimes {E}_{j,P}$ is split. 
  
\item
$a_{j,P}$ is a norm from $E_{j,P}\otimes Y_P/Y_P$. 

\item
$\N_{Y_P/F_P}\brac{a_{j,P}}=1$.
\item
 Each $E_{j,P}$ is either a split extension or $D\otimes E_{j,P}$ is split.

\end{enumerate}
\end{proposition}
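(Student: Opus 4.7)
The plan is to proceed by a case analysis on the type of $P$, using the surviving list after Proposition \ref{propnospecialpoints}: namely $A_{10}^{s}$, $B_{10}^{s}$, $B_{20}^{s}$, $B_{11}^{ns}$, $B_{21}^{ns}$, $C_{11}^{Cold}$, $C_{11}^{Chilly}$, $C_{12}^{Cold}$, and $C_{12}^{Hot}$. For each type the symbol shape of $D_P$ is known, the configuration of $Y_P$ is enumerated by Propositions \ref{propcoldexamine1}--\ref{propB11nsexamine} together with Proposition \ref{propblowuphotexamine}, and the structure of $a$ at $P$ is controlled by Propositions \ref{propaisaunitifYnotsplit}, \ref{propaisingoodshape}, \ref{propaisingoodshape2} and \ref{propositionaatchillypoint}. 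I will first determine the colour of $\eta_1, \eta_2$ (when these are of Type 1b or 2) using Section \ref{section-finalcolouring}, which via the stated heuristic dictates whether each $E_{j,P}$ is unramified or ramified along each $\eta_i$.

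Concretely, an unramified choice will always be realized by $L_P$, the unique unramified cyclic degree $\ell$ extension of $F_P$; a ramified choice will be taken of the form $F_P(\sqrt[\ell]{u\pi_P^{i}\delta_P^{j}})$ with the unit $u$ chosen so as to split the appropriate cyclic symbol of $D_P$ (e.g. $u=u_P$ when $D_P=(u_P,\pi_P)$, or $u=v_P$ when $D_P=(v_P,\pi_P^{j}\delta_P)$, and using the residual Brauer class $\beta_{rbc}$ in the presence of an unramified summand). Property~(6) then holds by construction, while properties~(2) and~(3) reduce to verifying that the symbol decomposition of $D_P$ (or $D_P\otimes Y_P$) becomes trivial after tensoring with $E_{j,P}$; this is a direct computation in each case, with Lemma~\ref{lemmasplittingfields2} used to pass between $F_P$ and the relevant branch field when necessary, and Lemma~\ref{lemmaindexformula} invoked at hot points to monitor the index. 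To decompose $a=a_{1,P}a_{2,P}$: when $Y_P$ is a field, Proposition~\ref{propaisaunitifYnotsplit} gives $a\in \widehat{B_P}^*$ and it suffices to take $a_{2,P}=1$ (possibly after absorbing an $\ell^{\mathrm{th}}$ power into $a_{1,P}$); when $Y_P\simeq \prod F_P$ we use the explicit expressions $a'_{i,P}=z_{i,P}\pi_P^{m_i}\delta_P^{n_i}$, allocating the $\pi_P$-part to whichever $a_{j,P}$ whose $E_{j,P}$ is ramified along $\eta_1$, and the $\delta_P$-part to the other. Properties~(4) and~(5) are then verified using Lemmata~\ref{totallyramifieddegreel}, \ref{lemmanormfromE-split}, \ref{lemmanormoneunramified} and \ref{lemmanormoneramified}, exploiting that the $\ell^{\mathrm{th}}$ power contributions isolated in the propositions on the shape of $a$ are automatically norms, and that $\sum m_i=\sum n_i=0$, $\prod z_{i,P}=1$ force the pieces to have norm one componentwise.

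The main obstacle will be the chilly, cold, and hot intersection points, where the combinatorial decomposition forced on $a$ by the graph colouring must be compatible with the algebraic decomposition of $D_P$. In particular, at $C_{11}^{Chilly}$ points the relation $m_i\equiv jn_i\pmod{\ell}$ from Proposition~\ref{propositionaatchillypoint} must be used in concert with the colour constraint so that the $\ell^{\mathrm{th}}$ power remainder $\pi_P^{r_i}$ falls inside the correct factor $a_{j,P}$. At cold points the refined decomposition $a'_{i,P}=(u_P\pi_P^m)^{sm_i}(v_P\delta_P)^{n_i}{w'}_{i,P}^{\ell}(\pi_P^{-rm_i})^{\ell}$ from Proposition~\ref{propositionaatcoldpoint} must be distributed between $E_{1,P}$ and $E_{2,P}$ so that each factor is a norm from the respective extension, and the colour chosen on each of $\eta_1,\eta_2$ in Proposition~\ref{propgraphcolour} ensures that this is possible (the $\{R,B\}$ alternation along an edge encodes exactly the swap of ramification needed here). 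At $C_{12}^{Hot}$ points, Proposition~\ref{propositionYathotpoints} forces $Y_P$ to be non-split; here $E_{j,P}$ must be chosen to contain $F_P(\sqrt[\ell]{u_P})$ so that the index-$\ell^2$ algebra $D\otimes F_{\eta_2}$ drops to index $\ell$ after tensoring (again via Lemma~\ref{lemmaindexformula}). Modulo this careful, type-by-type bookkeeping, no new techniques beyond the lemmata of Section~2 are needed.
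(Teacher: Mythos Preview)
Your overall strategy matches the paper's: a type-by-type case analysis guided by the colouring and the ramification heuristic. However, there is a genuine gap at cold points. You assert that an unramified choice for $E_{j,P}$ ``will always be realized by $L_P$'', but at a cold point $D_P = (u_P\pi_P^m, v_P\delta_P)$ with $0<m<\ell$, tensoring with $L_P$ only kills the unit parts, leaving $m(\pi_P,\delta_P)\in\Br(L_P)$, which is nontrivial (its residue along $\pi_P$ is the class of $\overline{\delta_P}$, a parameter of the residue field). Thus $L_P$ does not split $D_P$ and Property~(6) fails. Taking $E_{j,P}=\prod F_P$ instead would require $D\otimes Y_P$ to be split for Property~(3), which is not the case in general at cold points. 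The paper's remedy is to set $H_P := F_P\bigl(\sqrt[\ell]{u_P\pi_P^m + v_P\delta_P}\bigr)$: the symbol identity $(x,y)=(x+y,-yx^{-1})$ shows $H_P$ splits $D_P$, while $u_P\pi_P^m + v_P\delta_P$ reduces to $v_P\delta_P$ (resp.\ $u_P\pi_P^m$) modulo $\pi_P$ (resp.\ $\delta_P$), so $H_P$ is unramified along each branch as the heuristic demands. This extension is not of the shape $F_P(\sqrt[\ell]{u\pi_P^i\delta_P^j})$ you allow, and it is the essential new ingredient at cold points (Tables~\ref{TableEatC11coldpoint} and~\ref{TableEatC21coldpoint}).

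There is also a smaller error in your treatment of hot points: Proposition~\ref{propositionYathotpoints} only forces $Y_P$ to be non-split when $Y_{\eta'}$ is of Type NONRES. When $Y_{\eta'}$ is RES, the table in Proposition~\ref{propblowuphotexamine} shows $Y_P\simeq\prod F_P$, and one must then use the indigo/orange colouring of $\eta_1$ together with the shape of $a$ from Proposition~\ref{propositionaathotpoint} (Rows~8.3--8.4 of Table~\ref{TableEatC21hotpoint}); in particular $E_{1,P}=F_P(\sqrt[\ell]{\pi_P})$ can occur, which does not contain $F_P(\sqrt[\ell]{u_P})$.
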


\begin{proof} We investigate each type of point separately. In every case, we will determine $E_{1,P}, E_{2,P}$ and $a_{1,P}$ and set $a_{2,P} = aa^{-1}_{1,P}$, thus ensuring that Property 1 holds. Since $\N(a)=1$, Property 5 will also be satisfied provided $\N\brac{a_{1,P}}=1$. By (\cite{S97}, Proposition 1.2), Property 2 holds for any closed point.

We adopt the following notations in the proof: $u_P, v_P, w_P\in \widehat{A_P}^*$, $0<r, s, m, j<\ell$. If $Y_P$ is split, by Proposition \ref{propaisingoodshape}, $a = \brac{a'_{i,P}}$ where $a'_{i,P}=z_{i,P}\pi_P^{m_i}\delta_P^{n_i}$ where $m_i,n_i\in \mathbb{Z}$ and $z_{i,P}\in \widehat{A_P}^*$. Also since $\N(a)=1$, we have $\prod z_{i,P}=1$ and $\sum m_i = \sum n_i = 0$. $L_P$ denotes the unique non-split degree $\ell$ extension of $F_P$ unramified at $\widehat{A_P}$ and $H_P$, the extension $F_P\left(\sqrt[\ell]{u_P\pi_P^m + v_P\delta_P}\right)$.

\newpage
\textbf{Type $A_{10}^s$}: Wlog, assume $\eta_1$ is of Type 1a and $\eta_2$ is of Type 0. Note that $D_P$ is split. The following choices for $E_{j,P}$ and $a_{j,P}$ satisfy Properties 1-6.
\begin{table}[H]
\centering
\footnotesize
\arraycolsep=0.5pt
\medmuskip =0.5mu 
\[
\begin{array}{|c|c|c|c|c|c|c|}
\hline 

\mrm{Row} & {\eta_1} & {\eta_2} & {E_{1,P}} & {E_{2,P}} & {a_{1,P}} & {a_{2,P}=aa_{1,P}^{-1}} \\
\hline
0.1 & 1a  &  0 & \prod F_P   & {\prod F_P} &   {a} & {1} \\
\hline
\end{array}
\]
\captionof{table}{Patching data at points of Type $A_{10}^{s}$}
\label{TableEatA10spoint}
\end{table}

\textbf{Type $B_{10}^s$}: Wlog, assume $\eta_1$ is of Type 1b and $\eta_2$ is of Type 0. By Proposition \ref{propblowupchilly} and Section \ref{ChCHZlabels}, $\eta_1$ cannot be a Ch or H-curve and hence isn't coloured V, I, Ye or O. The following table gives the choice for $E_{j,P}$ and $a_{j,P}$.
\begin{table}[H]
\centering
\footnotesize
\arraycolsep=0.5pt
\medmuskip =0.5mu 
\[
\begin{array}{|c|c|c|c|c|c|}
\hline 

\mrm{Row} & \mrm{\eta_1}  & {E_{1,P}} & {E_{2,P}} & {a_{1,P}} & {a_{2,P}=aa_{1,P}^{-1}} \\
\hline
1.1 &  {R}, W  & F_P\brac{\sqrt[\ell]{\pi_P}} & {\prod F_P} &  \brac{\pi_P^{m_i}}_i & \brac{z_{i,P}\delta_P^{n_i}}_i \\
\hline
1.2 & {G}, Bl & \prod F_P & \prod F_P & a & 1\\
\hline
1.3 &  {B}    &  {\prod F_P} & F_P\brac{\sqrt[\ell]{\pi_P}} &  \brac{z_{i,P}\delta_P^{n_i}}_i & {\brac{\pi_P^{m_i}}_i} \\
\hline
\end{array}
\]
\captionof{table}{Patching data at points of Type $B_{10}^{s}$}
\label{TableEatB10spoint}
\end{table}
Since $D_P$ is itself split, Properties 3 and 6 hold while Property 4 holds by construction. Since $\sum m_i = \sum n_i = 0$ and $\prod z_{i,P}=1$, we have $\N\brac{a_{1,P}}=1$. Hence Property 5 holds.

\textbf{Type $B_{11}^{ns}$}: Wlog, assume $\eta_1$ is of Type 1b and $\eta_2$ is of Type 1a. Thus $D_P = \brac{u_P, \pi_P} \in \Br\brac{F_P}$. By Proposition \ref{propaisaunitifYnotsplit}, $a$ is a unit in the integral closure of $\widehat{A_P}$ in $Y_P$ if the latter is not split. By Proposition \ref{propositionaatB11nspoint}, if $Y_P$ is split, $a'_{i,P}= z_{i,P}\pi_P^{m_i}{\delta_P}^{\ell n'_i}$ with $\sum n'_i=0$. The following table\footnote{Row 2.2* is a special case when $\eta_1$ is Type 1b and green with $Y_{\eta_1}$ of Type RAM and $\eta_2$ of Type 1a with $Y_{\eta_2}$ of Type NONRES. In this situation, we choose $E_{1,P}=E_{2,P}=\prod F_P$ while $a_{1,P}=a$ and $a_{2,P}=1$.} gives the choice for $E_{j,P}$ and $a_{j,P}$.

\begin{table}[H]
\centering
\footnotesize
\arraycolsep=0.5pt
\medmuskip =0.5mu 
\[
\begin{array}{|c|c|c|c|c|c|c|}
\hline 

\mrm{Row} & \mrm{\eta_1} & {\eta_2} & {E_{1,P}} & {E_{2,P}} & {a_{1,P}} & {a_{2,P}=aa_{1,P}^{-1}} \\
\hline
2.1 &  W, {R}, {O}, {Ye}  &  & F_P\brac{\sqrt[\ell]{\pi_P}} & L_P &  \brac{\pi_P^{m_i}}_i & \brac{z_{i,P}{\delta_P}^{\ell n'_i}}_i \\
\hline
2.2^* & {G} (\mrm{RAM})& (\mrm{NONRES}) & \prod F_P & \prod F_P & a & 1\\
\hline
2.2 & Bl,{I},{G},{V} & & L_P & L_P & a & 1\\
\hline
2.3 &  {B} &  &  L_P & F_P\brac{\sqrt[\ell]{\pi_P}} &  \brac{z_{i,P}{\delta_P}^{\ell n'_i}}_i &  {\brac{\pi_P^{m_i}}_i} \\
\hline
\end{array}
\]
\captionof{table}{Patching data at points of Type $B_{11}^{ns}$}
\label{TableEatB11nspoint}
\end{table}
Since $D_P = \brac{u_P, \pi_P}$ and $u_P$ becomes an $\ell^{\mrm{th}}$ power in $L_P$, $E_{j,P}$ splits $D$ in each case except Row $2.2^*$. In this row however, $Y_P = F_P\brac{\sqrt[\ell]{w_P\pi_P}}$ and hence $D\otimes Y_P$ is split. Thus Properties 3 and 6 hold. By Lemmata \ref{lemmanormfromE-split} and \ref{lemmanormfromE-nonsplit}, Property 4 holds. Since $\sum m_i = \sum n'_i= 0$ and $\prod z_{i,P}=1$, we have $\N\brac{a_{1,P}}=1$. Hence Property 5 holds.

\newpage

\textbf{Type $B_{20}^s$}: Wlog, assume $\eta_1$ is Type 2 and $\eta_2$ is Type 0. Thus $\eta_1$ is coloured G. The following choices satisfy Properties 1-6.
\begin{table}[H]
\centering
\footnotesize
\arraycolsep=0.5pt
\medmuskip =0.5mu 
\[
\begin{array}{|c|c|c|c|c|c|}
\hline 

\mrm{Row} & \mrm{\eta_1} & {E_{1,P}} & {E_{2,P}} & {a_{1,P}} & {a_{2,P}=aa_{1,P}^{-1}} \\
\hline
3.1 &  {G}  &   {\prod F_P} & {\prod F_P} &  a &1 \\

\hline
\end{array}
\]
\captionof{table}{Patching data at points of Type $B_{20}^{s}$}
\label{TableEatB20spoint}
\end{table}

\textbf{Type $B_{21}^{ns}$}: Wlog, assume $\eta_1$ is Type 2 and $\eta_2$ is Type 1a. Thus $D_P = \brac{u_P, \pi_P} \in \Br\brac{F_P}$. By Proposition \ref{propaisaunitifYnotsplit}, $a$ is a unit in the integral closure of $\widehat{A_P}$ in $Y_P$ if the latter is not split.  By Proposition \ref{propositionaatB21nspoint}, if $Y_P$ is split, $a'_{i,P}= z_{i,P}{\delta_P}^{\ell n'_i}$ with $\sum n'_i=0$. The following table\footnote{Row 4.1* is a special case when $\eta_1$ is Type 2 with $Y_{\eta_1}$ of Type RAM and $\eta_2$ of Type 1a with $Y_{\eta_2}$ of Type NONRES. In this situation, we choose $E_{1,P}=E_{2,P}=\prod F_P$ while $a_{1,P}=a$ and $a_{2,P}=1$.} gives the choice for $E_{j,P}$ and $a_{j,P}$.

\begin{table}[H]
\centering
\footnotesize
\arraycolsep=0.5pt
\medmuskip =0.5mu 
\[
\begin{array}{|c|c|c|c|c|c|c|}
\hline 
\mrm{Row} & \mrm{\eta_1} & {\eta_2} & {E_{1,P}} & {E_{2,P}} & {a_{1,P}} & {a_{2,P}=aa_{1,P}^{-1}} \\
\hline
4.1^* & G (\mrm{RAM})& (\mrm{NONRES}) & \prod F_P & \prod F_P & a & 1\\
\hline
4.1 & {G} & & L_P & L_P & a & 1\\
\hline
\end{array}
\]
\captionof{table}{Patching data at points of Type $B_{21}^{ns}$}
\label{TableEatB21nspoint}
\end{table}

Since $D_P = \brac{u_P, \pi_P}$ and $u_P$ becomes an $\ell^{\mrm{th}}$ power in $L_P$, $E_{j,P}$ splits $D$ in Row 4.1. In Row 4.1*, $Y_P = F_P\brac{\sqrt[\ell]{w_P\pi_P}}$ and hence $D\otimes Y_P$ is split. Thus Properties 3 and 6 hold. By Lemmata \ref{lemmanormfromE-split} and \ref{lemmanormfromE-nonsplit}, Property 4 holds. Since $\N(a)=1$, so does Property 5.

\textbf{Type $C_{11}^{Chilly}$} : We assume that $D = D_{00} + \brac{u_P, \pi_P} + \brac{v_P, \delta_P} \in \Br(F)$ where $D_{00}$ is unramified at $A_P$ and $D_P = \brac{v_P, \pi_P^j\delta_P}$ where $0<j<\ell$. By Proposition \ref{propositionaatchillypoint}, if $Y_P$ is split, then $a'_{i,P}= z_{i,P}\brac{\pi_P^j\delta_P}^{n_i}\brac{\pi_P^{r_i\ell}}$ where $m_i = jn_i + r_i\ell$ and hence $\sum r_i=0$. In particular, $a$ is a unit (up to $\ell^{\mrm{th}}$ powers) in $\widehat{A_{\eta_1}}^*$ if and only if it is is a unit (up to $\ell^{\mrm{th}}$ powers) in $\widehat{A_{\eta_2}}^*$. 

Let $j=1, 2$. Recall that if $\eta_j$ is not a Ch-curve, then it is coloured G. The above discussion implies that if $\eta_1$ and $\eta_2$ are both Ch-curves, then they are both coloured Ye or both coloured V. Similarly if $\eta_1$ is coloured G and $\eta_{2}$ is a Ch-curve, then $\eta_{2}$ is coloured V. Likewise if $\eta_2$ is coloured G and $\eta_{1}$ is a Ch-curve, then $\eta_{1}$ is coloured V. Invoking Proposition \ref{propchillyexamine}, Table 13 below prescribes $E_{j,P}$ depending on the configuration of $a, Y, \eta_1, \eta_2$.

Since $D_P = \brac{v_P, \pi_P^j\delta_P}$ and $v_P$ becomes an $\ell^{\mrm{th}}$ power in $L_P$, $E_{j,P}$ splits $D$ in each case. Thus Properties 3 and 6 hold. By Lemma \ref{lemmanormfromE-nonsplit}, Property 4 holds for Rows 5.1-5.4, 5.8-5.9, 5.11 (when $Y_P$ is a field).  By Lemma \ref{lemmanormfromE-split}, Property 4 holds in the remaining cases (except for $\brac{E_{1,P}, a_{1,P}}$ in Row 5.6, where it is clear by observation). Finally since $\N(a)=1$ and for Row 5.6, $\sum n_i = \sum r_i = 0$ and $\prod z_{i,P}=1$, $\N\brac{a_{1,P}}=1$ for all rows. Hence Property 5 holds.\\

\begin{table}[H]
\centering
\footnotesize
\arraycolsep=0.5pt
\medmuskip =0.5mu 
\[
\begin{array}{|c|c|c|c|c|c|c|c|c|c|}
\hline 

\mrm{Row} & \eta_1 & {\eta_2}  &  {Y_{\eta_1}} & {Y_{\eta_2}} & {Y_P} &  {E_{1,P}} & {E_{2,P}} & {a_{1,P}} & {a_{2,P}=aa_{1,P}^{-1}} \\
\hline
5.1 &  {G} &  {G}  & \mrm{RAM} & \mrm{RAM} &  F_P\left(\sqrt[\ell]{w_P\pi_P^r\delta_P}\right)  & L_P &  L_P & {a} & {1} \\
\hline
5.2 &  {G} &  {G}  & \mrm{NONRES} & \mrm{RAM} &  F_P\left(\sqrt[\ell]{w_P\delta_P}\right)  & L_P &  L_P & {a} & {1} \\
\hline
5.3 & {G} &  {G}  & \mrm{RES} & \mrm{RES} & L_P  & L_P &  L_P & {a} & {1} \\
\hline
5.4 &  {G} &  {G}  & \mrm{NONRES} & \mrm{RES} &  L_P  & L_P &  L_P & {a} & {1} \\
\hline
5.5 &  {V} &  {V}  & \mrm{SPLIT} & \mrm{SPLIT} &  \prod F_P  & L_P &  L_P & {a} & {1} \\
\hline
5.6 & {Ye} &  {Ye}  & \mrm{SPLIT} & \mrm{SPLIT} &  \prod F_P  &  F_P\left(\sqrt[\ell]{\pi_P^j\delta_P}\right) &  L_P & {\brac{\brac{\pi_P^j\delta_P}^{n_i}\brac{\pi_P^{r_i\ell}}}_i} & {\brac{z_{i,P}}_i} \\
\hline
5.7 &  {G} &  {V}  & \mrm{NONRES} & \mrm{SPLIT} &  \prod F_P  & L_P &  L_P & {a} & {1} \\
\hline
5.8 &  {G} &  {G}  & \mrm{RAM} & \mrm{NONRES} &  F_P\left(\sqrt[\ell]{w_P\pi_P}\right)  & L_P &  L_P & {a} & {1} \\
\hline
5.9 &  {G} &  {G}  & \mrm{RES} & \mrm{NONRES} &  L_P & L_P &  L_P & {a} & {1} \\
\hline
5.10 & {V} &  {G}  & \mrm{SPLIT} & \mrm{NONRES} &  \prod F_P  & L_P &  L_P & {a} & {1} \\
\hline
5.11 &  {G} &  {G}  & \mrm{NONRES} & \mrm{NONRES} &  L_P/\prod F_P & L_P &  L_P & {a} & {1} \\
\hline
\end{array}\]

\captionof{table}{Patching data at points of Type $C_{11}^{Chilly}$}
\label{TableEatC11chillypoint}
\end{table}

\textbf{Cold points}: We assume that $D = D_{00} + \brac{u_P\pi_P^m, v_P\delta_P}\in \Br(F)$ where $D_{00}$ is unramified at $A_P$ and $D_P =\brac{u_P\pi_P^m, v_P\delta_P}$ where $0<m<\ell$. By Proposition \ref{propositionaatcoldpoint}, if $Y_P$ is split, then $a'_{i,P}= \brac{u_P\pi_P^m}^{sm_i}\brac{v_P\delta_P}^{n_i}{\brac{w'_{i,P}\pi_P^{-rm_i}}^{\ell}}$ where $sm = r\ell +  1$ and $w'_{i,P}\in \widehat{A_P}^*$ with ${w'_{i,P}}^{\ell}u_P^{sm_i}v_{P}^{n_i}=z_{i,P}$. Set $x_{i,P} = {\brac{w'_{i,P}\pi_P^{-rm_i}}}$. Since $\sum m_i = \sum n_i =  0$ and $\prod a'_{i,P}=1$, clearly $\prod x_{i,P}^{\ell} = 1$.

\textbf{Type $C_{11}^{Cold}$}: Let $j=1,2$. If $Y_{\eta_j}$ is of Type SPLIT, then since $P$ is a cold point lying on it, $\eta_j$ must be a C-curve. Thus it is coloured R or B. If $\eta_1$ is coloured G and $\eta_{2}$ is a C-curve, then by Proposition \ref{propgraphcolour}, $\eta_2$ will be coloured R. Similarly, if $\eta_2$ is coloured G and $\eta_1$ is a C-curve, then $\eta_1$ will be coloured R. Finally if both $\eta_1$ and $\eta_2$ are C-curves, then both of them cannot be of the same colour. Invoking Proposition \ref{propcoldexamine1}, we prescribe the choices for $E_{j,P}$ and $a_{j,P}$ in the following table:
\begin{table}[H]
\centering
\footnotesize
\arraycolsep=0.5pt
\medmuskip =0.5mu 
\[
\begin{array}{|c|c|c|c|c|c|c|c|c|c|}
\hline 

\mrm{Row} & \eta_1 & {\eta_2}  &  {Y_{\eta_1}} & {Y_{\eta_2}} & {Y_P} &  {E_{1,P}} & {E_{2,P}} & {a_{1,P}} & {a_{2,P}=aa_{1,P}^{-1}} \\
\hline
6.1 &  {G} &  {G}  & \mrm{RAM} & \mrm{RAM} &  F_P\left(\sqrt[\ell]{w_P\pi_P^r\delta_P}\right)  & H_P &  H_P & {a} & {1} \\
\hline
6.2 &  {G} &  {G}  & \mrm{RES} & \mrm{RAM} &  F_P\left(\sqrt[\ell]{v_P\delta_P}\right)  & H_P &  H_P & {a} & {1} \\
\hline
6.3 &  {G} &  {G}  & \mrm{NONRES} & \mrm{RAM} &  F_P\left(\sqrt[\ell]{w_P\delta_P}\right)  & H_P &  H_P & {a} & {1} \\
\hline
6.4 &  {G} &  {G}  & \mrm{RAM} & \mrm{RES} &  F_P\left(\sqrt[\ell]{u_P\pi_P^m}\right)  & H_P &  H_P & {a} & {1} \\
\hline
6.5 &  {R} &  {B}  & \mrm{SPLIT} & \mrm{SPLIT} &  \prod F_P  &  F_P\left(\sqrt[\ell]{u_P\pi_P^m}\right) &  F_P\left(\sqrt[\ell]{v_P\delta_P}\right) & {\brac{\brac{u_P\pi_P^m}^{sm_i}{\brac{x_{i,P}}^{\ell}}}_i} &  {\brac{\brac{v_P\delta_P}^{n_i}}_i} \\
\hline
6.6 & {B} &  {R}  & \mrm{SPLIT} & \mrm{SPLIT} &  \prod F_P  &  F_P\left(\sqrt[\ell]{v_P\delta_P}\right)  &   F_P\left(\sqrt[\ell]{u_P\pi_P^m}\right)  & {\brac{\brac{v_P\delta_P}^{n_i}}_i} & {\brac{\brac{u_P\pi_P^m}^{sm_i}{\brac{x_{i,P}}^{\ell}}}_i} \\
\hline
6.7 &  {G} &  {R}  & \mrm{NONRES} & \mrm{SPLIT} &  \prod F_P  &  F_P\left(\sqrt[\ell]{v_P\delta_P}\right) &  H_P & {a} & {1} \\
\hline
6.8 &  {G} &  {G}  & \mrm{RAM} & \mrm{NONRES} &  F_P\left(\sqrt[\ell]{w_P\pi_P}\right)  & H_P &  H_P & {a} & {1} \\
\hline
6.9 &  {R} &  {G}  & \mrm{SPLIT} & \mrm{NONRES} &  \prod F_P  &  F_P\left(\sqrt[\ell]{u_P\pi_P^m}\right)  &  H_P & {a} & {1} \\
\hline
6.10 &  {G} &  {G}  & \mrm{NONRES} & \mrm{NONRES} &  L_P/\prod F_P & H_P &  H_P & {a} & {1} \\
\hline
\end{array}\]
\captionof{table}{Patching data at points of Type $C_{11}^{Cold}$}
\label{TableEatC11coldpoint}
\end{table}
Since $D_P = \brac{u_P\pi_P^m, v_P\delta_P}$, clearly $F_P\left(\sqrt[\ell]{u_P\pi_P^m}\right)$ and $F_P\left(\sqrt[\ell]{v_P\delta_P}\right)$ split it. Since the symbol algebra $(x,y)=(x+y, -yx^{-1})$, so does $H_P$. Thus Properties 3 and 6 hold. By Lemma \ref{lemmanormfromE-nonsplit}, Property 4 holds for Rows 6.1-6.4, 6.8 (and for Row 6.10, if $Y_P=L_P$).  By construction it also holds for Rows 6.5-6.6. In Row 6.7, $a$ is a unit along $\eta_1$. Thus $a'_{i,P} = \brac{v_P\delta_P}^{n_i}w_{i,P}^{\ell}$ which is a norm from $F_P\left(\sqrt[\ell]{v_P\delta_P}\right)$ . A similar argument works for Row 6.9. In Row 6.10, if $Y_P =\prod F_P$, then since $a$ is a unit along both $\eta_1$ and $\eta_2$, we have that $a'_{i,P}=w_{i,P}^{\ell}$. So each $a'_{i,P}$ is a norm from $E_{1,P}$. Thus Property 4 holds for $C_{11}^{Cold}$ points. Finally since $\N(a)=1$ and for Rows 6.6-6.7, $\sum m_i  = \sum n_i = 0$ and $\prod x_{i,P}^{\ell} =1$, we have $\N\brac{a_{1,P}}=1$ for all rows. Hence Property 5 holds.

\textbf{Type $C_{12}^{Cold}$} : Wlog, assume $\eta_2$ is of Type 2.  Hence it is coloured G. If $Y_{\eta_1}$ is of Type SPLIT, then since $P$ is a cold point lying on it, $\eta_1$ must be a C-curve. Thus it is coloured R or B. Since $\eta_2$ is coloured G, then by Proposition \ref{propgraphcolour}, $\eta_1$ will be coloured R in this case. Invoking Proposition \ref{propcoldexamine2}, we prescribe the choices for $E_{j,P}$ and $a_{j,P}$ in the following table. The proof that Properties 1-6 hold is exactly similar to the Type  $C_{11}^{Cold}$ point case.

\begin{table}[H]
\centering
\footnotesize
\arraycolsep=0.5pt
\medmuskip =0.5mu 
\[
\begin{array}{|c|c|c|c|c|c|c|c|c|c|}
\hline 

\mrm{Row} & \eta_1 & {\eta_2}  &  {Y_{\eta_1}} & {Y_{\eta_2}} & {Y_P} &  {E_{1,P}} & {E_{2,P}} & {a_{1,P}} & {a_{2,P}=aa_{1,P}^{-1}} \\
\hline
7.1 &  {G} &  {G}  & \mrm{RAM} & \mrm{RAM} &  F_P\left(\sqrt[\ell]{w_P\pi_P\delta_P^r}\right)  & H_P &  H_P & {a} & {1} \\
\hline
7.2 &  {G} &  {G}  & \mrm{RES} & \mrm{RAM} &  F_P\left(\sqrt[\ell]{v_P\delta_P}\right)  & H_P &  H_P & {a} & {1} \\
\hline
7.3 &  {G} &  {G}  & \mrm{NONRES} & \mrm{RAM} &  F_P\left(\sqrt[\ell]{w_P\delta_P}\right)  & H_P &  H_P & {a} & {1} \\
\hline
7.4 &  {G} &  {G}  & \mrm{RAM} & \mrm{RES} &  F_P\left(\sqrt[\ell]{u_P\pi_P^m}\right)  & H_P &  H_P & {a} & {1} \\
\hline
7.5 &  {G} &  {G}  & \mrm{RAM} & \mrm{NONRES} &  F_P\left(\sqrt[\ell]{w_P\pi_P}\right)  & H_P &  H_P & {a} & {1} \\
\hline
7.6 &  {R} &  {G}  & \mrm{SPLIT} & \mrm{NONRES} &  \prod F_P  &  F_P\left(\sqrt[\ell]{u_P\pi_P^m}\right)  &  H_P & {a} & {1} \\
\hline
7.7 & {G} &  {G}  & \mrm{NONRES} & \mrm{NONRES} &  L_P/\prod F_P & H_P &  H_P & {a} & {1} \\
\hline
\end{array}\]
\captionof{table}{Patching data at points of Type $C_{12}^{Cold}$}
\label{TableEatC21coldpoint}
\end{table}

\textbf{Type $C_{12}^{Hot}$} : Wlog, assume $\eta_2$ is of Type 2 and coloured G. If $Y_{\eta_1}$ is of Type SPLIT, then since $P$ is a hot point lying on it, $\eta_1$ must be a H-curve and coloured I or O. By Proposition \ref{propositionaathotpoint}, if $Y_P$ is split, then $a'_{i,P}= z_{i,P}\pi_P^{m_i}\delta_P^{\ell n'_i}$ where $\sum n'_i=0$. Invoking the table in Proposition \ref{propblowuphotexamine}, we prescribe the choices for $E_{j,P}$ and $a_{j,P}$ in the following table:
\begin{table}[H]
\centering
\footnotesize
\arraycolsep=0.5pt
\medmuskip =0.5mu 
\[
\begin{array}{|c|c|c|c|c|c|c|c|c|c|}
\hline 

\mrm{Row} & \eta_1 & {\eta_2}  &  {Y_{\eta_1}} & {Y_{\eta_2}} & {Y_P} &  {E_{1,P}} & {E_{2,P}} & {a_{1,P}} & {a_{2,P}=aa_{1,P}^{-1}} \\
\hline
8.1 &  {G} &  {G}  & \mrm{RAM} & \mrm{RAM} &  F_P\left(\sqrt[\ell]{w_P\pi_P^r\delta_P}\right)  & L_P &  L_P & {a} & {1} \\
\hline
8.2 & {G} &  {G}  & \mrm{NONRES} & \mrm{RAM} &   F_P\left(\sqrt[\ell]{w_P\delta_P}\right) & L_P  &  L_P & {a} & {1} \\
\hline
8.3 &  {I} & {G}  & \mrm{SPLIT} & \mrm{RES} &  \prod F_P & L_P &  L_P & {a} & {1} \\
\hline
8.4 &  {O} &  {G}  & \mrm{SPLIT} & \mrm{RES} &  \prod F_P & F_P\left(\sqrt[\ell]{\pi_P}\right) &  L_P & {\brac{\pi_P^{m_i}}_i} & {\brac{z_{i,P}\delta_P^{\ell n'_i}}_i}\\
\hline
8.5 &  {G} &  {G}  & \mrm{RAM} & \mrm{NONRES} &  F_P\left(\sqrt[\ell]{w_P\pi_P}\right)  & \prod F_P &  \prod F_P & {a} & {1} \\
\hline
8.6 &  {G} &  {G}  & \mrm{RES} & \mrm{NONRES} &  L_P  & \prod F_P  &  \prod F_P & {a} & {1} \\
\hline
\end{array}\]
\captionof{table}{Patching data at points of Type $C_{12}^{Hot}$}
\label{TableEatC21hotpoint}
\end{table}
Since $D_P = \brac{u_P,\pi_P}$, clearly $F_P\left(\sqrt[\ell]{\pi_P}\right)$ and $L_P$ splits it. Thus Properties 3 and 6 hold for Rows 8.1-8.4. For Rows 8.5-8.6, we observe that $D\otimes Y_P=0$ and $E_{j,P}$ are split. Thus Properties 3 and 6 hold for all cases. By Lemma \ref{lemmanormfromE-nonsplit}, Property 4 holds for Rows 8.1-8.2 and 8.5-8.6. In Row 8.3, the colours of $\eta_1$ and $\eta_2$ imply that $a$ is a unit along $\eta_2$ and $\eta_1$ (up to $\ell^{\mrm{th}}$ powers). Hence each $a'_{i,P}=z_{i,P}\pi_P^{\ell m'_i}$ where $m_i = \ell m'_i$ is a unit in $\widehat{A_P}^*$ up to $\ell^{\mrm{th}}$ powers. Thus by Lemma \ref{lemmanormfromE-split}, Property 4 holds here.  For Row 8.4, clearly $\pi_P^{m_i}$ is a norm from $F_P\brac{\sqrt[\ell]{\pi_P}}$. Appealing again to Lemma \ref{lemmanormfromE-split}, we see that $z_{i,P}\delta_P^{\ell n'_i}$ are norms from $L_P$. Finally Property 5 holds because $\N(a)=1$ and $\sum m_i= \sum n'_i = 0$. 
\end{proof}

\subsection{Points of Type $A_{00}^s$}
\label{sectionsubtypesA00}
Let $P$ be of Type $A_{00}^{s}$. Thus $\eta_1$ and $\eta_2$ are both of Type 0. For $j=1,2$, let $\mathcal{C}_j := \brac{\overline{\eta_j}\cap S_0}\setminus \{P\}$ denote the set of marked points on $\eta_j$ apart from $P$. By Proposition \ref{propblowupB10}, it is clear that if $Q_j\in \mathcal{C}_j$ is not of Type $A_{00}^{s}$, then $\mathcal{C}_j=\{Q_j\}$. In such a case, let $\gamma_j$ denote the Type 1a/1b/2 curve such that $Q_j \in \overline{\eta_j}\cap \overline{\gamma_j}\cap S_0$. Note $\gamma_j$ can only be coloured R, G, B, Bl, or W. We subdivide Type $A_{00}^{s}$ points into three sub-types:

\textbf{D1}: $\mathcal{C}_j=\{Q_j\}$ where $Q_j$ is not of Type $A_{00}^{s}$ for $j=1,2$. \\
\textbf{D2}: $\mathcal{C}_j=\{Q_j\}$ where $Q_j$ is not of Type $A_{00}^{s}$ and $\mathcal{C}_{j'}$ is either empty or consists only of Type $A_{00}^{s}$ points for $\{j,j'\} = \{1,2\}$.\\
\textbf{D3}: $\mathcal{C}_j$ is either empty or consists only of Type $A_{00}^{s}$ points for $j=1,2$.

\begin{proposition}
\label{propositionEatpoints-A00} 
Let $P\in S_0$ be such that it is of Type $A_{00}^s$. Set $E_{1,P}=E_{2,P}=\prod F_P$. Then there exist ${a}_{1,P}$, $a_{2,P} \in Y_P$ such that for $j=1,2$,  
\begin{enumerate}
\item 
$a_{1,P}a_{2,P}= a$.
\item
$\N_{Y_P/F_P}\brac{a_{j,P}}=1$.
\item
$a_{j,P}$ is a norm from $E_{j,P}\otimes Y_P/Y_P$. 
\end{enumerate}
\end{proposition}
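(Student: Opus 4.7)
My plan is to leverage the simplicity of the prescribed patching fields $E_{1,P} = E_{2,P} = \prod F_P$. Being split \'etale $F_P$-algebras of degree $\ell$, they satisfy $E_{j,P} \otimes_{F_P} Y_P \simeq \prod_{i=1}^{\ell} Y_P$, whose norm to $Y_P$ is simply the coordinate product $(y_1,\ldots,y_\ell) \mapsto y_1 y_2 \cdots y_\ell$. This map is manifestly surjective (set one coordinate to any chosen value and the remaining coordinates equal to $1$), so Condition (3) holds automatically for \emph{any} choice of $a_{j,P} \in Y_P^*$. The proof thereby reduces to Conditions (1) and (2): splitting $a$ as a product of two norm-one elements of $Y_P$.

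Since $\N_{Y_P/F_P}(a) = 1$ by hypothesis, the most elementary splitting $a_{1,P} := a$ and $a_{2,P} := 1$ satisfies both conditions immediately, irrespective of whether $Y_P$ is a field or the split algebra $\prod_{i=1}^{\ell} F_P$. In sub-types D1 and D2, where a non-$A_{00}^s$ marked point $Q_j$ lies on some $\overline{\eta_j}$, one may prefer to modify this trivial splitting by further factors of the Hilbert-$90$ form $h^{-1}\psi(h)$ (with $h \in Y_P^*$), so that the chosen $(a_{1,P}, a_{2,P})$ dovetails with the patching data $(a_{1,Q_j}, a_{2,Q_j})$ prescribed by Proposition \ref{propositionEatpoints}. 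Any such modification multiplies $a_{1,P}$ and $a_{2,P}$ by norm-one elements and thus preserves all three listed properties.

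I do not expect any genuine obstacle in this proposition itself: the difficulty has been absorbed into the a-priori choice of split $E_{j,P}$, which makes Condition (3) automatic and Condition (2) trivial to secure together with (1). The real work --- namely, arranging that these $(a_{j,P}, E_{j,P})$ interpolate correctly with the codimension-one patching data $(a_{j,\eta_j}, E_{j,\eta_j})$ over the branch fields $F_{P,\eta_j}$ --- is deferred to the sections that follow, where the sub-type distinction D1/D2/D3 will be exploited to organise the branch-compatibility argument and to control the freedom used in the modifications alluded to above.
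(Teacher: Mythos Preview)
Your proposal is correct and matches the paper's core argument: both observe that, since $E_{j,P}=\prod F_P$ is split, the norm map $E_{j,P}\otimes_{F_P}Y_P\simeq\prod Y_P\to Y_P$ is just the coordinate product and hence surjective, so Property~(3) is automatic for \emph{any} choice of $a_{j,P}$; Properties~(1)--(2) then follow from the trivial factorisation $a_{1,P}=a$, $a_{2,P}=1$ together with $\N_{Y_P/F_P}(a)=1$. The paper goes further than the bare existence claim by tabulating specific choices of $(a_{1,P},a_{2,P})$ depending on the subtype D1/D2/D3 and the colours of the neighbouring curves $\gamma_1,\gamma_2$ (Tables~\ref{TableEatA00spoint-D3}--\ref{TableEatA00spoint-D1}), precisely so that these data will be branch-compatible with $(a_{j,\eta},E_{j,\eta})$ in the later sections --- exactly the refinement you anticipate and correctly defer.
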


\begin{proof}
Note that since we have chosen the split extension for each $E_{j,P}$, Property 3 holds for any choice of $a_{j,P}$. By Remark \ref{remark-ram-split-not-intersect}, note that if $Y_{\eta_1}$ is of Type RAM, then $Y_{\eta_2}$ cannot be of Type SPLIT and vice-versa. For the same reason, if $\gamma_j$ is coloured red/blue/white/black, then $Y_{\eta_j}$ cannot be of Type RAM. Finally if $Y_{\eta_j}$ is of Type RAM, then by Proposition \ref{propaisaunitifYnotsplit} and Lemmata \ref{lemmanormoneramified-dim1} and \ref{lemmanormoneramified}, $a\in \mathcal{O}_{Y_{\eta_j}}^{*\ell}$ and $\mathcal{O}_{Y_P}^{*\ell}$. We prescribe $a_{j,P}$ as in the tables below\footnote{If $Y_P$ is not split,  set $m_i=n_i=0$ and read the entry $\brac{\pi_P^{m_i}}_i$ as $1$ and $\brac{z_{i,P}\delta_P^{n_i}}_i$ as $a$ etc.} depending on the subtype and neighbours of $P$. 

\textbf{Subtype D3}: Let $P$ be of subtype D3. Thus $\mathcal{C}_j$ is empty or consists only of Type $A_{00}^{s}$ points for $j=1,2$.
\begin{table}[H]
\centering
\footnotesize
\arraycolsep=0.5pt
\medmuskip =0.5mu 
\[
\begin{array}{|c|c|c|}
\hline 
\mrm{Row}    & {a_{1,P}} & {a_{2,P}}=aa_{1,P}^{-1} \\
\hline
11.1 &     {a} & {1} \\
\hline
\end{array}
\]
\captionof{table}{Patching data at points of subtype D3}
\label{TableEatA00spoint-D3}
\end{table}

\textbf{Subtype D2}: Let $P$ be of subtype D2. Wlog assume $\mathcal{C}_1 = \{Q_1\}$ where $Q_1$ is not of Type $A_{00}^{s}$ and that $\mathcal{C}_2$ is empty or consists only of Type $A_{00}^{s}$ points. 
\begin{table}[H]
\centering
\footnotesize
\arraycolsep=0.5pt
\medmuskip =0.5mu 
\[
\begin{array}{|c|c|c|c|c|c|c|c|}
\hline 
\mrm{Row} & Y_{\eta_1} & Y_{\eta_2} & Q_{1}  &  \gamma_1 & \mrm{Colour\ of\ } \gamma_1    & {a_{1,P}} & {a_{2,P}}=aa_{1,P}^{-1} \\
\hline
10.1 & - & - &    A_{10}^{s}   & 1a &  &   {a} & {1} \\
\hline
10.2' & - & \mrm{RAM} &   B_{10}^{s}   & 1b & {R}, W  &   {a} & {1} \\
\hline
10.2 & - & \mrm{Not\ RAM}&   B_{10}^{s}   & 1b & {R}, W  &   {\brac{\delta_P^{n_i}}_i} & {\brac{z_{i,P}\pi_P^{m_i}}_i} \\
\hline
10.3 & - & - &   B_{10}^{s}    & 1b & {G}, {B}, Bl  &  {a} & {1}\\
\hline
10.4 &  - & - &  B_{20}^{s}   & 2 & {G}  &  {a} & {1}\\
\hline
\end{array}
\]
\captionof{table}{Patching data at points of subtype D2}
\label{TableEatA00spoint-D2}
\end{table}

\textbf{Subtype D1}: Let $P$ be of subtype D1. For $j=1,2$, let $\mathcal{C}_j = \{Q_j\}$ where $Q_j$ is not of Type $A_{00}^s$. 
\begin{table}[H]
\centering
\footnotesize
\arraycolsep=0.5pt
\medmuskip =0.5mu 
\[
\begin{array}{|c|c|c|c|c|c|c|c|c|c|c|}
\hline 
\mrm{Row} & Y_{\eta_1} & Y_{\eta_2} & Q_{1} & Q_{2} &  \gamma_1 & \mrm{Colour\ of\ } \gamma_1 & \gamma_2 & \mrm{Colour\ of\ } \gamma_2  & {a_{1,P}} & {a_{2,P}}=aa_{1,P}^{-1} \\
\hline
9.1 &  -  & - & A_{10}^{s} & A_{10}^{s}   & 1a & &  1a & & {a} & {1} \\
\hline
9.2' &  \mrm{RAM} &  - & A_{10}^{s} & B_{10}^{s}   & 1a & &  1b & {R}, W & {a} & {1} \\
\hline
9.2 &   \mrm{Not \ RAM} & - & A_{10}^{s} & B_{10}^{s}   & 1a & &  1b & {R}, W & {\brac{\pi_P^{m_i}}_i} & {\brac{z_{i,P}\delta_P^{n_i}}_i} \\
\hline
9.3 & - & -  &   A_{10}^{s} & B_{10}^{s}   & 1a & &  1b & {G}, {B}, Bl & {a} & {1} \\
\hline
9.4 &  - & -  &  A_{10}^{s} & B_{20}^{s}   & 1a & &  2  & {G}  & {a} & {1} \\
\hline
\hline
9.5' & - & \mrm{RAM} &  B_{10}^{s} & A_{10}^{s}   & 1b & {R}, W &  1a & &  {a} & {1}\\
\hline
9.5 & - &\mrm{Not\ RAM} &  B_{10}^{s} & A_{10}^{s}   & 1b & {R}, W &  1a & &  {\brac{\delta_P^{n_i}}_i} & {\brac{z_{i,P}\pi_P^{m_i}}_i}\\
\hline
9.6 & - & - & B_{10}^{s} & B_{10}^{s}   & 1b & {R}, W  &  1b & {R}, W & {1} & {a} \\
\hline
9.7' & - &  \mrm{RAM} & B_{10}^{s} & B_{10}^{s}   & 1b & {R}, W &  1b & {G} & {a} & {1} \\
\hline
9.7 & -  & \mrm{Not\ RAM} & B_{10}^{s} & B_{10}^{s}   & 1b & {R}, W &  1b & {G}, {B}, Bl & {\brac{\delta_P^{n_i}}_i} & {\brac{z_{i,P}\pi_P^{m_i}}_i} \\
\hline
9.8' & - & \mrm{RAM} &   B_{10}^{s} & B_{20}^{s}   & 1b & {R}, W  &  2  & {G}  &  {a} & {1} \\
\hline
9.8 & -  & \mrm{Not\ RAM} &   B_{10}^{s} & B_{20}^{s}   & 1b & {R}, W  &  2  & {G}  &  {\brac{\delta_P^{n_i}}_i} & {\brac{z_{i,P}\pi_P^{m_i}}_i} \\
\hline
\hline
9.9 &  - & - &  B_{10}^{s} & A_{10}^{s}   & 1b & {G}, {B}, Bl   &  1a & &  {a} & {1}\\
\hline
9.10' &\mrm{RAM} & - &   B_{10}^{s} & B_{10}^{s}   & 1b & {G}  &  1b & {R}, W &  {a} & {1} \\
\hline
9.10 & \mrm{Not\ RAM} & - &   B_{10}^{s} & B_{10}^{s}   & 1b & {G}, {B}, Bl  &  1b & {R}, W &  {\brac{\pi_P^{m_i}}_i} & {\brac{z_{i,P}\delta_P^{n_i}}_i} \\
\hline
9.11 &  - & - &  B_{10}^{s} & B_{10}^{s}   & 1b & {G}, {B}, Bl  &  1b & {G}, {B}, Bl & {a} & {1} \\
\hline
9.12 &-  &- &  B_{10}^{s} & B_{20}^{s}   & 1b & {G}, {B}, Bl   &  2  & {G}  &  {a} & {1} \\
\hline
\hline
9.13 & - & - &  B_{20}^{s} & A_{10}^{s}   & 2 & {G}  &  1a & &  {a} & {1}\\
\hline
9.14' &\mrm{RAM}  & -  &  B_{20}^{s} & B_{10}^{s}   & 2 & {G}  &  1b & {R}, W &  {a} & {1} \\
\hline
9.14 &  \mrm{Not\ RAM} & - &  B_{20}^{s} & B_{10}^{s}   & 2 & {G}  &  1b & {R}, W &  {\brac{\pi_P^{m_i}}_i} & {\brac{z_{i,P}\delta_P^{n_i}}_i} \\
\hline
9.15 &  - & - & B_{20}^{s} & B_{10}^{s}   & 2 & {G} &  1b & {G}, {B}, Bl & {a} & {1} \\
\hline
9.16 & - & - &  B_{20}^{s} & B_{20}^{s}   & 2 & {G}   &  2  & {G}  &  {a} & {1} \\
\hline
\end{array}
\]
\captionof{table}{Patching data at points of subtype D1}
\label{TableEatA00spoint-D1}
\end{table} \end{proof}

\section{Structure of $E_{j,P}$ and $a_{j,P}$ along branch fields}
Let $P\in S_0$. Recall the choice of parameter, $\pi_{\eta}$, of $F_{\eta}$ for each $\eta \in N'_0$ as in Section \ref{subsectionfinalmodel}, which defines $\overline{\eta}$ at $P$ if $P\in \overline{\eta}$. In this case, $\pi_P :=\pi_{\eta}$ is part of the chosen system of parameters $(pi_P, \delta_P)$ of $A_P$. In this section, we study the ramification and splitting properties of $E_{j,P}$ and the shape of $a_{j,P}$ for $j=1,2$ with respect to the colour and type of curves on which $P$ lies. This will be useful when we construct extensions $E_{j,\eta}$ and elements $a_{j,\eta}$ for codimension one points $\eta\in N_0$.

We first begin by calculating how the lift of residues looks like along the residue fields $k_{P,\eta}$ of branch fields $F_{P,\eta}$.
\begin{lemma}
\label{lemma-liftofresidues}
 Let $P\in S_0$ lie on the intersection of two distinct irreducible curves of $\mathcal{H}_{\mathcal{X}}$ with generic points $\eta_1$ and $\eta_2$. Let $(\pi_P, \delta_P)$ be the system of parameters at $P$ chosen as in Section \ref{subsectionfinalmodel} such that $\pi_P$ cuts out $\overline{\eta_1}$ and $\delta_P$ cuts out $\overline{\eta_2}$ at $P$. Let $\eta = \eta_1$ or $\eta_2$ and let $H_{\eta}$ denote the lift of residues along $\eta$. Set $H'_P = \overline{H_{\eta}\otimes F_{P,\eta}}/k_{P,\eta}$ and $u'\in k_{P,\eta}^*/k_{P,\eta}^{*\ell}$ to be the residue of $D$ over $F_{P,\eta}$. Then the following table gives the shape of $H'_P/k_{P,\eta}$ and $u'\in k_{P,\eta}$.
\begin{table}[H]
\centering
\footnotesize
\arraycolsep=0.5pt
\medmuskip =0.5mu 
\[
\begin{array}{|c|c|c|c|c|c|c|c|}
\hline 
\mrm{Row} & \mrm{Location} & P & {D_P\in \Br(F_P)} & {(\eta, \mathrm{Type})} & u' & {H'_{P}/k_{P,\eta}} & \mrm{Description\ of\ }H'_P  \\
\hline
a. & \mrm{Table} \ \ref{TableEatB10spoint} & B_{10}^{s} & 0  & \brac{\eta_1, 1b} & 1 & \prod k_{P,\eta} & \mrm{Split} \\
\hline
b. &  \mrm{Table} \ \ref{TableEatB11nspoint} & B_{11}^{ns} & \brac{u_P,\pi_P}  & \brac{\eta_1, 1b} & \overline{u_P} & k_{P,\eta}\brac{\sqrt[\ell]{\overline{u_P}}} & \mrm{Unramified \, nonsplit} \\
\hline
c. & \mrm{Table} \ \ref{TableEatB20spoint} & B_{20}^{s} & 0  & \brac{\eta_1, 2} & 1 &\prod k_{P,\eta} & \mrm{Split} \\
\hline
d. & \mrm{Table} \ \ref{TableEatB21nspoint} & B_{21}^{ns} & \brac{u_P,\pi_P}  & \brac{\eta_1, 2}  & \overline{u_P} & k_{P,\eta}\brac{\sqrt[\ell]{\overline{u_P}}} & \mrm{Unramified \, nonsplit} \\
\hline
e. & \mrm{Table} \ \ref{TableEatC11chillypoint} & C_{11}^{Chilly} & \brac{v_P,\pi_P^j\delta_P}  & \brac{\eta_1, 1b}  & \overline{v_P^j} & k_{P,\eta}\brac{\sqrt[\ell]{\overline{v_P}}} & \mrm{Unramified \, nonsplit} \\
\hline
f. & \mrm{Table} \ \ref{TableEatC11chillypoint} & C_{11}^{Chilly} & \brac{v_P,\pi_P^j\delta_P}  & \brac{\eta_2, 1b}  & \overline{v_P} & k_{P,\eta}\brac{\sqrt[\ell]{\overline{v_P}}} & \mrm{Unramified \, nonsplit} \\
\hline
g. & \mrm{Table} \ \ref{TableEatC11coldpoint} & C_{11}^{Cold} & \brac{u_P\pi_P^m, v_P\delta_P}  & \brac{\eta_1, 1b}  & \overline{v_P^{-m}\delta_P^{-m}} &  k_{P,\eta}\brac{\sqrt[\ell]{\overline{{v_P\delta_P}}}} & \mrm{Ramified \, nonsplit} \\
\hline
h. & \mrm{Table} \ \ref{TableEatC11coldpoint} & C_{11}^{Cold} & \brac{u_P\pi_P^m, v_P\delta_P} & \brac{\eta_2, 1b}  & \overline{u_P\pi_P^m} & k_{P,\eta}\brac{\sqrt[\ell]{\overline{u_P\pi_P^m}}} & \mrm{Ramified \, nonsplit} \\
\hline
i. & \mrm{Table} \ \ref{TableEatC21coldpoint} & C_{12}^{Cold} & \brac{u_P\pi_P^m, v_P\delta_P}  & \brac{\eta_1, 1b}  & \overline{v_P^{-m}\delta_P^{-m}}  & k_{P,\eta}\brac{\sqrt[\ell]{\overline{{v_P\delta_P}}}} & \mrm{Ramified \, nonsplit} \\
\hline
j. & \mrm{Table} \ \ref{TableEatC21coldpoint} & C_{12}^{Cold} & \brac{u_P\pi_P^m, v_P\delta_P} & \brac{\eta_2, 2}  &\overline{u_P\pi_P^m} &  k_{P,\eta}\brac{\sqrt[\ell]{\overline{u_P\pi_P^m}}} & \mrm{Ramified \, nonsplit} \\
\hline
l. & \mrm{Table} \ \ref{TableEatC21hotpoint} & C_{12}^{Hot} & \brac{u_P, \pi_P}  & \brac{\eta_1, 1b}  &\overline{u_P} & k_{P,\eta}\brac{\sqrt[\ell]{\overline{u_P}}} & \mrm{Unramified \, nonsplit} \\
\hline
m. & \mrm{Table} \ \ref{TableEatC21hotpoint} & C_{12}^{Hot} & \brac{u_P, \pi_P}  & \brac{\eta_2, 2}  & 1 &  \prod k_{P,\eta}& \mrm{Split} \\
\hline

\end{array}
\]
\captionof{table}{The shape of the lift of residues}
\label{Tableliftofresidues}
\end{table}
\end{lemma}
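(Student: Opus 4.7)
The plan is, for each row of the table, to compute the residue $u'=\partial_{F_{P,\eta}}([D])\in k_{P,\eta}^*/k_{P,\eta}^{*\ell}$ directly from the shape of $D_P\in\Br(F_P)$ supplied by the classification in Section~\ref{subsectionclosedpointsclassification}, and then read off $H'_P=k_{P,\eta}\brac{\sqrt[\ell]{u'}}$. The branch field $F_{P,\eta}$ is a complete discretely valued field whose parameter is $\pi_\eta$ (one of $\pi_P,\delta_P$) and whose residue field $k_{P,\eta}$ is itself a complete DVF, with parameter the image of the complementary element of the system $(\pi_P,\delta_P)$; furthermore, the complementary parameter of $A_P$ becomes a unit in the valuation ring of $F_{P,\eta}$. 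Since $H_\eta/F_\eta$ is the unique unramified cyclic $\ell$-extension whose residue matches $\partial_{F_\eta}([D])$, its base change $H_\eta\otimes_{F_\eta}F_{P,\eta}$ is unramified (or split) over $F_{P,\eta}$, and by compatibility of residues under the unramified extension $F_\eta\hookrightarrow F_{P,\eta}$ of complete DVRs, one has $H'_P=k_{P,\eta}\brac{\sqrt[\ell]{u'}}$.

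The main tool is the residue formula for symbol algebras: for units $u,w$ in the valuation ring of $F_{P,\eta}$ and integers $m,n$, bimultiplicativity of the symbol together with $\partial((\pi_\eta,\pi_\eta))=0$ (valid for odd $\ell$, using $-1\in F^{*\ell}$) yields
\[ \partial_{F_{P,\eta}}\bigl((u\pi_\eta^m,\,w\pi_\eta^n)\bigr) \;=\; \overline{u}^{\,n}\,\overline{w}^{\,-m} \in k_{P,\eta}^*/k_{P,\eta}^{*\ell}. \]
Since $D_{00}$ is unramified at $A_P$ and hence over $F_{P,\eta}$, it contributes nothing to the residue, so it suffices to apply this formula to the symbolic part of $D_P$ recorded in the classification tables.

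The case analysis is then routine. In rows (a), (c), (m), either $D_P=0$ or the parameter $\pi_\eta$ appears with zero exponent in both symbol entries, so $u'=1$ and $H'_P$ splits. In rows (b), (d), (e), (f), (l) the formula produces a unit residue and hence an unramified nonsplit $H'_P$; for rows (e)--(f) one uses $(v_P,\pi_P^j\delta_P)=j(v_P,\pi_P)+(v_P,\delta_P)$, and $\gcd(j,\ell)=1$ lets one replace the resulting generator $\overline{v_P^j}$ of the Kummer extension by $\overline{v_P}$. In the cold rows (g)--(j), the formula applied to $D_P=(u_P\pi_P^m,v_P\delta_P)$ gives $\overline{v_P^{-m}\delta_P^{-m}}$ along $\pi_P$ and $\overline{u_P\pi_P^m}$ along $\delta_P$; since the complementary parameter is a uniformizer of $k_{P,\eta}$, the Kummer extension is totally ramified, and $\gcd(m,\ell)=1$ identifies it with $k_{P,\eta}\brac{\sqrt[\ell]{\overline{v_P\delta_P}}}$ or $k_{P,\eta}\brac{\sqrt[\ell]{\overline{u_P\pi_P^m}}}$ respectively, matching the table entries.

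The one delicate step is row (m) at the hot point: although $D$ globally has the form $D_{00}+(u_P,\pi_P)+(v_P,\delta_P)$ with nontrivial ramification along $\eta_2$, the hot-point hypothesis $\overline{v_P}\in k_P^{*\ell}$ lifts via Hensel's lemma over $\widehat{A_P}$ to $v_P\in\widehat{A_P}^{*\ell}\subseteq F_{P,\eta_2}^{*\ell}$, so $(v_P,\delta_P)=0\in\Br(F_{P,\eta_2})$. The residue of $D$ over $F_{P,\eta_2}$ thus reduces to that of $(u_P,\pi_P)$, whose entries are both units (since $\pi_P$ is a unit in the valuation ring of $F_{P,\eta_2}$), yielding $u'=1$ and a split $H'_P$ as recorded.
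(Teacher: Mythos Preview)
Your proof is correct and is precisely the routine residue computation that the paper leaves implicit: the paper states this lemma as a table without supplying a proof, and your argument via the tame-symbol residue formula applied to the symbolic part of $D_P$ (together with the Hensel lift of $\overline{v_P}\in k_P^{*\ell}$ at the hot point for row~(m)) is exactly the intended verification.
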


In the following, we let $\pi_{\eta}=\pi_P$ be the prime defining $\eta$ at $P$ and let $\delta_P$ denote the other prime completing the system of parameters at $P$. We also let $L_P$ denote the the unique degree $\ell$ extension of $F_P$ unramified at $\widehat{A_P}$. 
\begin{proposition}[Violet/Indigo/Black]
\label{otherproperties-violet-indigo}
\label{otherproperties-black}
Let $\eta \in N'_0$ and $P\in \overline{\eta}\cap S_0$. Assume further that $\eta$ is coloured violet, indigo or black. Let $j=1$ or $2$ and let $E_{j,P}$ be as prescribed in Proposition \ref{propositionEatpoints}. Then
\begin{enumerate}
\item
$E_{j,P}=L_P$ if $\eta$ is coloured violet or indigo.
\item
$a_{1,P}=a$ and $a_{2,P}=1$.
\item
$E_{j,P} \otimes F_{P,\eta}$ is an unramified extension of $F_{P,\eta}$ and matches with the lift of residues at $\eta$ as etale algebras over $F_{P,\eta}$.
\item
$E_{j,P}\otimes F_{P,\eta}$ splits $D$ in $\Br\brac{F_{P,\eta}}$.
\end{enumerate} \end{proposition}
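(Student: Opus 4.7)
The plan is to reduce the proposition to a case check driven by the colour--type constraints. Since V, I, and Bl are colours only assigned to Type~1b curves $\eta$ with $Y_{\eta}\simeq \prod F_{\eta}$ and with every $a'_{i,\eta}$ a unit in $\widehat{A_{\eta}}$ up to $\ell^{\mathrm{th}}$ powers (Section~\ref{section-finalcolouring}), I would first use Proposition~\ref{propblowupchilly} and the labelling in Section~\ref{ChCHZlabels} to enumerate the possible marked point types on $\overline{\eta}$: a V--curve contains only $B_{11}^{ns}$ or $C_{11}^{Chilly}$ points; an I--curve only $B_{11}^{ns}$ or $C_{12}^{Hot}$; and a Bl--curve only $B_{10}^{s}$ or $B_{11}^{ns}$. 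This leaves a small fixed list of (colour, type) pairs to check.

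For each pair I would locate the unique row of the relevant table in Proposition~\ref{propositionEatpoints} that applies when the Type~1b curve at $P$ carries the colour V, I, or Bl: Row~1.2 of Table~\ref{TableEatB10spoint}, Row~2.2 of Table~\ref{TableEatB11nspoint}, Rows~5.5/5.7/5.10 of Table~\ref{TableEatC11chillypoint}, and Row~8.3 of Table~\ref{TableEatC21hotpoint}. In every one of these rows the prescribed data satisfies $a_{1,P}=a$ and $a_{2,P}=1$, proving (2); and $E_{j,P}=L_P$ in every V or I entry (while $E_{j,P}=\prod F_P$ in the single Bl/$B_{10}^s$ entry), yielding (1). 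Property~(4) is then immediate from Property~(6) of Proposition~\ref{propositionEatpoints}: either $E_{j,P}=L_P$ and $D\otimes E_{j,P}=0$ already in $\Br(F_P)$, or $E_{j,P}=\prod F_P$ and then $D_P=0$ by Lemma~\ref{lemmaatetasplit} to begin with; either way $D\otimes E_{j,P}\otimes F_{P,\eta}=0$.

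For~(3), $L_P$ is unramified at $\widehat{A_P}$ by definition and $\prod F_P$ is trivially so, hence $E_{j,P}\otimes F_{P,\eta}$ is an unramified etale extension of the complete discretely valued field $F_{P,\eta}$. To match with the lift of residues I would compare residues with Table~\ref{Tableliftofresidues}: whenever $D_P$ is a nontrivial symbol of the shape $(u,\,\cdot\,)$ with $u\in \widehat{A_P}^{*}$ (namely $u=u_P$ at $B_{11}^{ns}$, $u=v_P$ at $C_{11}^{Chilly}$, and $u=u_P$ at $C_{12}^{Hot}$), we have $L_P=F_P(\sqrt[\ell]{u})$ and its residue $k_{P,\eta}(\sqrt[\ell]{\overline{u}})$ coincides with $H'_P$ from rows~(b), (e)--(f) and (l) respectively; at the Bl/$B_{10}^s$ point, $D_P=0$ and $H'_P=\prod k_{P,\eta}$ matches the residue of $\prod F_{P,\eta}$ (row~(a)). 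The only mild subtlety is the chilly case, where the residue of $D$ is $\overline{v_P^j}$ along $\eta_1$ but $\overline{v_P}$ along $\eta_2$; since $\gcd(j,\ell)=1$, both generate the same cyclic subgroup of $k_{P,\eta}^{*}/k_{P,\eta}^{*\ell}$, so $L_P=F_P(\sqrt[\ell]{v_P})$ produces the correct lift--of--residues extension on either branch. This explains the uniformity of rows~(e) and (f) of Table~\ref{Tableliftofresidues} and makes the verification painless.
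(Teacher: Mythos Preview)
Your proof is correct and follows essentially the same approach as the paper: the paper's proof is the single sentence ``An inspection of Row 1.2, 2.2, 5.5, 5.7, 5.10 and 8.3 of the tables in Proposition~\ref{propositionEatpoints} immediately shows that Properties 1--4 hold (Lemma~\ref{lemma-liftofresidues}),'' and you have simply unpacked that inspection, correctly narrowing down the admissible point types from the Ch/H/Z labelling and verifying the lift-of-residues match via Table~\ref{Tableliftofresidues}. Your observation about $\gcd(j,\ell)=1$ handling both branches at chilly points is exactly what makes rows (e) and (f) of Table~\ref{Tableliftofresidues} agree with $L_P$.
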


\begin{proof} An inspection of Row 1.2, 2.2, 5.5, 5.7, 5.10 and 8.3 of the tables in Proposition \ref{propositionEatpoints} immediately shows that Properties 1-4 hold (Lemma \ref{lemma-liftofresidues}). 

 \end{proof}

 \begin{proposition}[Blue]
\label{otherproperties-blue}
Let $\eta \in N'_0$ and $P\in \overline{\eta}\cap S_0$. Assume further that $\eta$ is coloured blue. Let $D_{\eta} \simeq M_{\ell}\brac{u_{\eta},w_{\eta}\pi_{\eta}}$ for units $w_{\eta}, u_{\eta}\in \widehat{A_{\eta}}^*$. Let $j=1$ or $2$ and let $E_{j,P}$ be as prescribed in Proposition \ref{propositionEatpoints}. Then 
\begin{enumerate}
\item
$E_{1,P}\otimes F_{P,\eta}$ is an unramified extension of $F_{P,\eta}$ and matches with the lift of residues at $\eta$ as etale algebras over $F_{P,\eta}$.
\item
$E_{2,P}\otimes F_{P,\eta}$ is a ramified extension of $F_{P,\eta}$.
\item
$a_{1,P}$ is a unit along $\eta$.
\item
$E_{j,P}\otimes F_{P,\eta}$ splits $D$ in $\Br\brac{F_{P,\eta}}$.
\item
There exist $w_P, x_{i,P}\in \widehat{A_P}$ for $i\leq \ell$ which are units along $\eta$ such that 
\begin{enumerate}
\item
$E_{2,P} \simeq F_P[t]/(t^{\ell}-w_P\pi_P)$ and $a_{2,P} = \brac{\brac{w_P\pi_P}^{m_{i,P}}{x_{i,P}^{\ell}}}_i$ for $m_{i,P}\in \mathbb{Z}$.
\item
$\brac{w_Pw_{\eta}^{-1}, u_{\eta}}=0\in \Br\brac{F_{P,\eta}}$.
\end{enumerate}
\end{enumerate} \end{proposition}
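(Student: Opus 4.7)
The strategy is a case analysis according to the type of $P$, followed by a uniform Brauer-theoretic argument for (5b). Since $\eta$ is coloured blue, by Section~\ref{ChCHZlabels} and Proposition~\ref{propgraphcolour}, $\eta$ is a $C$-curve, hence a Type~1b curve with $Y_\eta \simeq \prod F_\eta$. Inspecting the possible marked points on a $C$-curve, the options for $P\in \overline{\eta}\cap S_0$ compatible with $\eta$ being blue are of Type $B_{10}^{s}$, $B_{11}^{ns}$ or $C_{11}^{Cold}$; the case $C_{12}^{Cold}$ is excluded because the other component there is of Type~2 (hence green in $\Delta$), which would force $\eta$ to be red by Proposition~\ref{propgraphcolour}(2). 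The relevant rows of Proposition~\ref{propositionEatpoints} are therefore $1.3$, $2.3$, $6.5$ and $6.6$.

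Properties (1)--(4) reduce to routine extraction row-by-row. In every row $E_{1,P}$ is either $\prod F_P$, $L_P$, or a Kummer extension whose Kummer element is a unit with respect to $\pi_\eta$, so that $E_{1,P}\otimes F_{P,\eta}$ is unramified at $\pi_\eta$; matching with the lift of residues then follows from Table~\ref{Tableliftofresidues} of Lemma~\ref{lemma-liftofresidues}. The extension $E_{2,P}$ is visibly Kummer-totally ramified at $\pi_\eta$ in each of these rows, giving (2). Property (3) is an inspection of the formula for $a_{1,P}$, whose exponents of $\pi_\eta$ vanish. Property (4) is inherited from Proposition~\ref{propositionEatpoints}(3) and (6) by base change to $F_{P,\eta}$.

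For (5a) I will read off $w_P$, $m_{i,P}$ and $x_{i,P}$ directly. Rows $1.3$ and $2.3$ give $w_P=1$, $m_{i,P}=m_i$ and $x_{i,P}=1$; Row $6.5$ (where $\pi_\eta=\delta_P$) gives $w_P=v_P$, $m_{i,P}=n_i$ and $x_{i,P}=1$. Row $6.6$ requires rewriting $F_P(\sqrt[\ell]{u_P\pi_P^m})=F_P(\sqrt[\ell]{u_P^s\pi_P})$ using $sm\equiv 1\pmod\ell$; substituting $sm=r\ell+1$ into the expression for $a_{2,P}$ from Proposition~\ref{propositionaatcoldpoint} and absorbing the stray $\pi_P^{-rm_i\ell}$ into the $\ell$-th power collapses $a_{2,P}$ to $((u_P^s\pi_P)^{m_i}(w'_{i,P})^\ell)_i$, so that $w_P=u_P^s$ and $x_{i,P}=w'_{i,P}$, both units along $\eta$.

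The substantive step is (5b), and I expect it to be the only non-combinatorial ingredient. The plan is to exploit that the extension $E_{2,P}\otimes F_{P,\eta}=F_{P,\eta}(\sqrt[\ell]{w_P\pi_\eta})$ splits $D\otimes F_{P,\eta}$, whose Brauer class is $(u_\eta,w_\eta\pi_\eta)$, by property (4), while $(u_\eta,w_P\pi_\eta)$ is split over the same extension tautologically. Bilinearity of the symbol gives
\[
(u_\eta,\,w_\eta w_P^{-1})=0 \ \text{ in }\ \Br\!\bigl(F_{P,\eta}(\sqrt[\ell]{w_P\pi_\eta})\bigr).
\]
Since $w_\eta$ and $w_P$ are units with respect to $\pi_\eta$, the class $(u_\eta,w_Pw_\eta^{-1})$ lies in the unramified subgroup of $\Br(F_{P,\eta})$. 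The Kummer extension $F_{P,\eta}(\sqrt[\ell]{w_P\pi_\eta})/F_{P,\eta}$ is totally ramified of degree $\ell$, so the residue field $k_{P,\eta}$ is unchanged, and restriction is the identity on the unramified subgroup $\Br(k_{P,\eta})\hookrightarrow \Br(F_{P,\eta})$. Hence $(u_\eta,w_Pw_\eta^{-1})=0$ already in $\Br(F_{P,\eta})$, which after antisymmetry of the symbol is exactly (5b).
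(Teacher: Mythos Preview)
Your proof is correct, and for the case analysis and for (1)--(4) and (5a) it follows the paper's argument essentially verbatim: same exclusion of $C_{12}^{Cold}$, same relevant rows $1.3$, $2.3$, $6.5$, $6.6$, same extraction of $w_P$ and $x_{i,P}$.

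The genuine difference is in (5b). The paper proves (5b) by a separate residue computation in each row: at $B_{10}^s$ it observes that the lift of residues is split, hence $u_\eta\in F_{P,\eta}^{*\ell}$; at $B_{11}^{ns}$ it writes $D=D_{00}+(u_P,\pi_P)$, compares residues to get $u_P\equiv u_\eta$ and deduces $(u_\eta,w_\eta)=0$; at cold points it writes $D=D_{00}+(u_P\pi_P^m,v_P\delta_P)$ and, after identifying $u_\eta$ with a suitable power of $v_P\delta_P$, manipulates symbols to reach $(u_P^s w_\eta^{-1},u_\eta)=0$. Your argument replaces all of this with a single uniform step: once (4) and (5a) are in hand, $(u_\eta,w_\eta\pi_\eta)$ and $(u_\eta,w_P\pi_\eta)$ both die over the totally ramified extension $F_{P,\eta}(\sqrt[\ell]{w_P\pi_\eta})$, so their quotient $(u_\eta,w_\eta w_P^{-1})$ dies there as well; being unramified, it was already trivial over $F_{P,\eta}$ since restriction to a totally ramified extension is injective on the unramified part of the Brauer group. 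This is shorter and conceptually cleaner. The paper's approach, on the other hand, yields the explicit residue identities (e.g.\ $u_P\equiv u_\eta$ at $B_{11}^{ns}$ points, $(v_P\delta_P)^{-m}\equiv u_\eta$ at cold points) as byproducts, which connect to the lift-of-residues description in Lemma~\ref{lemma-liftofresidues}; your argument bypasses those identities entirely.
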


\begin{proof} 
Since $\eta$ is coloured blue, it has to be Type 1b C-curve. Thus $P$ can either be a point of Type $B_{10}^{s}, B_{11}^{ns}$ or $C_{11}^{Cold}$. (It cannot be a $C_{12}^{Cold}$ point because then the other curve will be of Type 2 and hence green in colour. And therefore $\eta$ would have to be red). We mention the relevant rows of the tables in Proposition \ref{propositionEatpoints} below (whence Properties 1-4 become clear) and give a proof of Property 5a \& 5b in each case. 

\textit{Row 1.3 of Table \ref{TableEatB10spoint}}: Here $w_P=1=x_{i,P}$. Since $P$ is a $B_{10}^{s}$ point, by Lemma \ref{lemma-liftofresidues}, $u_{\eta}\in F_{P,\eta}^{*\ell}$ and hence Property 5a \& 5b is satisfied.

\textit{Row 2.3 of Table \ref{TableEatB11nspoint}}: Here $w_P=1=x_{i,P}$. Since $P$ is a $B_{11}^{ns}$ point, by Lemma \ref{lemma-liftofresidues}, the lift of residues along $\eta$ matches with $L_P$ along $F_{P,\eta}$. Writing $D = D_{00} + \brac{u_P, \pi_P}\in \Br(F)$ where $D_00$ is unramified at $A_P$, we have $\brac{u_P, \pi_P} = \brac{u_{\eta}, w_{\eta}\pi_{\eta}}\in \Br\brac{F_{P,\eta}}$. 
Since $\pi_P = \pi_{\eta}$, comparing residues we have that $u_P \cong u_{\eta}$ up to $\ell^{\mrm{th}}$ powers in $F_{P,\eta}$, and hence $ \brac{u_{\eta}, w_{\eta}}=0$. As $w_P = 1$ here, Property 5b is proved.

\textit{Rows 6.5 and 6.6 of Table \ref{TableEatC11coldpoint}}: We only investigate Row 6.6 (as the proof for Row 6.5 is similar in nature). Since $P$ is a $C_{11}^{Cold}$ point, by Lemma \ref{lemma-liftofresidues}, the lift of residues along $\eta_1$ matches with $F_P\brac{\sqrt[\ell]{v_P\delta_P}}$ along $F_{P,\eta_1}$.

Unravelling the expression for $a_{2,P}$ from Row 6.6, we see it is $\brac{\brac{u_P^{s}\pi_P}^{m_i}{{w'_{i,P}}^{\ell}}}_{i}$ where $sm = r\ell + 1$ and $w'_{i,P}\in \widehat{A_P}^*$. Also $E_{2,P} = F_P\brac{\sqrt[\ell]{u_P\pi_P^m}} = F_P\brac{\sqrt[\ell]{u_P^{s}\pi_P}}$. Thus  $w_P = u_P^{s}$ and $x_{i,P}=w'_{i,P}$. Writing $D = D_{00} + \brac{u_P\pi_P^m, v_P\delta_P}\in \Br(F)$ for $D_00$ unramified at $A_P$, we have $\brac{u_P\pi_P^m, v_P\delta_P} = \brac{u_{\eta}, w_{\eta}\pi_{\eta}}$ in $\Br\brac{F_{P,\eta}}$. As $\pi_{\eta} = \pi_P$, comparing residues as before, we have that $\brac{v_P\delta_P}^{-m} \cong u_{\eta}$ up to $\ell^{\mrm{th}}$ powers in $F_{P,\eta}$. 

Hence $\brac{u_P\pi_P^m, v_P\delta_P} = \brac{w_{\eta}^{m}\pi_{\eta}^{m}, v_P\delta_P}$. This implies $\brac{u_P, v_P\delta_P} = \brac{w_{\eta}^{m}, v_P\delta_P}$. Hence $\brac{u_Pw_{\eta}^{-m}, v_P\delta_P} = 0$ and so $\brac{u_P^s w_{\eta}^{-1}, \brac{v_P\delta_P}^m} = 0$. Thus  $\brac{w_Pw_{\eta}^{-1}, u_{\eta}} = 0$.  \end{proof}

\begin{proposition}[Green(1)]
\label{otherproperties-green-NONRES}
Let $\eta \in N'_0$ be of Type 1b or 2 and let $P\in \overline{\eta}\cap S_0$. Assume further that $Y_{\eta}$ is of Type NONRES.  Let $j=1$ or $2$ and let $E_{j,P}$ be as prescribed in Proposition \ref{propositionEatpoints}. Then 
\begin{enumerate}
\item
$\eta$ is coloured green.
\item
$a_{1,P}=a$ and $a_{2,P}=1$.
\item
$E_{j,\eta}\otimes F_{P,\eta}$ is an unramified extension of $F_{P,\eta}$ and matches with the lift of residues at $\eta$ as etale algebras over $F_{P,\eta}$.
\end{enumerate}
\end{proposition}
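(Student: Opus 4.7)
The proposition asserts three facts about green curves $\eta$ of Type 1b or 2 with $Y_\eta$ of Type NONRES; I would verify each by a direct inspection of the patching data tables from Proposition \ref{propositionEatpoints} against Lemma \ref{lemma-liftofresidues}.

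For Property 1, the Ch, C, H, Z classification of Section \ref{ChCHZlabels} and the refined colours V, I, Bl, Ye, O, W are by construction only assigned to $\eta$ of Type 1b with $Y_\eta \simeq \prod F_\eta$, while the R, B colouring produced by Proposition \ref{propgraphcolour} is reserved for C-curves, which also require $Y_\eta$ to be SPLIT. Since $Y_\eta$ is a nonsplit unramified field extension, $\eta$ receives none of these labels, is not a C-curve, and so is coloured green by Proposition \ref{propgraphcolour}.

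For Property 2, the possible types of $P$ are $B_{10}^s, B_{11}^{ns}, C_{11}^{Chilly}, C_{11}^{Cold}, C_{12}^{Cold}, C_{12}^{Hot}$ when $\eta$ is of Type 1b, and $B_{20}^s, B_{21}^{ns}, C_{12}^{Cold}, C_{12}^{Hot}$ when $\eta$ is of Type 2. In each of the corresponding tables, selecting the rows compatible with $\eta$ green and $Y_\eta$ of Type NONRES yields Rows $1.2$, $2.2$, $3.1$, $4.1$, $5.2$, $5.4$, $5.7$--$5.11$, $6.3$, $6.7$--$6.10$, $7.3$, $7.5$--$7.7$, $8.2$, $8.5$, $8.6$. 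Each such row prescribes $a_{1,P}=a$ and $a_{2,P}=1$ directly.

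For Property 3, $E_{j,P}$ is one of $\prod F_P$, $L_P$, $F_P(\sqrt[\ell]{v_P\delta_P})$, $F_P(\sqrt[\ell]{u_P\pi_P^m})$, or $H_P = F_P(\sqrt[\ell]{u_P\pi_P^m + v_P\delta_P})$. Since $\pi_P=\pi_\eta$ is a uniformizer of $F_{P,\eta}$ while $\delta_P$ becomes a unit there, each non-split choice is generated by an $\ell$-th root of a unit of $F_{P,\eta}$ and is therefore unramified. The identification with the lift of residues I would establish row by row against Lemma \ref{lemma-liftofresidues}: the split cases $\prod F_P$ match Rows $a, c, m$ directly; for the $L_P$ cases, since $k_P^{*}/k_P^{*\ell}$ is cyclic of order $\ell$, $L_P$ equals $F_P(\sqrt[\ell]{u})$ for any unit $u\in \widehat{A_P}^{*}$ whose residue is a non-$\ell$-th power, and at each of the relevant points the unit $u_P$ or $v_P$ flagged in Lemma \ref{lemma-liftofresidues} Rows $b, d, e, f, l$ enjoys this property, so the residue extensions coincide. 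The main computation is for $H_P$: factoring
\[ u_P\pi_P^m + v_P\delta_P \;=\; v_P\delta_P\bigl(1 + u_Pv_P^{-1}\delta_P^{-1}\pi_P^m\bigr) \;=\; u_P\pi_P^m\bigl(1 + v_Pu_P^{-1}\pi_P^{-m}\delta_P\bigr), \]
Hensel's lemma (applicable since $\ell$ is prime to the residue characteristic) yields $H_P\otimes F_{P,\eta_1} \simeq F_{P,\eta_1}(\sqrt[\ell]{v_P\delta_P})$ and $H_P\otimes F_{P,\eta_2} \simeq F_{P,\eta_2}(\sqrt[\ell]{u_P\pi_P^m})$, matching Rows $g$--$j$ of Lemma \ref{lemma-liftofresidues}. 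The Hensel factorisation for $H_P$ is the only genuine computation; the rest of the argument is a bookkeeping exercise comparing the tables of Proposition \ref{propositionEatpoints} with Lemma \ref{lemma-liftofresidues}.
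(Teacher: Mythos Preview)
Your proof is correct and follows exactly the same approach as the paper: the paper's own argument is simply ``An inspection of Rows 1.2, 2.2, 3.1, 4.1, 5.2, 5.4, 5.7--5.11, 6.3, 6.7--6.10, 7.3, 7.5--7.7, 8.2 and 8.5--8.6 of the Tables in Proposition \ref{propositionEatpoints} shows that Properties 2 and 3 also hold (Lemma \ref{lemma-liftofresidues}),'' and you have carried out that inspection explicitly, including the Hensel factorisation of $H_P$ that the paper leaves implicit. One small notational slip: your sentence ``$\pi_P=\pi_\eta$ is a uniformizer of $F_{P,\eta}$ while $\delta_P$ becomes a unit'' tacitly assumes $\eta=\eta_1$, whereas in several of the relevant rows (e.g.\ 6.8, 6.9, 7.5, 7.6, 8.5, 8.6) it is $\eta=\eta_2$ and the roles of $\pi_P,\delta_P$ are swapped; your two-sided Hensel factorisation of $H_P$ already handles both branches, and the remaining non-$H_P$ choices are visibly unramified along whichever of the two curves is the green one, so the argument goes through unchanged once the symmetry is noted.
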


\begin{proof} 
Property 1 is obvious (Section \ref{sectionfirstcolouring}). An inspection of Rows 1.2, 2.2, 3.1, 4.1, 5.2, 5.4, 5.7-5.11, 6.3, 6.7-6.10, 7.3, {7.5-7.7}, 8.2 and {8.5-8.6} of the Tables in Proposition \ref{propositionEatpoints} shows that Properties 2 and 3 also hold (Lemma \ref{lemma-liftofresidues}). \end{proof}

\begin{proposition}[Green(2)]
\label{otherproperties-green-notnonres}
Let $\eta \in N'_0$ and $P\in \overline{\eta}\cap S_0$. Assume further that one of the following holds:
\begin{enumerate}
  \item
 $\eta$ is of Type 1b and coloured green and $Y_{\eta}$ is not of Type NONRES,
 \item
 $\eta$ is of Type 2 (and hence coloured green) and $Y_{\eta}$ is not of Type NONRES.
 \end{enumerate}
Let $j=1$ or $2$ and let $E_{j,P}$ be as prescribed in Proposition \ref{propositionEatpoints}. Then,
\begin{enumerate}
\item
$E_{j,P}\otimes F_{P,\eta}$ is an unramified (possibly split) extension of $F_{P,\eta}$.
\item
$E_{j,P}\otimes \beta_{rbc, \eta} = 0$ where $\beta_{rbc,\eta}$ is as defined in Section \ref{section-fixingresidualbrauerclass}.
\item
If $P$ is not a hot point, $a_{1,P}=a$ and $a_{2,P}=1$.
\item
If $P$ is a hot point and $Y_P$ is not split, then $a_{1,P}=a$ and $a_{2,P}=1$.
\item
If $P$ is a hot point and $Y_P$ is split, then $a_{2,P}$ is a unit in $\prod \widehat{A_P}$ up to $\ell^{\mrm{th}}$ powers.
\end{enumerate}
 \end{proposition}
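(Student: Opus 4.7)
The plan is to proceed by case analysis on the type of marked point $P\in S_0$, restricted by the hypothesis that $\eta$ is green with $Y_\eta$ of Type RAM or RES (the SPLIT case is excluded since a green Ch/H/Z-curve with $Y_\eta$ split would receive an extended colour per Section \ref{section-finalcolouring}, and Type 2 curves cannot have $Y_\eta$ SPLIT by Remark \ref{remarktype2Ynotsplit}). Using the classification in Section \ref{subsectionclosedpointsclassification} together with the colouring scheme of Section \ref{section-finalcolouring}, I enumerate the applicable rows in the tables of Proposition \ref{propositionEatpoints}: a green Type 1b $\eta$ contributes rows in the $B_{10}^{s}, B_{11}^{ns}, C_{11}^{Chilly}, C_{11}^{Cold}, C_{12}^{Cold}, C_{12}^{Hot}$ tables, while a Type 2 $\eta$ contributes rows in the $B_{20}^{s}, B_{21}^{ns}, C_{12}^{Cold}, C_{12}^{Hot}$ tables.

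Properties 3, 4, and 5 follow by direct inspection of the tables. For every applicable row at a non-hot $P$, the entries yield $a_{1,P}=a$ and $a_{2,P}=1$, giving Property 3. At hot points $P$ (Table \ref{TableEatC21hotpoint}), the applicable rows under our hypothesis are 8.1, 8.2, 8.5, 8.6 (where $Y_P$ is a field and $a_{1,P}=a, a_{2,P}=1$, yielding Property 4) and 8.3, 8.4 (where $Y_P\simeq \prod F_P$ and $\eta=\eta_2$ is Type 2 green RES); in the latter, $a_{2,P}$ is either $1$ or $\brac{z_{i,P}\delta_P^{\ell n'_i}}_i$, each a unit in $\prod \widehat{A_P}$ up to $\ell^{\mrm{th}}$ powers (since $z_{i,P}\in \widehat{A_P}^*$ and $\delta_P^{\ell n'_i}$ is an $\ell^{\mrm{th}}$ power), yielding Property 5.

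Property 1 is verified by examining the explicit shapes of $E_{j,P}$. After base change to $F_{P,\eta}$, the parameter $\pi_\eta$ remains a uniformizer while the complementary parameter is a unit. Consequently $\prod F_P$ and $L_P$ are trivially unramified. Extensions of the form $F_P\brac{\sqrt[\ell]{v_P\delta_P}}$ or $F_P\brac{\sqrt[\ell]{u_P\pi_P^m}}$ become Kummer extensions generated by units over $F_{P,\eta}$ when $\eta$ is the appropriate curve, and $H_P = F_P\brac{\sqrt[\ell]{u_P\pi_P^m + v_P\delta_P}}$ has defining element $v_P\delta_P\brac{1 + v_P^{-1}\delta_P^{-1} u_P\pi_P^m}$ over $F_{P,\eta_1}$, a unit, hence yields an unramified extension. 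The extension $F_P\brac{\sqrt[\ell]{\pi_P}}$ in Row 8.4 is paired with $\eta=\eta_2$, and $\pi_P$ is a unit in $F_{P,\eta_2}$, again producing an unramified extension.

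The main obstacle is Property 2. The strategy starts with the decomposition $[D] = \beta_{rbc,\eta} + \brac{H_\eta, \sigma, \pi_\eta}$ in $\Br(F_\eta)$ from Lemma \ref{lemmaresdiualbrauerclass}, base-changed to $F_{P,\eta}$, and compares with the local description $D|_{F_{P,\eta}} = D_{00} + (u_P, \pi_P) + (v_P, \delta_P)$. Using that $D_{00}\in \Br(\widehat{A_P})\hookrightarrow \Br(k_P)=0$ since $k_P$ is finite, one obtains a simplified computation of $\beta_{rbc,\eta}$ over $F_{P,\eta}$. At hot points, the condition $\overline{v_P}\in k_P^{*\ell}$ promotes via Hensel to $v_P\in \widehat{A_{P,\eta}}^{*\ell}$, killing $(v_P, \delta_P)$ in $\Br(F_{P,\eta})$. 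Matching the residues of the ramified pieces at $\pi_\eta$ permits cancellation up to symbols of units in $\widehat{A_P}^*$; such symbols are unramified in $F_{P,\eta}$ with residues in $\Br(k_{P,\eta})$ whose further residues lie in $\Br(k_P)=0$, so they vanish. When $E_{j,P}$ is split, this directly yields $\beta_{rbc,\eta}=0$ over $F_{P,\eta}$. When $E_{j,P}$ is non-split, Property 6 of Proposition \ref{propositionEatpoints} gives $[D]\otimes E_{j,P}=0$; combined with the fact, verified via Lemma \ref{lemma-liftofresidues}, that $E_{j,P}\otimes F_{P,\eta}$ contains or matches the residual extension, this splits the ramified piece $\brac{H_\eta, \sigma, \pi_\eta}$ and forces $\beta_{rbc,\eta}$ to vanish as well. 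The careful row-by-row verification across the applicable tables constitutes the bulk of the technical work.
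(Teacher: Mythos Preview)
Your overall plan matches the paper's approach closely: the paper also separates into the RAM and RES subcases, computes $\beta_{rbc,\eta}$ over $F_{P,\eta}$ by comparing the decomposition $D = \beta_{rbc,\eta} + (\text{ramified piece})$ against the local shape $D|_{F_{P,\eta}} = (u_P,\pi_P)+(v_P,\delta_P)$ (resp.\ $(u_P\pi_P^m,v_P\delta_P)$ at cold points), and then runs through the relevant table rows. Your treatment of Properties 1, 3, 4, 5 by table inspection is exactly what the paper does. Your shortcut for non-split $E_{j,P}$---using Property~6 of Proposition~\ref{propositionEatpoints} ($D\otimes E_{j,P}=0$) together with the fact that $E_{j,P}\otimes F_{P,\eta}$ kills the ramified piece---is a valid and somewhat cleaner alternative to the paper's fully explicit check in each row.

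There is, however, a genuine gap in your treatment of the split case. Your claim that after residue-matching one is left with ``symbols of units in $\widehat{A_P}^*$'' which then vanish is not correct as stated. In the RAM, non-cold case the paper obtains $\beta_{rbc,\eta}\otimes F_{P,\eta} = (v_P,\delta_P) + (w_\eta,u_\eta)$. Here $\delta_P$ is a prime, not a unit, and $w_\eta\in\widehat{A_\eta}^*$ need not lie in $\widehat{A_P}^*$; its image in $k_{P,\eta}$ can have nontrivial $\overline{\delta_P}$-valuation, so $(\overline{w_\eta},\overline{u_\eta})$ may be ramified in $\Br(k_{P,\eta})$ and need not vanish. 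The rows where $E_{j,P}$ is split and $D_P\neq 0$ are precisely $2.2^*$, $4.1^*$, and $8.5$, and in each of them the paper invokes the \emph{specific} configuration datum that the other curve $\eta'$ has $Y_{\eta'}$ of Type NONRES. This forces $\overline{w_\eta}\in\mathcal{O}_{k_{P,\eta}}^*$ up to $\ell^{\mathrm{th}}$ powers (because $Y_P = F_P(\sqrt[\ell]{w_\eta\pi_\eta})$ must be unramified along $\delta_P$), and only then does $(w_\eta,u_\eta)$ become a genuine symbol of $\widehat{A_P}^*$-units which vanishes. Your argument omits this step. (As a minor point, the ``further residue'' of a class in $\Br(k_{P,\eta})$ lands in $\HH^1(k_P,\mu_\ell)\cong k_P^*/k_P^{*\ell}$, not in $\Br(k_P)$; what you need is that an \emph{unramified} class in $\Br(k_{P,\eta})$ lies in $\Br(k_P)=0$.)
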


\begin{proof}
The hypothesis implies $Y_{\eta}$ is of Type RES or RAM. We investigate each case separately.  Note that Properties 1, 3-5 will be clear from inspection of the relevant rows in the tables in Proposition \ref{propositionEatpoints} (which we will mention subsequently).

\textbf{{RAM}}: Let $Y_{\eta}$ be of Type RAM. Then  $Y_{\eta} = F_{\eta}\brac{\sqrt[\ell]{w_{\eta}\pi_{\eta}}}$ for some unit $w_{\eta}\in \widehat{A_{\eta}}^*$ and $\beta_{rbc,\eta} \in \Br\brac{F_{\eta}}$ with $D\otimes F_{\eta} = \beta_{rbc,\eta} + \brac{u_{\eta}, w_{\eta}\pi_{\eta}}$. If $P$ is not a cold point write $D = D_{00} + \brac{u_P, \pi_P} + \brac{v_P, \delta_P}$ where $D_{00}$ is unramified at $A_P$ and $u_P, v_P \in A_P^*$. Thus $D\otimes F_{P,\eta} = \brac{u_P, \pi_{\eta}} + \brac{v_P, \delta_P} $. 
Hence in $\Br\brac{F_{P,\eta}}$, we have\begin{align*}
& \brac{u_P, \pi_{\eta}} + \brac{v_P, \delta_P} = \beta_{rbc,\eta}  +  \brac{u_{\eta}, w_{\eta}\pi_{\eta}}.\\
& \implies \beta_{rbc,\eta}  =  \brac{v_P, \delta_P}  + \brac{u_P, \pi_{\eta}} - \brac{u_{\eta}, w_{\eta}\pi_{\eta}} =  \brac{v_P, \delta_P} + \brac{w_{\eta}, u_{\eta}} + \brac{u_Pu_{\eta}^{-1},\pi_{\eta}}
\end{align*}

Comparing residues, we get $\beta_{rbc,\eta}\otimes F_{P,\eta}  = \brac{\brac{v_P,\delta_P} + \brac{w_{\eta}, u_{\eta}}} \otimes F_{P,\eta}$, where $\overline{u_{\eta}}$ is the residue of $D\otimes F_{\eta}$. We investigate the relevant rows of the Tables in Proposition \ref{propositionEatpoints}. 

\textbf{$\eta$ of Type 1b}:\\ 
\textit{Row 1.2 of Table \ref{TableEatB10spoint}}: Here $(v_P,\delta_P)=0 \in \Br(F)$ itself. Computing residues along $\eta$, we see that $\overline{u_{\eta}}\in k_{P,\eta}$ is an $\ell^{\mrm{th}}$ power. So $\beta_{rbc,\eta}$ is already split over $F_{P,\eta}$.

\textit{Row $2.2^*$ of Table \ref{TableEatB11nspoint}}: Here $(v_P,\delta_P)=0 \in \Br(F)$ itself. Computing residues along $\eta$, we see that $\overline{u_{\eta}} \cong \overline{u_P} \in k_{P,\eta}$ upto $\ell^{\mrm{th}}$ powers. Also since $Y_{\eta_2}$ is of Type NONRES, $\overline{w_{\eta}}\in \mathcal{O}_{k_{P,\eta}}^*$. So $\beta_{rbc,\eta}$ is already split over $F_{P,\eta}$.

\textit{Row 2.2 of Table \ref{TableEatB11nspoint}}: Here $(v_P,\delta_P)=0 \in \Br(F)$ itself. Computing residues along $\eta$, we see that $\overline{u_{\eta}} \cong \overline{u_P} \in k_{P,\eta}$ upto $\ell^{\mrm{th}}$ powers. Thus the choice of $E_{j,P} = L_P$ splits $\beta_{rbc,\eta}$ over $F_{P,\eta}$.

\textit{Rows 5.1, {5.2} and 5.8 of Table \ref{TableEatC11chillypoint}}: Since $P$ is a chilly point, we have $\overline{u_P}= \overline{v_P}^j$. Here $E_{j,P} = L_P$. Note that $(v_P, \delta_P)$ is split by $L_P$ as $v_P$ becomes an $\ell^{\mrm{th}}$ power in $L_P$. Computing residues along $\eta$, we see $\overline{u_{\eta}} \cong \overline{v_P}^{j} \in k_{P,\eta}$ upto $\ell^{\mrm{th}}$ powers. Thus $L_P$ splits $\brac{u_{\eta}, w_{\eta}}$ over $F_{P,\eta}$ also.

\textit{Row 8.1 of Table \ref{TableEatC21hotpoint}}: Since $P$ is a hot point and $\eta$ is of Type $1b$, $(v_P,\delta_P) = 0 \in \Br(F_P)$ as $v_P\in \widehat{A_P}^{*\ell}$. Computing residues along $\eta$, we see $\overline{u_{\eta}} \cong \overline{u_P} \in k_{P,\eta}$ upto $\ell^{\mrm{th}}$ powers. Here $E_{j,P} = L_P$ which splits $\brac{w_{\eta}, u_{\eta}} = \beta_{rbc,\eta}$ over $F_{P,\eta}$.

\textit{Row 8.5 of Table \ref{TableEatC21hotpoint}}: Since $P$ is a hot point and $\eta$ is of Type 1b, $(v_P,\delta_P) = 0 \in \Br(F_P)$ as $v_P\in \widehat{A_P}^{*\ell}$. Computing residues along $\eta$, we see $\overline{u_{\eta}} \cong \overline{u_P} \in k_{P,\eta}$ upto $\ell^{\mrm{th}}$ powers. Since $Y_{\eta_2}$ has to be of Type NONRES in this configuration, we see that $\overline{w_{\eta}}$ has to have valuation $\cong 0 \mod \ell$ in the complete discretely valued field $k_{P,\eta}$ with parameter $\overline{\delta_P}$.

Putting this together, we see that $\overline{\brac{u_{\eta}, w_{\eta}}}$ is unramified over local field $k_{P,\eta}$, hence trivial. Therefore $\brac{u_{\eta}, w_{\eta}}$ is trivial over $F_{P,\eta}$. So $\beta_{rbc,\eta}$ is already split over $F_{P,\eta}$.

\textbf{$\eta$ of Type 2}: \\
\textit{Row 3.1 of Table \ref{TableEatB20spoint}}: Here $(v_P,\delta_P)=0 \in \Br(F)$ itself. Computing residues along $\eta$, we see that $\overline{u_{\eta}}\in k_{P,\eta}$ is an $\ell^{\mrm{th}}$ power. So $\beta_{rbc,\eta}$ is already split over $F_{P,\eta}$.

\textit{Row $4.1^*$ of Table \ref{TableEatB21nspoint}}: Here $(v_P,\delta_P)=0 \in \Br(F)$ itself. Computing residues along $\eta$, we see that $\overline{u_{\eta}} \cong \overline{u_P} \in k_{P,\eta}$ upto $\ell^{\mrm{th}}$ powers. Also since $Y_{\eta_2}$ is of Type NONRES, $\overline{w_{\eta}}\in \mathcal{O}_{k_{P,\eta}}^*$. So $\beta_{rbc,\eta}$ is already split over $F_{P,\eta}$.

\textit{Row 4.1 of Table \ref{TableEatB21nspoint}}: Here $(v_P,\delta_P)=0 \in \Br(F)$ itself. Computing residues along $\eta$, we see that $\overline{u_{\eta}} \cong \overline{u_P} \in k_{P,\eta}$ upto $\ell^{\mrm{th}}$ powers. Thus the choice of $E_{j,P} = L_P$ splits $\beta_{rbc,\eta}$ over $F_{P,\eta}$.

\textit{Rows {8.1-8.2} of Table \ref{TableEatC21hotpoint}}: Since $P$ is a hot point and $\eta$ is of Type $2$, computing residues along $\eta$, we see $\overline{u_{\eta}}  \in k_{P,\eta}^{*\ell}$ and therefore $\beta_{rbc,\eta} = \brac{v_P,\delta_P}$. Since $E_{j,P} = L_P$, it splits $\beta_{rbc,\eta}$ over $F_{P,\eta}$.

Now let $P$ be a cold point and write $D = D_{00} + \brac{u_P\pi_P^m, v_P\delta_P}$ where $D_{00}$ is unramified at $A_P$ and $u_P, v_P \in A_P^*$. Thus $D\otimes F_{P,\eta} = \brac{u_P\pi_{\eta}^m, v_P\delta_P} $. 

Hence in $\Br\brac{F_{P,\eta}}$, we have
\begin{align*}
& \brac{u_P\pi_{\eta}^m, v_P\delta_P} = \beta_{rbc,\eta}  +  \brac{u_{\eta}, w_{\eta}\pi_{\eta}} \\
& \implies \beta_{rbc,\eta}  = \brac{u_P\pi_{\eta}^m, v_P\delta_P}  - \brac{u_{\eta}, w_{\eta}\pi_{\eta}}  = \brac{u_P, v_P\delta_P} + \brac{w_{\eta}, u_{\eta}} +  \brac{\pi_{\eta}, v_P^m\delta_P^mu_{\eta}} 
\end{align*}

Comparing residues, we get $u_{\eta} \cong v_P^{-m}\delta_P^{-m}$ up to $\ell^{\mrm{th}}$ powers and $\beta_{rbc,\eta}\otimes F_{P,\eta}$ equals $\brac{\brac{u_P, v_P\delta_P} + \brac{w_{\eta}, u_{\eta}}} \otimes F_{P,\eta}$ which is $\brac{u_Pw_{\eta}^{-m}, v_P\delta_P} \otimes F_{P,\eta}$ in $\Br\brac{F_{P,\eta}}$.


\textit{Rows 6.1-6.4 and 6.8 of Table \ref{TableEatC11coldpoint} and Rows {7.1-7.5} of Table \ref{TableEatC21coldpoint} are relevant here}. In each case, $E_{j,P} = F_P\brac{\sqrt[\ell]{u_P\pi_P^m + v_P\delta_P}}$ . Thus $E_{j,P}\otimes F_{P,\eta} =  F_{P,\eta}\brac{\sqrt[\ell]{v_P\delta_P}}$ is unramified and clearly splits $\beta_{rbc,\eta}$ over $F_{P,\eta}$. 

\textbf{{RES}}: Let $Y_{\eta}$ be of Type RES. Then $Y_{\eta} = F_{\eta}\brac{\sqrt[\ell]{u_{\eta}}}$ and $\beta_{rbc,\eta}\in \Br\brac{F_{\eta}}$ with $D\otimes F_{\eta} = \beta_{rbc,\eta} + \brac{u_{\eta}, \pi_{\eta}}$. If $P$ is not a cold point, write $D = D_{00} + \brac{u_P, \pi_P} + \brac{v_P, \delta_P}\in Br(F)$ where $D_{00}$ is unramified at $A_P$ and $u_P, v_P \in A_P^*$. Thus $D\otimes F_{P,\eta} = \brac{u_P, \pi_{\eta}} + \brac{v_P, \delta_P} $. Hence in $\Br\brac{F_{P,\eta}}$, we have $\brac{u_P, \pi_{\eta}} + \brac{v_P, \delta_P} = \beta_{rbc,\eta}  +  \brac{u_{\eta}, \pi_{\eta}}$ which implies $\beta_{rbc,\eta} - \brac{v_P, \delta_P} =  \brac{u_{\eta}^{-1}u_P, \pi_{\eta}}$. Comparing residues, we see that $u_{\eta}\cong u_P$ up to $\ell^{\mrm{th}}$ powers and $\beta_{rbc,\eta}\otimes F_{P,\eta}  = \brac{v_P,\delta_P}\otimes F_{P,\eta}$.

If $P$ is a cold point, $D = D_{00} + \brac{u_P\pi_{P}^m, v_P\delta_P}\in \Br(F)$. Following a similar argument, we get $\beta_{rbc,\eta}$ equals  $\brac{u_P\pi_{\eta}^m, v_P\delta_P} - \brac{u_{\eta}, \pi_{\eta}}$ which equals $\brac{u_P,  v_P\delta_P} + \brac{\pi_{\eta} , v_P^m\delta_P^mu_{\eta}}$ in $\Br\brac{F_{P,\eta}}$. Comparing residues, we see $\beta_{rbc,\eta} \otimes F_{P,\eta }=  \brac{u_P,  v_P\delta_P} \otimes F_{P,\eta }$. We investigate the relevant rows of the Tables in Proposition \ref{propositionEatpoints}. 

\textbf{$\eta$ of Type 1b}:\\
\textit{Row 1.2 of Table \ref{TableEatB10spoint}}: Here $(v_P,\delta_P)=0 \in \Br(F)$ itself.\\
\textit{Row 2.2 of Table \ref{TableEatB11nspoint}}:  Here $(v_P,\delta_P)=0 \in \Br(F)$ itself.\\
\textit{Rows 5.3-5.4 and 5.9 of Table \ref{TableEatC11chillypoint}}: Since $P$ is a chilly point, we have $\overline{u_P}= \overline{v_P}^j$. In any case $(v_P, \delta_P)$ is split by $L_P$ as $v_P$ becomes an $\ell^{\mrm{th}}$ power in $L_P$.\\
\textit{Rows 6.2 and 6.4 of Table \ref{TableEatC11coldpoint} and Row 7.2 of Table \ref{TableEatC21coldpoint}}: Since $P$ is a cold point, we are interested in splitting $\brac{u_P, v_P\delta_P}$. Here $E_{j,P} $ is $F_P\brac{\sqrt[\ell]{u_P\pi_P^m + v_P\delta_P}}$ and hence $E_{j,P} \otimes F_{P,\eta} = F_{P,\eta}\brac{\sqrt[\ell]{v_P\delta_P}}$ which clearly splits $\beta_{rbc,\eta}$.\\
\textit{Row 8.6 of Table \ref{TableEatC21hotpoint}}: Since $P$ is a hot point and $\eta$ is of Type 1b, $\beta_{rbc,\eta} = (v_P,\delta_P)$ where $v_P\in \widehat{A_P}^{*\ell}$. Thus $\beta_{rbc,\eta}$ is already split over $F_{P,\eta}$.\\

\textbf{$\eta$ of Type 2}:\\
\textit{Row 3.1 of Table \ref{TableEatB20spoint}}: Here $(v_P,\delta_P)=0 \in \Br(F)$ itself.\\
\textit{Row 4.1 of Table \ref{TableEatB21nspoint}}:  Here $(v_P,\delta_P)=0 \in \Br(F)$ itself.\\
\textit{Row 7.4 of Table \ref{TableEatC21coldpoint}}: Since $P$ is a cold point, we are interested in splitting $\brac{u_P, v_P\delta_P}$. Here $E_{j,P} $ is $F_P\brac{\sqrt[\ell]{u_P\pi_P^m + v_P\delta_P}}$ and hence $E_{j,P} \otimes F_{P,\eta} = F_{P,\eta}\brac{\sqrt[\ell]{v_P\delta_P}}$ which clearly splits $\beta_{rbc,\eta}$.\\
\textit{Rows {8.3- 8.4} of Table \ref{TableEatC21hotpoint}}:  Since $P$ is a hot point and $\eta$ is of Type $2$, $\beta_{rbc,\eta} = (v_P,\delta_P)$ where $v_P\in \widehat{A_P}^*$ but not an $\ell^{th}$ power.  Here $E_{j,P} $ is either $L_P$ or $F_P\brac{\sqrt[\ell]{\delta_P}}$. In either case, it splits $\beta_{rbc,\eta}$ over $F_{P,\eta}$.  \end{proof}

\begin{proposition}[Yellow/Orange/Red/White]
\label{otherproperties-yellow-orange}
\label{otherproperties-red}
\label{otherproperties-white}
Let $\eta \in N'_0$ and $P\in \overline{\eta}\cap S_0$. Assume further that $\eta$ is coloured yellow, orange, red or white. Let $D\simeq M_{\ell}\brac{u_{\eta},w_{\eta}\pi_{\eta}}$ for units $w_{\eta}, u_{\eta}\in \widehat{A_{\eta}}^*$.  Let $j=1$ or $2$ and let $E_{j,P}$ be as prescribed in Proposition \ref{propositionEatpoints}. Then,
\begin{enumerate}
\item
$E_{1,P}\otimes F_{P,\eta}$ is a ramified extension of $F_{P,\eta}$.
\item
$E_{2,P} = L_P$ if $\eta$ is coloured yellow or orange. 
\item
$a_{2,P}$ is a unit along $\eta$.
\item
$E_{2,P}\otimes F_{P,\eta}$ is an unramified extension of $F_{P,\eta}$ and matches with the lift of residues at $\eta$ as etale algebras over $F_{P,\eta}$.
\item
$E_{j,P}\otimes F_{P,\eta}$ splits $D$ in $\Br\brac{F_{P,\eta}}$.
\item
There exist $w_P, x_{i,P}\in \widehat{A_P}$ for $i\leq \ell$ which are units along $\eta$ such that 
\begin{enumerate}
\item
$E_{1,P} \simeq F_P[t]/(t^{\ell}-w_P\pi_P)$ and $a_{1,P} = \brac{\brac{w_P\pi_P}^{m_{i,P}}{x_{i,P}^{\ell}}}_i$ for $m_{i,P}\in \mathbb{Z}$.
\item
$\brac{w_Pw_{\eta}^{-1}, u_{\eta}}=0\in \Br\brac{F_{P,\eta}}$.
\end{enumerate}
\end{enumerate}
\end{proposition}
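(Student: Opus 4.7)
The plan is to proceed by case analysis through the relevant rows of the tables in Proposition \ref{propositionEatpoints}, paralleling the Blue case proof above with the roles of $E_{1,P}$ and $E_{2,P}$ interchanged. The hypothesis that $\eta$ is coloured yellow, orange, red, or white forces $\eta$ to be a Type 1b curve (by the definitions in Sections \ref{ChCHZlabels} and \ref{section-finalcolouring}), and the rows to be examined are: Row 1.1 of Table \ref{TableEatB10spoint} (colours R, W); Row 2.1 of Table \ref{TableEatB11nspoint} (colours W, R, O, Ye); Row 5.6 of Table \ref{TableEatC11chillypoint} (colour Ye); Rows 6.5, 6.6, 6.7, 6.9 of Table \ref{TableEatC11coldpoint} (colour R); Row 7.6 of Table \ref{TableEatC21coldpoint} (colour R); and Row 8.4 of Table \ref{TableEatC21hotpoint} (colour O).

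Properties (1)--(5) will be verified by direct inspection. In each row, the Kummer generator of $E_{1,P}$ has $\pi_\eta$-valuation exactly $1$, so $E_{1,P}\otimes F_{P,\eta}$ is totally ramified, giving (1). Property (2) is the explicit assignment $E_{2,P}=L_P$ in the Ye/O rows. Property (3) is immediate from the table entries for $a_{2,P}$: either $a_{2,P}=1$, or its components are of the form $z_{i,P}\delta_P^{\star}$, $(v_P\delta_P)^{n_i}$, or $(u_P\pi_P^m)^{sm_i}x_{i,P}^\ell$, all of which are units along $\eta$ up to $\ell^{\mrm{th}}$ powers. Property (4) is verified row-by-row by reducing $E_{2,P}\otimes F_{P,\eta}$ modulo $\pi_\eta$ and matching against Table \ref{Tableliftofresidues}: $L_P\otimes F_{P,\eta}\simeq F_{P,\eta}\brac{\sqrt[\ell]{u_P}}$ matches rows b, d, e, f, l once the identification $u_P\equiv u_\eta$ mod $\ell^{\mrm{th}}$ powers (derived below) is invoked, and $H_P\otimes F_{P,\eta}$ simplifies to $F_{P,\eta}\brac{\sqrt[\ell]{v_P\delta_P}}$ or $F_{P,\eta}\brac{\sqrt[\ell]{u_P\pi_P^m}}$ since $u_P\pi_P^m+v_P\delta_P$ equals one of the summands modulo the relevant parameter, matching rows g, h, i, j. Property (5) then follows from Property (4) together with the explicit splitting of $D_P$ by $E_{j,P}$ noted in the proof of Proposition \ref{propositionEatpoints}.

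For Property (6a), the extension $E_{1,P}$ is rewritten in the normalised form $F_P[t]/(t^\ell-w_P\pi_P)$ by raising its Kummer radicand to an inverse power modulo $\ell$ and absorbing $\ell^{\mrm{th}}$ powers. Rows 1.1, 2.1, 8.4 already give $w_P=1$. In Row 5.6, $F_P\brac{\sqrt[\ell]{\pi_P^j\delta_P}}=F_P\brac{\sqrt[\ell]{\delta_P^s\pi_P}}$ with $sj\equiv 1\bmod \ell$, so $w_P=\delta_P^s$, a unit along $\eta$. In the cold-point rows, the radicand $u_P\pi_P^m$ (resp. $v_P\delta_P$, after swapping the role of $\pi_\eta$) normalises to $u_P^s\pi_P$ (resp. $v_P\pi_\eta$), producing $w_P=u_P^s$ or $v_P$, again a unit along $\eta$. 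The corresponding rewriting of $a_{1,P}$ as $\brac{(w_P\pi_P)^{m_{i,P}}x_{i,P}^\ell}_i$ is then a mechanical manipulation of the tabulated expression, collecting $\pi_\eta$-valuation into the exponent of $w_P\pi_P$ and absorbing the remaining units (including powers of $w_P^{-1}\in\widehat{A_P}$) into $x_{i,P}^\ell$.

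The principal technical step, and the main obstacle, is Property (6b): the relation $\brac{w_Pw_\eta^{-1},u_\eta}=0\in\Br(F_{P,\eta})$. In each row, write $[D]$ at $P$ using the decomposition from Section \ref{subsubsection-modelparameter}, push it to $\Br(F_{P,\eta})$, and equate with $(u_\eta,w_\eta\pi_\eta)$. Taking residues along $\pi_\eta=\pi_P$ pins down $u_\eta$ up to $\ell^{\mrm{th}}$ powers in terms of the data at $P$: typically $u_\eta\equiv u_P$ for the $B$-type and hot-point rows, $u_\eta\equiv v_P^j$ at chilly points, and $u_\eta\equiv (v_P\delta_P)^{-m}$ at cold points. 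Substituting this identification back into the symbolic equality and regrouping --- exactly as in the Row 6.6 computation within the Blue proof of Proposition \ref{otherproperties-blue} --- yields the required vanishing. The cold-point and chilly-point rows require the most care, as one must track both the cross-term $(v_P\delta_P)$ and the rewriting of $w_P$ as $u_P^s$ or $\delta_P^s$; but no new idea is required beyond the manipulations already displayed in the Blue case.
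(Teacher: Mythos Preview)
Your proposal is correct and follows essentially the same approach as the paper's proof: identifying the same relevant rows (1.1, 2.1, 5.6, 6.5--6.7, 6.9, 7.6, 8.4), reading Properties (1)--(5) off the tables via Lemma~\ref{lemma-liftofresidues}, and establishing (6a)--(6b) row by row through the residue comparison $D_P=(u_\eta,w_\eta\pi_\eta)$ in $\Br(F_{P,\eta})$, exactly as in the Blue case with indices swapped. Two minor remarks: in Property (3) the components of $a_{2,P}$ are genuine units along $\eta$ (not merely up to $\ell^{\mathrm{th}}$ powers), and your list of rows from Table~\ref{Tableliftofresidues} should be $a,b,e,f,g,h,i,l$ rather than including $d,j$ (which pertain to Type~2 curves, impossible here since $\eta$ coloured Y/O/R/W forces Type~1b) --- but neither affects the argument.
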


\begin{proof} We will mention the relevant rows of the tables in Proposition \ref{propositionEatpoints} below, whence Properties 1-5 become clear (Lemma \ref{lemma-liftofresidues} for Property 4). We will give a proof of Property 6a \& 6b in each case.

\textit{Row 1.1 of Table \ref{TableEatB10spoint}}: $\eta$ is coloured red/white. Here $w_P=1=x_{i,P}$. The proof is similar to that of Proposition \ref{otherproperties-blue} 5(b) for Row 1.3. 

\textit{Row 2.1 of Table \ref{TableEatB11nspoint}}: $\eta$ is coloured yellow/orange/red/white. Here $w_P=1=x_{i,P}$. Since $P$ is a $B_{11}^{ns}$ point, $D_P = \brac{u_P, \pi_P}$ is nonsplit. The proof is similar to that of Proposition \ref{otherproperties-blue} 5(b) for Row 2.3. 
 
\textit{Row 5.6 of Table \ref{TableEatC11chillypoint}}: $\eta$ is coloured yellow. Unravelling the expression for $a_{1,P}$ from Row 5.6, we see it is $\brac{\pi_P^{m_i}\delta_P^{n_i}}_i$ where $m_i = r_i\ell + jn_i$. Let $sj \cong 1 \mod \ell$. Thus $n_i = r'_i\ell + sm_i$ for some $r'_i$. Hence $a_{1,P} = \brac{\brac{\pi_P\delta_P^s}^{m_i}{\delta_P^{\ell r'_i}}}_i$. As $E_{1,P} = F_P\brac{\sqrt[\ell]{\pi_P^j\delta_P}} = F_P\brac{\sqrt[\ell]{\pi_P\delta_P^s}} $, we have  $w_P = \delta_P^s$ and $x_{i,P} = \delta_P^{r'_i}$ here. Writing $D = D_{00} + \brac{u_P, \pi_P} + \brac{v_P, \delta_P}\in \Br(F)$ for $D_{00}$ unramified at $A_P$, we have $\brac{v_P, \pi_P^j\delta_P} = \brac{u_{\eta}, w_{\eta}\pi_{\eta}}$  in $\Br\brac{F_{P,\eta}}$.  Since $\pi_P = \pi_{\eta}$, comparing residues we have $v_P^j \cong u_{\eta}$ up to $\ell^{\mrm{th}}$ powers, and hence $ \brac{u_{\eta}, w_{\eta}}= \brac{v_P, \delta_P}$. Since $sj \cong 1 \mod \ell$, we have $v_P \cong u_{\eta}^s$ up to $\ell^{\mrm{th}}$ powers. Therefore $\brac{u_{\eta}, w_{\eta}} = \brac{u_{\eta}, \delta_P^s}$ which implies $\brac{u_{\eta}, \delta_P^sw_{\eta}^{-1}} = 0$. Hence Properties 6a \& 6b hold.

\textit{Rows 6.5, {6.6}, {6.7} and 6.9 of Table \ref{TableEatC11coldpoint} and Row 7.6 of Table \ref{TableEatC21coldpoint}}: $\eta$ is coloured red and $D_P = \brac{u_P\pi_P^m, v_P\delta_P}$. The proof is similar to that of Proposition \ref{otherproperties-blue} 5(b) of Rows 6.5-6.6. 

\textit{Row 8.4 of Table \ref{TableEatC21hotpoint}}: $\eta$ is coloured orange. Here $w_P=1=x_{i,P}$. Writing $D = D_{00} + \brac{u_P, \pi_P} + \brac{v_P, \delta_P} \in \Br(F)$ for $D_{00}$ unramified at $A_P$, we have $\brac{u_P, \pi_P} = \brac{u_{\eta}, w_{\eta}\pi_{\eta}}$ in $\Br\brac{F_{P,\eta}}$. Since $\pi_P = \pi_{\eta}$, comparing residues we have that $u_P \cong u_{\eta}$ up to $\ell^{\mrm{th}}$ powers, and hence $ \brac{u_{\eta}, w_{\eta}}=0$. Hence Properties 6a \& 6b hold. \end{proof}

 \begin{proposition}[0/1a]
\label{otherproperties-1a/0}
Let $\eta \in N'_0$ be of Type 1a or 0 and $P\in \overline{\eta}\cap S_0$.  Let $j=1$ or $2$ and let $E_{j,P}$ be as prescribed in Propositions \ref{propositionEatpoints} and \ref{propositionEatpoints-A00}. Then
\begin{enumerate}
\item
$E_{j,P}\otimes F_{P,\eta}$ is an unramified (possibly split) extension of $F_{P,\eta}$.
\item
If $Y_{\eta}$ is of Type RAM, then $a_{1,P}=a$ and $a_{2,P}=1$.
\end{enumerate}

Further if $\eta$ is of Type 0, then there exist $j, j'$ such that $\{j,j'\} = \{1,2\}$ such that for every $P\in \overline{\eta}\cap S_0$, the element $a_{j',P}$ is a unit in $\mathcal{O}_{Y\otimes F_{P,\eta}}$ and $E_{j,P}\simeq \prod F_P$.

\end{proposition}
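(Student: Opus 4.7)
The argument is a direct case-by-case verification using the patching data tables of Propositions \ref{propositionEatpoints} and \ref{propositionEatpoints-A00}, constrained by the point-type restrictions in Proposition \ref{propnospecialpoints} and the neighbour restrictions in Proposition \ref{propblowupchilly}. I would split the proof into the three assertions and handle them in turn.

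For Property 1, I would enumerate the possible types of $P$ compatible with $\eta$. If $\eta$ is of Type 1a, then since Types $A_{11}^s$, $B_{11}^s$, $B_{21}^s$ have been eliminated from $S_0$, the only possibilities are $A_{10}^s$, $B_{11}^{ns}$, and $B_{21}^{ns}$; inspection of Tables \ref{TableEatA10spoint}, \ref{TableEatB11nspoint}, \ref{TableEatB21nspoint} shows $E_{j,P}$ is always $\prod F_P$, $L_P$, or $F_P(\sqrt[\ell]{\pi_P})$, with $\pi_P$ the parameter of the \emph{other} curve (of Type 1b or 2) through $P$, hence a unit in $\widehat{A_\eta}$. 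So $E_{j,P}\otimes F_{P,\eta}$ is split or unramified. If $\eta$ is of Type 0, then only $A_{00}^s$, $A_{10}^s$, $B_{10}^s$, $B_{20}^s$ occur; every entry of Tables \ref{TableEatA10spoint}, \ref{TableEatB20spoint}, \ref{TableEatA00spoint-D3}, \ref{TableEatA00spoint-D2}, \ref{TableEatA00spoint-D1} is $\prod F_P$, and the only non-split entries in Table \ref{TableEatB10spoint} are $F_P(\sqrt[\ell]{\pi_P})$ with $\pi_P$ the parameter of the Type 1b curve, again a unit along $\eta$.

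For Property 2, I would observe that when $Y_\eta$ is of Type RAM, Remark \ref{remark-ram-split-not-intersect} prevents the other curve at $P$ from having $Y_{\eta'}$ of Type SPLIT, so $Y_P$ is a field. Under this constraint, the monomial formulas $(\pi_P^{m_i})_i, (\delta_P^{n_i})_i$ in the tables degenerate (via the convention of the footnote preceding Table \ref{TableEatA00spoint-D3}) to $1$ or $a$, and in each relevant row of Proposition \ref{propositionEatpoints} -- namely Rows $2.2'$, $4.1'$, and the RAM lines of the $B_{10}^s$ table -- as well as the primed rows ($9.2', 9.5', 9.7', 9.8', 9.10', 9.14', 10.2'$) of the $A_{00}^s$ tables, the prescription is explicitly $a_{1,P}=a$, $a_{2,P}=1$.

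For the final consistency statement on a Type 0 curve $\eta$, Proposition \ref{propblowupchilly} guarantees that $\overline{\eta}\cap S_0$ contains at most one marked point $P_0$ not of Type $A_{00}^s$. I would prescribe the choice of $(j,j')$ as follows: take $(j,j')=(2,1)$ if $P_0$ exists and is of Type $B_{10}^s$ whose Type 1b neighbour is coloured R or W (Row 1.1 of Table \ref{TableEatB10spoint}), and take $(j,j')=(1,2)$ otherwise. Under either rule $E_{j,P_0}\simeq \prod F_P$ by inspection, and $E_{j,Q}=\prod F_P$ at every $A_{00}^s$ point $Q\in\overline{\eta}\cap S_0$ automatically. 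In each case one checks row-by-row that $a_{j',P}$ at $P\in \{P_0\}\cup(A_{00}^s\text{ points})$ is a monomial in the parameter of the curve \emph{perpendicular} to $\eta$ through $P$ (times a unit), hence a unit along $\eta$. The main subtlety is checking that the D1/D2/D3 subtype rules at the $A_{00}^s$ points stay in sync with the rule forced at $P_0$; this is controlled by the fact that the $A_{00}^s$ tables are stratified precisely by the colour and type of the non-$A_{00}^s$ member of $\mathcal{C}_j$, so the two rules agree by design.
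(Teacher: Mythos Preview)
Your proposal is correct and follows essentially the same route as the paper: table-by-table inspection of the patching data, constrained by Remark \ref{remark-ram-split-not-intersect} for Property 2 and by Proposition \ref{propblowupchilly} for the Type 0 consistency claim, with the same $(j,j')$ rule (the paper phrases it as $j=1$ when the neighbour curve $\eta'$ is of Type 1a or coloured G/B/Bl, and $j=2$ when $\eta'$ is coloured R/W, which is exactly your prescription). One small correction on row citations for Property 2: when $\eta$ is the Type 1a curve at a $B_{11}^{ns}$ or $B_{21}^{ns}$ point and $Y_\eta$ is RAM, the Type 1b/2 neighbour is forced to be green, so Rows 2.2 and 4.1 (not the starred variants, which concern the reverse situation where the \emph{1b/2} curve has $Y$ RAM) apply directly---and the footnote degeneration convention you invoke is only stated for the $A_{00}^s$ tables, not for Tables \ref{TableEatB10spoint}--\ref{TableEatB21nspoint}.
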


\begin{proof} By an inspection of Tables \ref{TableEatA10spoint}, \ref{TableEatB10spoint}, \ref{TableEatB11nspoint}, \ref{TableEatB20spoint}, \ref{TableEatB21nspoint} and the choice of $E_{j,P}$ in Proposition \ref{propositionEatpoints-A00}, it is clear that they are unramified along $\eta$. If $Y_{\eta}$ is of Type RAM, by Remark \ref{remark-ram-split-not-intersect} it cannot intersect $\eta'\in N'_0$ where $Y_{\eta'}$ is of Type SPLIT. Hence it cannot intersect $\eta'\in N'_0$ which is coloured V, I, B, Ye, O, R, W or Bl. Inspecting Tables \ref{TableEatA10spoint}, \ref{TableEatB10spoint}, \ref{TableEatB11nspoint}, \ref{TableEatB20spoint}, \ref{TableEatB21nspoint}, \ref{TableEatA00spoint-D3}, \ref{TableEatA00spoint-D2} and \ref{TableEatA00spoint-D1} shows that in this case, $a_{1,P}=a$ and $a_{2,P}=1$.

Now let us assume $\eta$ is of Type 0 and let $\mathcal{P}'_{\eta} = \overline{\eta}\cap S_0$. Then one of the following holds (Proposition \ref{propblowupB10} and proof of Proposition \ref{propositionEatpoints-A00}): Case A) $\mathcal{P}'_{\eta} = \{Q\}$ where $Q$ is not of Type $A_{00}^{s}$, Case B) $\mathcal{P}'_{\eta} = \{Q, Q'\}$ where $Q$ is not of Type $A_{00}^{s}$ and $Q'$ is of Type $A_{00}^{s}$. Case C) $\mathcal{P}'_{\eta}$ consists of only Type $A_{00}^{s}$ points.

\textit{Case A/B}: Let $Q\in \overline{\eta}\cap \overline{\eta'}$ (and $Q'\in \overline{\eta}\cap \overline{\gamma}$ in Case B). Note that $Q$ can be of Type $A_{10}^{s}, B_{10}^{s}$ or $B_{20}^{s}$. Thus $\eta'$ can be Type 1a or coloured red, green, blue, white or black (and $\gamma$ is of Type 0 in Case B while $Q'$ has to be of subtype D1 or D2 as defined in Section \ref{sectionsubtypesA00}).

Set $j=1$ and $j'=2$ if $\eta'$ is of Type 1a or coloured green, blue or black. Set $j=2$ and $j'=1$ if $\eta'$ is coloured red or white. An inspection of Tables \ref{TableEatA10spoint}, \ref{TableEatB10spoint} and \ref{TableEatB20spoint} for Case A and Tables \ref{TableEatA00spoint-D2} and \ref{TableEatA00spoint-D1} for Case B\footnote{Appeal to Lemma \ref{lemmanormoneramified} and Proposition \ref{propaisaunitifYnotsplit} for Rows 9.5', 9.7', 9.8' and 10.2'} verifies that our choice of $j$ and $j'$ is compatible.

\textit{Case C}: In this case each $P_i\in \mathcal{P}'_{\eta}$ is of Type $A_{00}^{s}$. Thus it has to be of subtype D2 or D3. Set $j=1$ and $j'=2$. An inspection of Tables \ref{TableEatA00spoint-D3} and \ref{TableEatA00spoint-D2} verifies the compatibility of this choice. \end{proof}

\section{Understanding $E_{j,P}$ in terms of norms from some extensions}
In this section, we continue to assume that $\eta\in N'_0$ with $P\in S_0\cap \overline{\eta}$ being the intersection of two distinct irreducible curves with generic points $\eta_1$ and $\eta_2$. Let $\pi_P, \delta_P, \pi_{\eta}$ be as before. We study $E_{j,P}$ (as prescribed in Propositions \ref{propositionEatpoints} and \ref{propositionEatpoints-A00}) vis-\'a-vis norms from some related extensions. 
\subsection{When $\eta$ is 1b or 2 and $Y_{\eta}$ is RAM}
\label{section-GaloisclosureRAM}
Let $\eta = \eta_1$ or $\eta_2$ be of Type 1b or 2 with $Y_{\eta}$ of Type RAM. By Proposition \ref{otherproperties-green-notnonres}, $E_{j,P}$ is unramified along $\eta$ for $j=1,2$ and splits $\beta_{rbc,\eta}$ over $F_{P,\eta}$. Let $Y_{\eta} = F_{\eta}\brac{\sqrt[\ell]{w_{\eta}\pi_{\eta}}}$ where $w_{\eta}\in \widehat{A_{\eta}}^*$. Thus $D = \beta_{rbc,\eta} + \brac{u_{\eta}, w_{\eta}\pi_{\eta}}$ where $u'=\overline{u_{\eta}} \in k_{\eta}/k_{\eta}^{*\ell}$ is the residue of $D_{\eta}$. In this subsection, we show that $u'$ is locally a norm from $\overline{E_{j,P}}$. This will be useful in the final part of this paper (Section \ref{section-f}) where we show that the constructed $E_j$s are \textit{good}. 

 \begin{proposition}
\label{prop-points-normsfromGaloisclosures-RAM} 
Let $\eta$, $u'$ and $P$ be as above. Then there exist $w_{1,P}, w_{2,P}\in k_{P,\eta}$  such that for $j=1$ or $2$, 
\begin{enumerate}
\item
 $\overline{E_{j,P}\otimes F_{P,\eta}}=k_{P,\eta}[t]/(t^{\ell}- w_{j,P})$
 \item
 $\brac{w_{j,P}, u'}=0\in \Br\brac{k_{P,\eta}}$.
 \end{enumerate}
\end{proposition}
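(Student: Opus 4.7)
The plan is to extract $w_{j,P}$ from the residue of $E_{j,P}\otimes F_{P,\eta}$, then exploit the fact that $E_{j,P}$ either splits $D$ or is itself split to compute the residue of $D\otimes F_{P,\eta}$ along $\pi_\eta$ on both sides, and conclude via a Kummer-theoretic identity combined with $(a,a)=0$ for odd $\ell$.

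First, by Proposition \ref{otherproperties-green-notnonres} Property~1, $\tilde{E}_{j,P}:=E_{j,P}\otimes F_{P,\eta}$ is unramified (possibly split) over $F_{P,\eta}$. Hence its residue $\overline{\tilde{E}_{j,P}}$ is a rank-$\ell$ \'etale algebra over $k_{P,\eta}$, and since $k_{P,\eta}$ contains a primitive $\ell^{\mrm{th}}$ root of unity, Kummer theory lets us write $\overline{\tilde{E}_{j,P}}=k_{P,\eta}[t]/(t^\ell-w_{j,P})$ for some $w_{j,P}\in k_{P,\eta}^*$, with $w_{j,P}=1$ in the split case. This establishes Property~1.

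For Property~2, if $E_{j,P}$ is a split extension then $w_{j,P}=1$ and trivially $(1,u')=0$. Otherwise, Proposition \ref{propositionEatpoints} Property~6 yields $D\otimes E_{j,P}=0\in\Br(E_{j,P})$ and a fortiori $D\otimes\tilde{E}_{j,P}=0\in\Br(\tilde{E}_{j,P})$. Starting from $D\otimes F_{P,\eta}=\beta_{rbc,\eta}+\brac{u_\eta,w_\eta}+\brac{u_\eta,\pi_\eta}$, where $\beta_{rbc,\eta}$ and $\brac{u_\eta,w_\eta}$ are unramified along $\pi_\eta$, the residue $\partial_{\pi_\eta}(D\otimes F_{P,\eta})$ is the class of $u'$ in $k_{P,\eta}^*/k_{P,\eta}^{*\ell}$. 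By compatibility of residue maps with the unramified base change $\tilde{E}_{j,P}/F_{P,\eta}$ (with common parameter $\pi_\eta$), the restriction of $u'$ to $\overline{\tilde{E}_{j,P}}$ equals $\partial_{\pi_\eta}(D\otimes\tilde{E}_{j,P})=0$. Thus $u'$ becomes an $\ell^{\mrm{th}}$ power in $\overline{\tilde{E}_{j,P}}=k_{P,\eta}(\sqrt[\ell]{w_{j,P}})$.

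By Kummer theory this forces $k_{P,\eta}(\sqrt[\ell]{u'})\subseteq k_{P,\eta}(\sqrt[\ell]{w_{j,P}})$, so either $u'\in k_{P,\eta}^{*\ell}$ or $u'=w_{j,P}^s c^\ell$ for some $s$ coprime to $\ell$ and $c\in k_{P,\eta}^*$. In the first case $(w_{j,P},u')=0$ is immediate; in the second, $(w_{j,P},u')=s(w_{j,P},w_{j,P})=s(w_{j,P},-1)=0$, using that $-1=(-1)^\ell$ is an $\ell^{\mrm{th}}$ power in $k_{P,\eta}$ for odd $\ell$. The only delicate point worth flagging is the compatibility of the residue map with unramified base change, which is standard once $\pi_\eta$ is fixed as a common uniformizer of $F_{P,\eta}$ and each factor of $\tilde{E}_{j,P}$; everything else is a short Kummer-theoretic chase.
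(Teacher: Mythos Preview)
Your proof is correct and takes a genuinely different route from the paper's. The paper proceeds by explicit case analysis over the type of $P$ (using Tables~\ref{TableEatB10spoint}--\ref{TableEatC21hotpoint} and Table~\ref{Tableliftofresidues}), prescribing $w_{j,P}$ concretely as $1$, as $u'$, or as an arbitrary non-$\ell^{\mathrm{th}}$-power unit depending on the case, and checking Property~2 by inspection in each. Your argument is uniform: you invoke Property~6 of Proposition~\ref{propositionEatpoints} (either $E_{j,P}$ is split or $D\otimes E_{j,P}=0$), push this to the branch, and read off via the residue map that $u'$ becomes an $\ell^{\mathrm{th}}$ power in $\overline{E_{j,P}\otimes F_{P,\eta}}$; the symbol identity $(w,w)=(w,-1)=0$ for odd $\ell$ then finishes.

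Your approach is more conceptual and avoids the bookkeeping, while the paper's explicit tabulation dovetails with the running case analysis used elsewhere (e.g., in Proposition~\ref{prop-points-normsfromGaloisclosures-RES}, where the specific form of $\tL_P$ genuinely varies by row). For this particular proposition, however, the downstream use in Proposition~\ref{propositionE-1bRAM} only needs the two properties you establish, so your argument is a valid drop-in replacement. One minor remark: your case split is on whether $E_{j,P}$ (over $F_P$) is split, but the same residue argument in your ``otherwise'' branch also handles the sub-case where $E_{j,P}$ is a field yet $\tilde{E}_{j,P}$ splits over $F_{P,\eta}$, since then $u'\in k_{P,\eta}^{*\ell}$ directly; so there is no gap.
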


\begin{proof}
Since $\eta$ is Type 1b/2 and $Y_{\eta}$ is RAM, $\eta$ is coloured green. We investigate the relevant rows in the tables given in the proof of Proposition \ref{propositionEatpoints}.

For the following situations, choose $w_{1,P} = w_{2,P} =1$.

- {$B_{10}^{s}$ point, $\eta$ of Type 1b} (cf. Row 1.2 of Table \ref{TableEatB10spoint} and Row a in Table \ref{Tableliftofresidues}) \\
- some {$B_{11}^{ns}$ points(*), $\eta$ of Type 1b} (cf. Row $2.2^*$ of Table \ref{TableEatB11nspoint} and Row b in Table \ref{Tableliftofresidues})\\
- {$B_{20}^{s}$ point, $\eta$ of Type 2}  (cf. Row 3.1 of Table \ref{TableEatB20spoint} and Row c in Table \ref{Tableliftofresidues}) \\
- some {$B_{21}^{ns}$ points(*), $\eta$ of Type 2} (cf. Row $4.1^*$ of Table \ref{TableEatB21nspoint} and Row d in Table \ref{Tableliftofresidues}) \\
- {$C_{12}^{Hot}$ point, $\eta$ of Type 1b} (cf. Row 8.5 of Table \ref{TableEatC21hotpoint} and Row l in Table \ref{Tableliftofresidues}).

For the following situations, choose $w_{1,P} = w_{2,P} = u'$.

- {$B_{11}^{ns}$ point, $\eta$ of Type 1b}  : (cf. Row 2.2 of Table \ref{TableEatB11nspoint} and Row b in Table \ref{Tableliftofresidues})\\
- {$B_{21}^{ns}$ point, $\eta$ of Type 2}  : (cf. Row 4.1 of Table \ref{TableEatB21nspoint} and Row d in Table \ref{Tableliftofresidues})\\
- {$C_{11}^{Chilly}$ point, $\eta$ of Type 1b} : (cf. Rows 5.1, 5.2 and 5.8 of Table \ref{TableEatC11chillypoint} and Rows e,f in Table \ref{Tableliftofresidues})\\
- {$C_{11}^{Cold}$ point, $\eta$ of Type 1b} : (cf. Rows 6.1-6.4 and 6.8 of Table \ref{TableEatC11coldpoint} and Rows g,h in Table \ref{Tableliftofresidues}) \\
- {$C_{12}^{Cold}$ point, $\eta$ of Type 1b} : (cf. Rows 7.1, 7.4 and 7.5 of Table \ref{TableEatC21coldpoint} and Row i in Table \ref{Tableliftofresidues}) \\
- {$C_{12}^{Cold}$ point, $\eta$ of Type 2} : (cf. Rows 7.1-7.3 of Table \ref{TableEatC21coldpoint} and Row j in Table \ref{Tableliftofresidues}) \\
- {$C_{12}^{Hot}$ point, $\eta$ of Type 1b} : (cf. Row 8.1 of Table \ref{TableEatC21hotpoint} and Row l in Table \ref{Tableliftofresidues}).

For {$C_{12}^{Hot}$ points, $\eta$ of Type 2} (cf. Rows 8.1-8.2 of Table \ref{TableEatC21hotpoint} and Row m in Table \ref{Tableliftofresidues}), choose $w_{1,P} = w_{2,P} \in  \mathcal{O}_{k_{P,\eta}}^*\setminus \mathcal{O}_{k_{P,\eta}}^{*\ell}$. Since $u'=1$ here, $\brac{w_{j,P},u'}=0$. \end{proof}

\subsection{When $\eta$ is 1b or 2 and $Y_{\eta}$ is RES}
\label{section-GaloisclosureRES}
Let $\eta=\eta_1$ or $\eta_2$ be of Type 1b or 2 with $Y_{\eta}$ of Type RES. In this subsection, we will define certain extensions  $\tL_P$ and $\tL'_P$ of $k_{P,\eta}$ and understand $E_{j,P}$ in terms of norms from these extensions. This will be helpful in constructing $E_{j,\eta}$ over $F_{\eta}$ by approximating local data.

Since $Y_{\eta}$ is of Type RES, we have $Y_{\eta}\simeq F_{\eta}\brac{\sqrt[\ell]{u_{\eta}}}$ where $u'=\overline{u_{\eta}} \in k_{\eta}/k_{\eta}^{*\ell}$ is the residue of $D\otimes F_{\eta}$. Recall that $\Gal\brac{Y_{\eta}/F_{\eta}} = \langle \psi\rangle$. Let $Y'$ be $\overline{Y\otimes F_{\eta}}$ over $k_{\eta}$ and by abuse of notation, let $\Gal\brac{Y'/k_{\eta}} = \langle \psi \rangle$ also. Finally let $Y'_P$ be $\overline{Y\otimes F_{P,\eta}}$ over $k_{P,\eta}$ with an induced action of $\psi$.

Note that if $Y'_P$ is split, then $Y'_P \simeq \prod k_{P,\eta}$ where $x\in Y'$ is identified with the tuple $\brac{x, \psi(x), \ldots, \psi^{\ell-1}(x)}$. Note that $\psi$ acts on $\prod k_{P,\eta}$ by permutations. That is, for $x_i\in k_{P,\eta}$, \[\psi(x_1, x_2, \ldots, x_{\ell}) = (x_2, x_3, \ldots, x_{\ell}, x_1).\]

Let $a_{j,P}$ and $E_{j,P}$ be as prescribed in Propositions \ref{propositionEatpoints} and \ref{propositionEatpoints-A00}. By Proposition \ref{otherproperties-green-notnonres}, $E_{j,P}$ is unramified along $\eta$ for $j=1,2$ and splits $\beta_{rbc,\eta}$ over $F_{P,\eta}$. Also $a_{j,P}$ are units along $\eta$ (Proposition \ref{propaisaunitifYnotsplit}). Set $b_P := \overline{a_{1,P}}, b'_P := \overline{a_{2,P}}$ in $Y'_P$. If $Y'_P$ is split, then set $b_P = \brac{b_{i,P}}_i \in \prod k_{P,\eta}$ and $b'_P = \brac{b'_{i,P}}_i \in \prod k_{P,\eta}$.

When $Y'_P$ is not split, set 
\begin{align*}
\tL_P &= Y'_P \brac{\sqrt[\ell]{b_P}, \sqrt[\ell]{\psi(b_P)}, \ldots, \sqrt[\ell]{\psi^{\ell-1}(b_P)}}\\
\tL'_P &= Y'_P \brac{\sqrt[\ell]{b'_P}, \sqrt[\ell]{\psi(b'_P)}, \ldots, \sqrt[\ell]{\psi^{\ell-1}(b'_P)}}.
\end{align*}

Since $Y'_P$ is a nonsplit extension of $k_{P,\eta}$ and $\N\brac{b_P}=1= \N\brac{b'_P}$, first of all $b_P$ and $b'_P$ are units in $\mathcal{O}_{Y',P}$. Also by Lemmata \ref{lemmanormoneramified-dim1} and \ref{lemmanormoneunramified}, we have $b_P, b'_P\in {Y'_P}^{*\ell}$. Hence $\psi^j\brac{b_P}, \psi^j\brac{b'_P}$ are all $\ell^{\mrm{th}}$ powers in $Y'_P$ also. Therefore $\tL_P = Y'_P =\tL'_P$.

When $Y'_P$ is split, set 
\begin{align*}
\tL_P &= k_{P,\eta} \brac{\sqrt[\ell]{b_{1,P}},\sqrt[\ell]{b_{2,P}}, \ldots, \sqrt[\ell]{b_{\ell,P}} }\\
\tL'_P &=k_{P,\eta} \brac{\sqrt[\ell]{b'_{1,P}},\sqrt[\ell]{b'_{2,P}}, \ldots, \sqrt[\ell]{b'_{\ell,P}} }\\
\end{align*}
Note that in either case $\tL_P/k_{P,\eta}$ and $\tL'_P/k_{P,\eta}$ are Galois extensions.

\begin{proposition}
\label{prop-points-normsfromGaloisclosures-RES} 
Let $\eta$, $P$, $u'$, $\tL_P$ and $\tL'_P$ be as above. There exist $w_P, w'_{P}\in k_{P,\eta}$, $z_P\in \tL_P$ and $z'_P\in \tL'_P$ such that 
\begin{enumerate}
\item
 $\overline{E_{1,P}\otimes F_{P,\eta}}=k_{P,\eta}[t]/(t^{\ell}- w_{P})$, $\N_{\tL_P/k_{P,\eta}}(z_P)=w_{P}$ and $\brac{w_{P}, u'}=0\in \Br\brac{k_{P,\eta}}$.
 \item
 $\overline{E_{2,P}\otimes F_{P,\eta}}=k_{P,\eta}[t]/(t^{\ell}- w'_{P})$, $\N_{\tL'_P/k_{P,\eta}}(z'_P)=w'_{P}$ and $\brac{w'_{P}, u'}=0\in \Br\brac{k_{P,\eta}}$.
 \end{enumerate}
\end{proposition}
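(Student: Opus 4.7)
The proof will proceed by a case-by-case analysis on the type of the marked point $P$, in parallel with the proof of Proposition \ref{prop-points-normsfromGaloisclosures-RAM}. The key structural simplification rests on the following dichotomy coming from the hypothesis $Y_\eta$ RES: the extension $Y\otimes F_{P,\eta}$ is unramified, and hence its residue $Y'_P$ is either the degree $\ell$ field $k_{P,\eta}(\sqrt[\ell]{u'})$ or the split algebra $\prod k_{P,\eta}$. When $Y'_P$ is nonsplit, the paper has already observed that $\tL_P = \tL'_P = Y'_P$, and by Hilbert 90 for the cyclic extension $k_{P,\eta}(\sqrt[\ell]{u'})/k_{P,\eta}$, an element $w\in k_{P,\eta}^*$ lies in $\N(Y'_P/k_{P,\eta})$ if and only if $(w,u')=0 \in \Br(k_{P,\eta})$; the norm and Brauer conditions of the proposition then collapse into a single requirement. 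When $Y'_P$ is split, $u'$ is an $\ell^{\mrm{th}}$ power so the Brauer condition is automatic and one only needs to construct an explicit norm.

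With this reduction in hand, I will enumerate the rows of the tables in Propositions \ref{propositionEatpoints} and \ref{propositionEatpoints-A00} in which $\eta$ is of Type 1b or 2 with $Y_\eta$ of Type RES: namely the relevant $A_{00}^s$ rows together with Rows 1.2, 2.2, 3.1, 4.1, 5.3, 5.4, 5.9, 6.2, 6.4, 7.2, 7.4, 8.3, 8.4 and 8.6. In each row I read off $E_{j,P}$, compute $\overline{E_{j,P}\otimes F_{P,\eta}}$, and make a canonical choice of $w_{j,P}$: namely $w_{j,P}=1$ when $E_{j,P}=\prod F_P$; $w_{j,P}$ equal to the residual unit of the $D$-symbol (for instance $\overline{u_P}$ in the $B_{ij}^{ns}$ cases and $\overline{v_P}$ in the $C_{11}^{Chilly}$ cases) when $E_{j,P}=L_P$; and $w_{j,P}=\overline{v_P\delta_P}$ via the symbol identity $(x,y)=(x+y,-y/x)$ when $E_{j,P}=H_P$ in the cold-point rows. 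The Brauer vanishing $(w_{j,P},u')=0$ then follows from the residue computations already carried out in the proof of Proposition \ref{otherproperties-green-notnonres}, which show that $u'$ is a power of $w_{j,P}$ modulo $\ell^{\mathrm{th}}$ powers, combined with the identity $(x,x)=(x,-1)=0$ (valid since $\ell$ is odd).

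The main technical obstacle lies in the split-$Y'_P$ subcases, which occur only in Rows 8.3 and 8.4 of the $C_{12}^{Hot}$ case where $\eta=\eta_2$ is Type 2 and $Y_P$ is split. In Row 8.3, the indigo colour of $\eta_1$ forces $m_i\equiv 0 \pmod \ell$ for every $i$, so $b_{i,P}$ is trivial modulo $\ell^{\mrm{th}}$ powers; combined with $a_{2,P}=1$, this makes both $\tL_P$ and $\tL'_P$ collapse to $k_{P,\eta_2}$, and the norm conditions hold trivially with $z_P=w_P$, $z'_P=w'_P$. In Row 8.4 the orange colour of $\eta_1$ guarantees $\gcd(m_i,\ell)=1$ for some $i$, so that $k_{P,\eta_2}(\sqrt[\ell]{\overline{\pi_P}})$ embeds into $\tL_P$, and $w_P=\overline{\pi_P}$ is realised as $\N(\sqrt[\ell]{\overline{\pi_P}})$ using that $\ell$ is odd. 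The $\tL'_P$ side is handled analogously by exploiting the freedom in the choice of $w'_P$: any Kummer generator of the unramified degree $\ell$ residue extension of $k_{P,\eta_2}$ is acceptable, and the constraint $\prod \overline{z_{i,P}}=1$ together with a suitable index choice will allow one to realise $w'_P$ as a norm from a subfield of $\tL'_P$. Once these subcases are handled, the proposition is established.
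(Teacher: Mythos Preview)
Your overall strategy is the same as the paper's --- case-by-case along the relevant rows, with the nonsplit-$Y'_P$ cases collapsing via the cyclic-norm/Brauer identification --- and most of your case analysis is correct. However, there is a genuine error in your treatment of Row 8.3.

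You assert that the indigo colour of $\eta_1$ forces $m_i\equiv 0\pmod\ell$, and then conclude that $b_{i,P}$ is trivial modulo $\ell^{\mrm{th}}$ powers, so $\tL_P$ collapses to $k_{P,\eta_2}$. The first claim is right, but the second does not follow. Along $\eta=\eta_2$ one has $b_{i,P}=\overline{z_{i,P}}\cdot\overline{\pi_P}^{\,m_i}$; with $m_i=\ell m'_i$ this becomes $\overline{z_{i,P}}$ times an $\ell^{\mrm{th}}$ power, but $\overline{z_{i,P}}\in\mathcal{O}_{k_{P,\eta_2}}^*$ has no reason to be an $\ell^{\mrm{th}}$ power. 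Hence $\tL_P$ is in general a nontrivial (unramified) extension of $k_{P,\eta_2}$, and your proposed choice $z_P=w_P$ fails: $\N_{\tL_P/k_{P,\eta_2}}(w_P)=w_P^{[\tL_P:k_{P,\eta_2}]}$, not $w_P$. The paper's argument at this point is simply that $\tL_P/k_{P,\eta_2}$ is unramified, so every unit of $k_{P,\eta_2}$ --- in particular any unit non-$\ell^{\mrm{th}}$-power $w_P$ generating $\overline{L_P\otimes F_{P,\eta_2}}$ --- is a norm.

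The same issue recurs in your handling of $\tL'_P$ in Row 8.4: the vague appeal to ``$\prod\overline{z_{i,P}}=1$ together with a suitable index choice'' is not the right mechanism. What is needed is again just that $\tL'_P$ is unramified over the local field $k_{P,\eta_2}$, whence units are norms. (A minor separate point: there are no ``relevant $A_{00}^s$ rows'' here, since an $A_{00}^s$ point lies on two Type~0 curves and hence never on a Type~1b/2 curve $\eta$.)
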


\newpage

\begin{proof}
Since $\eta$ is of Type 1b or 2 and $Y_{\eta}$ is RES, $\eta$ is coloured green. We investigate the relevant rows in the tables given in the proof of Proposition \ref{propositionEatpoints}.

\textit{$\eta$ of Type 1b}: 

Choose $w_P = w'_P = z_P = z'_P = 1$ for {$B_{10}^{s}$ points} (cf. Row 1.2 of Table \ref{TableEatB10spoint} and Row  a in Table \ref{Tableliftofresidues}) and {$C_{12}^{Hot}$ points} (cf. Row 8.6 of Table \ref{TableEatC21hotpoint} and Row l in Table \ref{Tableliftofresidues}).

For the following situations, $Y'_P=k_{P,\eta}$ is a nonsplit (unramified/ramified extension). Thus $\tL_P= \tL'_P = Y'_P = k_{P,\eta}\brac{\sqrt[\ell]{u'}}$. Choose $w_{P} =w'_P = u'$ and $z_P = z'_P = \sqrt[\ell]{u'}$. 

- {$B_{11}^{ns}$ point}  : (cf Row 2.2 of Table \ref{TableEatB11nspoint} and Row b in Table \ref{Tableliftofresidues})\\
-{$C_{11}^{Chilly}$ point} : (cf. Rows 5.3, 5.4 and 5.9 of Table \ref{TableEatC11chillypoint} and Rows e,f in Table \ref{Tableliftofresidues})\\
-{$C_{11}^{Cold}$ point} : (cf. Rows 6.2 and 6.4 of Table \ref{TableEatC11coldpoint} and Row g,h in Table \ref{Tableliftofresidues})\\ 
-{$C_{12}^{Cold}$ point} : (cf Row 7.2 of Table \ref{TableEatC21coldpoint} and Row i in Table \ref{Tableliftofresidues})

\textit{$\eta$ of Type 2}:

Choose $w_P = w'_P = z_P = z'_P = 1$ for {$B_{20}^s$ points} (cf Row 3.1 of Table \ref{TableEatB20spoint} and Row c in Table \ref{Tableliftofresidues}).

For the following situations, $Y'_P/k_{P,\eta}$ is a nonsplit (unramified/ramified extension). Thus $\tL_P= \tL'_P = Y'_P = k_{P,\eta}\brac{\sqrt[\ell]{u'}}$. Choose $w_{P} =w'_P = u'$ and $z_P = z'_P = \sqrt[\ell]{u'}$. 

- {$B_{21}^{ns}$ point} : (cf. Row 4.1 of Table \ref{TableEatB21nspoint} and Row d in Table \ref{Tableliftofresidues})\\
- {$C_{12}^{Cold}$ point} : (cf Row 7.4 of Table \ref{TableEatC21coldpoint} and Row j in Table \ref{Tableliftofresidues}).

\item We are left with the case of hot points. Rows 8.3-8.4 of Table \ref{TableEatC21hotpoint} are relevant here (with $\eta=\eta_2$). Note that by Row m of Table \ref{Tableliftofresidues}, we know $Y'_P$ is split. Hence $u'\in k_{P,\eta}^{*\ell}$ and therefore $\brac{w_P,u'} = \brac{w'_P,u'}=0$ for whatever be the choice of $w_P$ and $w'_P$. However we need to be more careful in making our choice to ensure the existence of $z_P$ and $z'_P$.

\textit{Row 8.3 of Table \ref{TableEatC21hotpoint}}: Observe $\eta_1$ is coloured indigo. This implies that the $b_{i,P}$ are all units in the local field $k_{P,\eta}$. Hence $\tL_P$ is an unramified extension of $k_{P,\eta}$. By construction, $b'_{i,P}=1$ and hence $\tL'_P = k_{P,\eta}$. Thus all units of $k_{P,\eta}$ are norms from $\tL_P$ and $\tL'_P$. Choose $w_P=w'_P$ to be a unit  in $k_{P,\eta}$ which is not an $\ell^{\mrm{th}}$ power and $z_P\in \tL_P$ such that $\N_{L_P/k_{P,\eta}}(z_P)=w_P$. Also set $z'_P = w'_P$.

\textit{Row 8.4 of Table \ref{TableEatC21hotpoint}}: By choice $b_{i,P} = \overline{\pi_P^{m_i}}$. Hence we see that $\tL_P = k_{P,\eta}\brac{\sqrt[\ell]{\overline{\pi_P}}}$. Set $w_P = \overline{\pi_P}$ which is clearly a norm from $\tL_P$. Note that $b'_{i,P}=\overline{z_{i,P}}$. Since $z_{i,P}$ are units in $\widehat{A_P}^*$, we see that $\tL'_P$ is an unramified extension of $k_{P,\eta}$. Thus all units of $k_{P,\eta}$ are norms from $\tL'_P$. As before choose $w'_P$ to be a unit in $\mathcal{O}_{k_{P,\eta}}$ which is not an $\ell^{\mrm{th}}$ power and $z'_P\in \tL'_P$ such that $\N_{\tL'_P/k_{P,\eta}}(z'_P)=w'_P$. \end{proof}

\subsection{When $\eta$ is 1a}
Let $\eta=\eta_1\in N_0$ be of Type 1a. For convenience, we again summarize of the choice of $E_{j,P}$ at points $P\in \overline{\eta}\cap S_0$ for $j=1,2$ while also tabulating the shape of $Y$. Note that both extensions are unramified along $\eta$.. Note also that by Proposition \ref{propaisaunitifYnotsplit} and Lemma \ref{lemmaalong1a}, $a$ is a unit along $\eta$ up to $\ell^{\mathrm{th}}$ powers.
\begin{figure}[H]
\centering
\footnotesize
\arraycolsep=0.5pt
\medmuskip =0.5mu 
\[
\begin{array}{|c|c|c|c|c|c|c|c|c|c|c|}
\hline 

\mrm{Row} & \mrm{Colour\ of\ }\eta_2 & {\eta_2} &  {P} &  {Y_{\eta_1}} & {Y_{\eta_2}} & {Y_P} &  {E_{1,P}} & {E_{2,P}} & {a_{1,P}} & {a_{2,P}=aa_{1,P}^{-1}} \\
\hline
R.1 &  & 0 & A_{10}^{s} & \mrm{RAM} & \mrm{RAM} &  F_P\left(\sqrt[\ell]{w_P\pi_P^r\delta_P}\right)  & {\prod F_P} &  {\prod F_P} &  {a} & {1}\\
\hline
R.2 &  & 0 & A_{10}^{s} & \mrm{RAM} & \mrm{NONRES} &  F_P\left(\sqrt[\ell]{w_P\pi_P}\right)  & {\prod F_P} &  {\prod F_P} &  {a} & {1} \\
\hline
R.3 &  {G} &  \mrm{1b} & B_{11}^{ns} & \mrm{RAM} &  \mrm{RAM}  &  F_P\left(\sqrt[\ell]{w_P\pi_P^r\delta_P}\right) &  L_P & {L_P}&  {a} & {1}\\
\hline
R.4 &  {G} &  \mrm{1b}   & B_{11}^{ns} & \mrm{RAM} & \mrm{NONRES}  & F_P\left(\sqrt[\ell]{w_P\pi_P}\right)  &  L_P & L_P &  {a} & {1}\\
\hline
R.5 & {G} & 2 & B_{12}^{ns} & \mrm{RAM} & \mrm{RAM} & F_P\left(\sqrt[\ell]{w_P\pi_P^r\delta_P}\right) & L_P &  L_P &  {a} & {1}\\
\hline
R.6 & {G} & 2 & B_{12}^{ns} & \mrm{RAM} & \mrm{NONRES} & F_P\left(\sqrt[\ell]{w_P\pi_P}\right) & L_P &  L_P &  {a} & {1}\\
\hline
\hline
S.1 &  & 0 & A_{10}^{s} & \mrm{SPLIT} & \mrm{SPLIT} &  {\prod F_P} & {\prod F_P} &  {\prod F_P} &   {a} & {1} \\
\hline
S.2 &  & 0 & A_{10}^{s} & \mrm{SPLIT} & \mrm{NONRES} &  {\prod F_P} & {\prod F_P} &  {\prod F_P} &  {a} & {1} \\
\hline
S.3 & {Bl, I, V} & \mrm{1b} & B_{11}^{ns} & \mrm{SPLIT} & \mrm{SPLIT} &  {\prod F_P} &
 L_P &  L_P &  {a} & {1} \\
\hline
S.4 & {W,R,O,Ye} & \mrm{1b} & B_{11}^{ns} & \mrm{SPLIT} & \mrm{SPLIT} &  {\prod F_P} &  F_P\left(\sqrt[\ell]{\delta_P}\right) & L_P   & \brac{\delta_P^{n_i}} &  \brac{z_{i,P}\pi_P^{\ell m'_i}}\\
\hline
S.5 & {B} & \mrm{1b} & B_{11}^{ns} & \mrm{SPLIT} & \mrm{SPLIT} &  {\prod F_P} &  L_P & F_P\left(\sqrt[\ell]{\delta_P}\right)    &   \brac{z_{i,P}\pi_P^{\ell m'_i}} & \brac{\delta_P^{n_i}} \\
\hline
S.6 & {G} & \mrm{1b} & B_{11}^{ns} & \mrm{SPLIT} & \mrm{NONRES} &  {\prod F_P} &
 L_P &  L_P &  {a} & {1} \\
\hline
S.7 & {G}  & 2 & B_{12}^{ns} & \mrm{SPLIT} & \mrm{NONRES} &  {\prod F_P} & L_P &  L_P & {a} & {1} \\
\hline
\hline
NR.1 &  & 0 & A_{10}^{s} & \mrm{NONRES} & \mrm{RAM} &  F_P\brac{\sqrt[\ell]{w_P\delta_P}} & {\prod F_P} &  {\prod F_P} &  {a} & {1}  \\
\hline
NR.2 &  &0 & A_{10}^{s} & \mrm{NONRES} & \mrm{SPLIT} &  {\prod F_P} & {\prod F_P} &  {\prod F_P} &  {a} & {1} \\
\hline
NR.3 &  &  0 & A_{10}^{s} & \mrm{NONRES} & \mrm{NONRES} &  F_P\brac{\sqrt[\ell]{w_P}} & {\prod F_P} &  {\prod F_P} &  {a} & {1}\\
\hline


NR.4 & {G} &  \mrm{1b} & B_{11}^{ns} & \mrm{NONRES} & \mrm{RAM} &  F_P\brac{\sqrt[\ell]{w_P\delta_P}} & \prod F_P &  \prod F_P & {a} & {1} \\
\hline

NR.5 & {G}  &  \mrm{1b} & B_{11}^{ns} & \mrm{NONRES} & \mrm{RES} &  L_P & L_P  &  L_P  & {a} & {1} \\
\hline

NR.6 & {Bl, I, V} &  \mrm{1b} & B_{11}^{ns} & \mrm{NONRES} & \mrm{SPLIT} &  {\prod F_P} & L_P & L_P   & {a} & {1}\\
\hline

NR.7  & {W, R, O, Ye} &  \mrm{1b} & B_{11}^{ns} & \mrm{NONRES} & \mrm{SPLIT} &  {\prod F_P}  & F_P\left(\sqrt[\ell]{\delta_P}\right) &  L_P    & \brac{\delta_P^{n_i}} &  \brac{z_{i,P}\pi_P^{\ell m'_i}}   \\
\hline

NR.8  & {B} &  \mrm{1b} & B_{11}^{ns} & \mrm{NONRES} & \mrm{SPLIT} &  {\prod F_P} &  L_P & F_P\left(\sqrt[\ell]{\delta_P}\right)    &  \brac{z_{i,P}\pi_P^{\ell m'_i}} & \brac{\delta_P^{n_i}}    \\
\hline

NR. 9&  {G} &  \mrm{1b} & B_{11}^{ns} & \mrm{NONRES} & \mrm{NONRES} &  F_P\brac{\sqrt[\ell]{w_P}}  & L_P  &  L_P  & {a} & {1}\\
\hline
NR.10 & {G}  &  2 & B_{12}^{ns} & \mrm{NONRES} & \mrm{RAM} &  F_P\brac{\sqrt[\ell]{w_P\delta_P}} &  {\prod F_P}  &  {\prod F_P}  & {a} & {1} \\
\hline
NR.11 & {G}  &  2 & B_{12}^{ns} & \mrm{NONRES} & \mrm{RES} &  {\prod F_P} &  L_P  &  L_P  & {a} & {1} \\
\hline
NR.12 & {G}  &  2 & B_{12}^{ns} & \mrm{NONRES} & \mrm{NONRES} &  F_P\brac{\sqrt[\ell]{w_P}} &  L_P & L_P  & {a} & {1} \\
\hline

\end{array}\]

\captionof{table}{Patching data at closed points when $\eta_1$ is of Type 1a}
\label{TableEatclosedpoints-Type1a}
\end{figure}

\subsubsection{When $Y_{\eta}$ is NONRES}
\begin{proposition}
\label{prop-points-normsfromGaloisclosures-1a} 
Let $\eta = {\eta_1}$ be of Type 1a and let $P\in \overline{\eta}\cap S_0$. Further assume $Y_{\eta}$ is of Type NONRES. Let $Y' =\overline{Y_{\eta}} = k_{\eta}\brac{\sqrt[\ell]{u'}}$ and $Y'_P = Y'\otimes k_{P,\eta}$. If $Y'_P$ is nonsplit, set $\tL_P = \tL'_P = Y'_P$. If $Y'_P = \prod k_{P,\eta}$, let $\overline{a_{1,P}} = \brac{b_{i,P}}_i$ and $\overline{a_{2,P}} = \brac{b'_{i,P}}_i$ in $\prod k_{P,\eta}$ and set $\tL_P = k_{P,\eta}\brac{\sqrt[\ell]{b_{1,P}}, \sqrt[\ell]{b_{2,P}}, \ldots, \sqrt[\ell]{b_{\ell,P}}}$ and $\tL'_P  = k_{P,\eta}\brac{\sqrt[\ell]{b'_{1,P}}, \sqrt[\ell]{b'_{2,P}}, \ldots, \sqrt[\ell]{b'_{\ell,P}}}$. 

Then there exist $w_P, w'_{P}\in k_{P,\eta}$, $z_P\in \tL_P$ and $z'_P\in \tL'_P$ such that 
\begin{enumerate}
\item
 $\overline{E_{1,P}\otimes F_{P,\eta}}=k_{P,\eta}[t]/(t^{\ell}- w_{P})$ and $\N_{\tL_P/k_{P,\eta}}(z_P)=w_{P}$.
 \item
 $\overline{E_{2,P}\otimes F_{P,\eta}}=k_{P,\eta}[t]/(t^{\ell}- w'_{P})$ and $\N_{\tL'_P/k_{P,\eta}}(z'_P)=w'_{P}$.
 \end{enumerate}
\end{proposition}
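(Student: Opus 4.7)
The proof proceeds by a row-by-row analysis of Table \ref{TableEatclosedpoints-Type1a}, restricted to rows NR.1-NR.12 where $Y_{\eta_1}$ is of Type NONRES, in the same style as the proofs of Propositions \ref{prop-points-normsfromGaloisclosures-RAM} and \ref{prop-points-normsfromGaloisclosures-RES}. A preliminary observation, used in all rows where $E_{j,P}$ is nontrivial, is that $Y'_P = \overline{Y_{\eta_1}}\otimes k_{P,\eta_1}$ is split if and only if $Y_P$ is split: $Y\otimes F_{P,\eta_1}$ is unramified over $F_{P,\eta_1}$ with residue $Y'_P$, so splitness of $Y_P$ propagates to $Y'_P$; conversely, in the nontrivial rows $Y_P$ is either $\prod F_P$ or $L_P$, and a non-power unit in $k_P$ remains a non-power in $k_{P,\eta_1}$ by Hensel's lemma, so $Y_P = L_P$ forces $Y'_P$ nonsplit.

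The analysis then splits into four patterns according to the form of $E_{j,P}$. First, when $E_{j,P} \simeq \prod F_P$ (rows NR.1-NR.4 and NR.10), the residue is trivial and we take $w_P = 1$, $z_P = 1$ (similarly for $w'_P, z'_P$). Second, when $E_{j,P} = L_P$ and $Y'_P$ is nonsplit (rows NR.5, NR.9, NR.12), both $\overline{L_P\otimes F_{P,\eta_1}}$ and $\tL_P = Y'_P$ coincide with the unique unramified degree $\ell$ extension of $k_{P,\eta_1}$, so taking $w_P = u'$ and $z_P = \sqrt[\ell]{u'}$ works since $\N(\sqrt[\ell]{u'}) = u'$ for $\ell$ odd. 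Third, when $E_{j,P} = L_P$ with $Y'_P$ split (rows NR.6, NR.11), Lemma \ref{lemmaalong1a} together with Proposition \ref{propositionaatB11nspoint} force $a'_{i,P} \equiv z_{i,P} \pmod{(F_{P,\eta_1}^*)^\ell}$, so each $b_{i,P}$ is a unit of $k_{P,\eta_1}$; then $\tL_P$ is an unramified (possibly trivial) extension of the local field $k_{P,\eta_1}$, and since norms from any such extension surject onto units, $w_P = \overline{u_P}$ (where $L_P = F_P\brac{\sqrt[\ell]{u_P}}$) is a norm.

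The fourth pattern is $E_{j,P} = F_P\brac{\sqrt[\ell]{\delta_P}}$ (rows NR.7, NR.8). Since $\delta_P$ is a unit of $F_{P,\eta_1}$, this extension is unramified at $\eta_1$ with residue $k_{P,\eta_1}\brac{\sqrt[\ell]{\overline{\delta_P}}}$, giving $w_P = \overline{\delta_P}$. By Proposition \ref{propositionaatB11nspoint}(3), the associated $a_{j,P} = (\delta_P^{n_i})_i$ has each $n_i$ divisible by $\ell$, so the $b_{i,P}$ are $\ell$-th powers in $k_{P,\eta_1}$, making $\tL_P = k_{P,\eta_1}$; hence $w_P$ is trivially a norm. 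The main obstacle is bookkeeping across all twelve rows: one must verify that the exponent constraints on $a'_{i,P}$ imposed by Lemma \ref{lemmaalong1a}, Proposition \ref{propositionaatB11nspoint}, and the coloring of $\eta_2$ combine correctly with the choice of $E_{j,P}$ prescribed by the table. Once these constraints are checked, every sub-case reduces to one of the four patterns above.
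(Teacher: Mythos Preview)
Your overall approach matches the paper's --- a row-by-row check of Table~\ref{TableEatclosedpoints-Type1a} --- and your first three patterns are fine. The fourth pattern, however, contains a genuine error.

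In rows NR.7 and NR.8 you invoke Proposition~\ref{propositionaatB11nspoint}(3) to conclude that in $a_{j,P}=(\delta_P^{n_i})_i$ each $n_i$ is divisible by $\ell$, and hence that $\tL_P=k_{P,\eta_1}$. This misreads the notation. In Proposition~\ref{propositionaatB11nspoint} the parameter called $\pi_P$ is the one along which $D$ is ramified (the Type~1b direction), and the conclusion is that the exponent of the \emph{other} parameter --- the Type~1a direction --- is divisible by $\ell$. In Table~\ref{TableEatclosedpoints-Type1a} the roles are reversed: $\pi_P$ cuts out $\eta_1$ (Type~1a) and $\delta_P$ cuts out $\eta_2$ (Type~1b). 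Translated correctly, the proposition gives $m_i=\ell m'_i$ (already recorded in the table as $\pi_P^{\ell m'_i}$) but says nothing about $n_i$. Indeed, in row NR.7 the colour of $\eta_2$ is W, R, O, or Ye, which by the rules of Section~\ref{section-finalcolouring} means precisely that some $n_i\not\equiv 0\pmod\ell$.

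Consequently $\tL_P$ is not $k_{P,\eta_1}$ but $k_{P,\eta_1}\bigl(\sqrt[\ell]{\overline{\delta_P}}\bigr)$, a ramified extension, exactly as the paper states. The conclusion you need still holds, but for the reason the paper gives: with $w_P=\overline{\delta_P}$ one takes $z_P=\sqrt[\ell]{\overline{\delta_P}}\in\tL_P$, whose norm is $\overline{\delta_P}$ since $\ell$ is odd. The companion extension $\tL'_P$ in NR.7 (built from $b'_{i,P}=\overline{z_{i,P}}$) is genuinely unramified, so for $w'_P$ one takes any unit of $\mathcal{O}_{k_{P,\eta_1}}$ that is not an $\ell$-th power. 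Row NR.8 is handled symmetrically. A minor related point: rows NR.9 and NR.12 allow $Y_P$ to be split as well as $L_P$; your pattern~3 argument (with $n_i=0$, which follows from Proposition~\ref{propaisaunitifYnotsplit} applied to the nonsplit $Y_{\eta_2}$) covers that sub-case, but you should say so.
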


\begin{proof}
In Table \ref{TableEatclosedpoints-Type1a}, the Rows NR.1-NR.12 are relevant. We only give the choices of $w_P$ and $w'_P$ from which existence of $z_P, z'_P$ become clear. For Rows NR.1-NR.4 and NR.10, choose $w_P = w'_P  = 1$. For Row NR.7, $\tilde{L}_P = k_{P,\eta}\brac{\sqrt[\ell]{\overline{\delta_P}}}$ is a ramified extension and $\tilde{L}'_P/k_{P,\eta}$ is an unramified extension. Choose $w_P = \overline{\delta_P}$ and $w'_P$ to be any unit in $\mathcal{O}_{k_{P,\eta}}$ which is not an $\ell^{\mrm{th}}$ power. For Row NR.8, a similar proof as in Row NR.7 works. For the remaining rows,  $\tilde{L}_P$ and $\tilde{L}'_P$ are unramified extensions of $k_{P,\eta}$. So again choose $w_P = w'_P$ to be any unit in $\mathcal{O}_{k_{P,\eta}}$ which is not an $\ell^{\mrm{th}}$ power. \end{proof}

\subsubsection{When $Y_{\eta}$ is SPLIT}
\begin{proposition}
\label{prop-points-normsfromGaloisclosures-1a-split} 
Let $\eta=\eta_1$ be of Type 1a and let $P\in \overline{\eta}\cap S_0$. Further assume $Y_{\eta}$ is of Type SPLIT. Let\footnote{Modify $a_{j,P}$ by $\ell$th powers of the parameter $\pi_{\eta}$ if needed to define $(b_{i,P})$ and $(b'_{i,P})$.} ${\overline{a_{1,P}}}=\brac{b_{i,P}}$ (resp. ${\overline{a_{2,P}}}=\brac{b'_{i,P}}$). Set ${X_P} = k_{P,\eta}\brac{\sqrt[\ell]{b_{1,P}}, \ldots, \sqrt[\ell]{b_{\ell,P} }}$ and ${X'_P} = k_{P,\eta}\brac{\sqrt[\ell]{b'_{1,P}}, \ldots, \sqrt[\ell]{b'_{\ell,P} }}$.

Then there exist $w_P, w'_{P}\in k_{P,\eta}$, $z_P\in {X_P}$ and $z'_P\in {X'_P}$ such that 
\begin{enumerate}
\item
 $\overline{E_{1,P}\otimes F_{P,\eta}}=k_{P,\eta}[t]/(t^{\ell}- w_{P})$ and $w_P = \N_{{X_P}/k_{P,\eta}}\brac{z_P}$. 
\item
 $\overline{E_{2,P}\otimes F_{P,\eta}}=k_{P,\eta}[t]/(t^{\ell}- w'_{P})$ and $w'_P = \N_{{X'_P}/k_{P,\eta}}\brac{z'_P}$. 
\end{enumerate}
\end{proposition}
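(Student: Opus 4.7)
The proof is a row-by-row verification of Table~\ref{TableEatclosedpoints-Type1a}. I would start by pinning down the local structure of $k_{P,\eta}$: in the conventions of this table, $\pi_P = \pi_{\eta_1} = \pi_\eta$ and $\delta_P = \pi_{\eta_2}$, so that branching at $\eta = \eta_1$ makes $\delta_P$ a unit in $F_{P,\eta}$ while $\pi_P$ is the uniformizer. Consequently $k_{P,\eta} \cong k_P((\overline{\delta_P}))$ is a complete discretely valued field with uniformizer $\overline{\delta_P}$ and finite residue field $k_P$ (which contains $\zeta_\ell$), so $k_{P,\eta}^*/k_{P,\eta}^{*\ell} \cong (\mathbb{Z}/\ell\mathbb{Z})^2$. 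In particular any extension of the form $k_{P,\eta}(\sqrt[\ell]{\text{unit}})$ is either trivial or is the unique unramified cyclic degree-$\ell$ extension $\tilde L_P$, while $k_{P,\eta}(\sqrt[\ell]{\overline{\delta_P}})$ is totally ramified. By Lemma~\ref{lemmaalong1a}, $a$ is a unit along $\eta$ up to $\ell$-th powers, so the permitted modifications by $\ell$-th powers of $\pi_\eta$ amount to stripping the explicit $\pi_P^{\ell m'_i}$ factors that appear in the entries of the table.

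The routine cases are Rows S.1, S.2 (both $E_{j,P} = \prod F_P$, so take $w_P = w'_P = 1$ and $z_P = z'_P = 1$) and Rows S.3, S.6, S.7 (both $E_{j,P} = L_P = F_P(\sqrt[\ell]{u_P})$ with $a_{2,P} = 1$). In the latter trio, the coloring or structural hypothesis on $\eta_2$---namely V/I/Bl in S.3, or NONRES $Y_{\eta_2}$ in S.6 and S.7---forces each $a'_{i,P}$ to be a unit along $\eta_2$ up to $\ell$-th powers, so every $n_i \equiv 0 \pmod{\ell}$ and hence $b_{i,P} \equiv \overline{z_{i,P}} \in k_P^* \subset \mathcal{O}_{k_{P,\eta}}^*$. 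Setting $w_P = w'_P = \overline{u_P}$, the field $X_P$ is either $k_{P,\eta}$ or $\tilde L_P$; in either case $\overline{u_P}$ (being a unit) is a norm from $X_P$ (trivially, or by surjectivity of the unit norm from the unramified cyclic degree-$\ell$ extension of the local field $k_{P,\eta}$), and $X'_P = k_{P,\eta}$ so $z'_P = w'_P$ works.

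Rows S.4 and S.5 are symmetric and contain the one genuinely new case; I describe S.4. Here $E_{1,P} = F_P(\sqrt[\ell]{\delta_P})$ and $E_{2,P} = L_P$. The crucial observation is that although $E_{1,P}/F_P$ is ramified along $\eta_2$, in $F_{P,\eta}$ the element $\delta_P$ is a unit, so $E_{1,P}\otimes F_{P,\eta}$ is in fact unramified over $F_{P,\eta}$ with residue $k_{P,\eta}(\sqrt[\ell]{\overline{\delta_P}})$---the totally ramified degree-$\ell$ extension of the local field $k_{P,\eta}$. I therefore set $w_P = \overline{\delta_P}$; from $a_{1,P} = (\delta_P^{n_i})$ one reads $b_{i,P} \equiv \overline{\delta_P}^{n_i} \pmod{k_{P,\eta}^{*\ell}}$, so $X_P = k_{P,\eta}(\sqrt[\ell]{\overline{\delta_P}})$ whenever some $n_i \not\equiv 0 \pmod{\ell}$ (and $X_P = k_{P,\eta}$ otherwise), and the choice $z_P = \sqrt[\ell]{\overline{\delta_P}}$ gives $\N(z_P) = \overline{\delta_P} = w_P$ because $\prod_{i=0}^{\ell-1}\zeta^i = 1$ for $\ell$ odd. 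The partner pair $(w'_P, z'_P)$ reduces to the earlier analysis after noting that $\pi_P^{\ell m'_i}$ in $a_{2,P}$ is an $\ell$-th power in $F_{P,\eta}$, so $b'_{i,P} = \overline{z_{i,P}}$ is a unit and $w'_P = \overline{u_P}$ is a norm from $X'_P$.

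The main obstacle is bookkeeping: one must carefully track which of $\pi_P, \delta_P$ is the $\eta$-parameter in the convention of Table~\ref{TableEatclosedpoints-Type1a} (which reverses the convention of Table~\ref{TableEatB11nspoint}, where the $B_{11}^{ns}$ prescription was originally made), and it is precisely this reversal that makes $E_{1,P} = F_P(\sqrt[\ell]{\delta_P})$ simultaneously ramified along $\eta_2$ (as required by the heuristic coloring) and unramified over the branch field $F_{P,\eta}$, giving the clean norm-from-$X_P$ description of $w_P = \overline{\delta_P}$.
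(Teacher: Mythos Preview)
Your proposal is correct and follows essentially the same row-by-row approach as the paper's proof. The paper is terser---it simply asserts that $X_P,X'_P$ are unramified in Rows S.3, S.6, S.7 and picks $w_P=w'_P$ to be \emph{any} non-$\ell$-th-power unit---whereas you supply the reasons (the V/I/Bl colouring or the NONRES hypothesis on $Y_{\eta_2}$ forcing $n_i\equiv 0\pmod\ell$) and make the specific choice $w_P=\overline{u_P}$; your careful remark about the $\pi_P\leftrightarrow\delta_P$ convention swap between Tables~\ref{TableEatB11nspoint} and~\ref{TableEatclosedpoints-Type1a} is a helpful clarification the paper leaves implicit.
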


\begin{proof}
In Table \ref{TableEatclosedpoints-Type1a}, the Rows S.1-S.7 are relevant. We only give the choices of $w_P$ and $w'_P$ from which the existence of $z_P, z'_P$ become clear.

For Rows S.1-S.2, choose $w_P = w'_P  = 1$. For Row S.4, $X_P = k_{P,\eta}\brac{\sqrt[\ell]{\overline{\delta_P}}}$ is a ramified extension and $X'_P/k_{P,\eta}$ is an unramified extension. Choose $w_P = \overline{\delta_P}$ and $w'_P$ to be any unit in $\mathcal{O}_{k_{P,\eta}}$ which is not an $\ell^{\mrm{th}}$ power. For Row S.5, a similar proof as in Row S.4 works. For Rows S.3, S.6 and S.7,  $X_P$ and $X'_P$ are unramified extensions of $k_{P,\eta}$. So choose $w_P = w'_P$ to be any unit in $\mathcal{O}_{k_{P,\eta}}$ which is not an $\ell^{\mrm{th}}$ power. \end{proof}

\section{Patching data at coloured points of $N_0$}
\label{sectionpatchingdataatN0}

 Let $\eta\in N_0$ be of Type 1b or 2 where ${N_0}$ denotes the subset $N'_0\cap X_0$. Let $\pi_{\eta}$ denote the parameter of $F_{\eta}$ fixed in Section \ref{subsectionfinalmodel} and let $\beta_{rbc,\eta}$ be as defined in Section \ref{section-fixingresidualbrauerclass}. For $j=1,2$ and any $P\in S_0$, let $E_{j,P}$ and $a_{j,P}$ be as prescribed in Propositions \ref{propositionEatpoints}. We now prescribe the choices for $E_{j,\eta}$ and ${a}_{j,\eta}$.
 
\begin{proposition}[Violet/Indigo/Black]
\label{propositionE-choosingliftofresidues-violetandindigo}
Let $\eta\in N_0$ be coloured violet, indigo or black. Set $E_{1,\eta}$ and $E_{2,\eta}$ to be the lift of residues at $\widehat{A_{\eta}}$. Further, set $a_{1,\eta} = a$ and $a_{2,\eta}=1$. Then for $j=1,2$, we have 
\begin{enumerate}
\item 
$a_{1,\eta}a_{2,\eta} = a$. 

\item
 $D\otimes E_{j,\eta}$ is split.
  
\item
$a_{j,\eta}$ is a norm from $E_{j,\eta}\otimes Y_{\eta}/Y_{\eta}$. 

\item
$\N_{Y_{\eta}/F_{\eta}}\brac{a_{j,\eta}}=1$.
\item
$E_{j,\eta}\otimes F_{P,\eta} \simeq E_{j,P} \otimes F_{P,\eta}$ for each point $P\in S_0 \cap \overline{\eta}$.
\item
$a_{j,P} = a_{j, \eta} \in Y\otimes F_{P,\eta}$ for each point $P\in S_0 \cap \overline{\eta}$.
\end{enumerate}
\end{proposition}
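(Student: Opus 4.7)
The plan is to verify the six properties by combining the local data supplied by Proposition \ref{otherproperties-violet-indigo} (which covers all three of V, I, and Bl) with the index formula and a single residue computation.

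First I would observe that $\eta$ must be of Type 1b. Indeed, the V/I/Bl coloring applies only to vertices of $\Delta$ (which are of Type 1b or 2), but the coloring presupposes a decomposition $a = (a'_{i,\eta})_i \in \prod F_{\eta}$, forcing $Y_{\eta}$ to be split, and then Remark \ref{remarktype2Ynotsplit} rules out Type 2. So $D \otimes F_{\eta} = \beta_{rbc,\eta} + (u_{\eta}, \pi_{\eta})$ with $\ind(D_{\eta}) = \ell$, and $E_{j,\eta}$ is the unramified cyclic extension whose residue field is $k_{\eta}(\sqrt[\ell]{\bar{u}_{\eta}})$. Properties 1 and 4 are immediate from $a_{1,\eta}a_{2,\eta} = a \cdot 1$ and $\N_{Y_{\eta}/F_{\eta}}(a) = 1$. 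For Property 2, Lemma \ref{lemmaindexformula} gives $\ell = \ind(D_{\eta}) = \ind(\beta_{rbc,\eta} \otimes E_{j,\eta}) \cdot \ell$, which forces $\beta_{rbc,\eta} \otimes E_{j,\eta} = 0$; combined with the fact that $E_{j,\eta}$ kills $(u_{\eta}, \pi_{\eta})$, this yields $D \otimes E_{j,\eta} = 0$. Properties 5 and 6 follow directly from Proposition \ref{otherproperties-violet-indigo}, which identifies $E_{j,P} \otimes F_{P,\eta}$ with the lift of residues along $\eta$ as etale algebras over $F_{P,\eta}$ and records $a_{1,P} = a$, $a_{2,P} = 1$.

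The substantive step is Property 3 for $j = 1$, namely that $a$ is a norm from $E_{1,\eta} \otimes Y_{\eta}/Y_{\eta}$. Since $Y_{\eta} \simeq \prod F_{\eta}$, it suffices to show each component $a'_{i,\eta}$ is a norm from $E_{1,\eta}/F_{\eta}$. The coloring lets me write $a'_{i,\eta} = z_{i,\eta} t^{\ell}$ with $z_{i,\eta} \in \widehat{A_{\eta}}^*$ and $t \in F_{\eta}$, and $a = \N_{M/Y}(z)$ being a reduced norm from $D \otimes Y$ gives $(a'_{i,\eta}) \cup [D_{\eta}] = 0$ in $H^3(F_{\eta}, \mu_{\ell}^{\otimes 2})$. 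I would split this cup product as $(a'_{i,\eta}) \cup \beta_{rbc,\eta} + (a'_{i,\eta}) \cup (u_{\eta}, \pi_{\eta})$; the first term is a cup of unramified classes and has vanishing residue, while the second term rewrites as $(z_{i,\eta}) \cup (u_{\eta}, \pi_{\eta})$ and has residue $(\bar{z}_{i,\eta}, \bar{u}_{\eta}) \in H^2(k_{\eta}, \mu_{\ell})$. Taking residues forces $(\bar{z}_{i,\eta}, \bar{u}_{\eta}) = 0$, so $\bar{z}_{i,\eta}$ is a norm from $k_{\eta}(\sqrt[\ell]{\bar{u}_{\eta}})$; lifting through the canonical isomorphism $\Br(\widehat{A_{\eta}}) \cong \Br(k_{\eta})$ yields $(E_{1,\eta}, \sigma, z_{i,\eta}) = 0 \in \Br(F_{\eta})$, whence $z_{i,\eta}$ (and therefore $a'_{i,\eta}$) is a norm from $E_{1,\eta}/F_{\eta}$.

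The main obstacle is executing the residue calculation cleanly: keeping track of which factors in the cup products are unramified, justifying the residue formula $\partial(\alpha \cup \beta) = \bar{\alpha} \cup \partial\beta$ when $\alpha$ is unramified, and verifying that the lifting step through $\Br(\widehat{A_{\eta}}) \cong \Br(k_{\eta})$ is legitimate even though $k_{\eta}$ can have cohomological dimension as large as $2$. The remaining verifications are essentially bookkeeping against Proposition \ref{otherproperties-violet-indigo} and the index formula.
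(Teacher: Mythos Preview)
Your proposal is correct and follows essentially the same route as the paper: Properties 1, 4, 5, 6 are handled identically, and for Property 3 both you and the paper reduce to showing $(\bar z_{i,\eta},\bar u_\eta)=0$ in $\HH^2(k_\eta,\mu_\ell)$ by taking residues of the cup-product relation $(a'_{i,\eta})\cup[D_\eta]=0$. The only minor difference is in Property 2: the paper observes directly that Type~1b means $D_\eta\simeq \M_\ell(u_\eta,w_\eta\pi_\eta)$ as a single symbol (cf.\ Table~1), so $E_{j,\eta}=F_\eta(\sqrt[\ell]{u_\eta})$ splits it immediately, whereas you route through the index formula to first kill $\beta_{rbc,\eta}\otimes E_{j,\eta}$---both arguments are valid, the paper's is just one step shorter. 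Your stated ``main obstacle'' (the lifting through $\Br(\widehat{A_\eta})\cong\Br(k_\eta)$) is a non-issue: the symbol $(z_{i,\eta},u_\eta)$ is unramified, and the isomorphism $\HH^2_{\nr}(F_\eta,\mu_\ell)\cong\HH^2(k_\eta,\mu_\ell)$ is exactly what makes the lift go through, independent of $\cd(k_\eta)$.
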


\begin{proof} Recall that we have $D = {\beta}_{rbc,\eta} + \left(u_{\eta}, \pi_{\eta} \right)\ \mrm{in} \ \Br\left(F_{\eta}\right)$ where ${\beta}_{rbc,\eta}$ is an unramified algebra with index at most $\ell$. And by construction, $E_{j,\eta}\simeq F_{\eta}\brac{\sqrt[\ell]{u_{\eta}}}$. Properties  1 \& 4 are immediate, while Properties 5 \& 6 follow from Proposition \ref{otherproperties-violet-indigo}. Since $D$ is ramified at $\eta$, we have $\sqbrac{E_{j,\eta}:F_{\eta}}=\ell$. Since $\eta$ is of Type 1b, $D_{\eta}\simeq  \M_{\ell}\brac{u_{\eta}, w_{\eta}\pi_{\eta}}$ for some unit $w_{\eta}\in \widehat{A_{\eta}}^*$. Thus $D\otimes E_{j,\eta}$ is split which shows Property 2 holds.

Since $\eta$ is coloured violet/indigo/black, then it has to be a Ch/H/Z curve respectively. In particular $\eta$ is a  Type 1b curve with $Y_{\eta}$ of Type SPLIT and $a = \brac{a'_{i,\eta}}_i$ where each $a'_{i,\eta}$ is a unit up to $\ell^{\mrm{th}}$ powers in $\widehat{A_{\eta}}$. Therefore the fact that $a\in \Nrd\left(D\otimes Y_{\eta}\right)$ translates to $\left(a'_{i,\eta}\right)\left(u_{\eta},\pi_{\eta}\right) = 0$ in  $\HH^3\left(F_{\eta}, \mu_{\ell}\right)$ for all $i$ (Lemma \ref{lemmareducednormsunramified}). This implies by taking residues that $\overline{\left(u_{\eta}, a'_{i,\eta}\right)}=0 \ \mathrm{in} \ \HH^2\left({k_{\eta}}, \mu_{\ell}\right)$ for all $i$. Hence each $a'_{i,\eta}$ is a norm from $E_{1,\eta}$. Since clearly $1$ is a norm from $E_{2,\eta}$, Property 3 holds. \end{proof}

\begin{proposition}[Blue]
\label{propositionE-1bSPLIT-Blue}
Let $\eta\in N_0$ be coloured blue. Set $E_{1,\eta}$ to be the lift of residues at $\widehat{A_{\eta}}$. Then there exists a ramified cyclic extension $E_{2,\eta}/F_{\eta}$ of degree $\ell$ and elements $a_{1,\eta} = \brac{\tilde{a}_{1,i,\eta}}_i$ and  $a_{2,\eta} = \brac{\tilde{a}_{2,i,\eta}}_i \in \prod F_{\eta}$ such that for $j=1,2$, the following holds:
\begin{enumerate}
\item 
$a_{1,\eta}a_{2,\eta} = a \in Y_{\eta}$, i.e. $\tilde{a}_{1,i,\eta}\tilde{a}_{2,i,\eta}= a'_{i,\eta}$ for each $i$.

\item
$D\otimes E_{j, \eta}$ is split.

\item
$a_{j,\eta}$ is a norm from $E_{j,\eta}\otimes Y_{\eta}/Y_{\eta}$, i.e . $\tilde{a}_{j,i,\eta}$ is a norm from $E_{j,\eta}$ for each $i$.

\item
$\N_{Y_{\eta}/F_{\eta}}\brac{a_{j,\eta}}=1$, i.e. $\prod_i \tilde{a}_{j,i,\eta} = 1$.
\item
$E_{j,\eta}\otimes F_{P,\eta} \simeq E_{j,P} \otimes F_{P,\eta}$ for each point $P\in S_0 \cap \overline{\eta}$.
\item
 $\tilde{a}_{j,i,\eta}{\mu_{j,i,P,\eta}} = \tilde{a}_{j,i,P} \in F_{P,\eta}$ for all $i$ at each point $P\in S_0 \cap \overline{\eta}$ for some $\mu_{j,i,P,\eta}\in F_{P,\eta}^{*\ell}$ such that $\prod_i \mu_{j,i,P,\eta}=1$.
 \item
 $E_{1,\eta}/F_{\eta}$ is unramified and cyclic of degree $\ell$.
\end{enumerate}
\end{proposition}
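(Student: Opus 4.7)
Since $\eta$ is blue, it is a Type 1b curve with $Y_{\eta}\simeq\prod F_{\eta}$, and by Lemma \ref{totallyramifieddegreel} any totally ramified cyclic degree $\ell$ extension of $F_{\eta}$ has the form $F_{\eta}\brac{\sqrt[\ell]{\tilde{w}'\pi_{\eta}}}$ for some unit $\tilde{w}'\in\widehat{A_{\eta}}^*$. The plan is to select the residue $\overline{\tilde{w}'}\in k_{\eta}^*$ via a global approximation argument over the function field $k_{\eta}$ of $\overline{\eta}$, which is a global function field over the finite residue field $k$. Lemma \ref{lemmainvariantalgebras} will produce one residue that simultaneously matches the branch data from Proposition \ref{otherproperties-blue} at every $P\in\overline{\eta}\cap S_0$ and kills $D$ over $E_{2,\eta}$.

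First I would write $D\otimes F_{\eta}=\beta_{rbc,\eta}+(u_{\eta},w_{\eta}\pi_{\eta})$ with $\beta_{rbc,\eta}$ unramified. Since $D_{\eta}$ has index $\ell$ and is split by $\overline{E_{1,\eta}}=k_{\eta}\brac{\sqrt[\ell]{\overline{u_{\eta}}}}$, one has $\overline{\beta_{rbc,\eta}}=(\overline{u_{\eta}},\tilde{w})\in\Br(k_{\eta})$ for some $\tilde{w}\in k_{\eta}^*$. For each branch $P\in\overline{\eta}\cap S_0$, Proposition \ref{otherproperties-blue}(5) supplies $w_P\in\widehat{A_P}^*$ (a unit along $\eta$) with $(w_Pw_{\eta}^{-1},u_{\eta})=0\in\Br(F_{P,\eta})$; taking residues this becomes $(\overline{u_{\eta}},\overline{w_{\eta}}\tilde{w}/\overline{w_P})=0\in\Br(k_{P,\eta})$. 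Applying Lemma \ref{lemmainvariantalgebras} over $k_{\eta}$ with $u=\overline{u_{\eta}}$, $b=\overline{w_{\eta}}\tilde{w}$, cyclic extension $\overline{E_{1,\eta}}/k_{\eta}$, and local data $\overline{w_P}$ yields $\overline{\tilde{w}'}\in k_{\eta}^*$ congruent to $\overline{w_P}$ modulo $\ell$-th powers in each $k_{P,\eta}$ and satisfying $(\overline{u_{\eta}},\overline{w_{\eta}}\tilde{w}/\overline{\tilde{w}'})=0\in\Br(k_{\eta})$. Lifting $\overline{\tilde{w}'}$ arbitrarily to $\tilde{w}'\in\widehat{A_{\eta}}^*$ and setting $E_{2,\eta}=F_{\eta}\brac{\sqrt[\ell]{\tilde{w}'\pi_{\eta}}}$ gives a ramified cyclic degree $\ell$ extension; since $\tilde{w}'\pi_{\eta}$ is an $\ell$-th power in $E_{2,\eta}$, the class $D\otimes E_{2,\eta}=\beta_{rbc,\eta}+(u_{\eta},w_{\eta}/\tilde{w}')$ is unramified and residually trivial, hence split. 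Compatibility $E_{2,\eta}\otimes F_{P,\eta}\simeq E_{2,P}\otimes F_{P,\eta}$ follows from the mod-$\ell$th-power congruence, and for $j=1$ both sides are the lift of residues at $\widehat{A_{\eta}}$.

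To split $a$, I would write $a'_{i,\eta}=z_{i,\eta}\pi_{\eta}^{m_i}$ with $\sum m_i=0$ and $\prod z_{i,\eta}=1$ (Table \ref{TableaandYatcodimonepoints}), then set $\tilde{a}_{2,i,\eta}:=(\tilde{w}'\pi_{\eta})^{m_i}$ and $\tilde{a}_{1,i,\eta}:=z_{i,\eta}(\tilde{w}')^{-m_i}\in\widehat{A_{\eta}}^*$. Since $\tilde{w}'\pi_{\eta}=\N_{E_{2,\eta}/F_{\eta}}\brac{\sqrt[\ell]{\tilde{w}'\pi_{\eta}}}$ ($\ell$ odd), $\tilde{a}_{2,i,\eta}$ is a norm from $E_{2,\eta}$ and hence a reduced norm of $D_{\eta}$; combined with $a'_{i,\eta}$ being a reduced norm this gives $(\tilde{a}_{1,i,\eta})\cdot[D_{\eta}]=0$. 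Using $D_{\eta}=(E_{1,\eta}/F_{\eta},\sigma,w_{\eta}\pi_{\eta})$ with $E_{1,\eta}$ the unramified maximal subfield, computing the residue along $\pi_{\eta}$ shows that $\overline{\tilde{a}_{1,i,\eta}}$ is a norm from $\overline{E_{1,\eta}}/k_{\eta}$, which Hensel-lifts to a norm from $E_{1,\eta}$. Both product-one conditions are immediate. For branch compatibility, the local shape $\tilde{a}_{2,i,P}=(w_P\pi_P)^{m_i}x_{i,P}^{\ell}$ from Proposition \ref{otherproperties-blue}(5a) (with $m_{i,P}=m_i$ forced by the $\pi_{\eta}$-valuation) together with $\tilde{w}'\equiv w_P$ mod $\ell$-th powers in $F_{P,\eta}$ yields $\mu_{j,i,P,\eta}\in F_{P,\eta}^{*\ell}$; the identity $\prod_i\mu_{j,i,P,\eta}=1$ then drops out from comparing the two product-one conditions on each side.

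The main obstacle will be Step 1: verifying that the hypotheses of Lemma \ref{lemmainvariantalgebras} genuinely hold at all branches simultaneously. The $\sigma$-invariance is automatic because $b=\overline{w_{\eta}}\tilde{w}$ lies in the base field $k_{\eta}$, but the local splitting condition $(u,b)=(u,f_v)$ at each place above a branch must be extracted from the Brauer-class identity of Proposition \ref{otherproperties-blue}(5b), which in turn depends on the geometric constraints Proposition \ref{propgraphcolour} places on the neighbours of a blue $\eta$---essentially, this is where the coloring pays off. Once the approximation is secured, the remainder is formal bookkeeping with residues and Hensel's lemma.
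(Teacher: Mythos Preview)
Your overall strategy is the paper's: build $E_{2,\eta}=F_{\eta}\brac{\sqrt[\ell]{\tilde w'\pi_{\eta}}}$ by choosing a unit $\tilde w'$ that approximates the branch data $w_P$ and kills $D_{\eta}$, then set $\tilde a_{2,i,\eta}=(\tilde w'\pi_{\eta})^{m_i}$. The splitting of $a$, the norm checks via residues, and the branch bookkeeping all go through as you describe.

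The gap is your appeal to Lemma~\ref{lemmainvariantalgebras}. You feed it the cyclic extension $\overline{E_{1,\eta}}=k_{\eta}\brac{\sqrt[\ell]{\overline{u_{\eta}}}}$ together with $u=\overline{u_{\eta}}$; but then $u$ is an $\ell$-th power in $\overline{E_{1,\eta}}$, so the lemma's output $(u,b)=(u,f)$ in $\HH^2\bigl(\overline{E_{1,\eta}},\mu_{\ell}\bigr)$ reads $0=0$ and imposes no constraint on $f$ beyond the approximation. The lemma works over the extension $E$, not over the base $k_{\eta}$, and you need the identity $(\overline{u_{\eta}},\,\overline{w_{\eta}}\tilde w/\overline{\tilde w'})=0$ in $\Br(k_{\eta})$ --- a global condition that matching at the finitely many $P\in S_0\cap\overline{\eta}$ does not force.

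The paper bypasses this by working inside the norm group itself. With $D_{\eta}\simeq\M_{\ell}(u_{\eta},w_{\eta}\pi_{\eta})$ and $X_{\eta}=k_{\eta}\brac{\sqrt[\ell]{\overline{u_{\eta}}}}$, Proposition~\ref{otherproperties-blue}(5b) gives that each $\overline{w_Pw_{\eta}^{-1}}$ is a local norm from $X_{\eta}\otimes k_{P,\eta}$; choose preimages $z_{P,\eta}$, weakly approximate by $z\in X_{\eta}$, and put $\theta=\N_{X_{\eta}/k_{\eta}}(z)$. Then $\theta$ is a global norm (so $(\overline{u_{\eta}},\theta)=0$ in $\Br(k_{\eta})$ automatically) and $\overline{w_{\eta}}\theta\equiv\overline{w_P}$ modulo $\ell$-th powers at each branch. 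Lifting and setting $\tilde w'=w_{\eta}\theta_{\eta}$ completes the construction. This replaces your use of Lemma~\ref{lemmainvariantalgebras} and is the step you were missing.
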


\begin{proof} Since $\eta$ is coloured blue, it is a C curve of Type 1b with $Y_{\eta}$ of Type SPLIT. Write $D \simeq \M_{\ell}\brac{u_{\eta}, w_{\eta}\pi_{\eta}}$ where $w_{\eta}, u_{\eta}\in \widehat{A_{\eta}}^*$ and $a = \brac{a'_{i,\eta}}_i$  where $a'_{i,\eta} = x'_i\pi_{\eta}^{m_i}$ with $x'_i\in \widehat{A_{\eta}}^*$. Because $\N(a)=1$, we have $\sum m_i=0$ and $\prod x'_i = 1$.

Let $P\in \overline{\eta}\cap S_0$. By Proposition \ref{otherproperties-blue},  $E_{2,P} = \frac{F_P[t]}{(t^{\ell}-w_P\pi_P)}$ and $a_{2,P} = \brac{\tilde{a}_{2,i,P}}$ where $\tilde{a}_{2,i,P} = \brac{w_P\pi_P}^{m_{i,P}}{x_{i,P}^{\ell}}$ for some $w_P, x_{i,P}\in \widehat{A_P}$ which are units along $\eta$. Further $E_{1,P}$ matches the lift of residues along $F_{P,\eta}$ and $a_{1,P} = \brac{\tilde{a}_{1,i,P}}$ where $\tilde{a}_{1,i,P}$ are all units along $\eta$. Thus, since $a$ is arranged to be in good shape, we have $m_i = m_{i,P}$.

Let $\tilde{X}_{\eta} = F_{\eta}\brac{\sqrt[\ell]{u_{\eta}}}$ and let $X_{\eta} = k_{\eta}\brac{\sqrt[\ell]{\overline{u_{\eta}}}}$. Our goal is to find a $\theta_{\eta}\in \widehat{A_{\eta}}^*$ which is a norm from $\tilde{X}_{\eta}$ so that $\overline{w_{\eta}\theta_{\eta}}$ is close to $\overline{w_P}$ in $k_{P,\eta}$ for each $P\in \overline{\eta}\cap S_0$. 

If such a $\theta_{\eta}$ exists, then we set 
\begin{align*}
E_{2,\eta} &= F_{\eta}\brac{\sqrt[\ell]{w_{\eta}\theta_{\eta}\pi_{\eta}}}, \\
E_{1,\eta} & = \tilde{X}_{\eta} = F_{\eta}\brac{\sqrt[\ell]{u_{\eta}}},\\
\tilde{a}_{2,i,\eta} & = \brac{w_{\eta}\theta_{\eta}\pi_{\eta}}^{m_i} \forall \  i\leq \ell-1,\\
\tilde{a}_{2,\ell,\eta} & = \brac{\tilde{a}_{2,1,\eta}\ldots \tilde{a}_{2,\ell-1,\eta}}^{-1},\\
a_{1,\eta} & = aa_{2,\eta}^{-1}.
\end{align*}

Thus clearly Properties 1, 4, 5, 6 and 7 hold. Since $\theta_{\eta}$ is assumed to be a norm from $\tilde{X}_{\eta}$, we have $\brac{u_{\eta}, \theta_{\eta}} = 0 \in \Br\brac{F_{\eta}}$. Hence $D = \brac{u_{\eta}, w_{\eta}\theta_{\eta}\pi_{\eta}} \in \Br\brac{F_{\eta}}$, which therefore implies Property 2 holds.

Let us check that Property 3 holds. Clearly $\tilde{a}_{2,i,\eta} = \brac{w_{\eta}\theta_{\eta}\pi_{\eta}}^{m_i}$ is a norm from $E_{2,\eta}$ for each $i\leq \ell-1$. Since $1$ is a norm always, so is $\tilde{a}_{2,\ell,\eta}$. It is left to show that $\tilde{a}_{1,i,\eta} = x'_i \brac{w_{\eta}\theta_{\eta}}^{-m_i}$ is a norm from $\tilde{X}_{\eta}$ for each $i\leq \ell-1$ (which will automatically imply $\tilde{a}_{1,i,\eta}$ is a norm from $\tilde{X}_{\eta}$ also as $\N(a_{1,\eta})=1$).

Since each $a'_{i,\eta}$ is a reduced norm of $D$, we have $\brac{u_{\eta}, w_{\eta}\theta_{\eta}\pi_{\eta}} {\brac{a'_{i,\eta}}} = 0$. This implies $\brac{u_{\eta}, w_{\eta}\theta_{\eta}\pi_{\eta}} \brac{{\brac{w_{\eta}\theta_{\eta}\pi_{\eta}}^{m_i}}{x'_i \brac{w_{\eta}\theta_{\eta}}^{-m_i}}}= 0$ and hence $\brac{u_{\eta}, w_{\eta}\theta_{\eta}\pi_{\eta}} \brac{{x'_i \brac{w_{\eta}\theta_{\eta}}^{-m_i}}}= 0$. Taking residues, we see $\overline{\brac{u_{\eta}, x'_i \brac{w_{\eta}\theta_{\eta}}^{-m_i} }} = 0$ and thus each $\tilde{a}_{1,i,\eta}$ is a norm from $\tilde{X}_{\eta}$.

Now let us find $\theta_{\eta}$. Recall that $X_{\eta}$ is the residue of $D_{\eta}$. For each $P\in \overline{\eta}\cap S_0$, by Proposition \ref{otherproperties-blue} 5(b), we know that $\overline{w_Pw_{\eta}^{-1}}$ is a norm from $X_{\eta}\otimes k_{P,\eta}$. Thus for each $P\in \overline{\eta}\cap S_0$, let $z_{P,\eta}\in X_{\eta}\otimes k_{P,\eta}$ such that $\N\brac{z_{P,\eta}} = \overline{w_Pw_{\eta}^{-1}}$. By weak approximation, find $z\in X_{\eta}$ which is close to each $z_{P,\eta}$. Set $\theta = \N_{X_{\eta}/k_{\eta}}\brac{z} \in k_{\eta}$ and let $\theta_{\eta}$ denote its lift in $F_{\eta}$. This $\theta_{\eta}$ satisfies the required properties. \end{proof}

\begin{proposition}[Green(1)]
\label{propositionE-choosingliftofresidues}
Let $\eta\in N_0$ be of Type 1b/2 with $Y_{\eta}$ of Type NONRES. Set $E_{1,\eta}$ and $E_{2,\eta}$ to be the lift of residues at $\widehat{A_{\eta}}$. Further, set $a_{1,\eta} = a$ and $a_{2,\eta}=1$. Then for $j=1,2$, we have 
\begin{enumerate}
\item 
$a_{1,\eta}a_{2,\eta} = a$ 

\item
$D\otimes {E}_{j,\eta}$ has index at most $\ell$. 

\item
$D\otimes Y\otimes {E}_{j,\eta}$ is split.
 
\item
$a_{j,\eta}$ is a norm from $E_{j,\eta}\otimes Y_{\eta}/Y_{\eta}$. 

\item
$\N_{Y_{\eta}/F_{\eta}}\brac{a_{j,\eta}}=1$.
\item
$E_{j,\eta}\otimes F_{P,\eta} \simeq E_{j,P} \otimes F_{P,\eta}$ for each point $P\in S_0 \cap \overline{\eta}$.
\item
$a_{j,P} = a_{j, \eta} \in Y\otimes F_{P,\eta}$ for each point $P\in S_0 \cap \overline{\eta}$.
\end{enumerate}
\end{proposition}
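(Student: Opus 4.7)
The plan is to set $E_{1,\eta} = E_{2,\eta} = F_\eta(\sqrt[\ell]{u_\eta})$ to be the lift of residues at $\widehat{A_\eta}$, coming from the decomposition $D = \beta_{rbc,\eta} + (u_\eta,\pi_\eta)$ in $\Br(F_\eta)$ furnished by Lemma \ref{lemmaresdiualbrauerclass}. With $a_{1,\eta} = a$ and $a_{2,\eta} = 1$, Properties (1) and (5) are immediate from $\N_{Y_\eta/F_\eta}(a) = 1$, while Properties (6) and (7) are precisely the conclusions of Proposition \ref{otherproperties-green-NONRES}, which guarantees that at each $P\in S_0\cap \overline{\eta}$ the extension $E_{j,P}\otimes F_{P,\eta}$ matches the lift of residues (hence $E_{j,\eta}\otimes F_{P,\eta}$) and that $(a_{1,P},a_{2,P}) = (a,1)$.

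Property (2) is purely an index computation: the symbol $(u_\eta,\pi_\eta)$ is killed by $E_{j,\eta}$, so $D\otimes E_{j,\eta} = \beta_{rbc,\eta}\otimes E_{j,\eta}$, whose index is $1$ in Type $1b$ and $\ell$ in Type $2$ by Lemma \ref{lemmaindexformula}.

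The substantive part is Property (3). Type $1b$ is free, since $D\otimes E_{j,\eta}$ is already split. For Type $2$, the key intermediate claim is $\ind(D\otimes Y_\eta) = \ell$. On one hand, $Y$ sits in the maximal subfield $M$ of $D$ with $[M:Y] = \ell$, so $M$ splits $D\otimes Y$ and $\ind(D\otimes Y_\eta) \leq \ind(D\otimes Y) \leq \ell$. On the other hand, NONRES combined with Kummer theory gives $\overline{u_\eta}\notin \overline{Y_\eta}^{*\ell}$, which keeps $(u_\eta,\pi_\eta)\otimes Y_\eta$ ramified and hence nontrivial, forcing the index to be at least $\ell$. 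Applying Lemma \ref{lemmaindexformula} over $Y_\eta$ to the decomposition $D\otimes Y_\eta = \beta_{rbc,\eta}\otimes Y_\eta + (u_\eta,\pi_\eta)\otimes Y_\eta$ and using $[Y_\eta(\sqrt[\ell]{u_\eta}):Y_\eta] = \ell$ (by NONRES again) then forces $\ind(\beta_{rbc,\eta}\otimes Y_\eta E_{j,\eta}) = 1$. Since $E_{j,\eta}$ kills $(u_\eta,\pi_\eta)$, this gives $D\otimes Y\otimes E_{j,\eta} = \beta_{rbc,\eta}\otimes Y_\eta E_{j,\eta} = 0$.

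For Property (4), $a_{2,\eta} = 1$ is trivially a norm. For $a = \N_{M/Y}(z) = \Nrd_{C_D(Y)}(z)$, the fact that $a$ is a reduced norm from the index-$\ell$ algebra $C_D(Y) \sim D\otimes Y$ translates via Merkurjev--Suslin into $[D]\cdot(a) = 0$ in $H^3(Y,\mu_\ell^{\otimes 2})$. Restricting to $Y_\eta$ and taking the residue along $\pi_\eta$, the unramified summand $\beta_{rbc,\eta}\cdot(a)$ contributes $0$, while $(u_\eta,\pi_\eta)\cdot(a)$ contributes $(\overline{u_\eta},\overline{a})\in \Br(\overline{Y_\eta})$, which must therefore vanish. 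Hence $\overline{a}$ is a norm from the unramified extension $\overline{Y_\eta}(\sqrt[\ell]{\overline{u_\eta}}) = \overline{E_{1,\eta}\otimes Y_\eta}$. Since $a\in \widehat{B_\eta}^*$ by Proposition \ref{propaisaunitifYnotsplit} and $E_{1,\eta}\otimes Y_\eta/Y_\eta$ is unramified, Hensel's lemma lifts this to $a\in \N_{E_{1,\eta}\otimes Y_\eta/Y_\eta}$. The main obstacle is Property (3) in Type $2$: the splitting of $\beta_{rbc,\eta}$ over the compositum depends crucially on both the global input $\ind(D\otimes Y)\leq \ell$ and on NONRES ensuring $E_{j,\eta}Y_\eta$ is a genuine degree $\ell^2$ field extension of $F_\eta$.
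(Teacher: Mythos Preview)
Your proof is correct and follows essentially the same approach as the paper. The only notable variation is in Property~(4): the paper first invokes Lemma~\ref{lemmareducednormsunramified} to kill the term $(a)\cup[\beta_{rbc,\eta}]$ outright in $\HH^3(Y_\eta,\mu_\ell)$ (using that $a$ is a unit and $\beta_{rbc,\eta}\otimes Y_\eta$ is unramified over a residue field of cohomological dimension~$\leq 2$), whereas you bypass this lemma and simply observe that the cup product of two unramified classes has trivial residue. Your route is marginally more direct; both yield the same conclusion $(\overline{u_\eta},\overline{a})=0$ in $\Br(\overline{Y_\eta})$. One small expository point: when you assert that the residue of $\beta_{rbc,\eta}\cdot(a)$ vanishes, you are implicitly using $a\in\widehat{B_\eta}^*$ (Proposition~\ref{propaisaunitifYnotsplit}), which you only cite in the following sentence --- it would be cleaner to invoke it before taking the residue.
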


\begin{proof}
By hypothesis, $\eta$ is coloured green. Recall that we have $D = {\beta}_{rbc,\eta} + \left(u_{\eta}, \pi_{\eta} \right)\ \mrm{in} \ \Br\left(F_{\eta}\right)$ where ${\beta}_{rbc,\eta}$ is unramified with index at most $\ell$. By construction, $E_{j,\eta}\simeq F_{\eta}\brac{\sqrt[\ell]{u_{\eta}}}$. Properties 1 \& 5 are immediate while Properties 6 \& 7 follow from Proposition \ref{otherproperties-green-NONRES}.  Note that $D \otimes E_{j,\eta} = {\beta}_{rbc,\eta}\otimes E_{j,\eta} \in \Br\brac{F_{\eta}}$ and hence Property 2 holds.

Since $D$ is ramified at $\eta$, we have $\sqbrac{E_{j,\eta}:F_{\eta}}=\ell$. As $Y_{\eta}$ is NONRES, we have {$E_{j,\eta}\otimes Y_{\eta}/Y_{\eta}$ is a field extension of degree $\ell$}. Then by Lemma \ref{lemmaindexformula}, the index of $\left(D\otimes Y_{\eta}\right)$ equals $\ind\left({\beta}_{rbc,\eta}\otimes_{F_{\eta}} Y_{\eta}\otimes_{F_{\eta}} E_{j,\eta} \right)\left[E_{j,\eta}\otimes Y_{\eta} : Y_{\eta}\right]$ which is $\ind\left({\beta}_{rbc,\eta}\otimes_{F_{\eta}} Y_{\eta}\otimes_{F_{\eta}} E_{j,\eta} \right)\times \ell$. Since $\ind\left(D\otimes Y_{\eta}\right) \leq \ell$, we see that $E_{j,\eta}\otimes_{F_{\eta}} Y_{\eta}$ splits ${\beta}_{rbc,\eta}$ and hence also $D$, implying Property 3.

By hypothesis, $a\in \Nrd\left(D\otimes Y_{\eta}\right)$. That is, $\left(a\right)\left[{\beta}_{rbc,\eta}\right] + \left(a\right)\left(u_{\eta},\pi_{\eta}\right) = 0 \ \mathrm{in} \ \HH^3\left(Y_{\eta}, \mu_{\ell}\right)$. By Proposition \ref{propaisaunitifYnotsplit}, $a$ is a unit at $\eta$. By Lemma \ref{lemmareducednormsunramified}, $a$ is a reduced norm of ${\beta}_{rbc,\eta}\otimes Y_{\eta}$ and therefore we have that $\left(a\right)\left(u_{\eta},\pi_{\eta}\right) = 0 \ \mathrm{in} \ \HH^3\left(Y_{\eta}, \mu_{\ell}\right)$. Thus by taking residues, we see that $\overline{\left(u_{\eta}, a\right)}=0 \ \mathrm{in} \ \HH^2\left(\overline{Y_{\eta}}, \mu_{\ell}\right)$ which would imply that $a$ is a norm from $E_{j,\eta}\otimes_{F_{\eta}}Y_{\eta}/Y_{\eta}$. Hence Property 4 holds.\end{proof}

\begin{proposition}[Green(2)-RAM]
\label{propositionE-1bRAM}
Let $\eta\in N_0$ be of Type 1b or 2 and let $Y_{\eta}$ be of Type RAM. Set $a_{1,\eta}=a$ and $a_{2,\eta}=1$. Then for $j=1,2$, there exist $E_{j,\eta}/F_{\eta}$, unramified cyclic extensions of degree $\ell$ such that 
\begin{enumerate}
\item 
$a_{1,\eta}a_{2,\eta}=a$
\item
$E_{j, \eta}$ splits the residual Brauer class ${\beta}_{rbc,\eta}$. 
\item
$D\otimes {E}_{j,\eta}$ has index at most $\ell$ and $D\otimes Y\otimes {E}_{j,\eta}$ is split.
\item
$a_{j,\eta}$ is a norm from $E_{j,\eta}\otimes Y_{\eta}/Y_{\eta}$. 
\item
$\N_{Y_{\eta}/F_{\eta}}\brac{a_{j,\eta}}=1$.
\item
$E_{j,\eta}\otimes F_{P,\eta} \simeq E_{j,P} \otimes F_{P,\eta}$ for each point $P\in S_0 \cap \overline{\eta}$.
\item
 $a_{j,\eta} = a_{j,P} \in Y\otimes F_{P,\eta}$ for each point $P\in S_0 \cap \overline{\eta}$.
\item
The residue of $D_{\eta}$ is a norm from $\overline{E_{j,\eta}}/k_{\eta}$.
\end{enumerate}
\end{proposition}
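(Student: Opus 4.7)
The plan is to build each $E_{j,\eta}$ as an unramified cyclic degree $\ell$ extension of $F_\eta$ by first prescribing its residue extension $\overline{E_{j,\eta}}/k_\eta$ via global approximation on $k_\eta$, and then lifting by Hensel. Since $Y_\eta$ is of Type RAM, the classification of shapes of $a$ at codimension one points (cf. the table following Lemma~\ref{lemmaalong1a}) together with Lemma~\ref{lemmanormoneramified-dim1}(c) gives $a = w'_\eta{}^{\ell}$ for some $w'_\eta \in \widehat{B_\eta}^*$; so with $a_{1,\eta}=a$ and $a_{2,\eta}=1$, Properties 1, 4, 5 will be immediate. Property 7 I would verify from the tables in Proposition~\ref{propositionEatpoints}: every row compatible with $Y_\eta = \mathrm{RAM}$ in Table~\ref{TableEatC21hotpoint} has $a_{1,P}=a$, $a_{2,P}=1$, and Proposition~\ref{otherproperties-green-notnonres}(3) covers the non-hot closed points.

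For the construction, note that $k_\eta$ is a global field (the function field of an irreducible curve over the finite residue field of $K$), contains a primitive $\ell^{\mathrm{th}}$ root of unity, and satisfies $\ell \neq \charac(k_\eta)$. At each $P \in \overline{\eta}\cap S_0$, Proposition~\ref{prop-points-normsfromGaloisclosures-RAM} provides $w_{j,P}\in k_{P,\eta}$ with $\overline{E_{j,P}\otimes F_{P,\eta}} = k_{P,\eta}(\sqrt[\ell]{w_{j,P}})$ and $(w_{j,P},\overline{u_\eta})=0 \in \Br(k_{P,\eta})$, while Proposition~\ref{otherproperties-green-notnonres}(1)-(2) ensures this residue extension splits $\overline{\beta_{rbc,\eta}}$ over $k_{P,\eta}$. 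I would then apply Lemma~\ref{lemmauglyinitial} to $k_\eta$ with $D'=\overline{\beta_{rbc,\eta}}$, $u'=\overline{u_\eta}$, and $T$ the set of places of $k_\eta$ corresponding to $\overline{\eta}\cap S_0$, enlarged to include the finitely many remaining places where $\overline{\beta_{rbc,\eta}}$ is non-split; at these auxiliary places I supply a cyclic degree $\ell$ extension splitting $\overline{\beta_{rbc,\eta}}$ and admitting $\overline{u_\eta}$ as a norm, available since the local $\overline{\beta_{rbc,\eta}}$ is cyclic of index dividing $\ell$. The lemma outputs $w_j\in k_\eta$ defining a cyclic degree $\ell$ extension $\overline{E_{j,\eta}}=k_\eta(\sqrt[\ell]{w_j})$ that matches $w_{j,P}$ up to $\ell^{\mathrm{th}}$ powers at each $P$, splits $\overline{\beta_{rbc,\eta}}$, and has $\overline{u_\eta}$ as a norm, which gives Property 8.

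To finish, lift $w_j$ to $\tilde w_j \in \widehat{A_\eta}^*$ via Hensel and set $E_{j,\eta}=F_\eta(\sqrt[\ell]{\tilde{w}_j})$, the unique unramified cyclic degree $\ell$ extension of $F_\eta$ with residue field $\overline{E_{j,\eta}}$. Property 6 then follows because both $E_{j,P}\otimes F_{P,\eta}$ and $E_{j,\eta}\otimes F_{P,\eta}$ are unramified of degree $\ell$ and are determined by their matching residue extensions. Property 2 holds since $\beta_{rbc,\eta}$ is unramified on $\widehat{A_\eta}$ and its residue is killed by $\overline{E_{j,\eta}}$. For Property 3, $D\otimes E_{j,\eta} = (u_\eta, w_\eta\pi_\eta)\otimes E_{j,\eta}$ has index at most $\ell$, while $D\otimes Y_\eta \otimes E_{j,\eta} = \beta_{rbc,\eta}\otimes Y_\eta\otimes E_{j,\eta} = 0$ because $Y_\eta$ kills the ramified symbol ($w_\eta\pi_\eta$ being an $\ell^{\mathrm{th}}$ power in $Y_\eta$ by Type RAM).

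The main obstacle I anticipate is arranging compatible local data at the auxiliary places in $T$ for Lemma~\ref{lemmauglyinitial}: at each such place one needs a local cyclic extension that simultaneously splits $\overline{\beta_{rbc,\eta}}$ and admits $\overline{u_\eta}$ as a norm. This should reduce to a finite case analysis using the local Brauer group structure at each place of $k_\eta$, but it is the one technical step of the argument that is not a formal consequence of the earlier propositions.
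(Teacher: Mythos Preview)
Your proposal is correct and follows essentially the same route as the paper: construct $\overline{E_{j,\eta}}/k_\eta$ by applying Lemma~\ref{lemmauglyinitial} with $D'=\overline{\beta_{rbc,\eta}}$ and $u'=\overline{u_\eta}$, using Proposition~\ref{prop-points-normsfromGaloisclosures-RAM} at $P\in\overline{\eta}\cap S_0$, then lift. The ``obstacle'' you flag at the auxiliary places $Q\notin S_0$ where $\overline{\beta_{rbc,\eta}}$ is nonzero dissolves: at such $Q$ the divisor $\mathcal{H}_{\mathcal{X}}$ meets $\overline{\eta}$ only in $\overline{\eta}$ itself, so $D=D_{00}+(u_Q,\pi_Q)$ with $u_Q\in A_Q^*$, whence $u'$ is a unit in $\mathcal{O}_{k_{Q,\eta}}$; taking $E'_{j,Q}$ to be the unique unramified degree~$\ell$ extension of $k_{Q,\eta}$ then splits $D'$ (local class field theory) and has $(w_{j,Q},u')=0$ since both are units, so $u'$ is a norm. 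For Property~7 you can avoid the direct table check at hot points by invoking Proposition~\ref{otherproperties-green-notnonres}(4): when $Y_\eta$ is RAM, $Y_P$ is never split at any $P\in\overline{\eta}\cap S_0$.
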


\begin{proof} By hypothesis, $\eta$ is coloured green. Properties 1 \& 5 are immediate. By Lemma \ref{lemmanormoneramified-dim1}, $a = a_{1,\eta}\in Y_{\eta}^{*\ell}$. Since $a_{2,\eta}=1$, Property 4 holds for whatever degree $\ell$ extensions $E_{j,\eta}$ we choose. We first construct $\overline{E_{j,\eta}}/k_{\eta}$ and then set $E_{j,\eta}/k_{\eta}$ to be the unramified lift of $\overline{E_{j,\eta}}/k_{\eta}$. We would like to apply Lemma \ref{lemmauglyinitial} to construct $\overline{E_{j,\eta}}$. 

Recall that we have $D = \beta_{rbc,\eta} + (u_{\eta} , w_{\eta}\pi_{\eta})$ in $\Br(F_{\eta})$ where $Y_{\eta} \simeq F_{\eta}(\sqrt[\ell]{w_{\eta}\pi_{\eta}})$ for $w_{\eta}\in \widehat{A_{\eta}}^*$.  Let $D' = \overline{\beta_{rbc,\eta}}$, the residual Brauer class considered over the residue field $k_{\eta}$. Thus $D'$ is a central simple algebra of exponent and index at most $\ell$ over global field $k_{\eta}$. Let $u' := \overline{u_{\eta}}\in k_{\eta}$. Let $\mathcal{P}'_{\eta} :=  \overline{\eta} \cap S_0$. Let $\mathcal{Q}'_{\eta}$ denote the set of closed points $Q\in \overline{\eta}$ not in $\mathcal{P}'_{\eta}$ such that $D'\otimes k_{Q,\eta} \neq 0$. 

For $P\in \mathcal{P}'_{\eta}$, set $E'_{j,P} := \overline{E_{j,P}\otimes F_{P,\eta}} / k_{P,\eta}$ and let $w_{1,P}, w_{2,P}\in k_{P,\eta}^*$ be the ones obtained from Proposition \ref{prop-points-normsfromGaloisclosures-RAM}. So $\overline{E_{j,P}\otimes F_{P,\eta}} = k_{P,\eta}[t]/(t^{\ell}-w_{j,P})$ and $\brac{u',w_{j,P}}=0$. For $Q\in \mathcal{Q}'_{\eta}$, set $E'_{1,Q}$ and $E'_{2,Q}$ to be the unique unramified field extension of $k_{Q,\eta}$ and set $w_{1,Q} = w_{2,Q}$ be any unit in $\mathcal{O}_{k_{Q,\eta}}$ which is not an $\ell^{\mrm{th}}$ power.

Let us verify that the hypotheses of Lemma \ref{lemmauglyinitial} hold now. Let $P\in \mathcal{P}'_{\eta}\cup \mathcal{Q}'_{\eta}$. We first check that $\brac{w_{j,P}, u'}=0\in \Br\brac{k_{P,\eta}}$. For $P\in \mathcal{P}'_{\eta}$, this is assured by Proposition \ref{prop-points-normsfromGaloisclosures-RAM}. For $P\in \mathcal{Q}'_{\eta}$, $u'$ is a unit in $\mathcal{O}_{k_{P,\eta}}$. Since $w_{j,P}$ is a unit, by local class field theory, $\brac{w_{j,P},u'} = 0$.

Next we verify that $D'\otimes E'_{j,P}$ is trivial. For $P\in \mathcal{P}'_{\eta}$, this is assured by Proposition \ref{otherproperties-green-notnonres}. For $P\in \mathcal{Q}'_{\eta}$, since each $E'_{j,P}$ is a nonsplit unramified extension of degree $\ell$, local class field theory guarantees that it will split any index $\ell$ algebra over $k_{P, \eta}$.

Clearly for each $Q\not\in \brac{ \mathcal{P}'_{\eta}\cup \mathcal{Q}'_{\eta}}$, $D'\otimes k_{Q,\eta}$ is split.

Thus Lemma \ref{lemmauglyinitial} can be used to construct $\overline{E_{1,\eta}}$ and $\overline{E_{2,\eta}}$ over $k_{\eta}$. Setting $E_{1,\eta}$ and $E_{2,\eta}$ to be their respective unramified lifts over $F_{\eta}$, it is immediate that Properties 2, 6, and 8 are satisfied. Property 7 is guaranteed by again using Proposition \ref{otherproperties-green-notnonres}. 

To complete the proof of Property 3, note that as $E_{j,\eta}$ splits $\beta_{rbc,\eta}$ and $Y_{\eta} = F_{\eta}\brac{\sqrt[\ell]{w_{\eta}\pi_{\eta}}}$ and $D = \beta_{rbc,\eta} + \left(u_{\eta}, w_{\eta}\pi_{\eta}\right) \in \Br\left(F_{\eta}\right)$, it is immediate that $\ind\left(D\otimes_F E_{i,\eta}\right) \leq \ell$ and that $D\otimes Y_{\eta} \otimes E_{j,\eta}$ is split. \end{proof}

\begin{proposition}[Green(2)-RES]
\label{propositionE-1bRESY}
\label{propositionE-2RESY}
Let $\eta\in N_0$ be of Type 1b or 2 and let $Y_{\eta}$ be of Type RES. Then for $j=1,2$, there exist $E_{j,\eta}/F_{\eta}$, unramified cyclic extensions of degree $\ell$ and elements $a_{1,\eta}, a_{2,\eta}\in \mathcal{O}_{Y_{\eta}}$ such that 
\begin{enumerate}
\item 
$a_{1,\eta}a_{2,\eta}=a$.
\item
$E_{j, \eta}$ splits $\beta_{rbc,\eta}$. 
\item
$D\otimes {E}_{j,\eta}$ has index at most $\ell$ and $D\otimes Y\otimes {E}_{j,\eta}$ is split.
\item
$a_{j,\eta}$ is a norm from $E_{j,\eta}\otimes Y_{\eta}/Y_{\eta}$. 
\item
$\N_{Y_{\eta}/F_{\eta}}\brac{a_{j,\eta}}=1$.
\item
$E_{j,\eta}\otimes F_{P,\eta} \simeq E_{j,P} \otimes F_{P,\eta}$ for each point $P\in S_0 \cap \overline{\eta}$.
\item
 $a_{j,\eta}{\mu_{j,P,\eta}} = a_{j,P}$ in $Y\otimes F_{P,\eta}$ for each point $P\in S_0 \cap \overline{\eta}$ for some $\mu_{j,P,\eta}\in  \mathcal{O}_{Y\otimes F_{P,\eta}}$ such that $\mu_{j,P,\eta} \cong 1$ mod  $\brac{\pi_{\eta}}$ and $\N\brac{\mu_{j,P,\eta}}=1$. 
\item
The residue of $D\otimes F_{\eta}$ is a norm from $\overline{E_{j,\eta}}/k_{\eta}$.
\end{enumerate}
\end{proposition}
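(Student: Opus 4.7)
The plan is to work over the residue field $k_{\eta}$, which is a global field, construct cyclic degree $\ell$ extensions $\overline{E_{j,\eta}}/k_{\eta}$ by approximating prescribed local data, and then take $E_{j,\eta}/F_{\eta}$ to be their unique unramified cyclic lifts. In every configuration other than Row 8.4 of Table \ref{TableEatC21hotpoint}, the tables in Proposition \ref{propositionEatpoints} prescribe $a_{1,P}=a$ and $a_{2,P}=1$ uniformly across the points $P\in\overline{\eta}\cap S_{0}$, so I will set $a_{1,\eta}=a$ and $a_{2,\eta}=1$ globally; Properties 1 and 5, together with Property 4 for $j=2$, are then immediate, and Property 7 holds with $\mu_{j,P,\eta}=1$.

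To produce $\overline{E_{2,\eta}}$ I apply Lemma \ref{lemmauglyinitial} over $k_{\eta}$ with $u'=\overline{u_{\eta}}$ and local data $\overline{E_{2,P}\otimes F_{P,\eta}}=k_{P,\eta}[t]/(t^{\ell}-w'_{P})$ at each $P\in\overline{\eta}\cap S_{0}$, supplemented by the unique degree $\ell$ unramified extension at the remaining finite set of places where $D':=\overline{\beta_{rbc,\eta}}$ is nontrivial. The hypotheses of that lemma are exactly what Proposition \ref{prop-points-normsfromGaloisclosures-RES} (namely $(w'_{P},u')=0$, so $u'$ is a norm from each local $E_{v}$) and Proposition \ref{otherproperties-green-notnonres} (namely $D'\otimes\overline{E_{2,P}\otimes F_{P,\eta}}=0$) provide. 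To produce $\overline{E_{1,\eta}}$ I invoke the stronger Lemma \ref{lemmaugly} with $Y'=k_{\eta}(\sqrt[\ell]{u'})$ and $a'=\overline{a}$, reverting to Lemma \ref{lemmauglyinitial} if $\overline{a}\in {Y'}^{*\ell}$. The critical observation here is that the extension $\tL_{P}$ of Proposition \ref{prop-points-normsfromGaloisclosures-RES} appears as a direct factor of $L\otimes_{k_{\eta}}k_{P,\eta}$, where $L$ denotes the Galois closure of $Y'(\sqrt[\ell]{\overline{a}})/k_{\eta}$: when $Y'_{P}$ is nonsplit, Lemma \ref{lemmanormoneunramified} gives $\overline{a}\in {Y'_{P}}^{*\ell}$ so $L\otimes k_{P,\eta}\simeq\prod Y'_{P}=\prod\tL_{P}$, and when $Y'_{P}$ is split the analogous identification $L\otimes k_{P,\eta}\simeq \prod \tL_P$ holds componentwise via the different embeddings of $Y'$. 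Hence $w_{P}$ being a norm from $\tL_{P}$ forces it to be a norm from $L\otimes k_{P,\eta}$, which, together with $(w_{P},u')=0$ and $D'\otimes\overline{E_{1,P}\otimes F_{P,\eta}}=0$, fulfills all three hypotheses of Lemma \ref{lemmaugly}.

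Taking $E_{j,\eta}/F_{\eta}$ to be the unique unramified cyclic lifts of $\overline{E_{j,\eta}}$, Properties 2, 6 and 8 follow directly from the lemma outputs (in particular, Property 8 is exactly the statement that $u'$ is a norm from $\overline{E_{j,\eta}}$). Property 3 reduces to observing that once $E_{j,\eta}$ splits $\beta_{rbc,\eta}$, the algebra $D\otimes E_{j,\eta}$ equals the symbol $(u_{\eta},\pi_{\eta})\otimes E_{j,\eta}$ in $\Br(E_{j,\eta})$, which has index at most $\ell$ and vanishes after tensoring with $Y_{\eta}=F_{\eta}(\sqrt[\ell]{u_{\eta}})$. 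Property 4 for $j=1$ follows by Hensel's lemma once the residual norm equation $\overline{a}\in\N_{Y'\otimes\overline{E_{1,\eta}}/Y'}$ has been secured, using $a\in\widehat{B_{\eta}}^{*}$ from Proposition \ref{propaisaunitifYnotsplit} and the fact that $Y_{\eta}\otimes E_{1,\eta}/Y_{\eta}$ is unramified. I expect the main obstacle to be the exceptional Row 8.4 case, where at a $C_{12}^{\mrm{Hot}}$ point $P$ with $Y_{P}$ split and $\eta=\eta_{2}$ of Type 2 with $Y_{\eta}$ of Type RES, the tables prescribe $a_{1,P}\ne a$ and $a_{2,P}\ne 1$. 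Handling this will require refining the choice of $a_{j,\eta}$ by weak approximation in $Y$ itself, so that each $a_{j,P}a_{j,\eta}^{-1}$ becomes a norm-one unit congruent to $1$ modulo $\pi_{\eta}=\delta_{P}$ at the offending hot points, and then carrying these adjusted elements through the construction above while preserving compatibility with Lemma \ref{lemmaugly}.
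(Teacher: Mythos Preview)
Your plan matches the paper's: work over the global residue field $k_{\eta}$, assemble local data from Propositions \ref{otherproperties-green-notnonres} and \ref{prop-points-normsfromGaloisclosures-RES}, invoke the approximation lemmata to build $\overline{E_{j,\eta}}$, and lift unramified. The Row 8.4 exception, however, is not a loose end to be tied up afterward; it forces you to reorder the construction.

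The paper builds $a_{j,\eta}$ \emph{before} appealing to Lemma \ref{lemmaugly}. Over $Y'=\overline{Y_{\eta}}$ it writes each $\overline{a_{1,P}}=c_{P}^{-1}\psi(c_{P})$ by Hilbert 90, approximates the $c_{P}$ by a single $c\in Y'$, and sets $a'_{1}=c^{-1}\psi(c)$, $a'_{2}=\overline{a}\,{a'_{1}}^{-1}$; the lifts $a_{1,\eta},a_{2,\eta}$ then satisfy Properties 1, 5 and 7 on the nose. Only after this does one form $\tL=\mathrm{GalCl}\bigl(Y'(\sqrt[\ell]{a'_{1}})\bigr)$ and $\tL'=\mathrm{GalCl}\bigl(Y'(\sqrt[\ell]{a'_{2}})\bigr)$, so that $\tL\otimes k_{P,\eta}\simeq\prod\tL_{P}$ and $\tL'\otimes k_{P,\eta}\simeq\prod\tL'_{P}$ hold exactly, matching the $\tL_{P},\tL'_{P}$ of Section \ref{section-GaloisclosureRES}. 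In your ordering you feed $\overline{a}$ into Lemma \ref{lemmaugly}, and at a Row 8.4 point the local factor of your $L$ is built from $\overline{a'_{i,P}}=\overline{z_{i,P}\pi_{P}^{m_i}}$, not from $\overline{\pi_{P}^{m_i}}$; so $\tL_{P}$ need not be a direct factor of $L\otimes k_{P,\eta}$, and the norm hypothesis fails. Refining $a_{1,\eta}$ afterward does not repair this: Lemma \ref{lemmaugly} guarantees only that its \emph{input} $a'$ is a norm from $E\otimes Y'/Y'$, so Property 4 would be lost for the adjusted element. Likewise, once $a_{2,\eta}\neq 1$ you can no longer use Lemma \ref{lemmauglyinitial} for $E_{2,\eta}$; the paper applies Lemma \ref{lemmaugly} for both $j$, with $\tL'$ built from $a'_{2}$.

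There is a second, smaller gap: at the auxiliary places $\mathcal{Q}'_{\eta}$ (points $Q\notin S_{0}$ with $D'\otimes k_{Q,\eta}\neq 0$) you must still verify the norm hypothesis of Lemma \ref{lemmaugly}. The paper handles this by a residue comparison of $\beta_{rbc,\eta}$ against $D=D_{00}+(u_{P},\pi_{P})$ to show that $Y'_{Q}$ is an unramified nonsplit extension of $k_{Q,\eta}$; then $a'_{1},a'_{2}\in {Y'_{Q}}^{*\ell}$ by Lemma \ref{lemmanormoneunramified}, so $\tL_{Q}=\tL'_{Q}=Y'_{Q}$ and every unit of $k_{Q,\eta}$ is a norm.
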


\begin{proof} By hypothesis, $\eta$ is coloured green. Since $Y_{\eta}$ is of Type RES, by Lemma \ref{propaisaunitifYnotsplit} we have that $a\in \mathcal{O}_{Y,\eta}^*$. Recall that we have $D = \beta_{rbc,\eta} + (u_{\eta} , \pi_{\eta})$ in $\Br(F_{\eta})$.  Set $u' := \overline{u_{\eta}}\in k_{\eta}$, $Y' := \overline{Y_{\eta}} = k_{\eta}\brac{\sqrt[\ell]{\overline{u'}}}$ and $a'=\overline{a} \in Y'$ and let $\Gal\brac{Y'/k_{\eta}} = \langle \psi \rangle$.

Let $\mathcal{P}'_{\eta} :=  \overline{\eta} \cap S_0$. By Proposition \ref{otherproperties-green-notnonres}, $a_{j,P}$ is a unit along $\eta$. First let's construct $a'_1\in Y'$ approximating $\overline{a_{1,P}}\in \overline{Y\otimes F_P \otimes F_{P,\eta}}$ for each $P\in \mathcal{P}'_{\eta}$. Since $\N\brac{\overline{a_{1,P}}}=1$, by Hilbert 90 there exists $c_{P}\in Y'\otimes k_{P,\eta}$ such that $c_P^{-1}\psi\brac{c_P}=\overline{a_{1,P}}$. Using weak approximation, find $c\in Y'$ which is close to $c_P$ for each $P\in \mathcal{P}'_{\eta}$. Set $a'_1 = c^{-1}\psi(c)$ and set $a'_2 = a'{a'_1}^{-1}$. Let $a_{1,\eta}$ denote a lift of $a'_1$ and let $a_{2,\eta} = aa_{1,\eta}^{-1}$.

Therefore Properties 1, 5 \& 7 are immediate. We will first construct $\overline{E_{j,\eta}}/k_{\eta}$ and then set $E_{j,\eta}/k_{\eta}$ to be the unramified lift of $\overline{E_{j,\eta}}/k_{\eta}$. We appeal to Lemma \ref{lemmaugly} to construct $\overline{E_{j,\eta}}$.

Let $D' = \overline{\beta_{rbc,\eta}}$, the residual Brauer class considered over the residue field $k_{\eta}$. Thus $D'$ is a central simple algebra of exponent and index at most $\ell$ over global field $k_{\eta}$.  Let $\mathcal{Q}'_{\eta}$ denote the set of closed points $Q\in \overline{\eta}$ not in $\mathcal{P}'_{\eta}$ such that $D'\otimes k_{Q,\eta} \neq 0$. 

Let $j=1$ or $2$. For $P\in \mathcal{P}'_{\eta}$, set $E'_{j,P} := \overline{E_{j,P}\otimes F_{P,\eta}} / k_{P,\eta}$. For $P\in \mathcal{Q}'_{\eta}$, set $E'_{j,P}$ to be the unique unramified field extension of $k_{P,\eta}$. Let $\tL$ denote the Galois closure of $Y'(\sqrt[\ell]{a_1'})$ and let $\tL'$ denote the Galois closure of $Y'(\sqrt[\ell]{a_2'})$. 

Whenever $Y'\otimes k_{Q,\eta}$ is not split, since $a'_1$ and $a'_2$ have norm 1, they are also units and in fact $\ell^{\mrm{th}}$ powers in the complete discretely valued field $Y'\otimes k_{Q,\eta}$ for every $Q\in \overline{\eta}$ (Lemmata \ref{lemmanormoneramified-dim1} and \ref{lemmanormoneunramified}). Further, since $Y_{\eta}$ is RES, we note that $Y'$ is unramified except at points $P \in S_0\cap \overline{\eta}$ of Type $C_{11}^{Cold}$ or $C_{12}^{Cold}$. For each $P\in \mathcal{P}'_{\eta} $,  recall the extensions $\tL_P, \tL'_P$ defined in Section \ref{section-GaloisclosureRES}. Note that the extension $\tL\otimes_{k_{\eta}} k_{P,\eta}\simeq \prod_{i=1}^{g} \tL_P$ and extension $\tL'\otimes_{k_{\eta}} k_{P,\eta}\simeq \prod_{j=1}^{h} \tL'_P$ for some $g, h\geq 1$.

Proposition \ref{prop-points-normsfromGaloisclosures-RES} says that there exist $w_P, w'_P \in k_{P,\eta}$, $z_P\in \tL_P$ and $z'_P\in \tL'_P$ so that $E'_{1,P} = k_{P,\eta}[t]/\brac{t^{\ell}-w_P}$, $\N_{\tL_P/k_{P,\eta}}\brac{z_P} = w_P$,  $E'_{2,P} = k_{P,\eta}[t]/\brac{t^{\ell}-w'_P}$ and $\N_{\tL'_P/k_{P,\eta}}\brac{z'_P} = w'_P$. For each $P\in \mathcal{Q}'_{\eta}$, let $w_P = w'_P$ be any unit in $\mathcal{O}_{k_{P,\eta}}$ which is not an $\ell^{\mrm{th}}$ power. 

We now construct\footnote{If $a'_j$ is an $\ell^{\mrm{th}}$ power in $Y'$, Property 4 is automatically satisfied for $a_{j,\eta}$. A check of the relevant rows mentioned in Proposition \ref{otherproperties-green-notnonres} show $u'$ is a norm from $\overline{E_{j,P}}\otimes k_{P,\eta}$. We can then use Lemma \ref{lemmauglyinitial} to construct $\overline{E_{j,\eta}}$.} the extensions $\overline{E_{j,\eta}}$ using Lemma \ref{lemmaugly} by verifying that the hypotheses of the same hold. Let $P\in \mathcal{P}'_{\eta}\cup \mathcal{Q}'_{\eta}$.

We need to find $\tilde{z}_P\in \tL\otimes k_{P,\eta} =\prod \tL_P$ (respectively $\tilde{z}'_P\in \tL'\otimes k_{P,\eta} =\prod \tL'_P$) such that its norm to $k_{P,\eta}$ is $w_P$ (resp $w'_P$). For $P\in \mathcal{P}'_{\eta}$, {set $\tilde{z_P} = \brac{z_P, 1, 1, \ldots, 1}$ and $\tilde{z'_P} = \brac{z'_P, 1, 1, \ldots, 1}$} and use Proposition \ref{prop-points-normsfromGaloisclosures-RES} to conclude the proof in this case. For $P\in \mathcal{Q}'_{\eta}$, we claim that $Y'_P$ is a nonsplit unramified extension of $k_{P,\eta}$. This is because of the following:

Write $D = D_{00} + \brac{u_P, \pi_{P}}\in \Br(F)$ where $u_P\in \widehat{A_P}^*$ and $\pi_P$ is a prime corresponding to the curve $\overline{\eta}$ (\cite{S97}).  We also have $D = \beta_{rbc,\eta} + \brac{u_{\eta}, \pi_{\eta}}\in \Br\brac{F_{\eta}}$. Note that $\pi_P = \pi_{\eta}w_{\eta}\in F_{\eta}$ where $w_{\eta}\in \widehat{A_{\eta}}^*$. Comparing these two expressions in $\Br\brac{F_{P,\eta}}$, we see $\brac{u_P, w_{\eta}} +   \brac{u_P, \pi_{\eta}}  = \beta_{rbc,\eta} + \brac{u_{\eta}, \pi_{\eta}}$. Taking residues, we see that $\overline{u_P}\overline{u_{\eta}}^{-1} = 1$ up to $\ell^{\mrm{th}}$ powers in $k_{P,\eta}$. And hence $\beta_{rbc,\eta} = \brac{u_\eta, w_{\eta}} \in \Br\brac{F_{P,\eta}}$. Now we are looking at a place $P\not\in \mathcal{P}'_{\eta}$ such that this algebra is not trivial. In particular, this implies $\overline{u_{\eta}}$ is not an $\ell^{\mrm{th}}$ power. Therefore $Y'_P$ is not split.

As observed before, this further implies $a'_1$ and $a'_2$ are units (and in fact $\ell^{\mrm{th}}$ powers) in $\mathcal{O}_{Y'_P}$. Thus $\tL\otimes k_{P,\eta} = \prod \tL_P$ and $\tL'\otimes k_{P,\eta} = \prod \tL'_P $ where $\tL_P =\tL'_P = Y'_P$. Since $Y'_P$ is unramified nonsplit extension of $k_{P,\eta}$, every unit of $k_{P,\eta}$ is a norm from it and hence from $\tilde{L}_P$ and $\tilde{L}'_P$, which finishes the proof of this case.

We need to verify that $D'\otimes E'_{j,P}$ is trivial for all $P\in \overline{\eta}$. For $P\in \mathcal{P}'_{\eta}$, use Proposition \ref{otherproperties-green-notnonres} to conclude the proof in this case. For $P\in \mathcal{Q}'_{\eta}$, since each $E'_{j,P}$ is a nonsplit unramified extension of degree $\ell$, local class field theory guarantees that it will split any index $\ell$ algebra over $k_{P, \eta}$. Also clearly for each $Q\not\in \brac{ \mathcal{P}'_{\eta}\cup \mathcal{Q}'_{\eta}}$, $D'\otimes k_{Q,\eta}$ is split already. 

We need to verify that $\brac{w_P, a'_1}_y = \brac{w'_P, a'_2}_y = 0$ for every valuation $y\in \Omega_{Y'}$ lying over $P$. For $P\in \mathcal{P}'_{\eta}$, this is assured by Proposition \ref{propositionEatpoints} (4). For $P\in \mathcal{Q}'_{\eta}$, we have already noted that $Y'_P$ is unramified and nonsplit over $k_{P,\eta}$. By Lemma \ref{lemmanormoneunramified}, $a'_1$ and $a'_2$ are $\ell^{\mrm{th}}$ powers in $Y'\otimes k_{P,\eta}$. So $\brac{w_P, a'_1}= 0 = \brac{w'_P, a'_2}$.

Thus Lemma \ref{lemmaugly} can be used to construct $\overline{E_{j,\eta}}$ over $k_{\eta}$ for $j=1,2$. Setting $E_{j,\eta}$ to be their respective unramified lifts over $F_{\eta}$, it is immediate that Properties 2, 4, 6 and 8 are satisfied. To complete the proof of Property 3, note that as $E_{j,\eta}$ splits $\beta_{rbc,\eta}$, $Y_{\eta} = F_{\eta}\brac{\sqrt[\ell]{u_{\eta}}}$ and $D = \beta_{rbc,\eta} + \left(u_{\eta}, \pi_{\eta}\right) \in \Br\left(F_{\eta}\right)$, it is immediate that $\ind\left(D\otimes_F E_{j,\eta}\right) \leq \ell$ and that $D\otimes Y_{\eta} \otimes E_{j,\eta}$ is split. \end{proof}

\begin{proposition}[Yellow/Orange/Red/White]
\label{propositionE-1bSPLIT-Red}
\label{propositionE-1bSPLIT-YellowOrange}
Let $\eta\in N_0$ be coloured yellow, orange or white. Set $E_{2,\eta}$ to be the lift of residues at $\widehat{A_{\eta}}$. Then there exists a ramified cyclic extension $E_{1,\eta}/F_{\eta}$ of degree $\ell$ and elements $a_{1,\eta} = \brac{\tilde{a}_{1,i,\eta}}$ and  $a_{2,\eta} = \brac{\tilde{a}_{2,i,\eta}} \in \prod F_{\eta}$ such that for $j=1,2$, the following holds:
\begin{enumerate}
\item 
$a_{1,\eta}a_{2,\eta} = a \in Y_{\eta}$, i.e. $\tilde{a}_{1,i,\eta}\tilde{a}_{2,i,\eta}= a'_{i,\eta}$ for each $i$.
\item
$E_{j, \eta}$ splits $D$.

\item
$a_{j,\eta}$ is a norm from $E_{j,\eta}\otimes Y_{\eta}/Y_{\eta}$, i.e . $\tilde{a}_{j,i,\eta}$ is a norm from $E_{j,\eta}$ for each $i$.

\item
$\N_{Y_{\eta}/F_{\eta}}\brac{a_{j,\eta}}=1$, i.e. $\prod_i \tilde{a}_{j,i,\eta} = 1$.
\item
$E_{j,\eta}\otimes F_{P,\eta} \simeq E_{j,P} \otimes F_{P,\eta}$ for each point $P\in S_0 \cap \overline{\eta}$.
\item
 $\tilde{a}_{j,i,\eta}{\mu_{j,i,P,\eta}} = \tilde{a}_{j,i,P} \in F_{P,\eta}$ for all $i$ at each point $P\in S_0 \cap \overline{\eta}$ for some $\mu_{j,i,P,\eta}\in F_{P,\eta}^{*\ell}$ such that $\prod_i \mu_{j,i,P,\eta}=1$.
  \item
 $E_{2,\eta}/F_{\eta}$ is unramified and cyclic of degree $\ell$.
\end{enumerate}
\end{proposition}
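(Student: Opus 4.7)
My plan is to mirror the Blue-case argument (Proposition~\ref{propositionE-1bSPLIT-Blue}) with the roles of $E_1$ and $E_2$ interchanged, since now the ramified extension is $E_1$ rather than $E_2$. For $\eta$ coloured yellow, orange, red, or white, $\eta$ is Type~1b with $Y_\eta$ of Type SPLIT, so $a=(a'_{i,\eta})_i$ with $a'_{i,\eta} = x'_i \pi_\eta^{m_i}$, $x'_i \in \widehat{A_\eta}^*$, $\sum m_i = 0$, $\prod x'_i = 1$, and $D_\eta \simeq \M_\ell(u_\eta, w_\eta \pi_\eta)$ for units $u_\eta, w_\eta \in \widehat{A_\eta}^*$. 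The key input is Proposition~\ref{otherproperties-yellow-orange}, which at each $P \in S_0 \cap \overline{\eta}$ gives units $w_P, x_{i,P}$ along $\eta$ with $E_{1,P} \simeq F_P[t]/(t^\ell - w_P\pi_P)$, $a_{1,P} = ((w_P\pi_P)^{m_i} x_{i,P}^\ell)_i$, and the crucial vanishing $(w_P w_\eta^{-1}, u_\eta) = 0$ in $\Br(F_{P,\eta})$.

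The construction proceeds as follows. Let $\tilde X_\eta = F_\eta(\sqrt[\ell]{u_\eta})$ and let $X_\eta = k_\eta(\sqrt[\ell]{\overline{u_\eta}})$ be its residue. The vanishing above means that $\overline{w_P w_\eta^{-1}}$ is a norm from $X_\eta \otimes k_{P,\eta}$ at each $P$; using weak approximation in the global field $X_\eta$, produce a single $z \in X_\eta$ close to local preimages $z_{P,\eta}$, and take $\theta_\eta \in \widehat{A_\eta}^*$ to be a lift of $\N_{X_\eta/k_\eta}(z)$. Then $\theta_\eta$ is a norm from $\tilde X_\eta$ and $\overline{w_\eta \theta_\eta}$ approximates $\overline{w_P}$ at each $P$. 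Define $E_{2,\eta} = \tilde X_\eta$, $E_{1,\eta} = F_\eta(\sqrt[\ell]{w_\eta \theta_\eta \pi_\eta})$, $\tilde a_{1,i,\eta} = (w_\eta\theta_\eta\pi_\eta)^{m_i}$ for $i<\ell$, $\tilde a_{1,\ell,\eta}=\prod_{i<\ell}\tilde a_{1,i,\eta}^{-1}$, and $a_{2,\eta} = a\,a_{1,\eta}^{-1}$.

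With this setup, properties (1), (4), (5), (6), (7) are immediate. For (2), the identity $(u_\eta,\theta_\eta) = 0$ gives $D_\eta = (u_\eta, w_\eta\theta_\eta\pi_\eta)$ in $\Br(F_\eta)$, which is killed by both $E_{1,\eta}$ (second slot) and by $E_{2,\eta}$ (first slot). For (3), the $\tilde a_{1,i,\eta}$ are norms from $E_{1,\eta}$ by construction, and for $\tilde a_{2,i,\eta} = x'_i(w_\eta\theta_\eta)^{-m_i}$ one uses that $a$ is a reduced norm: the relation $(u_\eta, w_\eta\theta_\eta\pi_\eta)(a'_{i,\eta}) = 0$ in $\HH^3(F_\eta,\mu_\ell)$ collapses, after cancelling the piece absorbed by $\tilde a_{1,i,\eta}$, to $(u_\eta, x'_i(w_\eta\theta_\eta)^{-m_i}) = 0$; taking residues exhibits $\tilde a_{2,i,\eta}$ as a norm from $\tilde X_\eta = E_{2,\eta}$.

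The main obstacle is the simultaneous requirement that $\theta_\eta$ be a \emph{genuine} (global) norm from $\tilde X_\eta/F_\eta$, so that $E_{1,\eta}$ truly splits $D$ over $F_\eta$ rather than only locally, while also approximating the prescribed local data at every branch. The trick, as in the Blue case, is to perform the approximation inside the global field $X_\eta$ and only afterwards take the norm down to $k_\eta$; this is legitimate precisely because of the compatibility $(w_P w_\eta^{-1}, u_\eta) = 0$ supplied by Proposition~\ref{otherproperties-yellow-orange}(6b), which itself encodes all the delicate ramification bookkeeping carried out earlier at the marked points.
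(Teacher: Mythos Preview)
Your proposal is correct and follows exactly the approach the paper takes: the paper's proof simply states that the argument is the same as in the Blue case (Proposition~\ref{propositionE-1bSPLIT-Blue}) with the roles of $E_1$ and $E_2$ interchanged, appealing to Proposition~\ref{otherproperties-yellow-orange} in place of Proposition~\ref{otherproperties-blue} for the branch compatibility. You have spelled out precisely these details, including the weak-approximation construction of $\theta_\eta$ from the local norm data supplied by property~6(b) of Proposition~\ref{otherproperties-yellow-orange}.
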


\begin{proof}By hypothesis, $\eta$ is a Ch/H/C/Z curve of Type 1b with $Y_{\eta}$ of Type SPLIT. The proof is similar to the proof of Proposition \ref{propositionE-1bSPLIT-Blue} (we appeal to Proposition \ref{otherproperties-yellow-orange} to ensure compatibility at branches). \end{proof}

\section{Patching data at uncoloured points of $N_0$}
\label{section-patchingdataatuncolouredN0}
Let $\eta \in N_0$ be of Type 1a or 0 and let $\pi_{\eta}$ be a parameter of $F_{\eta}$ as before.  Set $\mathcal{P}'_{\eta} := \overline{\eta}\cap S_0$. If $\eta$ is of Type 0, set $\mathcal{Q}'_{\eta}:=\emptyset$. If $\eta$ is of Type 1a, set $D' = \overline{D\otimes F_{\eta}}$ over the residue field $k_{\eta}$. Thus $D'$ is a central simple algebra over the global field $k_{\eta}$ of exponent and index dividing $\ell$.  Let $\mathcal{Q}'_{\eta}$ denote the set of closed points $Q\in \overline{\eta}$ not in $\mathcal{P}'_{\eta}$ such that $D'\otimes k_{Q,\eta} \neq 0$.  For $j=1,2$ and any $P\in S_0$, let $E_{j,P}$ and $a_{j,P}$ be as prescribed in Propositions \ref{propositionEatpoints} and \ref{propositionEatpoints-A00}. We now prescribe the choices for $E_{j,\eta}$ and ${a}_{j,\eta}$. Tables \ref{TableEatA10spoint}, \ref{TableEatB10spoint}, \ref{TableEatB11nspoint}, \ref{TableEatB20spoint}, \ref{TableEatB21nspoint}, \ref{TableEatA00spoint-D3}, \ref{TableEatA00spoint-D2} and \ref{TableEatA00spoint-D1} are relevant in this section. 

\begin{proposition}[0/1a-RAM]
\label{propositionE-Ia-RAM}
\label{propositionE-0-RAM}
Let $\eta\in N_0$ be of Type 0 or 1a and let $Y_{\eta}$ be of Type RAM. Set $a_{1,\eta}=a$ and $a_{2,\eta}=1$. Then for $j=1,2$, there exist $E_{j,\eta}/F_{\eta}$, unramified cyclic extensions of degree $\ell$ such that \begin{enumerate}
\item 
$a_{1,\eta}a_{2,\eta}=a$ in $Y_{\eta}$.

\item
$D\otimes {E}_{j,\eta}$ is split. If $\eta$ is of Type 0, then $E_{j,\eta}\simeq \prod F_{\eta}$.

\item
$a_{j,\eta}$ is a norm from $E_{j,\eta}\otimes Y_{\eta}/Y_{\eta}$. 

\item
$\N_{Y_{\eta}/F_{\eta}}\brac{a_{j,\eta}}=1$.
\item
$E_{j,\eta}\otimes F_{P,\eta} \simeq E_{j,P} \otimes F_{P,\eta}$ for each point $P\in S_0 \cap \overline{\eta}$.
\item
 $a_{j,\eta} = a_{j,P} \in Y\otimes F_{P,\eta}$ for each point $P\in S_0 \cap \overline{\eta}$.
\end{enumerate}
\end{proposition}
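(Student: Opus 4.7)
The plan is to adapt the strategy of Proposition \ref{propositionE-1bRAM} (the Green(2)-RAM case) to the present situation, where $\eta$ lies outside the ramification locus of $D$. First I would dispose of the easy properties. Since $Y_\eta/F_\eta$ is totally ramified of degree $\ell$ by the RAM hypothesis, Lemma \ref{lemmanormoneramified-dim1}(c) gives $a\in \mathcal{O}_{Y_\eta}^{*\ell}$. Thus, independently of any choice of $E_{j,\eta}$: Property 1 follows from $a_{1,\eta}a_{2,\eta} = a\cdot 1 = a$; Property 4 is immediate since $\N(a) = 1$ and $\N(1) = 1$; and Property 3 holds because $a_{1,\eta} = a$ is an $\ell^{\mrm{th}}$ power in $Y_\eta$ (hence a norm from any extension of $Y_\eta$) while $a_{2,\eta} = 1$ is trivially a norm. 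Property 6 is precisely the content of Proposition \ref{otherproperties-1a/0}(2), which guarantees $a_{1,P} = a$ and $a_{2,P} = 1$ at every $P\in S_0\cap \overline{\eta}$ when $Y_\eta$ is RAM.

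It remains to construct $E_{j,\eta}$ so that Properties 2 and 5 hold. For this it is helpful to observe that since $\mathcal{H}_{\mathcal{X}}$ is in good shape and $Y = F(\sqrt[\ell]{y})$ with $v_\eta(y)\not\equiv 0 \pmod{\ell}$, the expansion $y = v\pi_P^i\delta_P^j$ at any $P\in \overline{\eta}\cap S_0$ has a nonzero exponent on the parameter cutting out $\overline{\eta}$, so $Y_P$ is a field extension of $F_P$ (never split). For $\eta$ of Type $0$, set $E_{1,\eta} = E_{2,\eta} = \prod F_\eta$. Property 2 is immediate since $D_\eta$ is already split. For Property 5, the non-splitness of $Y_P$ just noted rules out all rows of Tables \ref{TableEatA10spoint}, \ref{TableEatB10spoint}, \ref{TableEatB20spoint}, \ref{TableEatA00spoint-D3}, \ref{TableEatA00spoint-D2}, \ref{TableEatA00spoint-D1} in which the patching data is presented as a tuple indexed by $Y_P\simeq \prod F_P$; inspection of the remaining rows shows that in every case $E_{j,P} = \prod F_P$, so $E_{j,P}\otimes F_{P,\eta} = \prod F_{P,\eta}$ matches the restriction of $\prod F_\eta$.

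For $\eta$ of Type $1a$, I would mimic the proof of Proposition \ref{propositionE-1bRAM}, building $\overline{E_{j,\eta}}/k_\eta$ globally and lifting. Let $D' = \overline{D_\eta}$, a CSA of index dividing $\ell$ over the global field $k_\eta$, and let $\mathcal{Q}'_\eta$ be the finite set of closed points of $\overline{\eta}$ outside $\mathcal{P}'_\eta := \overline{\eta}\cap S_0$ at which $D'$ is nontrivial. For each $P\in \mathcal{P}'_\eta$, define $E'_{j,P} := \overline{E_{j,P}\otimes F_{P,\eta}}/k_{P,\eta}$; by Proposition \ref{otherproperties-1a/0}(1) this is an unramified degree $\ell$ etale algebra over $F_{P,\eta}$, and since $D\otimes F_{P,\eta}$ is unramified and is split by $E_{j,P}\otimes F_{P,\eta}$, a residue computation shows $E'_{j,P}$ splits $D'\otimes k_{P,\eta}$. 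For each $Q\in \mathcal{Q}'_\eta$, take $E'_{j,Q}$ to be the unique unramified degree $\ell$ field extension of $k_{Q,\eta}$, which splits the local index $\ell$ algebra $D'\otimes k_{Q,\eta}$ by local class field theory. Apply Lemma \ref{lemmauglyinitial} over $k_\eta$ with $u' = 1$ (trivially a norm from any local extension) to obtain a degree $\ell$ cyclic (or split) extension $\overline{E_{j,\eta}}/k_\eta$ approximating the prescribed local data and splitting $D'$, and let $E_{j,\eta}$ be its unramified lift to $F_\eta$. Property 5 then follows from the approximation, and Property 2 from the bijection between unramified CSAs over $F_\eta$ and CSAs over $k_\eta$ under residue.

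The main potential obstacle is bookkeeping rather than a genuinely new difficulty: in the Type 0 case, one must confirm by table inspection that every admissible row (given $Y_P$ is a field) has the split extension in the slot corresponding to $\eta$; in the Type 1a case, the residue-level splitting of $D'$ by $E'_{j,P}$ must be verified, and one must check that the hypotheses of Lemma \ref{lemmauglyinitial} are met (in particular, that $k_\eta$ contains a primitive $\ell^{\mrm{th}}$ root of unity, which follows from reducing $\zeta_{\ell^2}\in F\subset F_\eta$). Neither step introduces a new idea beyond those already present in Proposition \ref{propositionE-1bRAM}.
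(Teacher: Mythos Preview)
Your proposal is correct and follows essentially the same approach as the paper's proof: disposing of Properties 1, 3, 4, 6 via Lemma~\ref{lemmanormoneramified-dim1}(c) and Proposition~\ref{otherproperties-1a/0}(2), then setting $E_{j,\eta}=\prod F_\eta$ in the Type~0 case and invoking Lemma~\ref{lemmauglyinitial} with $u'=1$ in the Type~1a case to build $\overline{E_{j,\eta}}$ and lift. The only cosmetic difference is that for Type~0 you rule out the offending table rows by observing directly that $Y_P$ is a field (from the exponent of $\pi_P$ in $y$), whereas the paper invokes Remark~\ref{remark-ram-split-not-intersect} to exclude intersections with curves where $Y_{\eta'}$ is SPLIT; these are two phrasings of the same constraint.
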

\begin{proof} 
By Proposition \ref{propaisaunitifYnotsplit} and Lemma \ref{lemmanormoneramified-dim1}, $a \in \mathcal{O}_{Y_{\eta}}^{*\ell}$. By Proposition \ref{otherproperties-1a/0}, $a_{1,P}=a$ and  $a_{2,P} =1$ for each $P\in \mathcal{P}'_{\eta}$. Hence Properties 1, 3, 4 and 6 hold.

\textbf{Let $\eta$ be of Type 0}. Note that by Remark \ref{remark-ram-split-not-intersect}, $\eta$ cannot intersect $\eta'\in N'_0$ with $Y_{\eta'}$ of Type SPLIT. By inspection of the relevant tables, we see that $E_{1,P}=E_{2,P}=\prod F_P$ for any $P\in \mathcal{P}'_{\eta}$. Set $E_{j,\eta} = \prod F_{\eta}$. Hence Properties 2 \& 5 hold in this case.

\textbf{Let $\eta$ be of Type 1a}. Let $j=1$ or $2$. For $P\in \mathcal{P}'_{\eta}$, set $E'_{j,P} := \overline{E_{j,P}\otimes F_{P,\eta}} / k_{P,\eta}$. For $P\in \mathcal{Q}'_{\eta}$, set $E'_{j,P}$ to be the unique unramified field extension of $k_{P,\eta}$ of degree $\ell$.  $D'\otimes E'_{j,P}$ is trivial for all $P\in \mathcal{P}'_{\eta}\cup \mathcal{Q}'_{\eta}$ (cf. Proof of Proposition \ref{propositionEatpoints} for $P\in \mathcal{P}'_{\eta}$ and local class field theory for $P\in \mathcal{Q}'_{\eta}$). Also clearly for each $Q\in \overline{\eta}\setminus \brac{ \mathcal{P}'_{\eta}\cup \mathcal{Q}'_{\eta}}$, $D'\otimes k_{Q,\eta}$ is split already. Set $u'=1$ and use Lemma \ref{lemmauglyinitial} to construct $\overline{E_{j,\eta}}$. Set $E_{j,\eta}/F_{\eta}$ to be the unramified lift of $\overline{E_{j,\eta}}/k_{\eta}$ to see that Properties 2 \& 5 hold. \end{proof}

\begin{proposition}[0/1a-SPLIT]
\label{propositionE-Ia-SPLIT}
\label{propositionE-0-SPLIT}
Let $\eta\in N_0$ be of Type 0 or 1a and let $Y_{\eta}$ be of Type SPLIT. Then for $j=1,2$, there exist $E_{j,\eta}/F_{\eta}$, unramified cyclic extensions of degree $\ell$ and elements $a_{j,\eta} = \brac{\tilde{a}_{j,i,\eta}}_i\in \prod F_{\eta}$ such that 
\begin{enumerate}
\item 
$a_{1,\eta}a_{2,\eta}= a = \brac{a'_{i,\eta}}_i$ in $Y_{\eta}$, i.e ${\tilde{a}_{1,i,\eta}}{\tilde{a}_{2,i,\eta}} = a'_{i,\eta}$ for each i.

\item
$D\otimes {E}_{j,\eta}$ is split. If $\eta$ is of Type 0, then $E_{j,\eta}\simeq \prod F_{\eta}$.

\item
$a_{j,\eta}$ is a norm from $E_{j,\eta}\otimes Y_{\eta}/Y_{\eta}$, i.e. ${\tilde{a}_{j,i,\eta}}$ is a norm from $E_{j,\eta}$ for each i.
\item
$\N_{Y_{\eta}/F_{\eta}}\brac{a_{j,\eta}}=1$, i.e. $\prod {\tilde{a}_{j,i,\eta}} = 1$.
\item
$E_{j,\eta}\otimes F_{P,\eta} \simeq E_{j,P} \otimes F_{P,\eta}$ for each point $P\in S_0 \cap \overline{\eta}$.
\item
 $\tilde{a}_{j,i,\eta}{\mu_{j,i,P,\eta}} = \tilde{a}_{j,i,P} \in F_{P,\eta}$ for all $i$ at each point $P\in S_0 \cap \overline{\eta}$ for some $\mu_{j,i,P,\eta}\in F_{P,\eta}^{*\ell}$ such that $\prod_i \mu_{j,i,P,\eta}=1$.
\end{enumerate}
\end{proposition}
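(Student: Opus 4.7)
I will treat the Type 0 and Type 1a cases separately. For Type 0 the algebra $D_\eta$ is split, so we may take both $E_{j,\eta}=\prod F_\eta$; Property 2 is then automatic and Property 3 is trivial since every element is a norm from a split extension. By Proposition~\ref{otherproperties-1a/0} there exists some $j_0\in\{1,2\}$ with $a_{j_0,P}$ a unit in $\mathcal{O}_{Y\otimes F_{P,\eta}}$ at every $P\in\mathcal{P}'_\eta$, and $E_{j_0,P}\simeq\prod F_P$. Letting $j'=3-j_0$, I would use weak approximation on $F_\eta$ to choose $\tilde a_{j',i,\eta}\in F_\eta^*$ for $i=1,\dots,\ell-1$ that match $\tilde a_{j',i,P}$ up to $\ell^{\mathrm{th}}$ powers at each $P$ (using that we may freely modify factors by $\ell^{\mathrm{th}}$ powers per the footnote). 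Set $\tilde a_{j',\ell,\eta}=\bigl(\prod_{i<\ell}\tilde a_{j',i,\eta}\bigr)^{-1}$ to enforce Property 4, and then define $\tilde a_{j_0,i,\eta}=a'_{i,\eta}/\tilde a_{j',i,\eta}$ so that Property 1 holds; the product condition for $j_0$ follows from $\prod_i a'_{i,\eta}=1$. Property 5 is immediate, and Property 6 follows by the approximation for $j'$ and from the defining relation (using that $a'_{i,\eta}$ and $\tilde a_{j',i,P}\tilde a_{j_0,i,P}$ agree).

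For the Type 1a case, I first construct the residual extensions $\overline{E_{j,\eta}}/k_\eta$ using Lemma~\ref{lemmaugly} applied to the global field $k_\eta$ with local data supplied by Proposition~\ref{prop-points-normsfromGaloisclosures-1a-split} at $P\in\mathcal{P}'_\eta$ and by the unique unramified degree $\ell$ extension at $P\in\mathcal{Q}'_\eta$ (where one takes $w_P=w'_P\in\mathcal{O}^*_{k_{P,\eta}}\setminus\mathcal{O}_{k_{P,\eta}}^{*\ell}$). The hypotheses of Lemma~\ref{lemmaugly} are verified exactly as in the proof of Proposition~\ref{propositionE-1bRESY}: the local $w_{j,P}$ are norms from the Galois closures $X_P,X'_P$ of the compositum of $k_{P,\eta}(\sqrt[\ell]{b_{i,P}})$ (respectively the $b'_{i,P}$) over $k_{P,\eta}$, and $D'\otimes\overline{E_{j,P}\otimes F_{P,\eta}}$ is split by Proposition~\ref{propositionEatpoints}~(3) at marked points and by local class field theory at points of $\mathcal{Q}'_\eta$. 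Lift each $\overline{E_{j,\eta}}$ to the unramified degree $\ell$ extension $E_{j,\eta}/F_\eta$; this yields Properties 2 and 5.

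It remains to construct the $\tilde a_{j,i,\eta}$. Since $Y_\eta\simeq\prod F_\eta$ is split, there is no Hilbert 90 obstruction: the condition $\prod_i\tilde a_{j,i,\eta}=1$ is the only linear constraint. I would use weak approximation on $F_\eta$ to find, for each $j=1,2$ and $i=1,\dots,\ell-1$, an element $\tilde a_{j,i,\eta}\in F_\eta^*$ that is a norm from $E_{j,\eta}/F_\eta$ and that matches $\tilde a_{j,i,P}$ up to $\ell^{\mathrm{th}}$ powers at each $P\in\mathcal{P}'_\eta$; this is possible because $E_{j,\eta}\otimes F_{P,\eta}\simeq E_{j,P}\otimes F_{P,\eta}$ and $\tilde a_{j,i,P}$ is a norm from $E_{j,P}$. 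Then define $\tilde a_{j,\ell,\eta}=\bigl(\prod_{i<\ell}\tilde a_{j,i,\eta}\bigr)^{-1}$, which is again a norm from $E_{j,\eta}$, so Properties 3 and 4 hold. Finally, to enforce Property 1, I will choose the $\tilde a_{1,i,\eta}$ by the above approximation and then \emph{define} $\tilde a_{2,i,\eta}=a'_{i,\eta}/\tilde a_{1,i,\eta}$.

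\textbf{Main obstacle.} The delicate step is verifying that this derived choice of $\tilde a_{2,i,\eta}$ remains a norm from $E_{2,\eta}/F_\eta$ and still approximates $\tilde a_{2,i,P}$ at branches. The norm property follows from the cocycle-like identity: $a'_{i,\eta}$ is a reduced norm from $D_\eta$, and by Property 2 the extension $E_{2,\eta}$ splits $D_\eta$, so any reduced norm from $D_\eta$ is a norm from $E_{2,\eta}$ (using Lemma~\ref{lemmareducednormsunramified} applied to $D_\eta\otimes E_{2,\eta}$ after base change); hence $a'_{i,\eta}/\tilde a_{1,i,\eta}$ is a norm since $\tilde a_{1,i,\eta}$ already is. The branch compatibility follows because $\tilde a_{1,i,\eta}\tilde a_{2,i,\eta}=a'_{i,\eta}=\tilde a_{1,i,P}\tilde a_{2,i,P}\mu$ for some $\mu\in F_{P,\eta}^{*\ell}$ (by the good-shape arrangement of $a$). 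Coordinating the choice of $j_0$ in Type 0 with the colour/residue data at the adjacent marked points (through Proposition~\ref{otherproperties-1a/0}) is what guarantees that the compatibility on the ``unit'' side $j'$ can be achieved, and this coordination is the subtle piece to keep consistent across all $P\in\mathcal{P}'_\eta$.
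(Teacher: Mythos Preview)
There are two genuine gaps in your argument, one in each case.

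\textbf{Type 0.} You assert that both $E_{j,\eta}$ may be taken split, but this violates Property~5. Proposition~\ref{otherproperties-1a/0} only guarantees that \emph{one} of the $E_{j,P}$ is split at every $P\in\mathcal P'_\eta$; the other, $E_{j',P}$, can be nonsplit along $\eta$. Concretely, at a $B_{10}^s$ point $P$ where the Type~1b curve $\gamma$ is coloured red/white (Row~1.1) or blue (Row~1.3), one has $E_{j',P}=F_P(\sqrt[\ell]{\pi_P})$ with $\pi_P$ defining $\gamma$. Along the Type~0 curve $\eta$ this becomes an unramified extension whose residue field $k_{P,\eta}(\sqrt[\ell]{\overline{\pi_P}})$ is ramified over $k_{P,\eta}$, hence nonsplit. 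The paper therefore constructs $E_{j',\eta}$ by approximation: it first fixes $a_{j,\eta}$ and $a_{j',\eta}$, then builds $\overline{E_{j',\eta}}=k_\eta[t]/(t^\ell-w')$ where $w'$ is chosen as a norm from $X'=k_\eta(\sqrt[\ell]{c'_1},\dots,\sqrt[\ell]{c'_\ell})$ approximating the local $w'_P$.

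\textbf{Type 1a.} Your central claim in the ``Main obstacle'' paragraph is false: it is \emph{not} true that every reduced norm from $D_\eta$ is a norm from $E_{2,\eta}$. If $D_\eta=(E_{2,\eta},\sigma,c)$ has index $\ell$ then $c\in\Nrd(D_\eta^*)$ but $c\notin\N_{E_{2,\eta}/F_\eta}(E_{2,\eta}^*)$, so the containment $\Nrd(D_\eta^*)\supseteq\N(E_{2,\eta}^*)$ is strict. Hence defining $\tilde a_{2,i,\eta}=a'_{i,\eta}/\tilde a_{1,i,\eta}$ gives no control over whether it is a norm from $E_{2,\eta}$. (Separately, Lemma~\ref{lemmaugly} does not apply here: it requires $Y'$ to be a field, whereas $Y_\eta$ is split.)

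The paper avoids this by reversing your order of construction. It first builds $a_{1,\eta}=(\tilde c_i)$ by weak approximation of the residues $\overline{x_{i,P}}$ over $\mathcal P'_\eta\cup\mathcal Q'_\eta$, and sets $a_{2,\eta}=aa_{1,\eta}^{-1}$; at this stage no norm condition is imposed. Only then are $E_{1,\eta}$ and $E_{2,\eta}$ constructed: one sets $X=k_\eta(\sqrt[\ell]{c_1},\dots,\sqrt[\ell]{c_\ell})$ and $X'=k_\eta(\sqrt[\ell]{c'_1},\dots,\sqrt[\ell]{c'_\ell})$, and uses weak approximation in $X$ (resp.\ $X'$) together with Proposition~\ref{prop-points-normsfromGaloisclosures-1a-split} to produce $w=\N_{X/k_\eta}(z)$ (resp.\ $w'$) approximating the prescribed local $w_P$. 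Then $\overline{E_{j,\eta}}=k_\eta[t]/(t^\ell-w^{(\prime)})$ satisfies $(w,c_i)=0$ and $(w',c'_i)=0$ in $\Br(k_\eta)$ by construction, which is exactly Property~3. Building $E_{j,\eta}$ \emph{after} the $a_{j,\eta}$ is what makes both norm conditions hold simultaneously.
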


\begin{proof} Let $a = \brac{a'_{i,\eta}}\in \prod {F_{\eta}}$ where $a'_{i,\eta}=x'_{i}\pi_{\eta}^{m_i}$ where $m_i\in \mathbb{Z}$ and $x'_{i}\in \widehat{A_{\eta}}^*$. Because $\N(a)=1$, we have $\sum m_i=0$ and $\prod x'_i = 1$. 

\textbf{Let $\eta$ be of Type 0}. Since $D_{\eta}$ is already split, Property 2 is satisfied. Choose $\{j, j'\}=\{1,2\}$ as in Proposition \ref{otherproperties-1a/0}. 

First let's construct $a_{j,\eta}\in \prod F_{\eta}$ approximating ${a_{j,P}}$ for each $P\in \mathcal{P}'_{\eta}$. By inspection of the relevant tables, we see that $a_{j,P} = \brac{\tilde{a}_{j,i,P}}$ is such that $\tilde{a}_{j,i,P} = x_{i,P}\pi_P^{m_i}$ where $x_{i,P}\in \mathcal{O}_{F_{P,\eta}}^*$. Since $\N\brac{a_{j,P}}=1$, we have $\prod x_{i,P}=1$. For $1\leq i\leq \ell-1$ by weak approximation, find $c_{i}\in k_{\eta}$ which is close to $\overline{x_{i,P}}$ in $k_{P,\eta}$ and let $\tilde{c_i}$ be a lift of $c_i$ in $F_{\eta}$. Let $c_{\ell} = \prod_{r=1}^{\ell-1}\brac{c_r}^{-1}$ and $\tilde{c}_{\ell} = \prod_{r=1}^{\ell-1}\brac{\tilde{c}_r}^{-1}$. Set $\tilde{a}_{j,i,\eta}= \tilde{c_i}\pi_{\eta}^{m_i}$ and $a_{j',\eta} = aa_{j,\eta}^{-1}$. Thus $\tilde{a}_{j',i,\eta}= x'_i\tilde{c_i}^{-1}\in \widehat{A_{\eta}}^*$. Let $c'_i = \overline{\tilde{a}_{j',i,\eta}}\in k_{\eta}$. Therefore Properties 1, 4 \& 6 are immediate.

Set $E_{j,\eta} = \prod F_{\eta}$. Note that by Proposition $\ref{otherproperties-1a/0}$, $E_{j,P}=\prod F_P$. Thus Properties 3 \& 5 are satisfied for $a_{j,\eta}$ and $E_{j,\eta}$. For $P\in \mathcal{P}'_{\eta}$, set $E'_{j',P} := \overline{E_{j',P}\otimes F_{P,\eta}} / k_{P,\eta}$. Let $X' = k_{\eta}\brac{\sqrt[\ell]{c'_1}, \ldots, \sqrt[\ell]{c'_{\ell}}}$. By inspection of the relevant tables, we find that one of the following hold for $P\in \mathcal{P}'_{\eta}$: 
\begin{itemize}
\item[-]{$E'_{j',P}=\prod k_{P,\eta}$} : In this case, set $w'_P=1$.
\item[-] {$E'_{j',P}= k_{P,\eta}\brac{\sqrt[\ell]{\overline{\delta_P}}}$} : In this case, also note that $c'_i =\overline{\delta_P^{n_i}}$. Hence $X'\otimes k_{P,\eta} = k_{P,\eta}\brac{\sqrt[\ell]{\overline{\delta_P}}}$. Set $w'_P = \overline{\delta_P}$. 
\end{itemize}
Thus for $P\in \mathcal{P}'_{\eta}$ we have found $w'_P \in k_{P,\eta}$ so that $E'_{j',P} = k_{P,\eta}[t]/\brac{t^{\ell}-w'_P}$  and $w'_P=\N_{X'\otimes k_{P,\eta}/k_{P,\eta}}\brac{z'_P}$ for suitable elements $z'_P$. By weak approximation, we can find $z'\in X'$ close to $z'_P$. Let $w' =\N(z')$. Set $\overline{E_{j',\eta}} =k_{\eta}[t]/(t^{\ell}-w')$. Thus $\brac{w', c'_i}=0$ in $\Br\brac{k_{\eta}}$ for all $i$. Let $E_{j',\eta}$ be the unramified lift of $\overline{E_{j',\eta}}$. This shows that Properties 3 \& 5 hold for $a_{j',\eta}$ and $E_{j',\eta}$ as well. 

\textbf{Let $\eta$ be of Type 1a}. By Lemma \ref{lemmaalong1a} we have that $m_i = \ell m'_i$ and $a'_{i,\eta} = {x'_{i}\pi_{\eta}^{\ell m'_i}}$. For  $Q\in \mathcal{Q}'_{\eta}$, set $a_{1,Q}=a$ and $a_{2,Q}=1$. Since $a$ is arranged to be in good shape (Proposition \ref{propaisingoodshape2}), we see that at these points $a_{1,Q}=\brac{x_{i,Q}\pi_Q^{\ell m'_i}}$ where $\pi_Q$ is some prime in a regular system of parameters defining $\eta$ at $Q$ and $x_{i,Q}\in \widehat{A_Q}^*$.

First let's construct $a_{1,\eta}\in \prod F_{\eta}$ approximating $a_{1,P}$ for each $P\in \mathcal{P}'_{\eta} \cup \mathcal{Q}'_{\eta}$. By the above discussion and inspection of the relevant tables, we see that in $\prod F_{P,\eta}$, $a_{1,P} = \brac{x_{i,P}}_i$ or $\brac{x_{i,P}\pi_P^{\ell m'_i}}_i$ where $x_{i,P}\in \widehat{A_{P,\eta}}^*$. Since $\N\brac{a_{1,P}}=1$, we have $\prod x_{i,P}=1$. For $1\leq i\leq \ell-1$, by {weak approximation}, find $c_{i}\in k_{\eta}$ which is close to $\overline{x_{i,P}}$ in $k_{P,\eta}$ and let $\tilde{c_i}$ be a lift of $c_i$ in $F_{\eta}$. Let $c_{\ell} = \prod_{r=1}^{\ell-1}\brac{c_r}^{-1}$, $\tilde{c}_{\ell} = \prod_{r=1}^{\ell-1}\brac{\tilde{c}_r}^{-1}$ and $c'_r = \overline{x'_r\tilde{c_r}^{-1}}$ for $r\leq \ell$. Let $a_{1,\eta} = \brac{\tilde{c_i}}$ and let $a_{2,\eta} = aa_{1,\eta}^{-1}$. Thus Properties 1, 4 \& 6 are immediate.

Let $j=1$ or $2$. For $P\in \mathcal{P}'_{\eta}$, set $E'_{j,P} := \overline{E_{j,P}\otimes F_{P,\eta}} / k_{P,\eta}$. For $P\in \mathcal{Q}'_{\eta}$, set $E'_{j,P}$ to be the unique unramified field extension of $k_{P,\eta}$. $D'\otimes E'_{j,P}$ is trivial for all $P\in \mathcal{P}'_{\eta}\cup \mathcal{Q}'_{\eta}$ (cf. Proof of Proposition \ref{propositionEatpoints} for $P\in \mathcal{P}'_{\eta}$ and local class field theory for $P\in \mathcal{Q}'_{\eta}$). Also clearly for each $Q\in \overline{\eta}\setminus \brac{ \mathcal{P}'_{\eta}\cup \mathcal{Q}'_{\eta}}$, $D'\otimes k_{Q,\eta}$ is split already.

Let $X = k_{\eta}\brac{\sqrt[\ell]{c_1}, \ldots, \sqrt[\ell]{c_{\ell}}}$ and let $X' = k_{\eta}\brac{\sqrt[\ell]{c'_1}, \ldots, \sqrt[\ell]{c'_{\ell}}}$. Then $X\otimes_{k_{\eta}} k_{P,\eta}\simeq \prod_{i=1}^{g} X_P$ (resp. $X'\otimes_{k_{\eta}} k_{P,\eta}\simeq \prod_{j=1}^{h} X'_P$) where  $X_P$ and $X'_P$ are as in Proposition \ref{prop-points-normsfromGaloisclosures-1a-split} if $P\in \mathcal{Q}'_{\eta}$ and are unramified field extensions\footnote{since $x_{i,P}\in \widehat{A_P}^*$ at these points.} of $k_{P,\eta}$ if $P\in \mathcal{Q}'_{\eta}$. 

For each $P\in \mathcal{Q}'_{\eta}$, let $w_P = w'_P$ be any unit in $\mathcal{O}_{k_{P,\eta}}$ which is not an $\ell^{\mrm{th}}$ power, which therefore are norms from unramified extensions $X_P$ and $X'_P$ respectively. For each $P\in \mathcal{P}'_{\eta}$ choose $w_P, w'_P \in k_{P,\eta}$ as in Proposition \ref{prop-points-normsfromGaloisclosures-1a-split}. Thus for $P\in  \mathcal{P}'_{\eta}\cup \mathcal{Q}'_{\eta}$, we have $E'_{1,P} = k_{P,\eta}[t]/\brac{t^{\ell}-w_P}$, $E'_{2,P} = k_{P,\eta}[t]/\brac{t^{\ell}-w'_P}$ with $w_P = \N_{X\otimes k_{P,\eta}/k_{P,\eta}}\brac{z_P}$ and $w'_P=\N_{X'\otimes k_{P,\eta}/k_{P,\eta}}\brac{z'_P}$ for suitable elements $z_P$ and $z'_P$. By weak approximation, we can find $z\in X$ and $z'\in X'$ close to $z_P$ and $z'_P$ respectively. Let $w = \N(z)$ and $w' =\N(z')$. Set $E'_{1} =k_{\eta}[t]/(t^{\ell}-w)$ and $E'_{2} =k_{\eta}[t]/(t^{\ell}-w')$. Thus $\brac{w, c_i}=0$ and $\brac{w', c'_i}=0$ in $\Br\brac{k_{\eta}}$ for all $i$. Let $E_{j,\eta}$ be unramified lifts of $E'_{j}$. These extensions approximate $E_{j,P}$ and Properties 2, 3 \& 5 hold. \end{proof}

\begin{proposition}[0/1a-NONRES]
\label{propositionE-0-NONRES}
\label{propositionE-Ia-NONRES}
Let $\eta\in N_0$ be of Type 0 or 1a and let $Y_{\eta}$ be of Type NONRES. Then for $j=1,2$, there exist $E_{j,\eta}/F_{\eta}$, unramified cyclic extensions of degree $\ell$ and elements $a_{j,\eta}\in \mathcal{O}_{Y,\eta}$ such that 
\begin{enumerate}
\item 
$a_{1,\eta}a_{2,\eta}=a$ in $Y_{\eta}$.
\item
$D_{\eta}$ is split if $\eta$ is of Type 0. Else $D\otimes {E}_{j,\eta}$ has index at most $\ell$ and $D\otimes Y\otimes E_{j,\eta}$ is split. Further if $\eta$ is of Type 0, then $E_{j,\eta}\simeq \prod F_{\eta}$.
\item
$a_{j,\eta}$ is a norm from $E_{j,\eta}\otimes Y_{\eta}/Y_{\eta}$. 
\item
$\N_{Y_{\eta}/F_{\eta}}\brac{a_{j,\eta}}=1$.
\item
$E_{j,\eta}\otimes F_{P,\eta} \simeq E_{j,P} \otimes F_{P,\eta}$ for each point $P\in S_0 \cap \overline{\eta}$.
\item
 $a_{j,\eta}{\mu_{j,P,\eta}} = a_{j,P}$ in $Y\otimes F_{P,\eta}$ for each point $P\in S_0 \cap \overline{\eta}$ for some $\mu_{j,P,\eta}\in  \mathcal{O}_{Y\otimes F_{P,\eta}}$ such that $\mu_{j,P,\eta} \cong 1$ mod  $\brac{\pi_{\eta}}$ and $\N\brac{\mu_{j,P,\eta}}=1$. 
\end{enumerate}
\end{proposition}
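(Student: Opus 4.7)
The plan is to combine the strategies of Propositions \ref{propositionE-1bRESY} and \ref{propositionE-0-SPLIT}, adapted to our NONRES, Type 0/1a setting. Since $Y_\eta$ is unramified nonsplit, write $Y_\eta = F_\eta(\sqrt[\ell]{u_\eta})$ with $u_\eta \in \widehat{A_\eta}^*$; set $Y' := \overline{Y_\eta} = k_\eta(\sqrt[\ell]{\overline{u_\eta}})$ with $\Gal(Y'/k_\eta) = \langle \psi \rangle$, $a' := \overline{a}$, $\mathcal{P}'_\eta := \overline{\eta}\cap S_0$, and (when $\eta$ is of Type 1a) $\mathcal{Q}'_\eta$ the finite set of points of $\overline{\eta}$ outside $\mathcal{P}'_\eta$ at which $D' := \overline{D_\eta}$ is nonsplit. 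By Proposition \ref{propaisaunitifYnotsplit}, $a \in \mathcal{O}_{Y,\eta}^*$, and by Propositions \ref{otherproperties-green-NONRES}, \ref{otherproperties-1a/0} (together with inspection of Table \ref{TableEatclosedpoints-Type1a}), each $a_{j,P}$ is a unit along $\eta$.

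First I would split $a$ locally using Hilbert 90 and weak approximation, exactly as in Proposition \ref{propositionE-1bRESY}: at each $P \in \mathcal{P}'_\eta$ write $\overline{a_{1,P}} = c_P^{-1}\psi(c_P)$ via Hilbert 90 in $Y' \otimes k_{P,\eta}$; weak approximation in the global field $Y'$ then yields $c \in Y'$ close to every $c_P$. Set $a'_1 := c^{-1}\psi(c)$, $a'_2 := a'/a'_1$, lift to $a_{1,\eta} \in \mathcal{O}_{Y,\eta}^*$, and take $a_{2,\eta} := a/a_{1,\eta}$. Properties 1, 4, and 6 follow as in the RES case, the last one via the density of $c$ relative to $c_P$.

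When $\eta$ is of Type 0, $D_\eta$ is already split, so Property 2 is immediate. Following the Type 0 argument of Proposition \ref{propositionE-0-SPLIT}, I use Proposition \ref{otherproperties-1a/0} to pick the index $j \in \{1,2\}$ with $E_{j,P} = \prod F_P$ at every $P \in \mathcal{P}'_\eta$, and set $E_{j,\eta} := \prod F_\eta$. For the other index $j'$, I would construct $E_{j',\eta}$ as the unramified lift of $\overline{E_{j',\eta}} = k_\eta[t]/(t^\ell - w')$, where $w' := \N_{X'/k_\eta}(z')$ is obtained by weak approximation in the global Kummer extension $X' := k_\eta(\sqrt[\ell]{c'_1}, \ldots, \sqrt[\ell]{c'_\ell})$ of the residues $c'_i$ of $\tilde{a}_{j',i,\eta}$, matching the local norm data $(w'_P, z'_P)$ supplied by Propositions \ref{prop-points-normsfromGaloisclosures-1a} and \ref{prop-points-normsfromGaloisclosures-1a-split} at $P \in \mathcal{P}'_\eta$. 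All remaining properties then follow as in Proposition \ref{propositionE-0-SPLIT}.

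When $\eta$ is of Type 1a, I would build $E_{j,\eta}$ as the unramified lift of a cyclic degree $\ell$ extension $\overline{E_{j,\eta}}/k_\eta$ produced by Lemma \ref{lemmaugly} applied to $D'$, $Y'/k_\eta$, and the finite set $T := \mathcal{P}'_\eta \cup \mathcal{Q}'_\eta$. At $P \in \mathcal{P}'_\eta$, use the local data $(w_{j,P}, z_{j,P})$ from Propositions \ref{prop-points-normsfromGaloisclosures-1a} and \ref{prop-points-normsfromGaloisclosures-1a-split}, passing to the idempotent component $\tilde{z}_P = (z_P, 1, \ldots, 1)$ of $\tilde{L} \otimes k_{P,\eta}$ when the Galois closure splits (as in the proof of Proposition \ref{propositionE-1bRESY}); at $P \in \mathcal{Q}'_\eta$, pick $E'_{j,P}$ the unique unramified degree $\ell$ extension and $w_{j,P}$ a non-$\ell$-th-power unit of $\mathcal{O}_{k_{P,\eta}}$. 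The three hypotheses of Lemma \ref{lemmaugly} are verified as in Proposition \ref{propositionE-1bRESY}: Lemma \ref{lemmanormoneunramified} gives $a'_j \in (Y' \otimes k_{Q,\eta})^{*\ell}$ at $Q \in \mathcal{Q}'_\eta$ (collapsing the relevant Galois closure to $Y' \otimes k_{Q,\eta}$), while local class field theory handles the Brauer-splitting at those places. The main obstacle will be the bookkeeping at $P \in \mathcal{P}'_\eta$ where $Y_P \simeq \prod F_P$ is split (notably rows NR.6--NR.8 and S.3--S.7 of Table \ref{TableEatclosedpoints-Type1a}): here the Galois closures $X_P$, $X'_P$ of Proposition \ref{prop-points-normsfromGaloisclosures-1a-split} can be ramified or decompose as products of smaller subfields of $k_{P,\eta}$, so the choice of $(w_{j,P}, \tilde{z}_P)$ must be tracked carefully to ensure both the norm condition and the Brauer-triviality of $(w_{j,P}, a'_j)$ over $Y' \otimes k_{P,\eta}$ hold simultaneously.
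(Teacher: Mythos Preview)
Your overall architecture matches the paper's, and the Type~1a paragraph is close to correct, but there is a recurring confusion: you keep importing SPLIT-case machinery into a situation where $Y_\eta$ is a nonsplit \emph{field}. In the Type~0 paragraph you propose to build $E_{j',\eta}$ via weak approximation in $X' = k_\eta(\sqrt[\ell]{c'_1},\ldots,\sqrt[\ell]{c'_\ell})$, where the $c'_i$ are ``residues of $\tilde{a}_{j',i,\eta}$''. But since $Y_\eta$ is NONRES, $a_{j',\eta}$ is a single element of the field $Y_\eta$, not a tuple in $\prod F_\eta$; there are no global components $\tilde{a}_{j',i,\eta}$ and no $c'_i\in k_\eta$, so $X'$ is simply undefined. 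The correct substitute is the Galois closure $\tilde{L}'$ of $Y'(\sqrt[\ell]{a'_{j'}})$ over $k_\eta$, and one applies Lemma~\ref{lemmaugly} (with $D'=0$) to that, exactly as in the RES case you already cite. The same slip explains your references to Proposition~\ref{prop-points-normsfromGaloisclosures-1a-split} and to rows S.3--S.7 of Table~\ref{TableEatclosedpoints-Type1a}: those treat $Y_\eta$ of Type SPLIT and are irrelevant here; only Proposition~\ref{prop-points-normsfromGaloisclosures-1a} and the NR rows apply.

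There is a second, smaller gap in the Type~1a case. You approximate $a'_1$ only over $\mathcal{P}'_\eta$ and then invoke Lemma~\ref{lemmanormoneunramified} to conclude $a'_j\in (Y'\otimes k_{Q,\eta})^{*\ell}$ at $Q\in\mathcal{Q}'_\eta$. That lemma needs $Y'\otimes k_{Q,\eta}$ to be a \emph{field}; at curve points $Q\notin S_0$ the extension $Y'_Q$ is unramified but may be split (nothing here forces it to be inert, unlike the argument in Proposition~\ref{propositionE-1bRESY}), and then a norm-one element need not have unit---let alone $\ell$-th-power---components, so the hypothesis $(w_Q, a'_j)=0$ of Lemma~\ref{lemmaugly} can fail for your unit $w_Q$. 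The paper closes this by also prescribing $a_{1,Q}=a$, $a_{2,Q}=1$ at each $Q\in\mathcal{Q}'_\eta$ and running the Hilbert~90 / weak-approximation step over $\mathcal{P}'_\eta\cup\mathcal{Q}'_\eta$; this forces the components of $a'_j$ to be units at those places, whence the symbol with the unit $w_Q$ is unramified and hence trivial.
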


\begin{proof} By Proposition \ref{propaisaunitifYnotsplit} we have that $a\in \mathcal{O}_{Y,\eta}^*$.  Let $Y' = \overline{Y_{\eta}} = k_{\eta}\brac{\sqrt[\ell]{u'}}$, $a'=\overline{a} \in Y'$ and $\Gal\brac{Y'/k_{\eta}} = \langle \psi \rangle$. For  $P\in \mathcal{Q}'_{\eta}$, set $a_{1,P}=a$ and $a_{2,P}=1$. Further since $a$ is in good shape and $P\not\in S_0$, we see that $a_{j,P}$ are units along $\eta$ and further, $\overline{a_{j,P}} \in \mathcal{O}_{Y'_{P}}^*$. Since $Y$ is also arranged to be in good shape, $Y'_P$ is an unramified (possibly split) extension of $k_{P,\eta}$. By inspecting the relevant tables, we see that $a_{j,P}$ are units along $\eta$ for $P\in \mathcal{P}'_{\eta}$ also. 

First let's construct $a'_1\in Y'$ approximating $\overline{a_{1,P}}\in \overline{Y\otimes F_{P,\eta}}$ for each $P\in \mathcal{P}'_{\eta}\cup \mathcal{Q}'_{\eta}$. Since $\N\brac{\overline{a_{1,P}}}=1$, by Hilbert 90 there exists $c_{P}\in Y'\otimes k_{P,\eta}$ such that $c_P^{-1}\psi\brac{c_P}=\overline{a_{1,P}}$. Using weak approximation, find $c\in Y'$ which is close to $c_P$ for each $P$. Set $a'_1 = c^{-1}\psi(c)$ and set $a'_2 = a'{a'_1}^{-1}$. Let $a_{1,\eta}$ denote a lift of $a'_1$ and let $a_{2,\eta} = aa_{1,\eta}^{-1}$. Then Properties  1, 4 \& 6 are immediate. 

\textbf{Let $\eta$ be of Type 0}. Property 2 is satisfied by the definition of Type $0$. Choose $\{j, j'\} = \{1, 2\}$ as in Proposition \ref{otherproperties-1a/0}. Set $E_{j,\eta} = \prod F_{\eta}$. Since by the same proposition, $E_{j,P}=\prod F_P$, Properties 3 \& 5 are satisfied for $a_{j,\eta}$ and $E_{j,\eta}$. 

For $P\in \mathcal{P}'_{\eta}$, set $E'_{j',P} := \overline{E_{j',P}\otimes F_{P,\eta}} / k_{P,\eta}$. Let $\tL'$ denote the Galois closure of $Y'(\sqrt[\ell]{a'_{j'}})$. Letting $D'=0\in \Br\brac{k_{\eta}}$, we would like to apply Lemma \ref{lemmaugly} to construct $E'_{j'} = \overline{E_{j',\eta}}$ first. Thus for each $P\in \mathcal{P}'_{\eta}$,  we would like to first find $w'_P \in k_{P,\eta}$ and $z'_P\in \tL'\otimes k_{P,\eta}$ so that $E'_{j',P} = k_{P,\eta}[t]/\brac{t^{\ell}-w'_P}$ and $\N_{\tL'\otimes k_{P,\eta}/k_{P,\eta}}\brac{z'_P} = w'_P$. 
 
By inspection of the relevant tables, we find that one of the following hold : 
\begin{itemize}
\item[-]{$E'_{j',P}=\prod k_{P,\eta}$} : In this case, set $w'_P=1$.
\item[-]{$E'_{j',P}= k_{P,\eta}\brac{\sqrt[\ell]{\overline{\delta_P}}}$} : In this case, also note that $a_{j',P} = \brac{\overline{\delta_P^{n_i}}}_i$. Hence {$\tL'\otimes k_{P,\eta} = k_{P,\eta}\brac{\sqrt[\ell]{\overline{\delta_P}}}$}. So set $w'_P = \overline{\delta_P}$
\end{itemize}

Similarly it is an immediate check that $\brac{w'_P, a_{j'}}_y = 0$ for every valuation $y\in \Omega_{Y'}$ lying over $P$. Thus Lemma \ref{lemmaugly} can be used to construct $E'_{j'}$. Setting $E_{j',\eta}$ to be its unramified lift over $F_{\eta}$, it is immediate that Properties 3 \& 5 are satisfied. 

\textbf{Let $\eta$ be of Type 1a}. Let $j=1$ or $2$.  We would like to apply Lemma \ref{lemmaugly} to first construct $E'_j = \overline{E_{j,\eta}}$. For $P\in \mathcal{P}'_{\eta}$, set $E'_{j,P} := \overline{E_{j,P}\otimes F_{P,\eta}} / k_{P,\eta}$. For $P\in \mathcal{Q}'_{\eta}$, set $E'_{j,P}$ to be the unique unramified field extension of $k_{P,\eta}$ of degree $\ell$.

$D'\otimes Y' \otimes E'_{j,P}$ is trivial for all $P\in \mathcal{P}'_{\eta}\cup \mathcal{Q}'_{\eta}$ (cf. Proof of Proposition \ref{propositionEatpoints} for $P\in \mathcal{P}'_{\eta}$ and local class field theory for $P\in \mathcal{Q}'_{\eta}$). Also clearly for each $Q\in \overline{\eta}\setminus \brac{ \mathcal{P}'_{\eta}\cup \mathcal{Q}'_{\eta}}$, $D'\otimes k_{Q,\eta}$ is split already.

Let $\tL$ denote the Galois closure of $Y'(\sqrt[\ell]{a_1'})$ and let $\tL'$ denote the Galois closure of $Y'(\sqrt[\ell]{a_2'})$. Note that whenever $Y'\otimes k_{Q,\eta}$ is not split, since $a'_1$ and $a'_2$ have norm 1, they are also units (and in fact $\ell^{\mrm{th}}$ powers) in the complete discretely valued field $Y'\otimes k_{Q,\eta}$ for every $Q\in \overline{\eta}$ by Lemmata \ref{lemmanormoneramified-dim1} and \ref{lemmanormoneunramified}. Then as in the previous proof, $\tL\otimes_{k_{\eta}} k_{P,\eta}\simeq \prod_{i=1}^{g} \tL_P$ (resp. $\tL'\otimes_{k_{\eta}} k_{P,\eta}\simeq \prod_{j=1}^{h} \tL'_P$) where  $\tL_P$ and $\tL'_P$ are as in Proposition \ref{prop-points-normsfromGaloisclosures-1a} if $P\in \mathcal{P}'_{\eta}$ and are unramified field extensions\footnote{By the remark in the beginning of this proof, $Y'_P$ is an unramified (possibly split) extension and $a'_1$ and $a'_2$ are units at these points} if $P\in \mathcal{Q}'_{\eta}$.

For each $P\in \mathcal{Q}'_{\eta}$, let $w_P = w'_P$ be any unit in $\mathcal{O}_{k_{P,\eta}}$ which is not an $\ell^{\mrm{th}}$ power, which therefore are norms from unramified extensions $\tL_P$ and $\tL'_P$ respectively. For each $P\in \mathcal{P}'_{\eta}$ choose $w_P, w'_P \in k_{P,\eta}$ as in Proposition \ref{prop-points-normsfromGaloisclosures-1a}. Thus for $P\in  \mathcal{P}'_{\eta}\cup \mathcal{Q}'_{\eta}$, we have $E'_{1,P} = k_{P,\eta}[t]/\brac{t^{\ell}-w_P}$, $E'_{2,P} = k_{P,\eta}[t]/\brac{t^{\ell}-w'_P}$ and $\N_{\tL_P/k_{P,\eta}}\brac{z_P} = w_P$, $\N_{\tL'_P/k_{P,\eta}}\brac{z'_P} = w'_P$for suitable elements $z_P$ and $z'_P$.

We would like to use a modified version of Lemma \ref{lemmaugly}. Let $P\in \mathcal{P}'_{\eta}\cup \mathcal{Q}'_{\eta}$. By the above discussion, we can find $\tilde{z}_P\in \tL\otimes k_{P,\eta} =\prod \tL_P$ (resp $\tilde{z}'_P\in \tL'\otimes k_{P,\eta} =\prod \tL'_P$) such that its norm to $k_{P,\eta}$ is $w_P$ (resp $w'_P$). We verify that $\brac{w_P, a'_1}_y = \brac{w'_P, a'_2}_y = 0$ for every valuation $y\in \Omega_{Y'}$ lying over $P$. For $P\in \mathcal{P}'_{\eta}$, this is by Proposition \ref{propositionEatpoints} (4). For $P\in \mathcal{Q}'_{\eta}$, by construction $a'_1$ is a unit in $\mathcal{O}_{Y'_P}^*$ while $a'_2 = \brac{1}$. Since $w_P\in \mathcal{O}_{k_{P,\eta}}^*$ also, we are done in this case. Thus Lemma \ref{lemmaugly} can be used to construct $E'_1$ and $E'_2$ over $k_{\eta}$ though $E'_i$ will not split $D'$. Setting $E_{1,\eta}$ and $E_{2,\eta}$ to be their respective unramified lifts over $F_{\eta}$, we see that Properties 2, 3 \& 6 are satisfied. \end{proof}

\section{Spreading and patching of $E_j$ and $a_j$}
Recall that $F=K(X)$ is the function field of a smooth projective geometrically integral curve $X$ over a $p$-adic field $K$ and $\mathcal{X}\to \Spec \mathcal{O}_K$, a normal proper model of $F$ as fixed in Section \ref{subsectionfinalmodel}. We recall some further notation from (\cite{HH}, Section 6) and (\cite{HHK09}, Section 3.3). Let $\eta\in N_0$ and let $U_{\eta}\subset X_0$ be a non-empty open subset containing $\eta$. Then $A_{U_{\eta}}$ denotes the ring of functions regular on $U_{\eta}$. Fix a parameter $t$ of $K$. Thus $t\in A_{U_{\eta}}$. Then $\widehat{A_{U_{\eta}}}$ denotes the completion of $A_U$ at ideal $(t)$ and $F_U$, the fraction field of $\widehat{A_{U_{\eta}}}$. Further $F\subseteq F_{U_{\eta}}\subseteq F_{\eta}$. Let $\pi_{\eta}$ be the parameter of $F_{\eta}$ fixed as in Section \ref{subsectionfinalmodel}.

\begin{proposition}
\label{propositionEaatUsinpatching}
Let $j=1$ or $2$. For each $\eta\in N_0$, there exist a neighbourhood $U_{\eta}$ of $\eta$ in $X_0$ such that $U_{\eta}\subseteq \overline{\eta}\setminus S_0$, elements $a_{j,U_{\eta}} \in Y\otimes F_{U_{\eta}}$ and cyclic or split extensions $E_{j,U_{\eta}}/F_{U_{\eta}}$ of degree $\ell$ such that
\begin{enumerate}
\item
$a_{1, U_{\eta}}a_{2,U_{\eta}}=a$.
\item
$D\otimes E_{j,U_{\eta}}$ has index dividing $\ell$.
\item
$D\otimes Y\otimes E_{j,U_{\eta}}$ is split.
\item
$a_{j,U_{\eta}}$ is a norm from $Y\otimes E_{j,U_{\eta}}/Y\otimes F_{U,{\eta}}$. 
\item
$\N_{Y\otimes F_{U_{\eta}}}\brac{a_{j, U_{\eta}}} = 1$.
\item
$E_{j,U_{\eta}}\otimes F_{\eta} \simeq E_{j,\eta}$.
\item
$E_{j,U_{\eta}}\simeq \prod F_{U_{\eta}}$ whenever $E_{j,{\eta}}\simeq \prod F_{\eta}$ .
\item
$E_{j,U_{\eta}}\simeq F_{U_{\eta}}[t]/\brac{t^{\ell}-e_{j,U_{\eta}}}$ for some $e_{j,U_{\eta}}\in \widehat{A_{U_{\eta}}}$ . Further if $E_{j,\eta}$ is unramified, then $e_{j,U_{\eta}}\in  \widehat{A_{U_{\eta}}}^*$.
\item
$a_{j,U_{\eta}}v_{j,\eta}^{\ell}  = a_{j, \eta} \in Y\otimes F_{\eta}$ for some $v_{j,\eta}\in Y\otimes F_{\eta}$ of norm one.
\item
$D\otimes F_{U_{\eta}}$ is split whenever $D\otimes F_{{\eta}}$ is split.
\item
$D\otimes E_{j,U_{\eta}}$ is split whenever $D\otimes E_{j,{\eta}}$ is split.

\end{enumerate} 
\end{proposition}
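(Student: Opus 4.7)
The plan is to construct $E_{j,U_\eta}$ and $a_{j,U_\eta}$ by descending the data $E_{j,\eta}, a_{j,\eta}$ of Sections \ref{sectionpatchingdataatN0}--\ref{section-patchingdataatuncolouredN0} from the completion $F_\eta$ to a subring $\widehat{A_{U_\eta}}$ of functions regular on a small enough open neighborhood $U_\eta$ of $\eta$ in $X_0$. Because $F_{U_\eta} \hookrightarrow F_\eta$ is dense in the $\pi_\eta$-adic topology and each $E_{j,\eta}$ and $a_{j,\eta}$ was produced as a cyclic (or split) extension of degree $\ell$ and as an explicit or Hilbert--90 element respectively, the construction is essentially a weak-approximation argument; the only subtlety is property (9).

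First I would spread the extensions. When $E_{j,\eta}\simeq \prod F_\eta$, just take $E_{j,U_\eta} = \prod F_{U_\eta}$, giving (7). Otherwise write $E_{j,\eta} = F_\eta[t]/(t^\ell - e_{j,\eta})$ with $e_{j,\eta}\in \widehat{A_\eta}$, pick an approximation $e_{j,U_\eta}\in \widehat{A_{U_\eta}}$ congruent to $e_{j,\eta}$ modulo a high power of $\pi_\eta$, and set $E_{j,U_\eta} = F_{U_\eta}[t]/(t^\ell - e_{j,U_\eta})$. For unramified $E_{j,\eta}$, shrinking $U_\eta$ away from the zero locus of $e_{j,U_\eta}$ makes $e_{j,U_\eta}$ a unit, giving (8) and (6). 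Since $F\subseteq F_{U_\eta}$ and Brauer classes are detected on an open, after further shrinking one obtains (2), (3), (10) and (11) from the corresponding facts at $\eta$.

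Next I would spread $a_{j,\eta}$. In the split-$Y_\eta$ coordinate cases, approximate each coordinate of $a_{1,\eta}=(\tilde a_{1,i,\eta})$ in $F_{U_\eta}$, arranging the last coordinate so that $\prod_i \tilde a_{1,i,U_\eta}=1$; in the nonsplit case, use Hilbert~90 to write $a_{1,\eta} = c_\eta^{-1}\psi(c_\eta)$, approximate $c_\eta$ by $c_{U_\eta}\in Y\otimes F_{U_\eta}$, and set $a_{1,U_\eta} := c_{U_\eta}^{-1}\psi(c_{U_\eta})$, which is automatically of norm one. In either case set $a_{2,U_\eta} := a\cdot a_{1,U_\eta}^{-1}$, of norm one because $\N(a)=1$. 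Properties (1) and (5) then hold, while (4) follows from the vanishing of the symbol $(e_{j,U_\eta},a_{j,U_\eta})$ in $\Br(Y\otimes F_{U_\eta})$ after shrinking: the class vanishes over $Y\otimes F_\eta$ by the assumption at $\eta$ and Brauer vanishing spreads to an open neighborhood.

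The main obstacle is (9), and this is where Lemma~\ref{lemmasimultaneousnormoneandlthpower} enters. Fix an integer $m\geq 1$ such that $F$ does not contain a primitive $\ell^m$-th root of unity (possible since $K$ is $p$-adic). Approximate closely enough that $\mu_{j,\eta} := a_{j,\eta}/a_{j,U_\eta}$ lies in $1 + \pi_\eta^N(Y\otimes \widehat{A_\eta})$ for $N$ sufficiently large; by Hensel's lemma this forces $\mu_{j,\eta}$ to be an $\ell^{2m}$-th power in $Y\otimes \widehat{A_\eta}$. Since $\mu_{j,\eta}$ has norm one, Lemma~\ref{lemmasimultaneousnormoneandlthpower} produces $h$ with $\mu_{j,\eta} = h^{-\ell}\psi(h)^\ell = (h^{-1}\psi(h))^\ell$, and the element $v_{j,\eta} := h^{-1}\psi(h)$ has norm one by telescoping. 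This gives (9). Finally, carrying out all of the above shrinkings and approximations simultaneously on finitely many conditions (one per $\eta$) yields the desired open $U_\eta\subseteq \overline{\eta}\setminus S_0$.
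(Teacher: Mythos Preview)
Your proposal is correct and follows essentially the same approach as the paper: approximate the defining element $e_{j,\eta}$ to spread $E_{j,\eta}$, use the HHK spreading results to propagate index and splitting conditions to an open set, approximate the Hilbert~90 element (or the coordinates, in the split-$Y_\eta$ case) to spread $a_{1,\eta}$ with norm one, and invoke Lemma~\ref{lemmasimultaneousnormoneandlthpower} for property~(9). The paper makes one technical point more explicit than you do: in the nonsplit-$Y_\eta$ case, the Hilbert~90 element $c_\eta$ need not be a unit along $\eta$, so the paper writes it as $u_\eta\tilde\pi_\eta^r$ with $\tilde\pi_\eta$ a parameter chosen already in the henselization $Y^h_\eta$, approximates only the unit part via Artin approximation, and then uses that $F^h_\eta = \varinjlim F_V$ (\cite{HHK14}, Lemma~2.2.1) to descend to some $F_{U_\eta}$; your phrase ``approximate $c_\eta$ by $c_{U_\eta}$'' elides this step but the fix is routine.
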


\begin{proof} By the propositions in Section \ref{sectionpatchingdataatN0} and \ref{section-patchingdataatuncolouredN0}, we know $D\otimes E_{j,\eta}\otimes Y$ is split and $D\otimes E_{j,\eta}$ has index dividing $\ell$. Further, we also know $E_{j,\eta} = F_{\eta}[t]/(t^{\ell}-e'_{j,\eta})$ where $e'_{j,\eta} = \pi_{\eta}^{\epsilon_j}e_{j,\eta}$ with $\epsilon_j \in \{0, 1\}$ and $e_{j,\eta}\in \widehat{A_{\eta}}^*$. Finally we have norm one elements $a_{j,\eta}\in Y_{\eta}$ such that $a_{1,\eta}a_{2,\eta}=a$ and $\brac{a_{j,\eta}, e'_{j,\eta}}=0\in \Br(Y_{\eta})$.

For $d=2$ or $\ell+1$, by (\cite{HHK15}, Proposition 5.8) and (\cite{KMRT}, Proposition 1.17), there exists non-empty open set $V'_{\eta}$ of $\eta$ such that $D\otimes F_{V'_{\eta}}$ (resp. $D\otimes Y \otimes F_{V'_{\eta}}$) has index $< d$ whenever $D\otimes F_{\eta}$ (resp $D\otimes Y\otimes F_{\eta}$) has index $<d$. If $E_{j,\eta}\simeq \prod F_{\eta}$, set $V_{j,\eta}:=V'_{\eta}$. Set $E_{V_{j,\eta}}=\prod F_{V_{j,\eta}}$ and $e_{j,V_{j,\eta}}=1$. Then Properties 2, 3, 6, 7, 8 , 10 \& 11 clearly hold.

If $E_{j,\eta}/F_{\eta}$ is a field extension, choose $e_j\in F^*$ such that $e_j^{-1}e_{j,\eta}$ is $1$ mod  $(\pi_{\eta})$ in $\widehat{A_{\eta}}$. Set $e'_j = \pi_{\eta}^{\epsilon_j}e_j\in F^*$ and $E'_j = F[t]/(t^{\ell}-e'_j)$. Since $e'_j=e'_{j,\eta}x^{\ell}$ for some $x\in \widehat{A_{\eta}}^*$, $E'_j\otimes F_{\eta}\simeq E_{j,\eta}$. Now again by (\cite{HHK15}, Proposition 5.8) and (\cite{KMRT}, Proposition 1.17), for $d = 2$ or $\ell+1$, there exists non-empty open set $V_{j,\eta}\subseteq V'_{\eta}$ of $\eta$ such that $D\otimes E'_j\otimes F_{V_{j,\eta}}$ (resp. $D\otimes Y \otimes E'_j  \otimes F_{V_{j,\eta}}$) has index $< d$ whenever $D\otimes E'_j\otimes F_{\eta}$ (resp $D\otimes Y\otimes E'_j\otimes F_{\eta}$) has index $<d$. Setting $E_{j,V_{j,\eta}}:= E'_j\otimes F_{V_{j,\eta}}$, it is clear that Properties $2,3,6,7, 10$ and $11$ hold. Shrink $V_{j,\eta}$ further to assume $e_j\in \widehat{A_{V_{j,\eta}}}^*$. Setting $e_{V_{j,\eta}}:= e'_j$, it is clear that Property $8$ holds. Shrink $V_{1,\eta}$ and $V_{2,\eta}$ to assume they are both equal and call them $V_{\eta}$. To address Properties $1, 4, 5$ and $9$, we distinguish between the cases when $Y_{\eta}/F_{\eta}$ is a field extension and when $Y_{\eta}\simeq \prod F_{\eta}$.

\textit{Suppose that $Y_{\eta}/F_{\eta}$ is a field extension}: Let $F^h_\eta$ be the henselization of $F$ at the discrete valuation $\eta$. Set $Y^h_{\eta} = Y\otimes_{F}{F^h_\eta}$ and identify it as a subfield of $Y_{\eta}$ via the canonical morphism $Y^h_{\eta}\to Y_{\eta}$. Let $\tilde{\pi}_\eta \in Y^h_\eta$ be a parameter. Then $\tilde{\pi}_\eta $ is also a parameter in $Y_\eta$. Since $\N_{Y_{\eta}/F_{\eta}}\brac{a_{1,\eta}}=1$, by Hilbert 90, let $a_{1,\eta} = b_{1,\eta}^{-1}\psi\brac{b_{1,\eta}}$ for some $b_{1,\eta}\in Y_{\eta}^*$. Write $b_{1,\eta}  = u_\eta\tilde{\pi}_\eta^r$ for some $u_\eta \in Y_\eta$ which is a unit at $\eta$. Since $u_\eta \in Y_\eta$ is a unit at $\eta$, by (\cite{Artin}, Theorem 1.10), there exists $u^h_\eta \in Y^h_\eta$ such that $u_\eta^h \equiv u_\eta$ modulo the maximal ideal of valuation ring of $Y_\eta$. Let $b^h_{1,\eta} = u^h_\eta \tilde{\pi}_\eta^r \in Y^h_\eta$. Set $a^h_{1,\eta}=  \brac{b^h_{1,\eta}}^{-1}\psi\brac{b^h_{1,\eta}}$. Thus $\N(a^h_{1,\eta})=1$ and $a^h_{1,{\eta}}a_{1, \eta}^{-1}$ is a norm one element in $Y_{\eta}$ which is $1$ modulo the maximal ideal of valuation ring of $Y_\eta$. Thus by Lemma \ref{lemmasimultaneousnormoneandlthpower} again, $a^h_{1,{\eta}}\brac{v_{1,\eta}}^{\ell}  = a_{1, \eta}$ for some $v_{1,\eta}\in Y_{\eta}$ of norm one. Thus $\brac{a_{1,\eta}, e'_1} = \brac{a^h_{1,\eta}, e'_1} = 0 \in \Br(Y_{\eta})$ and we have $\brac{a^h_{1,\eta}, e'_1} = 0 \in \Br(Y^h_{\eta}) $ (cf. proof of (\cite{HHK14}, Proposition 3.2.2)).

Since $F^h_\eta$ is the filtered  direct limit of the fields $F_V$, where $V$  ranges over the non-empty open subset of $\eta$ (\cite{HHK14}, Lemma 2.2.1), there exist a non-empty open subset  $U_\eta\subseteq V_{\eta}$ of $\eta$ and $a_{1, U_\eta} \in Y\otimes F_{U_\eta}$ such that $N_{Y\otimes F_{U_\eta}/F_{U_\eta}}(a_{1,U_\eta}) =1$ and the image of $a_{1, U_{\eta}}$ in $Y^h_{\eta}$ is equal to $a^h_{1,\eta}$.

By  shrinking $U_\eta$, we can assume that $\brac{a_{1,U_{\eta}}, e'_1}  = 0 \in \Br\brac{Y_{U_\eta}}$ (\cite{HHK14}, Proposition 3.2.2). Hence Property $4$ holds for $a_{1, U_{\eta}}$. Finally set $a_{2,U_{\eta}}=aa_{1,U_{\eta}}^{-1}$. Thus for $j=1$ and $2$, it is clear that Properties $1, 5$ and $9$ are satisfied. Since $\brac{a_{2,\eta}, e'_2}= \brac{a_{2, U_{\eta}}, e'_2}=0\in \Br(Y_{\eta})$, by using (\cite{HHK14}, Proposition 3.2.2) again and shrinking $U_{\eta}$, we can show that Property $4$ holds for $a_{2,U_{\eta}}$ also. 

\textit{Suppose that $Y_{\eta}$ is split}:  Then shrink $V_{\eta}$ further such that $Y\otimes F_{V_{\eta}}\simeq \prod F_{V_{\eta}}$ also (\cite{Artin}, Theorem 1.10 \& \cite{HHK14}, Lemma 2.2.1). We have $a_{1, \eta} = (\tilde{a}_{1,i,\eta})_{i\leq \ell}$ where $\tilde{a}_{1,i,\eta} = c_{i,\eta}\pi_{\eta}^{m_i}\in F_{\eta}$ for $m_i\in \mathbb{Z}$ and $c_{i,\eta}\in \widehat{A_{\eta}}^*$. For $1\leq i\leq \ell-1$, choose $c_i\in F^*$ such that $c_i^{-1}c_{i,\eta}$ is $1$ mod $(\pi_{\eta})$ in $\widehat{A_{\eta}}$.

Set $\tilde{a}_{1,i,V_{\eta}} = c_i\pi_{\eta}^{m_i}$ for $i\leq \ell-1$ and set $\tilde{a}_{1,V_{\eta}, \ell} = \brac{\prod_{r=1}^{\ell-1}\tilde{a}_{1,r,V_{\eta}, }}^{-1}$. Finally set $a_{1,V_{\eta}}:= \brac{\tilde{a}_{1,i,V_{\eta}}}_i$ and $a_{2,V_{\eta}} = \brac{\tilde{a}_{2,i,V_{\eta}}}_i = \brac{a_{1,V_{\eta}}}^{-1}$ in $\prod F_{V_{\eta}}$.  Thus Properties $1$, $5$ and $9$ (using\footnote{We note that there exists an $m\geq 1$ such that $F_{\eta}$ doesn't contain a primitive $\ell^m$th root of unity.  Look at any branch field $F_{\eta}\subset F_{P,\eta}$ and observe that its residue field $k_{P,\eta}$ has a further residue field $k_P$ which is a finite field of characteristic not $\ell$). } Lemma \ref{lemmasimultaneousnormoneandlthpower}) are satisfied. Let $j=1$ or $2$ and $i\leq \ell$. Since $\brac{a_{j,\eta}, e'_j}=0\in \Br\brac{Y_{\eta}}$, we have $\brac{\tilde{a}_{j, i, V_{\eta}}, e'_j}=0\in \Br(F_{\eta})$. By (\cite{HHK14}, Proposition 3.2.2) and shrinking further if neccesary, there exists a neighbourhood $U_{\eta}\subseteq V_{\eta}$ of $\eta$ such that $\brac{\tilde{a}_{j, i, V_{\eta}}, e'_j}=0\in \Br(F_{U_{\eta}})$ which shows that Property 4 holds.

\end{proof}
 
Recall that $\mathcal{P}'_{\eta}$ denotes the finite set of marked closed points in $\overline{\eta}\cap S_0$.  For each $\eta$ in $N_0$, choose $U_{\eta}$ as in Proposition \ref{propositionEaatUsinpatching} and let $\mathcal{R}'_{\eta}$ denote the finite set of closed points $\brac{\overline{\eta}\setminus U_{\eta}}\setminus S_0$. 

\begin{proposition}
\label{propositionEaatnewpoints} Let $j=1$ or $2$ and let $\eta\in N_0$. For each $P\in \mathcal{P}'_{\eta}\cup \mathcal{R}'_{\eta}$, there exist elements $a_{j,P} \in Y_{P}$ and cyclic or split extensions $E_{j,P}/F_{P}$ of degree $\ell$ such that
\begin{enumerate}
\item
$a_{1, P}a_{2,P}=a$.
\item
$D\otimes E_{j,P}$ has index at most $\ell$.
\item
$D\otimes Y\otimes E_{j,P}$ is split.
\item
$a_{j,P}$ is a norm from $Y\otimes E_{j,P}/Y\otimes F_P$. 
\item
$\N_{Y\otimes F_{P}}\brac{a_{j, P}} = 1$.
\item
$E_{j,U_{\eta}}\otimes F_{P,\eta} \simeq E_{j,P}\otimes F_{P,\eta}$.
\item
$E_{j,P}\simeq \prod F_P$ or $D\otimes E_{j,P}$ is split.
\item
There exists ${\mu_{j, P, \eta}}\in \brac{Y\otimes F_{P,\eta}}^*$ such that $a_{j,P} = a_{j,\eta}{\mu_{j,P,\eta}}$ where $\N\brac{\mu_{j,P,\eta}}=1$ and 
\begin{itemize}
\item[-] $\mu_{j,P,\eta} =1$ if $Y_{\eta}$ is of Type RAM.
\item[-] $\mu_{j,P,\eta} = \brac{\mu_{j,i,P,\eta}}_i$ for $i\leq \ell$ where $\mu_{j,i,P,\eta}\in F_{P,\eta}^{*\ell}$ if $Y_{\eta}$ is of Type SPLIT.
\item[-] $\mu_{j,P,\eta} \cong 1 \mod \brac{\pi_{\eta}}$ if $Y_{\eta}$ is of Type RES/NONRES.
\end{itemize} \end{enumerate} \end{proposition}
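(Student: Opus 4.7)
The plan is to split the construction into two cases depending on whether $P\in \mathcal{P}'_\eta$ or $P\in \mathcal{R}'_\eta$, and in each case verify Properties 1--8 separately. The work in sections 7--11 has already set up essentially all the needed compatibilities, so the proof will be largely a transitivity/bookkeeping argument; the only new work occurs for the unmarked points in $\mathcal{R}'_\eta$, and even there the absence of a second ramification curve through $P$ makes the local construction straightforward.

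For $P\in \mathcal{P}'_\eta$, I would simply reuse the patching data $(E_{j,P}, a_{j,P})$ already produced in Propositions \ref{propositionEatpoints} and \ref{propositionEatpoints-A00}. Properties 1--5 and 7 are then immediate. For Properties 6 and 8, the compatibility with $(E_{j,U_\eta}, a_{j,U_\eta})$ along the branch $F_{P,\eta}$ is obtained by transitivity: the propositions of Sections 8 and 10 (e.g.\ \ref{otherproperties-violet-indigo}, \ref{otherproperties-blue}, \ref{otherproperties-green-NONRES}, \ref{otherproperties-green-notnonres}, \ref{otherproperties-yellow-orange}, \ref{otherproperties-1a/0}, \ref{propositionE-choosingliftofresidues}, \ref{propositionE-1bRAM}, \ref{propositionE-1bRESY}) already establish $E_{j,\eta}\otimes F_{P,\eta}\simeq E_{j,P}\otimes F_{P,\eta}$ and produce a correction $\mu_{j,P,\eta}$ of the shape demanded in Property 8; combining with Proposition \ref{propositionEaatUsinpatching}(6),(9) which links $(E_{j,\eta},a_{j,\eta})$ to $(E_{j,U_\eta},a_{j,U_\eta})$ over $F_\eta$, and absorbing an $\ell$-th power coming from Property 9 of Proposition \ref{propositionEaatUsinpatching} into $\mu_{j,P,\eta}$, yields the assertions at the branch.

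For $P\in \mathcal{R}'_\eta$, the point $P$ lies on exactly one irreducible curve of $\mathcal{H}_{\mathcal{X}}$, namely $\overline{\eta}$, because $P\notin S_0$. I would take $E_{j,P}:=F_P[t]/(t^\ell-e_{j,U_\eta})$ using the same element $e_{j,U_\eta}\in \widehat{A_{U_\eta}}$ (coming from a global $e_j\in F^*$ as in the proof of Proposition \ref{propositionEaatUsinpatching}); this gives Property 6 for free and makes Properties 2, 3, 7 immediate from the analogous properties of $E_{j,U_\eta}$. For $a_{j,P}$, I would construct a norm-one element of $Y_P$ by starting from the shape of $a$ at $P$ provided by Propositions \ref{propaisaunitifYnotsplit}, \ref{propaisingoodshape2} (depending on whether $Y_P$ is a field or split), mirroring the shape of $a_{j,\eta}$ in the local coordinates at $P$, and then invoking Hilbert 90 over $Y\otimes F_{P,\eta}$ to adjust by a norm-one correction so that the resulting $a_{j,P}$ differs from $a_{j,U_\eta}$ along the branch by an element $\mu_{j,P,\eta}$ of norm one. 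Property 4, that $a_{j,P}\in \Nrd_{Y_P}(E_{j,P}\otimes Y_P)$, would then follow from the vanishing of the cup product $(a_{j,P},e_{j,U_\eta})\in \Br(Y_P)$, which is obtained by base-changing the corresponding vanishing over $F_\eta$ and using Proposition \ref{propaisaunitifYnotsplit} to ensure the integrality needed for residues.

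The main obstacle is in Property 8: in each of the three sub-cases the correction $\mu_{j,P,\eta}$ must have a prescribed shape. When $Y_\eta$ is of Type RAM, Lemma \ref{lemmanormoneramified-dim1} forces any norm-one element to be an $\ell$-th power, so after absorbing into $a_{j,P}$ we may arrange $\mu_{j,P,\eta}=1$. When $Y_\eta$ is of Type SPLIT, one must match each coordinate modulo $\ell$-th powers with product equal to $1$: this is achievable by adjusting each component $\tilde a_{j,i,P}$ of $a_{j,P}$ by an $\ell$-th power in $F_{P,\eta}$ (combined as in the spreading construction) and using Lemma \ref{lemmasimultaneousnormoneandlthpower} to enforce the product constraint. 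When $Y_\eta$ is of Type RES or NONRES, the condition $\mu_{j,P,\eta}\equiv 1 \pmod{\pi_\eta}$ reduces, via Hensel's lemma on the complete local ring and Lemma \ref{lemmanormoneunramified}, to finding a Hilbert 90 solution congruent to $1$ modulo the maximal ideal; this is possible because any norm-one unit of $Y\otimes F_{P,\eta}$ is an $\ell$-th power there, so the Hilbert-90 representative can be chosen arbitrarily close to $1$.
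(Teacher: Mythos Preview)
Your treatment of the marked points $P\in\mathcal{P}'_\eta$ is correct and matches the paper: Properties 1--5 and 7 come from Propositions \ref{propositionEatpoints} and \ref{propositionEatpoints-A00}, while Properties 6 and 8 follow by chaining the compatibilities in Section~\ref{sectionpatchingdataatN0} with Proposition~\ref{propositionEaatUsinpatching}(6).

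For curve points $P\in\mathcal{R}'_\eta$, however, your argument has genuine gaps. First, defining $E_{j,P}:=F_P[t]/(t^\ell-e'_j)$ with the global $e'_j\in F^*$ does not give you any control over the shape of $e'_j$ in $\widehat{A_P}$: the element $e_j\in F^*$ was only chosen so that $e_j\equiv e_{j,\eta}\bmod\pi_\eta$, and at a point $P\notin U_\eta$ it may have arbitrary zeros or poles in the $\delta_P$-direction. The paper instead writes $e_{j,\eta}=x_{j,\eta}\pi_P^{\epsilon_j}$ in $F_{P,\eta}$, reduces $\overline{x_{j,\eta}}=y_j\overline{\delta_P}^{\,r_j}$ up to $\ell$-th powers, and then \emph{defines} $e_{j,P}=\tilde{y_j}\delta_P^{r_j}\pi_P^{\epsilon_j}$. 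This monomial form is what allows one to invoke Lemma~\ref{lemmasplittingfields2} (to pass from splitting of $D\otimes E_{j,P}$ over $F_{P,\eta}$ to splitting over $F_P$, giving Property~3) and \cite[Corollary~5.5]{PPS} (to descend the vanishing of $(a_{j,P},e_{j,P})$ from $\Br(Y\otimes F_{P,\eta})$ to $\Br(Y_P)$, giving Property~4). Your claim that Properties~3 and~4 are ``immediate from the analogous properties of $E_{j,U_\eta}$'' fails because there is no inclusion $F_{U_\eta}\subset F_P$ when $P\notin U_\eta$.

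The most serious gap is Property~7. This is \emph{not} a formal consequence of anything about $E_{j,U_\eta}$; the analogous statements in Proposition~\ref{propositionEaatUsinpatching} (items 7 and 11) are only conditional. The paper's verification for curve points is a case analysis on the type of $Y_\eta$, and in the RAM and RES cases it relies essentially on Property~8 of Propositions~\ref{propositionE-1bRAM} and~\ref{propositionE-1bRESY}: the residue $u'=\overline{u_\eta}$ of $D_\eta$ was arranged to be a \emph{norm} from $\overline{E_{j,\eta}}$. When $E_{j,P}=F_P[t]/(t^\ell-\tilde{y_j}\delta_P^{r_j})$ with $0<r_j<\ell$ is a nonsplit field extension ramified along $\delta_P$, this norm condition forces $\overline{u_P}\in k_P^{*\ell}$ (via Lemma~\ref{lemmanormfromramified-dim1}), hence $D_P=0$. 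You never invoke this norm condition, and without it there is no reason for a nonsplit $E_{j,P}$ of this shape to split $D_P=(u_P,\pi_P)$.
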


\begin{proof} If $P\in \mathcal{P}'_{\eta}$, the proof follows from Propositions \ref{propositionEatpoints}, \ref{propositionEatpoints-A00} and those in Section \ref{sectionpatchingdataatN0}. Assume therefore that $P\in \mathcal{R}'_{\eta}$, i.e. it is a curve point. For $j=1,2$, let $E_{j,\eta} = F_{\eta}[t]/\brac{t^{\ell}-e_{j,\eta}}$ where $e_{j,\eta}\in F_{\eta}$ with $v_{\eta}\brac{e_{j,\eta}} = 0$ or $1$. Let $\brac{\pi_P, \delta_P}$ denote a system of regular parameters at $A_P$ such that $D_P = \brac{u_P,\pi_P}$ where $u_P\in\widehat{A_P}^*$ (\cite{S97}). Let $\pi_{\eta} = \theta_P\pi_P$ in $F_{\eta}$ where $\theta_P\in \widehat{A_{\eta}}^*$. Let $e_{j,\eta} = x_{j,\eta}\pi_{P}^{\epsilon_j}\in F_{P,\eta}$ where $x_j\in \widehat{A_{P,\eta}}^*$ and $\epsilon_j\in \{0,1\}$. Let $\overline{x_{j,\eta}} = y_j\overline{\delta_P}^{r_j}$ up to $\ell^{\mrm{th}}$ powers where $y_j\in \mathcal{O}_{k_{P,\eta}}^*$ and $0 \leq r_j < \ell$.

Let $\tilde{y_j}\in \widehat{A_P}^*$ be such that it matches with $\overline{y_j}$ in $k_P$. Set $e_{j,P} = \tilde{y_j}\delta_P^{r_j}\pi_P^{\epsilon_j}$ and $E_{j,P} = F_P[t]/\brac{t^{\ell}-e_{j,P}}$. Using Proposition \ref{propositionEaatUsinpatching}, Property 6 is satisfied. Also note that by (\cite{S97}), Property 2 is satisfied. As $Y$ is arranged to be in good shape, $Y_P$ is unramified or $Y_P = F_P\brac{\sqrt[\ell]{v_P\pi_P}}$ where $v_P\in \widehat{A_P}^*$. Thus if $Y_P$ is not split, then $D\otimes Y_P$ is already split.
 
As $D$ is ramified at most along $\eta$ at $P$, this implies by Lemma \ref{lemmasplittingfields2} that if $D\otimes E_{j,P}\otimes F_{P,\eta}$ is split, then so is $D\otimes E_{j,P}$. Using Propositions in Section \ref{sectionpatchingdataatN0}, it is clear that $D\otimes Y\otimes E_{j,\eta}$ is split and hence so is $D\otimes Y\otimes E_{j, P}\otimes F_{P,\eta}$. If $Y_P$ is split, therefore we see that $D\otimes E_{j,P}\otimes F_{P,\eta}$ is split. Hence Property 3 is satisfied.

Note that Property 7 holds in the following situations:
\begin{itemize}
\item[-] $\eta$ is of Type 0 or 1a. This is because $D = 0 \in \Br(F_P)$ already. 
\item[-] $E_{j,P}=\prod F_P$.
\item[-] $D\otimes E_{j,P}\otimes F_{P,\eta}$ is split.
\item[-] $Y_{\eta}$ is of Type SPLIT as the discussion above shows.
\item[-] $E_{j,P} = L_P$, the unique field extension of $F_P$ of degree $\ell$ unramified at $\widehat{A_P}$. This is because $u_P$ becomes an $\ell^{\mrm{th}}$ power in $E_{j,P}$. 
\end{itemize}

Recall that we choose $E_{j,\eta}$ to be ramified along $\eta$ in some cases only when $Y_{\eta}$ is SPLIT, where Property 7 already holds. Thus to check that this Property holds in general, we have to investigate only the cases when $E_{j,P} = F_P[t]/(t^{\ell}- \tilde{y_j}\delta_P^{r_j})$ where $0< r_j <\ell$.

We now discuss the proof of the rest of the Properties 1-8 depending on the type of $Y_{\eta}$. 

\textit{$Y_{\eta}$ is RAM}: Thus $\eta$ can be of  Type 0, 1, 1b and coloured green or 2. Set $a_{1,P}=a$ and $a_{2,P}=1$ to see Properties 1, 5 \& 8 hold by construction (Propositions \ref{propositionE-1bRAM} and \ref{propositionE-Ia-RAM}). By Lemma \ref{lemmanormfromE-nonsplit} and Proposition \ref{propaisaunitifYnotsplit}, these are $\ell^{\mrm{th}}$ powers in $Y_P$ and hence Property 4 holds. To check that Property 7 holds, we can assume $\eta$ is of Type 1b or 2. Proposition \ref{propositionE-1bRAM} also implies that each $E_{j,P}\otimes F_{P,\eta}$ is unramified. Thus the only case to check is when $E_{j,P} \simeq  F_P[t]/(t^{\ell}- \tilde{y_j}\delta_P)$. However, the same proposition gives that $\overline{u_{\eta}}$ is a norm from $\overline{E_{j,\eta}}$ where $\overline{u_P}=\overline{u_{\eta}}\in k_{P,\eta}$ is the residue of $D$ along the branch. Since we are in the case when $\overline{E_{j,\eta}\otimes k_{P,\eta}}$ is ramified, this implies that $\overline{u_P}\in k_P^{*\ell}$ and hence $u_P\in \widehat{A_P}^{*\ell}$. Therefore $D=0\in \Br(F_P)$ already.

\textit{$Y_{\eta}$ is SPLIT}: For $j=1,2$, write $a_{j,\eta} = \brac{\tilde{a}_{j,i,\eta}}_i \in \prod F_{P,\eta}$ where $\tilde{a}_{j,i,\eta} = x_{j,i,P}\pi_{P}^{m_{j,i}}$ where $x_{j,i,P}\in \widehat{A_{P,\eta}}^*$. Let $\overline{x_{j,i,P}} = x'_{j,i,P}\overline{\delta_P}^{s_{j,i}} \in k_{P,\eta}$ where $x'_{j,i,P} \in \mathcal{O}_{k_{P,\eta}}^*$. Let $\tilde{x'}_{j,i,P}\in \widehat{A_P}^*$ be a lift of $\overline{x'_{j,i,P}}$. 

Set $a_{1,P} = \brac{\tilde{a}_{1,i,P}}_i$ where $\tilde{a}_{1,i,P} = \tilde{x'}_{1,i,P}\pi_P^{m_{1,i}}\delta_P^{s_{1,i}}$ for $1\leq i \leq \ell-1$. Set  $\tilde{a}_{1,\ell,P} = \brac{\tilde{a}_{1,1, P}\ldots \tilde{a}_{1, \ell-1,P}}^{-1}$. And set $a_{2,P} = aa_{1,P}^{-1}$. Thus Properties 1, 5 \& 8 hold. We have already checked that Property 7 holds in this case ($Y_{\eta}$ being SPLIT).

Since $\tilde{a}_{j,i,\eta}$ is a norm from $E_{j,\eta}$ for each $i$, we have $\brac{\tilde{a}_{j,i,\eta}, e_{j,P}}= \brac{\tilde{a}_{j,i,P}, e_{j,P}} = 0\in \Br\brac{F_{P,\eta}}$.  By construction, $\brac{\tilde{a}_{j,i,P}, e_{j,P}}$ is ramified at most along $\pi_P$ and $\delta_P$ in $\Br\brac{F_P}$. Hence by (\cite{PPS}, Corollary 5.5), we have $\brac{\tilde{a}_{j,i,P}, e_{j,P}}=0\in \Br\brac{F_P}$ also for each $i$. Therefore Property 4 holds.

\textit{$Y_{\eta}$ is RES/NONRES}: Since $P$ is a curve point, $Y$ is arranged to be in good shape and $Y_{\eta}/F_{\eta}$ is unramified, we have $Y_P = F_P[t]/\brac{t^{\ell}-v_P}$ for some $v_P\in \widehat{A_P}^*$. Hence $Y_P$ is either split or $L_P$, the unique unramified extension of $F_P$ of degree $\ell$. Thus $Y\otimes F_{P,\eta}$ is unramified over $F_{P,\eta}$ as also $\overline{Y\otimes F_{P,\eta}}$ over $k_{P,\eta}$. Note that $a_{j,\eta}\in \mathcal{O}_{Y\otimes F_{P,\eta}}^*$ by construction and $E_{j,\eta}$ is unramified along $\eta$ (cf. proofs of Propositions \ref{propositionE-choosingliftofresidues}, \ref{propositionE-2RESY} and \ref{propositionE-0-NONRES}). 

Let $\overline{a_{j,\eta}} = x'_j \overline{\delta_P}^{s_j}\in {\overline{Y\otimes F_{P,\eta}}}$ where $x'_j\in \mathcal{O}_{\overline{Y\otimes F_{P,\eta}}}^*$. Set $a_{1,P} = \tilde{x'_1}\delta_P^{s_1}\in Y_P$ where $\tilde{x'_1}\in Y_P$ is a lift of $\overline{x'_1}$ and $a_2 = aa_{1,P}^{-1}$. Thus Properties 1, 5 \& 8 hold. Since ${a}_{j,\eta}$ is a norm from $E_{j,\eta}$, we have $\brac{{a}_{j,\eta}, e_{j,P}}= \brac{{a}_{j,P}, e_{j,P}} = 0\in \Br\brac{Y\otimes F_{P,\eta}}$.  Note that $Y_P$ is an unramified extension of $F_P$. By construction, $\brac{{a}_{j,P}, e_{j,P}}$ is ramified at most along $\pi_P$ and $\delta_P$ in $\Br\brac{Y_P}$. Hence by (\cite{PPS}, Corollary 5.5), we have $\brac{{a}_{j,P}, e_{j,P}}=0\in \Br\brac{Y_P}$ also. Therefore Property 4 holds.

To check Property 7, we can assume $\eta$ is Type 1b or 2. When $Y_{\eta}$ is of Type NONRES, $E_{j,\eta}$ is the lift of residues. Thus, $E_{j,P}=L_P$ or $\prod F_P$ where we have checked that Property 7 holds. When $Y_{\eta}$ is of Type RES, Proposition \ref{propositionE-1bRESY} guarantees that $\overline{u_{\eta}}$ is a norm from $\overline{E_{j,\eta}}$ where $\overline{u_P}=\overline{u_{\eta}}\in k_{P,\eta}$ is the residue of $D$ along the branch. Arguing as in the case when $Y_{\eta}$ is Type RAM, we are done. \end{proof}

\begin{proposition}
\label{propositionfinala} Let $j=1$ or $2$ and let $\eta\in N_0$. Let $\Gal\brac{Y/F} = \langle \psi \rangle$. For each $P\in \mathcal{P}'_{\eta}\cup \mathcal{R}'_{\eta}$, there exist elements $h_{j,P,\eta}\in Y\otimes F_{P,\eta}$ such that\[a_{j,U_{\eta}}{h_{j,P,\eta}^{-\ell}\psi\brac{h_{j,P,\eta}}^{\ell}} = a_{j, P} \in Y\otimes F_{P,\eta}.\] 
\end{proposition}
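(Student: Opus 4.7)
The plan is to chain together Property (9) of Proposition \ref{propositionEaatUsinpatching} and Property (8) of Proposition \ref{propositionEaatnewpoints}, which together yield
\[
a_{j,P} \;=\; a_{j,\eta}\,\mu_{j,P,\eta} \;=\; a_{j,U_{\eta}}\,v_{j,\eta}^{\ell}\,\mu_{j,P,\eta} \quad \text{in } Y\otimes F_{P,\eta},
\]
where $v_{j,\eta}\in Y\otimes F_{\eta}$ and $\mu_{j,P,\eta}\in Y\otimes F_{P,\eta}$ both have norm one. The task thus reduces to finding $h_{j,P,\eta}\in Y\otimes F_{P,\eta}$ with $h_{j,P,\eta}^{-\ell}\psi(h_{j,P,\eta})^{\ell} = v_{j,\eta}^{\ell}\,\mu_{j,P,\eta}$.

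For the first factor, I would apply Hilbert 90 in the (split or cyclic) degree $\ell$ extension $Y\otimes F_{\eta}/F_{\eta}$ to produce $g\in Y\otimes F_{\eta}$ with $v_{j,\eta} = g^{-1}\psi(g)$, whence $v_{j,\eta}^{\ell} = g^{-\ell}\psi(g)^{\ell}$. For the second, I would invoke Lemma \ref{lemmasimultaneousnormoneandlthpower} applied to $\mu_{j,P,\eta}$ over $Y\otimes F_{P,\eta}/F_{P,\eta}$ to obtain $k\in Y\otimes F_{P,\eta}$ with $\mu_{j,P,\eta} = k^{-\ell}\psi(k)^{\ell}$. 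Setting $h_{j,P,\eta} := gk$ then gives
\[
h_{j,P,\eta}^{-\ell}\psi(h_{j,P,\eta})^{\ell} = g^{-\ell}k^{-\ell}\psi(g)^{\ell}\psi(k)^{\ell} = v_{j,\eta}^{\ell}\,\mu_{j,P,\eta},
\]
which multiplied against $a_{j,U_{\eta}}$ produces $a_{j,P}$ as required.

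The hard part is verifying the hypothesis of Lemma \ref{lemmasimultaneousnormoneandlthpower} for $\mu_{j,P,\eta}$, which splits along the cases in Property (8) of Proposition \ref{propositionEaatnewpoints}. If $Y_{\eta}$ is of Type RAM then $\mu_{j,P,\eta}=1$ and the hypothesis is vacuous; if $Y_{\eta}$ is of Type SPLIT then each component $\mu_{j,i,P,\eta}$ is an $\ell^{\mathrm{th}}$ power in $F_{P,\eta}^{*}$ by construction, directly matching the split-case premise. The delicate case is RES/NONRES, where $Y\otimes F_{P,\eta}/F_{P,\eta}$ is an unramified field extension with residue characteristic distinct from $\ell$, $\pi_{\eta}$ remains a uniformizer, and $\mu_{j,P,\eta}\equiv 1 \bmod (\pi_{\eta})$. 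Hensel's Lemma applied to $X^{\ell^{2m}}-\mu_{j,P,\eta}$ in the complete discretely valued ring then extracts an $\ell^{2m\mathrm{th}}$ root, where $m$ is chosen so that $F_{P,\eta}$ contains no primitive $\ell^{m\mathrm{th}}$ root of unity; such an $m$ exists because the successive residue fields of $F_{P,\eta}$ descend through $k_{P,\eta}$ to the finite field $k_{P}$, which contains only finitely many roots of unity of order prime to the residue characteristic. Correctly identifying the complete discretely valued structure on $Y\otimes F_{P,\eta}$ and invoking the root-of-unity hypothesis in this way is the main obstacle; once dispatched, the pieces assemble as above.
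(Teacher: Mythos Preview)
Your proposal is correct and follows essentially the same approach as the paper: chain Property (9) of Proposition \ref{propositionEaatUsinpatching} with Property (8) of Proposition \ref{propositionEaatnewpoints}, handle $v_{j,\eta}^{\ell}$ via Hilbert 90, handle $\mu_{j,P,\eta}$ via Lemma \ref{lemmasimultaneousnormoneandlthpower}, and multiply the resulting elements. The paper's proof is terser but invokes exactly the same ingredients (``by Lemma \ref{lemmasimultaneousnormoneandlthpower} and Hilbert 90''); you have simply spelled out the case verification for the lemma's hypothesis.

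One small imprecision: in the RES/NONRES case you write that $Y\otimes F_{P,\eta}/F_{P,\eta}$ is an unramified \emph{field} extension, but it can in fact split over the branch (e.g.\ when $Y_P\simeq \prod F_P$). This does not harm your argument, since $\mu_{j,P,\eta}\equiv 1 \bmod (\pi_{\eta})$ forces each component to be $\equiv 1$ and Hensel then extracts an $\ell$-th (indeed $\ell^{2m}$-th) root componentwise, landing you in the split hypothesis of Lemma \ref{lemmasimultaneousnormoneandlthpower}. The paper organizes the dichotomy by whether $Y\otimes F_{P,\eta}$ is a field or split rather than by the type of $Y_{\eta}$, which is why this case does not appear explicitly there.
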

\begin{proof} Let $m\geq 1$ be\footnote{As before, such an $m$ exists because the residue field $k_P$ of its residue field $k_{P,\eta}$ is a finite field (of characteristic not $\ell$).} such that $F_{P,\eta}$ does not contain a primitive $\ell^{m}$-th root of unity. By Propositions \ref{propositionEaatUsinpatching} and \ref{propositionEaatnewpoints}, there exist norm one elements $v_{j,\eta}$ and $\mu_{j,P,\eta}$ in $Y\otimes F_{P,\eta}$ such that $a_{j, U_{\eta}}{v_{j,\eta}^{\ell}}{\mu_{j,P,\eta}} = a_{j,P}\in Y\otimes F_{P,\eta}$.

Proposition \ref{propositionEaatnewpoints} also gives us that $\mu_{j,P,\eta}\in \brac{Y\otimes F_{P,\eta}}^{*\ell^{2m}}$ if $Y\otimes F_{P,\eta}$ is a field extension and $\mu_{j,P,\eta}\in \prod F_{P,\eta}^{*\ell}$ if $Y\otimes F_{P,\eta}$ is split. Therefore by Lemma \ref{lemmasimultaneousnormoneandlthpower} and Hilbert 90, there exists $h_{j,P,\eta}\in Y\otimes F_{P,\eta}$ such that $a_{j,U_{\eta}}{h_{j,P,\eta}^{-\ell}\psi\brac{h_{j,P,\eta}}^{\ell}} = a_{j, P}.$ \end{proof}

\begin{remark}
\label{patchingsetupp} Note that $\{\mathcal{P}'_{\eta}\cup \mathcal{R}'_{\eta}, U_{\eta}\}_{\eta\in N_0}$ forms a patching set $\mathcal{P}$ as in defined in (\cite{HH}). \end{remark}

\begin{proposition}
\label{propositionEpatching}
Let $j=1$ or $2$. Then there exist $E_j/F$, degree $\ell$ extensions of $F$ which are subfields of $D/F$ and elements $a_j\in Y$ such that 
\begin{itemize}
\item
$a_1a_2=a$ and $\N_{Y/F}\brac{a_j}=1$.
\item
$E_{j}\otimes_F F_{U_{\eta}}\simeq E_{j,U_{\eta}}$ and $E_j\otimes_F F_{P}\simeq E_{j,P}$ for the patching set-up $\mathcal{P}$.
\item
$D\otimes E_j\otimes Y$ is split and $E_j\subseteq C_D\left(Y\right)$.
\item
There exist $\theta_j\in E_jY \subseteq D$ such that $\N_{E_jY/Y}\left(\theta_j\right) = a_j$.
\end{itemize}
\end{proposition}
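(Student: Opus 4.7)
The plan is to invoke the Harbater--Hartmann--Krashen patching framework on the setup $\mathcal{P} = \{U_\eta, \mathcal{P}'_\eta \cup \mathcal{R}'_\eta\}_{\eta \in N_0}$ of Remark \ref{patchingsetupp}. First I would patch the extensions $E_j$: writing $E_{j, U_\eta} \simeq F_{U_\eta}[t]/(t^\ell - e_{j, U_\eta})$ and $E_{j, P} \simeq F_P[t]/(t^\ell - e_{j, P})$, the branch compatibility established in Propositions \ref{propositionEaatUsinpatching}(6, 8) and \ref{propositionEaatnewpoints}(6), together with HHK patching for finite-dimensional $F$-algebras (\cite{HH}, \cite{HHK09}), produces a degree-$\ell$ étale $F$-algebra $E_j$ specialising to the prescribed local pieces. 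Since each $E_{j, \ast}$ splits $D \otimes Y$ locally, the local-global principle for the Brauer group over function fields of $p$-adic curves (\cite{HHK15}) forces $D \otimes_F Y \otimes_F E_j$ to be globally split. As $D' := C_D(Y)$ has index $\ell$ over $Y$ and $[E_j \otimes_F Y : Y] \leq \ell$, the extension $E_j \otimes_F Y$ must be a maximal subfield of $D'$. In particular $E_j$ is a field of degree $\ell$ over $F$, and the inclusion $E_j \otimes_F Y \hookrightarrow D'$ restricts to an $F$-embedding $E_j \hookrightarrow C_D(Y) \subseteq D$.

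Next I would patch $a_1 \in Y$. Proposition \ref{propositionfinala} supplies, on each branch $F_{P, \eta}$, an element $h_{P, \eta} := h_{1, P, \eta}$ with $a_{1, P} = a_{1, U_\eta}(\psi(h_{P, \eta})/h_{P, \eta})^\ell$. Because $\mathrm{Res}_{Y/F}\mathbb{G}_m$ is a rational (quasi-trivial) torus, the HHK simultaneous factorisation theorem (\cite{HHK09}) yields $h_{U_\eta} \in (Y \otimes F_{U_\eta})^*$ and $h_P \in Y_P^*$ with $h_{P, \eta} = h_{U_\eta}^{-1} h_P$ on the branch. Setting $k_\ast := \psi(h_\ast)/h_\ast$ (a norm-one element of $Y \otimes F_\ast$), a direct computation using commutativity of $Y$ gives $(\psi(h_{P, \eta})/h_{P, \eta})^\ell = k_{U_\eta}^{-\ell} k_P^\ell$, so the modified patches $\tilde{a}_{1, U_\eta} := a_{1, U_\eta} k_{U_\eta}^{-\ell}$ and $\tilde{a}_{1, P} := a_{1, P} k_P^{-\ell}$ coincide on every branch. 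Patching over the rational norm-one torus $\mathrm{Res}_{Y/F}\SL1$ then yields a global $a_1 \in Y$ with $\N_{Y/F}(a_1) = 1$ specialising to $\tilde{a}_{1, \ast}$. Set $a_2 := a/a_1 \in Y$, whence $a_1 a_2 = a$ and $\N_{Y/F}(a_2) = 1$ automatically.

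Finally I would verify the norm-from-$E_j Y$ conditions and produce $\theta_j$. Locally $a_j$ differs from $a_{j, \ast}$ by an $\ell$-th power in $(Y \otimes F_\ast)^*$, so the Brauer class $(a_j, e_j)$ vanishes at every patch by Propositions \ref{propositionEaatUsinpatching}(4) and \ref{propositionEaatnewpoints}(4); the local-global principle for $\Br(Y)$ (\cite{HHK15}) then gives $(a_j, e_j) = 0 \in \Br(Y)$, so $a_j$ is a norm from $E_j Y/Y$, and any $\theta_j \in (E_j Y)^*$ with $\N_{E_j Y / Y}(\theta_j) = a_j$ lies in $E_j Y \subseteq D' \subseteq D$. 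The principal technical subtlety I anticipate is in the second step: the simultaneous factorisations of $h_{P, \eta}$ over all branches must be orchestrated consistently with the patching setup, and the identity $h^{-\ell}\psi(h)^\ell = (\psi(h)/h)^\ell$ (which uses only commutativity of $Y$) is what allows a single patching of $a_1$ to force the correct local behaviour of $a_2 = a/a_1$ without a second patching step that could otherwise destroy the product relation $a_1 a_2 = a$.
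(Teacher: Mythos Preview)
Your steps 1 and 2 are correct and track the paper's argument closely; in fact your embedding of $E_j$ into $C_D(Y)$ via the maximal-subfield inclusion $E_j \otimes_F Y \hookrightarrow C_D(Y)$ (as a $Y$-algebra) is slightly slicker than the paper's route, which first embeds $Y \otimes_F \tilde{E_j}$ as a maximal subfield of $D$ itself and then invokes Skolem--Noether to conjugate the resulting copy of $Y$ onto the given one.

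Step 3, however, has a genuine gap. You invoke ``the Brauer class $(a_j, e_j)$'' and local-global for $\Br(Y)$, but no global Kummer generator $e_j$ has been produced: you patched $E_j$ only as an \'etale algebra, and a degree-$\ell$ extension of a field containing $\mu_\ell$ need not be Galois. Thus $E_jY/Y$ may fail to be cyclic and the symbol $(a_j, e_j) \in \Br(Y)$ is undefined. The paper confronts exactly this obstacle: it passes to an extension $N/Y$ of degree coprime to $\ell$ such that $E_1Y \otimes_Y N$ is cyclic (\cite{Albert}, Chapter~IV, Theorem~31), sets up induced patching systems on the normal closures of $\mathcal{X}$ in $Y$ and in $N$ (as in the proof of \cite{PPS}, Proposition~7.5), and shows that the now well-defined cyclic-algebra class $(a_1, E_1Y \otimes_Y N)$ is trivial at every patch over $N$; local-global for the Brauer group (\cite{HHK09}, Theorem~5.1) then gives triviality over $N$, whence $a_1^{[N:Y]}$ is a norm from $E_1Y/Y$, and hence so is $a_1$ since $[N:Y]$ is coprime to $\ell$. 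This coprime-base-change manoeuvre, together with the induced patching systems upstairs, is the missing ingredient in your argument.
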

\begin{proof} Let $j=1$ or $2$. In this proof, by $x\in \mathcal{P}$ we mean $x \in \{U_{\eta}, P'_{\eta}\cup \mathcal{R}'_{\eta}\}$ of the patching set up $\mathcal{P}$ defined in Remark \ref{patchingsetupp}.

From Propositions \ref{propositionEaatUsinpatching} and \ref{propositionEaatnewpoints}, we see that by (\cite{HH}, Theorem 7.1), there exists a degree $\ell$ etale algebra $\tilde{E_j}/F$ such that $\tilde{E_j}\otimes_F F_{U_{\eta}}\simeq E_{j, U_{\eta}}$ and $\tilde{E_j}\otimes_F F_{P}\simeq E_{j, P}$ for the patching set-up $\mathcal{P}$. Since at least one of the $E_{j, P}$ (or the $E_{j, U_{\eta}}$) is a nonsplit field extension, clearly $\tilde{E_j}/F$ is a field.

These propositions also guarantee that $\ind\left(D\otimes_ F E_{j,x}\right)\leq \ell$ for each $x\in \mathcal{P}$. Therefore by (\cite{HHK09}, Theorem 5.1), we have that $\ind\left(D\otimes_F \tilde{E_j}\right) \leq \ell$ and hence there exists a subfield of $D$ isomorphic to $\tilde{E_j}/F$ which we again call $\tilde{E_j}$.

We also have that $Y_x\otimes_F E_{j,x}$ splits $D$ for each $x\in \mathcal{P}$. Thus $D\otimes_F Y\otimes_F \tilde{E_j}$ is split. And therefore $C_D(Y)\otimes_Y (Y\otimes_F \tilde{E_j})$ is split. As $D$ is a divison algebra of degree ${\ell}^2$, we have that $C_D(Y)/Y$ is division of degree $\ell$, and hence $Y\otimes_F \tilde{E_j}$ splits $C_D(Y)$. Thus it is a degree $\ell$ field extension of $Y$ and therefore a degree ${\ell}^2$ field extension of $F$.

Since $Y\otimes_F \tilde{E_j}$ is a splitting field of $D$, which is a division algebra of degree ${\ell}^2$, there exists $L'_j$, a  maximal subfield of $D$ which is isomorphic to $Y\otimes_F \tilde{E_j}$. Let $E''_j$ denote the subfield of $L'_j$ which is isomorphic to $\{1\}\otimes_F \tilde{E_j}$ in $L'_j$ and $Y'_j$, the isomorphic copy of $Y\otimes_F \{1\}$. Thus $E''_j$ and $Y'_j$ are commuting degree $\ell$ subfields of $D$. 

By Skolem-Noether, $Y= b_jY'_jb_j^{-1}\subseteq D$ for some unit $b_j\in D^*$. Set $L_j= b_jL_j'b_j^{-1}\subseteq D$ and $E_j=b_jE_j''b_j^{-1}\subseteq D$. Thus $E_j$ and $Y$ commute in $D$ (they are subfields of the maximal subfield $L_j$).

We now construct $a_1\in Y$ using the norm one elements $a_{1,x}\in Y\otimes F_x$ for $x\in \mathcal{P}$. By Proposition \ref{propositionfinala}, for each branch in the patching set-up corresponding to a pair $\left(U_{\eta}, P\right)$, we have $a_{1,P} = a_{1, U_{\eta}}{h_{1,P,\eta}^{-\ell}\psi\brac{h_{1,P,\eta}}^{\ell}}$ for some $h_{1,P,\eta}\in \brac{Y\otimes F_{P,\eta}}^*$ where $\Gal\brac{Y/F} = \langle{\psi}\rangle$. By simultaneous factorization for curves for the rational group $\mathrm{R}_{Y/F}\brac{\mathbb{G}_m}$ (\cite{HHK09}, Theorem 3.6), we can find $h_{1,x}\in \brac{Y\otimes F_x}^*$ for each $x\in \mathcal{P}$ such that for every pair $\left(U_{\eta}, P\right)$, we have $h_{1,P,\eta} = h_{1,U_{\eta}}h_{1,P}^{-1}$. Thus for every branch defined by $\left(U_{\eta}, P\right)$, 
\begin{align*}
& a_{1,P} = a_{1, U_{\eta}}h_{1,P,\eta}^{-\ell}\psi\brac{h_{1,P,\eta}}^{\ell} \\
& \implies a_{1,P} = a_{1, U_{\eta}} h_{1,U_{\eta}}^{-\ell} h_{1,P}^{\ell}\psi\brac{h_{1,U_{\eta}}}^{\ell}\psi\brac{h_{1,P}}^{-\ell}\\
& \implies a_{1,P}h_{1,P}^{-\ell}\psi\brac{h_{1,P}}^{\ell} =  a_{1, U_{\eta}} h_{1,U_{\eta}}^{-\ell}\psi\brac{h_{1,U_{\eta}}}^{\ell}.
\end{align*}
Let $x\in \mathcal{P}$. Thus by (\cite{HH}, Proposition 6.3 \& Theorem 6.4), we have an element $a_1\in Y$ such that $a_1 = a_{1,x}h_{1,x}^{-\ell}\psi\brac{h_{1,x}}^{\ell} \in Y\otimes F_x$ and $\N\brac{a_1}=1$. Set $a_2 = aa_1^{-1}$. Note that $a_j \cong a_{j,x}$ up to $\ell^{\mrm{th}}$ powers in $Y\otimes F_x$.

Now we only have to verify that $a_j$ is a norm from $E_jY$. Wlog let $j=1$ (the same proof works for $j=2$). By Propositions \ref{propositionEaatUsinpatching} and \ref{propositionEaatnewpoints}, we see that $\left(a_{1,x}, E_{1,x}\right)_{Y_{x}}$ is split for each $x$. This implies that $a_{1,x}$ and hence $a_1$ is a norm from $E_1Y\otimes_Y Y\otimes_F F_x$ over $Y\otimes F_x$ as $a_1$ differs from each $a_{1,x}$ by an $\ell^{\mrm{th}}$ power.

There exists a field extension $N/Y$ of degree coprime to $\ell$ such that $E_1Y\otimes_{Y}N$ is a cyclic field extension of degree $\ell$ (\cite{Albert}, Chapter IV, Theorem 31). Let $\mathcal{Y}$  (resp. $\mathcal{Z}$) denote the normal closure of $\mathcal{X}$ in $Y$ (resp. $N$) with special fiber $Y_0$ (resp. $Z_0$). Let $\gamma: Z_0\to Y_0$ and $\phi: Y_0\to X_0$ be the induced morphisms. Then, as in the proof of (\cite{PPS}, Proposition 7.5), we have induced patching systems $\mathcal{Y}'$ of $Y_{0}$ (resp. $\mathcal{Z}'$ of $Z_{0}$) consisting of open sets $U_{y}$ (resp. $U_z$) and closed points $P_{y}$ (resp. $P_z$) such that  $F_U\subset Y_{U_y} \subset N_{U_z}$, $F_P\subset Y_{P_y}\subset N_{P_z}$ for $U,P\in \mathcal{P}$ with $\gamma(U_z) \subset U_y$, $\phi(U_y)\subset U$, $\gamma(P_z)= P_y$ and $\phi(P_y)=P$. 

\newpage
Then for $x=U$ or $P$, we have the following commutative diagram induced by norm maps
\[\begin{tikzcd}
   E_1Y\otimes_Y  Y \otimes_F F_x \arrow{r}  \arrow{d} &  E_1Y\otimes_Y Y_{x_y}\arrow{r} \arrow{d} & E_1Y\otimes_Y N_{x_z} = \brac{(E_1Y\otimes_Y N)\otimes_N N_{x_z}}  \arrow{d} \\
  Y \otimes_F F_x \arrow{r}  & Y_{x_y}\arrow{r}  & N_{x_z} \\
    \end{tikzcd}
    \] This shows that $(a_1, E_1Y\otimes_Y N)$ is trivial over each $N_{x_z}$ for each $x_z\in \mathcal{Z}'$ and hence trivial over $N$ (\cite{HHK09}, Theorem 5.1). Thus $a_1$ is a norm of the extension $E_1Y\otimes_Y N/N$ and hence $a_1^{[N:Y]}$ is a norm of $E_1Y/Y$. Since $[N:Y]$ is coprime to $\ell$, this implies that there exists $\theta_1\in E_1Y$ such that $\N_{E_1Y/Y}\left(\theta_1\right)=a_1$. \end{proof}

\section{Solving the problem over $E_j$}
\label{section-f}
Recall that we started with $z\in \SL1(D)$ living in a maximal subfield $M$ of $D$ which contains a cyclic degree $\ell$ subfield $F\subseteq Y \subseteq M$ with $\N_{M/Y}(z):=a$. Let $a_j, E_j$  and $\theta_j$ be as in Proposition \ref{propositionEpatching} for $j=1, 2$. Note that $\N_{E_jY/F}(\theta_j)=1$ and hence $\theta_j \in \SL1(D)$. If we can prove that $\theta_j \in\left[D^*, D^*\right]$, then by Proposition \ref{propositionEconsequence}, $z\in \left[D^*, D^*\right]$.

 Let $c_j:= \N_{E_jY/E_j}(\theta_j)$. Since the proofs for the cases $j=1$ and $j=2$ are similar, without loss of generality, assume $j=1$. We also drop the suffixes in the remainder of this paper, i.e. we set $E:=E_1$, $\theta:=\theta_1$, $c:=c_1$ etc. 

\subsection{Strategy \`a la Platonov}
To show $\theta \in\left[D^*, D^*\right]$, we adapt the basic strategy underlying the proof of the triviality of $\SK1(D)$ over global fields (\cite{Plat76}, Theorem 5.4) as follows:

There exists a suitable\footnote{We can and do choose the coprime extension $N/F$ carefully as follows: Let $N'$ be the Galois closure of $E/F$ and take $N$ to be the fixed field of an $\ell$-Sylow group of $\Gal(N/F)$. Thus $[N:F]$ is coprime to $\ell$ and $E\otimes_F N = N'$ which is indeed cyclic over $N$ of degree $\ell$ (cf. \cite{Albert}, Chapter IV, Theorem 31)} field extension $N/F$ such that $[N:F]$ is coprime to $\ell$ with $E_{N}:= E\otimes_F N$, a cyclic subfield of $D_{N}:= D\otimes N$. By (\cite{Plat76}, Lemma 2.2, Section 2.4), it suffices to show that $\theta \in\left[D_{N}^*, D_{N}^*\right]$. Let $Y_{N}:= Y\otimes_F N$ and $\Gal(E_{N}/N) = \langle \sigma \rangle$. Note that $\theta\in E_{N}Y_{N}\subseteq C_{D_{N}}(E_{N})$ and $\N_{E_{N}Y_{N}/E_{N}}(\theta)=c$. Therefore the further norm, $\N_{E_{N}/N}(c)=1$. Now, because $E_{N}/N$ is a cyclic extension with Galois group $\langle \sigma\rangle$, by Hilbert 90, there exists a $b\in E_{N}$ such that $c = b^{-1}\sigma(b) \in E_{N}$. Note that $c=  b^{-1}\sigma(b)$ is a reduced norm in $E_N$ from $C_{D_{N}}(E_{N})$.

\begin{proposition}
\label{propositionf} For $N, \sigma, b, c$ as above, if there exists $f\in N$ such that $bf$ is a reduced norm in $E_N$ from $\left(C_{D_{N}}(E_{N})\right)$, then $\theta\in \left[D_{N}^*, D_{N}^*\right] $. \end{proposition}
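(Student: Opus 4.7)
The plan is to produce an explicit commutator $[u,\xi]$ in $D_N^*$ whose image inside $C_{D_N}(E_N)$ has the same reduced norm as $\theta$, and then to kill the remaining difference inside $\SL1(C_{D_N}(E_N))$ using Wang's theorem (\cite{W}).

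First, I would apply Skolem--Noether to the subfield $E_N\subset D_N$ to lift the generator $\sigma$ of $\Gal(E_N/N)$ to an inner automorphism: choose $u\in D_N^*$ with $uxu^{-1}=\sigma(x)$ for all $x\in E_N$. By hypothesis, fix $\xi\in C_{D_N}(E_N)^*$ with $\Nrd_{C_{D_N}(E_N)/E_N}(\xi)=bf$, and form $[u,\xi]:=u\xi u^{-1}\xi^{-1}$, which manifestly lies in $[D_N^*,D_N^*]$. A brief check, using that $\xi$ centralizes $E_N$, shows that for any $x\in E_N$ one has $u\xi u^{-1}\cdot x\cdot u\xi^{-1}u^{-1}=u\sigma^{-1}(x)u^{-1}=x$, so $u\xi u^{-1}$, and hence $[u,\xi]$, already lies in $C_{D_N}(E_N)$.

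The key computation is the reduced norm of $[u,\xi]$ inside $C_{D_N}(E_N)$. Since conjugation by $u$ defines a $\sigma$-semilinear $N$-algebra automorphism of $C_{D_N}(E_N)$, it twists the reduced norm by $\sigma$; using $\sigma(f)=f$ (as $f\in N$), I get
\[ \Nrd_{C_{D_N}(E_N)/E_N}([u,\xi]) \;=\; \sigma(bf)\cdot (bf)^{-1} \;=\; b^{-1}\sigma(b) \;=\; c. \]
Since $E_NY_N$ is a maximal subfield of $C_{D_N}(E_N)$, one also has $\Nrd_{C_{D_N}(E_N)/E_N}(\theta)=\N_{E_NY_N/E_N}(\theta)=c$, so $\theta\cdot[u,\xi]^{-1}\in \SL1(C_{D_N}(E_N))$.

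Finally, $C_{D_N}(E_N)$ is a central simple $E_N$-algebra of prime degree $\ell$, so by Wang's theorem (\cite{W}) its $\SK1$ is trivial. Thus $\theta\cdot[u,\xi]^{-1}$ is a product of commutators in $C_{D_N}(E_N)^*\subseteq D_N^*$, and combining with $[u,\xi]\in[D_N^*,D_N^*]$ yields $\theta\in[D_N^*,D_N^*]$, as required. I expect the main (mild) obstacle to be the careful bookkeeping around the $\sigma$-semilinear action of inner conjugation by $u$ on the reduced norm of $C_{D_N}(E_N)$; once that is in place, the argument is a clean application of Skolem--Noether followed by Wang.
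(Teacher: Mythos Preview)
Your proposal is correct and follows essentially the same route as the paper's proof: Skolem--Noether to lift $\sigma$ to an inner automorphism, pick an element of the centralizer with reduced norm $bf$, form the commutator, compute its reduced norm as $c$ via the $\sigma$-twist, and finish with Wang's theorem on the index-$\ell$ algebra $C_{D_N}(E_N)$. The only differences from the paper are cosmetic (the paper writes $v,g$ for your $u,\xi$ and uses the inverse commutator $g^{-1}vgv^{-1}$).
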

\begin{proof} Set $b'=bf$. Note that $c = b^{-1}\sigma(b) = (bf)^{-1}\sigma(bf) = b'^{-1}\sigma(b')$. By Skolem Noether, extend $\sigma : E_N\to E_N\subseteq D_N$ to an automorphism of $D_N$ given by $\tilde{\sigma} = \Int(v) : D_N\to D_N, \ d\leadsto vdv^{-1}$. Note that $\tilde{\sigma}$ restricts to an $N$-automorphism of $C_{D_N}(E_N)$ since $\tilde{\sigma}|_{E_N} = \sigma$. Set $D_1=C_{D_N}(E_N)$.

By hypothesis, there exists $g\in D_1$ such that $\Nrd_{D_1/E_N}(g) = b'$.  Thus, 
\begin{align*}
\Nrd_{D_1/E_N}\left(g^{-1}vgv^{-1}\right) &= \Nrd_{D_1/E_N}\left(g^{-1}\right) \Nrd_{D_1/E_N}\left(vgv^{-1}\right)  \\
& = b'^{-1} \Nrd_{D_1/E_N}\left(\tilde{\sigma}(g)\right) \\
& =  {b'^{-1} \tilde{\sigma}\left(\Nrd_{D_1/E_N}(g)\right)}\\
& = b'^{-1} \sigma(b') \\
& = c
\end{align*}

Since $\Nrd_{D_1/E_N}\brac{\theta}=c$, we have $\Nrd_{D_1/E_N}\left(\theta vg^{-1}v^{-1}g \right) = 1$. Since $D_1$ is a central simple algebra of square-free index $\ell$, $\SL1\left(D_1\right) = \left[D_1^*, D_1^*\right]$ (\cite{W}). Hence we have that $\theta vg^{-1}v^{-1}g \subseteq  \left[D_1^*, D_1^*\right]\subseteq \left[D_N^*, D_N^* \right]$. \end{proof}

We will find $f\in N$ satisfying the hypothesis of Proposition \ref{propositionf} by patching suitable elements $f_x\in \brac{N\otimes_F F_x}^*$ for $x$ in a refinement of the patching system $\mathcal{P}$ used to construct $E$ (Remark \ref{patchingsetupp}). 

\subsubsection{The shapes of $E_N$ and $b$}
We investigate the shape of $E$ after the coprime base change $N$. Let $x\in \mathcal{P}$. Since $E_x:= E\otimes_F F_x$ is a cyclic extension by construction and $E_N = N'$, the Galois closure of $E/F$, we see that $E_N\otimes_F F_x\simeq \prod_{[N:F]} E_x$. Let $N\otimes_F F_x = \prod_{i=1}^{r_x} N_{i,x}$. Since $[E_x:F_x]=\ell$, this forces each $N_{i,x}$ to be isomorphic to $F_x$ or $E_x$. Hence $E_N\otimes_F F_x$ as an $N\otimes_F F_x$ algebra is the product of an appropriate number of copies of the cyclic extensions $E_x/F_x$ and the split extensions $\prod_{\ell} E_x/E_x$.

Let $b\otimes 1\in E_N\otimes_F F_x$ correspond to the entry $\prod_q b_q \times \prod_i (b_{i,1,P}, b_{i,2,P}, \ldots, b_{i,\ell,P})$ in $\prod_q E_x/F_x \times \prod_i \brac{\prod_{\ell} E_x/E_x}$.  The $\sigma$ action is componentwise and further in $\prod_{\ell} E_x/E_x$, it permutes the entries of each tuple $\left(b_{i,j,P}\right)_{j\leq \ell}$ amongst themselves, i.e. $\sigma\brac{\prod_i (b_{i,j,P})_{j\leq \ell}} = \prod_i \brac{ b_{i,\sigma(j),P} }_{j\leq \ell}$. The $\sigma$ action on the $E_x/F_x$ components can be similarly described if $E_{x}\simeq \prod_{\ell} F_{x}$ is itself split.

For $\eta\in N_0$ and closed point $P\in \overline{\eta}$, let $x = \eta$  or $(P,\eta)$. Then we denote the integral closure of $\widehat{A_x}$ in $E\otimes F_{x}$ by $\widehat{B_x}$. Let its residue field be denoted $k'_{x}$. Similarly, let $\widehat{C_{x}}$ denote the integral closure of $\widehat{A_x}$ in $N\otimes F_{x}$ with residue field $k''_{x}$. Thus $\widehat{C_{x}} \simeq \prod \widehat{A_{x}} \times \prod \widehat{B_{x}}$.

We begin with the following broad modification of $b$: Let $\eta\in N_0$ be such that $E_{\eta}/F_{\eta}$ is an unramified field extension. Thus $E_N \otimes_F F_{\eta}/N\otimes_F F_{\eta}$ is the unramified (possibly split or partially split) extension $\prod E_{\eta}/F_{\eta} \times \prod \brac{\prod_{\ell} E_{\eta}/E_{\eta}}$. By weak approximation, modify $b$ by a suitable element of $N$ so that if $b = \prod_q b_q \times \prod_i (b_{i,1,\eta}, b_{i,2,\eta}, \ldots, b_{i,\ell,\eta})$ in $\prod_q E_{\eta}/F_{\eta} \times \prod_i \brac{\prod_{\ell} E_{\eta}/E_{\eta}}$, then

\begin{enumerate}
\item
Each $b_q$ living in any component of shape $E_{\eta}/F_{\eta}$ is a unit in $\widehat{B_{\eta}}$. This can be done by knocking off an appropriate power of $\pi_{\eta}$ from $F_{\eta}$.
\item
If $D\otimes E_{\eta}$ is an unramified algebra of index $\ell$, then each entry $b_{i,j,\eta}$ in the tuple $(b_{i,j,\eta})_{j\leq \ell}$ living in any component of shape $\prod_{\ell}E_{\eta}/E_{\eta}$ is a unit in $\widehat{B_{\eta}}$. This can be done as follows:

Let $m_j$ denote the valuation of $b_{i,j,\eta}$ in $E_{\eta}$. It suffices to check that all $m_j$s are equal because then we can again make $(b_{i,j,\eta})_{j\leq \ell}\in \prod \widehat{B_{\eta}}^*$ by knocking off $\pi_{\eta}^{m_j}$ from $E_{\eta}$. Since $c=b^{-1}\sigma(b)$ is a reduced norm from $D\otimes E$, this implies $b_{i,1,\eta}^{-1}b_{i,j,\eta}$ is a reduced norm from $D\otimes E_{\eta}$ for each $j\leq \ell$. Since every unit in $\widehat{B_{\eta}}$ is a reduced norm from $D\otimes E_{\eta}$ (Proposition \ref{lemmareducednormsunramified}), this forces all valuations $m_j$ to equal each other. 
\item
If $D\otimes F_{\eta}$ is an unramified algebra of index $\ell$ and if $E_{\eta}\simeq \prod F_{\eta}$ is split, then each $b_q = (b_{q, j, \eta})_{j\leq \ell}$ living in any component of shape $E_{\eta}/F_{\eta}$ is a unit in $\widehat{B_{\eta}}$, i.e. each $b_{q,j,\eta}\in \widehat{A_{\eta}}^*$. This can be achieved by a similar argument as in 2).
\end{enumerate}

 \subsection{Preliminary patching data of $f$}
 Recall that for each $\eta \in N_0$, $\mathcal{P}'_{\eta} := \overline{\eta}\cap S_0$ and $\mathcal{R}'_{\eta} := \brac{\overline{\eta}\setminus U_{\eta}}\setminus S_0$. Thus $S_0 = \cup_{\eta\in N_0}\mathcal{P}'_{\eta}$.

\begin{proposition}[$f$ at closed points]
\label{prop-f-atclosedpoints}
Let $\eta\in N_0$ and $P\in \mathcal{P}'_{\eta} \cup \mathcal{R}'_{\eta}$. Then there exists $f_P\in \brac{N\otimes_F F_P}^*$ such that $bf_P$ is a reduced norm from $D_{N}\otimes E_P$. Further $f_P\in \widehat{C_{P,\eta}}^*$.
\end{proposition}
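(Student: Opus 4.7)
The plan is to construct $f_P$ componentwise using the decomposition $N \otimes_F F_P \simeq \prod_{i=1}^{r_P} N_{i,P}$ described in the excerpt, where each $N_{i,P}$ is either $F_P$ or $E_P$. Correspondingly $E_N\otimes_F F_P$ is a product of copies of the cyclic extension $E_P/F_P$ (one per index with $N_{i,P}\simeq F_P$) and copies of $\prod_\ell E_P/E_P$ (one per index with $N_{i,P}\simeq E_P$), with $\sigma$ acting by cyclic permutation on the latter. The element $b$ decomposes accordingly, and the identity $c = b^{-1}\sigma(b)\in\Nrd$ translates, componentwise, into a $\sigma$-invariance of $[b]$ modulo reduced norms.

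In the split factors (indices with $N_{i,P}\simeq E_P$), I would choose $f_{P,i} := b_{i,1,P}^{-1} \in E_P^{*}$. The cocycle condition coming from $c\in\Nrd$ forces each ratio $b_{i,1,P}^{-1}b_{i,j,P}$ to be a reduced norm, so $b_i f_{P,i}$ becomes a reduced norm in every coordinate of $\prod_\ell E_P$. In the non-split factors (with $N_{i,P}\simeq F_P$), the task is to produce $f_{P,q}\in F_P^{*}$ such that $b_q f_{P,q}$ is a reduced norm from the relevant component of $D_N\otimes(E_N\otimes F_P)$. By Property~(6) of Proposition~\ref{propositionEatpoints}, whenever $E_{j,P}$ is not split, $D\otimes E_{j,P}$ is itself split, hence the relevant reduced-norm group is just the image of the field norm $\N_{E_P/F_P}$. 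Then the $\sigma$-invariance of $[b_q]$, combined with Hilbert~90 for $E_P/F_P$ and the fact that $c_q=b_q^{-1}\sigma(b_q)$ is already a norm, yields an explicit $f_{P,q}\in F_P^{*}$; when instead $E_{j,P}\simeq\prod F_P$ is split, only factors of the first type occur and the analysis reduces to the split-factor case.

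For the integrality refinement $f_P\in\widehat{C_{P,\eta}}^{*}$, I would appeal to the three preliminary modifications of $b$ made just before the proposition: they ensure that, along the branch $F_{P,\eta}$, the entries of $b$ lie in $\widehat{B_{P,\eta}}^{*}$ whenever the relevant local algebra is unramified. Combined with the detailed description of $E_{j,P}$ along branches (Propositions~\ref{otherproperties-violet-indigo}--\ref{otherproperties-1a/0}) and the splitting/norm lemmata (Lemmata~\ref{lemmareducednormsunramified},~\ref{lemmanormfromE-split},~\ref{lemmanormfromE-nonsplit}), the constructed $f_P$ will be a unit in $\widehat{C_{P,\eta}}^{*}$ directly, or can be made so by multiplying by an $\ell$-th power from $N^{*}$, which does not alter its reduced-norm property.

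The main obstacle will be the ramified configurations---cold, chilly, and hot points, together with the associated $B_{jk}^{ns}$ neighbours---where neither $D_P$ nor $E_P$ is unramified, and so reduced-norm surjectivity on units is not automatic. In each such case the patching data $E_{j,P}$ was chosen precisely to split $D_P$, reducing the reduced-norm condition to a cyclic-norm condition from $E_P/F_P$. The remaining difficulty is bookkeeping: verifying, for each configuration in the classification of Section~\ref{subsectionclosedpointsclassification}, that the explicit $f_{P,q}$ one writes down does land in $\widehat{C_{P,\eta}}^{*}$ along every branch of $\eta$ through $P$, and choosing $f_{P,q}$ with care when $P$ lies on two distinct irreducible curves of $\mathcal{H}_{\mathcal{X}}$ so that $P$ has two distinct parameter directions to balance.
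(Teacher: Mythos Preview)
Your overall decomposition of $E_N\otimes_F F_P$ is right, but the argument has two genuine gaps.

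First, your handling of the case where $E_P/F_P$ is a nonsplit field extension is confused: you write that then ``the relevant reduced-norm group is just the image of the field norm $\N_{E_P/F_P}$''. The reduced norm at stake is $\Nrd:(D\otimes E_P)^*\to E_P^*$, not a field norm down to $F_P$. Property~(6) of Proposition~\ref{propositionEatpoints} (and Property~(7) of Proposition~\ref{propositionEaatnewpoints} for curve points) gives the dichotomy \emph{either $E_P\simeq\prod F_P$ or $D\otimes E_P$ is split}. When $D\otimes E_P$ is split, every element of $E_P$ is a reduced norm and you may simply take $f_P=1$; there is nothing to construct. The paper does exactly this and thereby disposes of all the cold/chilly/hot bookkeeping you anticipate in your last paragraph in a single line.

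Second, and more seriously, in the remaining case $E_P\simeq\prod F_P$ (so every component of $E_N\otimes F_P/N\otimes F_P$ is $\prod_\ell F_P/F_P$), your recipe $f_{P,i}=b_{i,1,P}^{-1}$ fails the integrality requirement $f_P\in\widehat{C_{P,\eta}}^*$, and multiplying by an $\ell$-th power cannot repair this: if $b_{i,1,P}$ has $\eta$-valuation $m_1\not\equiv 0\pmod\ell$, no $\ell$-th power will make $b_{i,1,P}^{-1}$ a unit along $\eta$. The paper's fix is to set $f_{i,P}=b_{i,1,P}^{-1}\pi_P^{m_1}$, which \emph{is} a unit along $\eta$ but is no longer obviously compatible with the reduced-norm condition. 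Here one must use the explicit shape $D_P=(u_P,\pi_P)$ (available precisely because these few exceptional points have been pinned down) to get $(\pi_P^{m_1})\cup[D_P]=0$ in $\HH^3(F_P,\mu_\ell)$, and then invoke the injectivity of Suslin's invariant for square-free index (\cite{Suslin}, Theorem~12.2) to pass from $(b_{i,j,P}f_{i,P})\cup[D_P]=0$ to $b_{i,j,P}f_{i,P}\in\Nrd(D_P^*)$. Neither the $\pi_P^{m_1}$ correction nor Suslin's injectivity appears in your sketch, and both are essential. There is also a second subcase---when $\pi_P$ cuts out the \emph{other} curve $\eta'$ through $P$ rather than $\eta$---where one must instead appeal to the preliminary modification of $b$ to know the $b_{i,j,P}$ are already units along $\eta$.
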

\begin{proof}
If $D\otimes E_P$ is split, set $f_P = 1$. Note that $bf_P$ (and indeed any other element in $N\otimes E_P$) is a reduced norm from $D_N\otimes E_P$. Clearly $1\in \widehat{C_{P,\eta}}^*$.

Therefore assume $D\otimes E_P$ is not split and let $P\in \overline{\eta}\cap \overline{\eta'}$. We can in fact pinpoint precisely when this happens by a closer inspection of the proofs of Propositions \ref{propositionEatpoints} and \ref{propositionEaatnewpoints} - at points in Rows 2.2* of Table \ref{TableEatB11nspoint}, 4.1* of Table \ref{TableEatB21nspoint}, 8.5-8.6 of Table \ref{TableEatC21hotpoint} and at some innocuous curve points in $\mathcal{R}'_{\eta}$ where both $Y_{\eta}$ and $Y_{\eta'}$ are not SPLIT. Note that in all these cases, $E_{\eta}/F_{\eta}$ and $E_{\eta'}/F_{\eta'}$ are unramified field extensions by construction, $\{\eta, \eta'\} = \{\mathrm{Type\ 1b}, \mathrm{Type\ 1a}\}$ or $\{\mathrm{Type\ 1b}, \mathrm{Type\ 2}\}$ and $D_P\simeq (u_P, \pi_P)$ for some unit $u_P\in \widehat{A_P}^*$ and $\pi_P$ defines one of $\eta$ or $\eta'$ at $P$.

By Proposition \ref{propositionEaatnewpoints}, $E_P \simeq \prod F_P$ and therefore $E_N \otimes_F F_P/N\otimes_F F_P\simeq \prod \brac{\prod_{\ell} F_P/F_P}$. Let $b\otimes 1$ correspond to the entry $\prod_i (b_{i,1,P}, b_{i,2,P}, \ldots, b_{i,\ell,P})$. As discussed before, $\sigma$ permutes the entries of each tuple $\left(b_{i,j,P}\right)_{j\leq \ell}$ amongst themselves. Since $c=  b^{-1}\sigma(b) \in \Nrd_{E_N}\left(C_{D_N}(E_N)\right)$, we have that $\left(b_{i,1,P}\right)[D_P] = \left(b_{i,2,P}\right)[D_P] = \ldots = \left(b_{i,\ell,P}\right)[D_P] \in \HH^3\left(F_P, \mu_{\ell}\right)$. Let $b_{i,j,P}$ have valuation $m_j$ in $F_{P,\eta}$.

We first look at the case when $\pi_P$ defines $\eta$.  Set $f_{i,P}:= b_{i,1,P}^{-1}\pi_P^{m_1}$. Since $\pi_P$ is a parameter of $F_{P,\eta}$ also, $f_{i,P}$ is a unit along $\eta$. Define $f_P = \prod_i (f_{i,P})\in N\otimes F_P$. Thus $f_P\in \widehat{C_{P,\eta}}^*$. It now suffices to see that each $b_{i,j,P}f_{i,P}$ is a reduced norm from $D_P$.  For $j=1$, we have $\left(b_{i,1,P}f_{i,P}\right)[D_P] = (\pi_P^{m_1})(u_P, \pi_P)=0$. Since the cup-products $(b_{i,j,P})[D_P]$ equal each other for $j\leq \ell$, we have $\left(b_{i,j,P}f_{i,P}\right)[D_P]=  0$ for each $j$.

Identifying $\HH^1\brac{F_P, \SL1\brac{D_P}}$ with $F_P^*/\Nrd\brac{D_P}^*$, recall Suslin's invariant \[R  : \HH^1\brac{F_P, \SL1\brac{D_P}} \to \HH^3\brac{F_P, \mu_{\ell}^{\otimes 2}}, \  \lambda \leadsto \brac{\lambda}\cup [D_P].\]

Since index of $D_P$ is $\ell$ and in particular square-free, $R$ is injective (\cite{Suslin}, Theorem 12.2). Hence $b_{i,j,P}f_{i,P}$ is a reduced norm from $D_P$ and hence we are done in this case.

Now let's look at the case when $\pi_P$ defines $\eta'$. Thus $\eta$ is either of Type 1a or $P$ is a hot point and $\eta$ is of Type $2$ with $Y_{\eta}$ of Type NONRES (Rows 8.5-8.6 of Table \ref{TableEatC21hotpoint}). In either case $D\otimes E_{\eta}$ is an unramified index $\ell$ algebra\footnote{It is unramified if $\eta$ is Type 1a and by Proposition \ref{propositionE-choosingliftofresidues} otherwise. It is non-split since $D\otimes E_{P}$ and hence $D\otimes E_{P,\eta}$ is non-split.}. Thus by our initial modification, $b_{i,\eta}\in \widehat{B_{\eta}}^*$ already which shows that $b_{i,j,P}\in \widehat{A_{P,\eta}}^*$ for each $j\leq \ell$. Since $\pi_P$ is a unit along $\eta$ now, so is $f_{i,P}$.  \end{proof}

 \begin{proposition}[$f$ at codimension one points]
\label{prop-f-atcodimensiononepoints}
Let $\eta\in N_0$. Then there exists $f_{\eta}\in {\widehat{C_{\eta}}^*}\subset N\otimes F_{\eta}$ such that 
\begin{itemize}
\item
$f_{\eta} = f_{P}\phi_{P,\eta}^{\ell}\in N\otimes F_{P,\eta}$ for some $\phi_{P,\eta}\in \brac{N\otimes F_{P,\eta}}^*$ for each $P\in \mathcal{P}'_{\eta}\cup \mathcal{R}'_{\eta}$.
\item
$bf_{\eta}$ is a reduced norm from $D_N\otimes E_{\eta}$.
\end{itemize}
\end{proposition}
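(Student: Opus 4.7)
\textbf{The plan} is to construct $f_\eta$ by first producing its residue $\bar f_\eta$ in (an appropriate \'etale extension of) the global field $k_\eta$ via the approximation Lemma~\ref{lemmainvariantalgebras}, and then lifting to a unit in $\widehat C_\eta$ by Hensel. The closed points $P \in \mathcal{P}'_\eta \cup \mathcal{R}'_\eta$ correspond to a finite set of places of $k_\eta$, and the residues $\bar f_P$ of the $f_P$ produced in Proposition~\ref{prop-f-atclosedpoints} serve as the local data $f_v$ fed into that lemma.

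\textbf{Key steps.} \emph{(i)} Since $D\otimes E_\eta$ has index dividing $\ell$, Suslin's invariant $(E_N\otimes F_\eta)^*/\Nrd(D_N\otimes E_\eta)^* \hookrightarrow \HH^3(E_N\otimes F_\eta, \mu_\ell^{\otimes 2})$ is injective (\cite{Suslin}, Theorem 12.2). Hence the requirement that $bf_\eta$ be a reduced norm is equivalent to the vanishing of the cup product $(bf_\eta)\cup[D_N\otimes E_\eta]$ in $\HH^3$. \emph{(ii)} Decompose $E_N\otimes F_\eta$ as a product of factors of shape $E_\eta/F_\eta$ and $\prod_\ell E_\eta/E_\eta$, on which $\sigma$ acts either by the Galois action of $E_\eta/F_\eta$ or by cyclic permutation of components. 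On each factor, take residues in $\HH^3$: since $\widehat A_\eta$ is a complete discrete valuation ring with $\ell$ invertible in the residue, the cup-product condition over $F_\eta$ reduces to a cup-product condition over the appropriate residue field, and this is exactly the setting of Lemma~\ref{lemmainvariantalgebras}. \emph{(iii)} Choose the cyclic extension required by Lemma~\ref{lemmainvariantalgebras} to be $\overline{E_\eta}/k_\eta$ when $E_\eta/F_\eta$ is unramified (so this is the lift of residues or the unramified extension splitting $\beta_{rbc,\eta}$, cf.~Propositions~\ref{propositionE-choosingliftofresidues}, \ref{propositionE-1bRAM}, \ref{propositionE-1bRESY}); and to be an auxiliary unramified cyclic degree~$\ell$ extension of $k_\eta$ splitting $\beta_{rbc,\eta}$ when $E_\eta/F_\eta$ is ramified (the colored cases blue/yellow/orange/red/white, where $Y_\eta$ is \mrm{SPLIT}), using the Witt decomposition of $D\otimes E_\eta$. \emph{(iv)} Verify the cup-product compatibility $(u,\bar b)=(u,\sigma(\bar b))$ of Lemma~\ref{lemmainvariantalgebras} by residuating the global identity that $c=b^{-1}\sigma(b)$ is a reduced norm from $D_N\otimes E_N$, and note that the local data $\bar f_P$ verify the required cup-product equalities $(u,\bar b)=(u,\bar f_P)$ by Proposition~\ref{prop-f-atclosedpoints}. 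Lemma~\ref{lemmainvariantalgebras} then yields $\bar f_\eta \in k_\eta^*$ (or in the appropriate \'etale algebra) matching each $\bar f_P$ up to $\ell$-th powers; lift to a unit $f_\eta\in\widehat C_\eta^*$. Finally $f_\eta/f_P$ is an $\ell$-th power in $(N\otimes F_{P,\eta})^*$ since its residue in $k_{P,\eta}^*$ is an $\ell$-th power and $\ell$ is invertible there (Hensel), giving the desired $\phi_{P,\eta}$.

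\textbf{Main obstacle.} The principal technical difficulty is the case analysis driven by the colour and type of $\eta$, and especially the ramified cases for $E_\eta$, where $\overline{E_\eta}=k_\eta$ and one must introduce an auxiliary cyclic extension of $k_\eta$ together with the Witt decomposition of $D\otimes E_\eta$ to bring the problem into the format of Lemma~\ref{lemmainvariantalgebras}. Concurrently, one must track precisely the componentwise decomposition of $E_N\otimes F_\eta$ and the action of $\sigma$ on each component, and verify, case by case in $P\in \mathcal{P}'_\eta \cup \mathcal{R}'_\eta$, that the residues of the $f_P$ satisfy the cup-product compatibilities required as input to the approximation lemma; this uses crucially the reduced-norm property of $bf_P$ from Proposition~\ref{prop-f-atclosedpoints} together with the good-shape arrangements on the model $\mathcal{X}$ (Proposition~\ref{propgoodshape}).
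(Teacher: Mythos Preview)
Your broad strategy---reduce to the residue field and invoke Lemma~\ref{lemmainvariantalgebras}---is correct for the genuinely hard subcase, but your case analysis is inverted and two pieces are missing.

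\textbf{The ramified case is trivial, not the main obstacle.} When $E_\eta/F_\eta$ is ramified (the coloured cases blue/yellow/orange/red/white), the construction in Propositions~\ref{propositionE-1bSPLIT-Blue} and~\ref{propositionE-1bSPLIT-YellowOrange} guarantees that $E_\eta$ already \emph{splits} $D_\eta$. Hence $D_N\otimes E_\eta=0$ and every element of $E_N\otimes F_\eta$ is a reduced norm; one only needs weak approximation to match the $\overline{f_P}$. No auxiliary cyclic extension, no Witt decomposition, no Lemma~\ref{lemmainvariantalgebras} is required here. The same holds for Type~0, Type~1a, and the Shape~A/B unramified cases (lift of residues, or $D\otimes E_\eta$ split): the paper disposes of all of these by plain weak approximation together with Lemma~\ref{lemmareducednormsunramified} (units are reduced norms from an unramified index-$\ell$ algebra), using the preliminary modification of $b$ to force the relevant components into $\widehat B_\eta^*$.

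\textbf{The actual hard case, and what you are missing there.} The only place where Lemma~\ref{lemmainvariantalgebras} is genuinely needed is Shape~C: $\eta$ of Type~1b/2, $E_\eta$ unramified but \emph{not} the lift of residues, so $D\otimes E_\eta=(u_\eta,\pi_\eta)$ is still ramified. Here two things happen. First, for the $E_\eta/F_\eta$ components of $E_N\otimes F_\eta$, the paper applies Lemma~\ref{lemmainvariantalgebras} with $E'=\overline{E_\eta}$, $u=u'=\overline{u_\eta}$, $b'=\overline{b_q}$; the delicate point you do not address is verifying the ramification hypotheses on $u'$ relative to $E'/k_\eta$, which the paper traces back to the choice of $E_P$ at cold points (Tables~\ref{TableEatC11coldpoint},~\ref{TableEatC21coldpoint}) and to the fact that $u'$ is a norm from $\overline{E_\eta}$ (Propositions~\ref{propositionE-1bRAM},~\ref{propositionE-1bRESY}). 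Second, for the $\prod_\ell E_\eta/E_\eta$ components, Lemma~\ref{lemmainvariantalgebras} does not apply as stated (there is no cyclic field extension in sight), and the paper instead gives a direct construction: set $e_i'=b_{i,1,\eta}^{-1}\pi_\eta^{m_1}$, use $(\pi_\eta)(u_\eta,\pi_\eta)=0$ and the equality of all $(b_{i,j,\eta})[D\otimes E_\eta]$, then correct by an explicit $\theta$ coming from norms in $F_\eta(\sqrt[\ell]{u_\eta})$ to match the branch data. Your proposal does not handle this component at all.
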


\begin{proof}
Note that by Proposition \ref{prop-f-atclosedpoints}, we see that $f_P \in \widehat{C_{P,\eta}}^*$ for each $P\in \mathcal{P}'_{\eta}\cup \mathcal{R}'_{\eta}$.

\textbf{$\eta$ is of Type 0}: $D_{\eta}$ is split and so is $D\otimes F_P$ for every $P\in \overline{\eta}$. Thus each $f_P=1$ by choice for every marked point $P$ on $\overline{\eta}$ (Proposition \ref{prop-f-atclosedpoints}). Choose $f_{\eta}=1$. Clearly  $bf_{\eta}$ (and indeed any other element in $N\otimes E_{\eta}$) is a reduced norm from $D_N \otimes E_{\eta}$.

\textbf{$\eta$ is of Type 1a}: By construction, $E_{\eta}$ is unramified (Propositions \ref{propositionE-0-RAM}, \ref{propositionE-0-SPLIT} and \ref{propositionE-0-NONRES}). By weak approximation, find $\overline{f_{\eta}}\in k''_{\eta}$ which is close to $\overline{f_P}\in k''_{P,\eta}$ for each marked $P$ in $\overline{\eta}$. Let $f_{\eta}$ be a lift of $\overline{f_{\eta}}$ in $\widehat{C_{\eta}}^*$.

If $D\otimes E_{\eta}$ is split, then clearly $bf_{\eta}$ (and indeed any other element in $N\otimes E_{\eta}$) is a reduced norm from $D_N \otimes E_{\eta}$. So assume $D\otimes E_{\eta}$ is not split. Since $\eta$ is Type 1a, $D_{\eta}$ is an unramified index $\ell$ algebra and hence so is $D\otimes E_{\eta}$. By our initial modification of $b$, this implies all components of $b$ are units along $\eta$. Thus by Lemma \ref{lemmareducednormsunramified}, $bf_{\eta}$ is a reduced norm from $D_N\otimes E_{\eta}$.

\textbf{$\eta$ is of Type 1b/2}: Let $u_{\eta}\in F_{\eta}$ such that $u'=\overline{u_{\eta}}\in k_{\eta}/k_{\eta}^{*\ell}$ is the residue of $D_{\eta}$. There are three possible shapes of $E_{\eta}$.

\textit{Shape A}:  $E_{\eta}$ is a ramified/unramified field extension which splits $D_{\eta}$ (Propositions \ref{propositionE-choosingliftofresidues-violetandindigo}, \ref{propositionE-1bSPLIT-Blue} and \ref{propositionE-1bSPLIT-YellowOrange}).

\textit{Shape B}: $E_{\eta}$ is the lift of residues which might or might not split $D_{\eta}$ (Proposition \ref{propositionE-choosingliftofresidues}). Though in particular, it is an unramified field extension of $F_{\eta}$.

\textit{Shape C}: $E_{\eta}/F_{\eta}$ is an unramified field extension which is not the lift of residues of $F_{\eta}$. Then ${u'}$ is a norm from $\overline{E_{\eta}}$ and $E_{\eta}\otimes \beta_{\rbc,\eta}$ is split. (Propositions \ref{propositionE-1bRAM} and \ref{propositionE-1bRESY}).

For each shape, we prescribe $f_{\eta}\in N\otimes F_{\eta}$ as follows:

\textit{$E_{\eta}$ of Shape A/B}: As before find $\overline{f_{\eta}}\in k''_{\eta}$ which is close to $\overline{f_P}\in k''_{P,\eta}$ for each $P\in \mathcal{P}'_{\eta}\cup \mathcal{R}'_{\eta}$. Let $f_{\eta}$ be a lift of $\overline{f_{\eta}}$ in $\widehat{C_{\eta}}^*$. If $E_{\eta}$ is of Shape A , since $D\otimes E_{\eta}$ is split, every element in $N\otimes E_{\eta}$ is a reduced norm from $D_N\otimes E_{\eta}$.

Let $E_{\eta}$ be of Shape B. Note that $D = \beta_{rbc,\eta} + \brac{u_{\eta}, \pi_{\eta}} \in \Br\brac{F_{\eta}}$ where $\pi_{\eta}$ is a parameter of $F_{\eta}$. Thus $D\otimes E_{\eta}$ is an unramified algebra. If it is split, our choice of $f_{\eta}$ clearly works. So assume $D\otimes E_{\eta}$ has index $\ell$. Then by our initial modification of $b$, each component of $b$ is a unit along $\eta$. Thus by Lemma \ref{lemmareducednormsunramified}, $bf_{\eta}$ is a reduced norm from $D_N\otimes E_{\eta}$.

\textit{$E_{\eta}$ of Shape C} : Note that $D = \beta_{rbc,\eta} + \brac{u_{\eta}, \pi_{\eta}} \in \Br\brac{F_{\eta}}$ where $\pi_{\eta}$ is a parameter of $F_{\eta}$. Since $E_\eta$ splits $\beta_{rbc,\eta}$, we have $D\otimes E_{\eta} = \left( u_{\eta}, \pi_{\eta}\right)$. Let $E_N\otimes F_{\eta}/N\otimes F_{\eta}\simeq \prod_{q} E_{\eta}/F_{\eta} \times \prod_{i} \brac{\prod_{\ell} E_{\eta}/E_{\eta}}$ and let $b = \prod b_q \times \prod_i \brac{b_{i,j,\eta}}_{j\leq \ell}$. We will prescribe $f_{\eta}  = \prod f_q \times \prod_i e_i \in N\otimes F_{\eta}\simeq \prod_q F_{\eta} \times \prod_i E_{\eta}$ by prescribing each of its components $f_q\in \widehat{A_{\eta}}^*$ and $e_i \in \widehat{B_{\eta}}^*$ individually.

Let us look at the case of $b_q\in E_{\eta}/F_{\eta}$. By our initial modification of $b$, we have $b_q\in \widehat{B_{\eta}}^*$ for each $q$ and hence $\sigma(b_q)\in \widehat{B_{\eta}}^*$ also. Set $E' = \overline{E_{\eta}}$ and $b' = \overline{b_q}\in E'$. By abuse of notation, let $\Gal\brac{E'/k_{\eta}} = \langle \sigma \rangle$ also. Since $c$ is a reduced norm from $D_N\otimes E$,  $\left(b_q^{-1}\sigma(b_q) \right)\left(u_{\eta}, \pi_{\eta}\right)  = 0 \in \HH^3\left( E_{\eta}, \mu_{\ell}\right)$. This gives  $\left( b'^{-1}\sigma(b'), u' \right)  = 0 \in \HH^2\left(E', \mu_{\ell} \right)$. Thus $\brac{b', u'} = \brac{\sigma\brac{b'}, u'} \in \HH^2\left(E', \mu_{\ell} \right)$.

We would like to apply Lemma \ref{lemmainvariantalgebras} to find an $\overline{f_q}\in k_{\eta}$ and hence an $f_{q}\in F_{\eta}$ with the required properties. To do so, we proceed to verify that the rest of the hypotheses of the lemma are indeed satisfied by  $u'$, $E'/k_{\eta}$ and $b'$. 

By (\cite{S97}, \cite{S98}, Proposition 1.2), we see that the residue $u'$ is up to $\ell^{\mrm{th}}$ powers, a unit at almost all places $v$ of $k_{\eta}$ except at those given by cold points (Type C-Cold) $P$ on $\overline{\eta}$. Recall that by the choice of $E_P$ at cold points (cf Tables \ref{TableEatC11coldpoint} and \ref{TableEatC21coldpoint}), at such places $E'/k_{P,\eta}$ is given by adjoining the $\ell^{\mrm{th}}$ root of the residue $u'$ and hence $u'\in {E'_{P,\eta}}^{\ell}$. In particular, this discussion shows that at every place $v$ where $E'$ is unramified and inert, $u'\in \mathcal{O}_{E'_v}^*$ up to $\ell^{\mrm{th}}$ powers in ${E'_v}^*$.

Let $w$ be a place where $E'/k_{\eta}$ is ramified. We have already seen that if $w$ corresponds to a cold point $P$, then $u'\in {E'_w}^{*\ell}$. Therefore assume $w$ corresponds to a non-cold point $P$. Hence $u'\in \mathcal{O}_{k_{P,\eta}}^*$.  Since we know $u'$ is a norm from $E'$ and hence from $E'_w$,  Lemma \ref{lemmanormfromramified-dim1} implies that $u'\in {E'_w}^{*\ell}$.

Finally for $P\in \mathcal{P}'_{\eta}\cup \mathcal{R}'_{\eta}$, we have $\brac{b_qf_{q,P}}$ is a reduced norm from $D_N\otimes E_P$. This implies $\brac{b_qf_{q,P}}\brac{u_{\eta}, \pi_{\eta}}=0\in \HH^3\brac{E_{P,\eta}, \mu_{\ell}}$. Taking residues, this implies $\brac{u', b'} = \brac{u', \overline{f_{q,P}}^{-1}}\in E'\otimes k_{P,\eta}$. 

Thus we can apply Lemma \ref{lemmainvariantalgebras} to find $f_{q}\in F_{\eta}$ such that  $f_{q}\equiv f_{q,P} \in F_{P,\eta}$ up to $\ell^{\mathrm{th}}$ powers for marked points $P\in \mathcal{P}'_{\eta}\cup \mathcal{R}'_{\eta}$. Further, $\brac{u_{\eta}, b_qf_{q}}=0\in \Br\brac{E_{\eta}}$. This implies $\brac{u_{\eta}, \pi_{\eta}}\brac{b_qf_{q}}=0$. Since $D\otimes E_{\eta} = \brac{u_{\eta}, \pi_{\eta}}$, we have $b_qf_{q}\in \Nrd_{E_{\eta}}\left(D\otimes E_{\eta} \right)$ using injectivity of Suslin's invariant for index $\ell$ algebras again (\cite{Suslin}, Theorem 12.2).

Now let us look at the case of $\brac{b_{i,j,\eta}}_{j\leq \ell}\in \brac{\prod_{\ell} E_{\eta}}/E_{\eta}$. Since $c$ is a reduced norm from $D_N\otimes E$, we have that $\left(b_{i,1,\eta}\right)[D\otimes E_{\eta}] = \left(b_{i,2,\eta}\right)[D\otimes E_{\eta}] = \ldots = \left(b_{i,\ell,\eta}\right)[D\otimes E_{\eta}] \in \HH^3\left(E_{\eta}, \mu_{\ell}\right)$. Let $b_{i,j,\eta}$ have valuation $m_j$ in $E_{\eta}$.
 
Set $e'_{i}:= b_{i,1,\eta}^{-1}\pi_{\eta}^{m_1}$. Since $\pi_{\eta}$ is a parameter of $F_{\eta}$ and hence of $E_{\eta}$ also, $e'_{i}\in \widehat{B_{\eta}}^*$. Since $\left(b_{i,1,\eta}e'_i\right)[D\otimes E_{\eta}] = (\pi_{\eta}^{m_1})(u_{\eta}, \pi_{\eta})=0$. Since the cup-products $(b_{i,j,\eta})[D\otimes E_{\eta}]$ equal each other for $j\leq \ell$, we have $0=\left(b_{i,j,\eta}e'_i\right)[D\otimes E_{\eta}]$ for each $j$. Thus by injectivity of Suslin's invariant for index $\ell$ algebras, each $b_{i,j,\eta}e'_i$ is a reduced norm from $D\otimes E_{\eta}$.

However, $e'_i$ might not approximate the choice at marked points on $\overline{\eta}$. So we find a suitable correcting factor $\theta\in \widehat{B_{\eta}}^*$ such that $\theta\in \Nrd(D\otimes E_{\eta})$ and $e'_i\theta$ is close to the choice along marked points. Then $e_i = e'_i\theta$ is still in $\widehat{B_{\eta}}^*$ and each $b_{i,j,\eta}e_i$ is a reduced norm from $D\otimes E_{\eta}$.

Note that since $D\otimes E_{P,\eta}$ is still ramified, if $P\in \mathcal{P}'_{\eta}\cup \mathcal{R}'_{\eta}$, then $D\otimes E_P\simeq (u_P,\pi_P)$ where $\pi_P$ defines $\eta$ at $P$. Let $\pi_P = \pi_{\eta}\theta'$ for some $\theta'\in \widehat{A_{\eta}^*}$. Following the proof of Proposition \ref{prop-f-atclosedpoints}, we see that we are in the case when $\eta$ is Type 1b/2 and $E_P\simeq \prod F_P$. Thus $E_{P,\eta}\simeq \prod F_{P,\eta}$ and under this identification, $(b_{i,j,\eta})_{j\leq \ell} \in \brac{\prod_{\ell} E_{\eta}/E_{\eta}}$ goes to $(\sigma^{j-1}\brac{b_{i,1,\eta}}, \sigma^{j-1}\brac{b_{i,2,\eta}}, \ldots, \sigma^{j-1}\brac{b_{i,\ell,\eta}})_{j\leq\ell}$ in $\prod_{\ell} \brac{\prod_{\ell} F_{P,\eta}/F_{P,\eta}}$ over the branch. Our choice of $e_{i,P}$ along the branch corresponds to \[\brac{b_{i,1,\eta}^{-1}\pi_P^{m_1}, \sigma(b_{i,1,\eta})^{-1}\pi_P^{m_1}, \ldots, \sigma^{\ell-1}\brac{b_{i,1,\eta}}^{-1}\pi_P^{m_1} } \in \prod \widehat{A_{P,\eta}}^* \simeq \widehat{B_{P,\eta}}^*,\] i.e. $e_{i,P} = b_{i,1,\eta}^{-1}\pi_P^{m_1}\in \widehat{B_{P,\eta}}^*$ and $e_{i,P}{e'_i}^{-1} = {\theta'}^{m_1}\in \widehat{B_{P,\eta}}^*$.

Since both $\pi_{\eta}$ and $\pi_P$ are reduced norms from $D\otimes E_{P,\eta}$, so is $\theta'$ and hence ${\theta'}^{m_1}$. Therefore $(\theta'^{m_1}, u_{\eta})=0\in \HH^2(F_{P,\eta}, \mu_{\ell})$ and $(\overline{\theta'}^{m_1}, \overline{u_{\eta}}) = 0\in \HH^2(k_{P,\eta}, \mu_{\ell})$. Find $\theta_1\in F_{P,\eta}\left(\sqrt[\ell]{u_{\eta}}\right)$ such that $\N\left(\theta_1\right) = \theta'^{m_1}$. Note that since $D\otimes E_P = (u_P, \pi_P)$, $F_{P,\eta}\brac{\sqrt[\ell]{u_{\eta}}}$ is an unramified field extension of $F_{P,\eta}$. Choose $\tilde{\theta_1}\in \mathcal{O}_{F_{\eta}\left(\sqrt[\ell]{u_{\eta}}\right)}^*$ such that its image is close to $\theta_1$ and set $\theta = \N\left(\tilde{\theta_1}\right)\in F_{\eta}$. \end{proof}

\subsection{Spreading and patching of $f$}
\begin{proposition}\label{propositionfatUs}
For each $\eta$ in $N_0$, there exist a neighbourhood $U'_{\eta}$ of $\eta$ in $X_0$ with $U'_{\eta}\subseteq \overline{\eta}\setminus \brac{\mathcal{P}'_{\eta}\cup \mathcal{R}'_{\eta}}$ and an $f_{U'_{\eta}}\in N\otimes F_{U'_{\eta}}$ such that 
\begin{enumerate}
\item
$U'_{\eta}\subseteq U_{\eta}$ where $U_{\eta}$ are the neighbourhoods in the patching set up $\mathcal{P}$ 
\item
$bf_{U'_{\eta}}$ is a reduced norm from $D_N\otimes E\otimes F_{U'_{\eta}}$.
\item
$f_{U'_{\eta}} \cong f_{\eta}$  upto ${\ell}^{\mrm{th}}$ powers in $N\otimes F_{\eta}$.
\end{enumerate}
\end{proposition}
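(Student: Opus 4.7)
The plan is to spread out $f_\eta$ to an open neighbourhood, following the template of Proposition \ref{propositionEaatUsinpatching}. By Proposition \ref{prop-f-atcodimensiononepoints}, we have $f_\eta \in \widehat{C_\eta}^* \subseteq N\otimes F_\eta$ such that $bf_\eta$ is a reduced norm from $D_N\otimes E_\eta$. First I would produce a global representative: decomposing $N\otimes F_\eta$ as a product of complete local fields according to the \'etale decomposition of $N$ at $\eta$, one uses weak approximation in $N$ together with the Henselian property of the completed local rings to find $\tilde f_\eta \in N^*$ whose image in each component of $N\otimes F_\eta$ agrees with $f_\eta$ modulo $\ell^{\mathrm{th}}$ powers. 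Since $\ell^{\mathrm{th}}$ powers are reduced norms from any central simple algebra, $b\tilde f_\eta$ remains a reduced norm from $D_N\otimes E_\eta$, and $\tilde f_\eta\cong f_\eta$ up to $\ell^{\mathrm{th}}$ powers in $N\otimes F_\eta$.

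Next I would pass to the henselization using (\cite{HHK14}, Lemma 2.2.1), which expresses $F^h_\eta$ as the filtered direct limit of the $F_V$ over non-empty open subsets $V$ of $\eta$ in $X_0$. Because $\ind(D\otimes E) \leq \ell$ by Proposition \ref{propositionEpatching} and $\ell$ is prime, Suslin's invariant
\[ R : \brac{E_N\otimes F_\eta}^* / \Nrd_{D_N\otimes E_\eta} \hookrightarrow \HH^3\brac{E_N\otimes F_\eta, \mu_\ell^{\otimes 2}}, \quad \lambda \mapsto \brac{\lambda}\cup [D_N\otimes E] \]
is injective (\cite{Suslin}, Theorem 12.2), so the reduced norm condition on $b\tilde f_\eta$ is equivalent to the cup-product vanishing $\brac{b\tilde f_\eta}\cup[D_N\otimes E] = 0$ in $\HH^3\brac{E_N\otimes F_\eta, \mu_\ell^{\otimes 2}}$. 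By (\cite{HHK14}, Proposition 3.2.2), this vanishing then descends to some non-empty open $V'\ni\eta$, i.e.\ $\brac{b\tilde f_\eta}\cup[D_N\otimes E] = 0$ in $\HH^3\brac{E_N\otimes F_{V'}, \mu_\ell^{\otimes 2}}$.

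Finally, by (\cite{HHK15}, Proposition 5.8) and (\cite{KMRT}, Proposition 1.17), shrink $V'$ further to an open $U'_\eta$ so that $\ind(D_N\otimes E\otimes F_{U'_\eta}) \leq \ell$ and also $U'_\eta \subseteq U_\eta \setminus \brac{\mathcal{P}'_\eta \cup \mathcal{R}'_\eta}$. Over such $U'_\eta$, Suslin's invariant is still injective, so $b\tilde f_\eta$ is a reduced norm from $D_N\otimes E\otimes F_{U'_\eta}$. Setting $f_{U'_\eta}$ to be the image of $\tilde f_\eta$ in $N\otimes F_{U'_\eta}$ gives Properties 1--3.

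The main subtlety is the approximation step when $N\otimes F_\eta$ has several factors: $f_\eta$ is then a tuple and simultaneous approximation must preserve the unit property (so that the cup-product identity transports unchanged) coordinatewise. However since Property 3 only requires agreement up to $\ell^{\mathrm{th}}$ powers, this can be handled by component-wise weak approximation in $N$ exactly as in the ``$Y_\eta$ is split'' case of the proof of Proposition \ref{propositionEaatUsinpatching}.
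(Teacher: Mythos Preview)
Your proposal is correct and follows essentially the same route as the paper: approximate $f_\eta\in\widehat{C_\eta}^*$ by a global element $f'\in N^*$ agreeing modulo $\ell^{\mathrm{th}}$ powers, translate the reduced norm condition on $bf'$ into the cup-product vanishing $(bf')\cup[D_N\otimes E]=0$, spread this vanishing to an open $U'_\eta$ via (\cite{HHK14}, Proposition 3.2.2), and then invoke injectivity of Suslin's invariant for square-free index to recover the reduced norm condition over $F_{U'_\eta}$. The paper handles the approximation in one line (choose $f'\in N^*$ with $f_\eta^{-1}f'\equiv 1$ modulo the Jacobson radical of $\widehat{C_\eta}$, then Hensel gives $f_\eta=f'x^\ell$), and your further shrinking to force $\ind(D_N\otimes E\otimes F_{U'_\eta})\leq\ell$ is unnecessary since $\ind(D\otimes E)\leq\ell$ already holds globally by Proposition~\ref{propositionEpatching}, but neither point affects correctness.
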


\begin{proof} By Proposition \ref{prop-f-atcodimensiononepoints}, we see that $f_{\eta}\in \widehat{C_{\eta}}^*$ and that $bf_{\eta}\in \Nrd(D_N\otimes E_{\eta})$. Thus $\brac{bf_{\eta}}([D_N\otimes E_{\eta}])=0\in \HH^3\brac{N\otimes E_{\eta}, \mu_{\ell}}$. Let $f' \in N^*$ such that $f_{\eta}^{-1}f'$ is $1$ mod the maximal ideal of $\widehat{C_{\eta}}$. Note that $f_{\eta} = f' x^{\ell}\in \brac{N\otimes F_{\eta}}^{*}$ for some $x\in \widehat{C_{\eta}}^*$ and hence $\brac{bf'}([D_N\otimes E_{\eta}])=0\in \HH^3\brac{N\otimes E_{\eta}, \mu_{\ell}}$. By (\cite{PPS}, proof of Lemma 7.2 \& \cite{HHK14}, proof of Proposition 3.2.2) and shrinking further if necessary, there exists a neighbourhood $U'_{\eta}\subseteq U_{\eta}$ of $\eta$ such that $\brac{bf'}([D_N\otimes E\otimes F_{U'_{\eta}}])=0\in \HH^3\brac{E_N\otimes F_{U'_{\eta}}, \mu_{\ell}}$. Since $D_N\otimes E$ has index $\ell$, by injectivity of Suslin's invariant (\cite{Suslin}, Theorem 12.2), we have $bf'$ is a reduced norm from $D_N\otimes E \otimes F_{U'_{\eta}}$. The element $f_{U'_{\eta}}:=f'$ has the required properties. \end{proof}

\begin{remark}
\label{patchingsetupp'}
Let $T'_{\eta}$ denote the finite set of closed points $\overline{\eta}\setminus \brac{U'_{\eta}\cup \mathcal{P}'_{\eta}\cup \mathcal{R}'_{\eta}}$. Thus $\{\mathcal{P}'_{\eta}\cup \mathcal{R}'_{\eta}\cup \mathcal{T}'_{\eta}, U'_{\eta}\}_{\eta\in N_0}$ form a patching set up $\mathcal{P}'$ as in defined in (\cite{HH}).
\end{remark}


\begin{proposition}\label{propositionfatnewpoints}
Let $\eta\in N_0$ and let $P\in \brac{\mathcal{P}'_{\eta}\cup \mathcal{R}'_{\eta}\cup T'_{\eta}}$. Then there exists $f_P\in N\otimes F_P$ such that 
\begin{enumerate}
\item
$bf_{P}$ is a reduced norm from $D_N\otimes E_{P}$.
\item
$f_{U'_{\eta}}\psi_{P,\eta}^{\ell}= f_{P}$ in $N\otimes F_{P,\eta}$ for some $\psi_{P,\eta}\in N\otimes F_{P,\eta}$.
\end{enumerate}
\end{proposition}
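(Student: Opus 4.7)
The plan is to construct $f_P$ separately in the two cases $P\in \mathcal{P}'_{\eta}\cup \mathcal{R}'_{\eta}$ and $P\in T'_{\eta}$, verifying conditions (1) and (2) in each.

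For $P\in \mathcal{P}'_{\eta}\cup \mathcal{R}'_{\eta}$ I would simply take $f_P$ to be the element already produced by Proposition \ref{prop-f-atclosedpoints}, so condition (1) is immediate. For condition (2), I would chain Propositions \ref{prop-f-atcodimensiononepoints} and \ref{propositionfatUs}(3): the former yields $f_{\eta}=f_P\phi_{P,\eta}^{\ell}$ in $N\otimes F_{P,\eta}$ for some $\phi_{P,\eta}$, and the latter gives $f_{U'_{\eta}}=f_{\eta}\lambda_{\eta}^{\ell}$ in $N\otimes F_{\eta}$ for some $\lambda_{\eta}$. Pulling back to the branch field and multiplying, I would obtain $f_{U'_{\eta}}=f_P\psi_{P,\eta}^{\ell}$ with $\psi_{P,\eta}=\phi_{P,\eta}\lambda_{\eta}\in N\otimes F_{P,\eta}$.

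For $P\in T'_{\eta}$ (so $P\in U_{\eta}\setminus U'_{\eta}$ and $P\notin S_0$), I would exploit the fact that the $f'$ constructed in the proof of Proposition \ref{propositionfatUs} was actually chosen in $N^*$; thus $f_{U'_{\eta}}\in N^*\subset (N\otimes F_P)^*$, and I take $f_P:=f_{U'_{\eta}}$. Condition (2) then holds trivially with $\psi_{P,\eta}=1$. For condition (1) I would argue as follows. If $D\otimes E_P$ is split then every element of $(N\otimes E_P)^*$ is a reduced norm and we are done. Otherwise $D\otimes E_P$ has index $\ell$, and I would analyze the cup product
\[\alpha:=(bf_{U'_{\eta}})\cup [D_N\otimes E_P]\in \HH^3(N\otimes E_P,\mu_{\ell}^{\otimes 2}).\]
Since $f_{U'_{\eta}}\equiv f_{\eta}$ modulo $\ell^{\mrm{th}}$ powers in $N\otimes F_{\eta}$ and $bf_{\eta}$ is a reduced norm from $D_N\otimes E_{\eta}$ by Proposition \ref{prop-f-atcodimensiononepoints}, the class $\alpha$ dies after passing to the branch $N\otimes E\otimes F_{P,\eta}$. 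Because $P\notin S_0$ and the ramification of $D$ at $P$ is supported only along $\pi_P$ (the prime cutting out $\overline{\eta}$ at $P$), the ramification of $\alpha$ is supported at most along $\pi_P$ and $\delta_P$; an application of the Saltman-type vanishing of (\cite{PPS}, Corollary 5.5) over $N\otimes E_P$ then upgrades the branch-level vanishing to $\alpha=0$ in $\HH^3(N\otimes E_P)$. Finally, injectivity of Suslin's invariant for square-free index algebras (\cite{Suslin}, Theorem 12.2) forces $bf_{U'_{\eta}}\in \Nrd((D_N\otimes E_P)^*)$, giving (1).

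The main obstacle will be the descent step above: one must verify that $\alpha$ really is ramified only along $\pi_P$ and $\delta_P$, which requires a case-by-case bookkeeping of how $E_{U_{\eta}}$ restricts to $E_P$ along the branch and of the way the components of $b$ and $f_{U'_{\eta}}$ behave with respect to the $\sigma$-action on $E_N\otimes F_P$ decomposed into $E_P/F_P$ and $\prod_{\ell}E_P/E_P$ factors (as in the final part of Proposition \ref{prop-f-atcodimensiononepoints}). Once the ramification of $\alpha$ is localized along the two parameters, invoking (\cite{PPS}, Corollary 5.5) and Suslin's invariant is essentially mechanical.
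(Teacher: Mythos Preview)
Your treatment of $P\in \mathcal{P}'_{\eta}\cup \mathcal{R}'_{\eta}$ is fine and matches the paper. The gap is in the $P\in T'_{\eta}$ case when $D\otimes E_P$ is nonsplit.

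You take $f_P=f_{U'_{\eta}}$ and want to show $\alpha=(bf_P)\cup[D_N\otimes E_P]$ vanishes in $\HH^3$ by combining branch-level vanishing with (\cite{PPS}, Corollary~5.5). Two things go wrong. First, in this paper that corollary is only invoked for Brauer classes, not for $\HH^3$; you would need a separate argument for an $\HH^3$ analogue. Second, and more seriously, even granting such an analogue you cannot confine the ramification of $\alpha$ to $\pi_P,\delta_P$. In the relevant situation $E_P\simeq\prod F_P$ and $D_P=(u_P,\pi_P)$, so on each component $\alpha=(b_{i,j}f_{i})\cup(u_P,\pi_P)$. For a height-one prime $q\neq(\pi_P)$ of $\widehat{A_P}$ the class $(u_P,\pi_P)$ is unramified and the residue of $\alpha$ is $v_q(b_{i,j}f_i)\cdot(\bar u_P,\bar\pi_P)\in\Br(\kappa(q))$. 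The factor $(\bar u_P,\bar\pi_P)$ is typically nonzero (e.g.\ at $q=(\delta_P)$ it equals $(\bar u_P,\pi_P)$ in $\Br(k_P((\pi_P)))$, which is nonzero since $u_P\notin F_P^{*\ell}$), and nothing controls $v_q(b_{i,j})$ because $b\in E_N$ is an arbitrary Hilbert~90 lift of $c$. So the ``bookkeeping'' you allude to is not achievable with $f_P=f_{U'_{\eta}}$: the support of $b$ at $P$ is genuinely unknown.

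The paper sidesteps this by making $f_P$ depend on $b$. Writing $b_i=(b_{i,1},\ldots,b_{i,\ell})$ in the $i$-th $\prod_\ell F_P$ component, it sets $f_{i,P}=b_{i,1}^{-1}\,\N_{F_P(\sqrt[\ell]{u_P})/F_P}(\tilde\theta_1)\,\pi_P^{m_1}$, where $\tilde\theta_1$ is chosen so that $\N(\tilde\theta_1)$ approximates the unit $b_{i,1}f_{P,\eta}\pi_P^{-m_1}$ coming from $f_\eta$. Then $b_{i,1}f_{i,P}=\N(\tilde\theta_1)\pi_P^{m_1}$ is \emph{manifestly} a reduced norm from $(u_P,\pi_P)$, and since the $b_{i,j}$ differ from $b_{i,1}$ by reduced norms (as $c=b^{-1}\sigma(b)$ is one), the same holds for all $j$. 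The approximation of $\tilde\theta_1$ also gives $f_{i,P}\equiv f_{P,\eta}$ up to $\ell^{\mathrm{th}}$ powers, which yields condition (2). The key point you are missing is that the uncontrolled factor $b_{i,1}^{-1}$ must be absorbed into $f_P$ explicitly; it cannot be handled by a residue/purity argument alone.
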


\begin{proof}
If $P\in \mathcal{P}'_{\eta}\cup \mathcal{R}'_{\eta}$, the proposition follows from Propositions \ref{prop-f-atclosedpoints}, \ref{prop-f-atcodimensiononepoints} and \ref{propositionfatUs}. Hence assume $P\in \mathcal{T}'_{\eta}$. In particular, this implies $P\in U_{\eta}$ where $U_{\eta}$ is the neighbourhood of $\eta$ in the patching system $\mathcal{P}$ defined in Remark \ref{patchingsetupp}. Hence $F_{U_{\eta}}\subseteq F_P$. Let $(\pi_P,\delta_P)$ be a system of regular parameters of $A_P$ where $\pi_P$ defines the curve $\overline{\eta}$ at $P$.

We choose $f_P$ depending the shape of $D\otimes E_P$ as follows: 

\textit{$D\otimes E\otimes F_P$ is split}: Since $N\otimes F_P$ is dense in $N\otimes F_{P,\eta}$, pick an $f_P$ here which approximates $f_{\eta}\in N\otimes F_{\eta}$ treated as an element over the branch, i.e. $f_{\eta}\in N\otimes F_{P,\eta}$. The proposition is clearly true for this choice of $f_P$.

\textit{$D\otimes E \otimes F_P$ is not split}:  Since $P$ is a curve point, we have $D\neq 0\in \Br(F_P)$ possibly only if $\eta$ is of Type 1b or 2, in which case $D = \brac{u_P, \pi_P}\in \Br\brac{F_P}$ where $u_P\in \widehat{A_P}^*$ (\cite{S97}).  Let $u_{\eta}\in F_{\eta}$ be such that $u' = \overline{u_{\eta}} \in k_{\eta}^*/k_{\eta}^{*\ell}$ is the residue of $D_{\eta}$. Thus $\overline{u_P} \cong u' \in k_{P,\eta}$ up to $\ell^{\mrm{th}}$ powers.  

Except when $\eta$ is coloured green, $D\otimes E_{\eta}$ is split by construction (Propositions in \ref{sectionpatchingdataatN0}). By Proposition \ref{propositionEaatUsinpatching}, this implies $D\otimes E\otimes F_{U_{\eta}}$ is split and hence so is $D\otimes E\otimes F_P$. When $\eta$ is coloured green, by Propositions \ref{propositionE-choosingliftofresidues}, \ref{propositionE-1bRAM} and \ref{propositionE-1bRESY}, $E_{\eta}/F_{\eta}$ is unramified. By Proposition \ref{propositionEaatUsinpatching}, this implies $E\otimes F_{U_{\eta}}\simeq \frac{ F_{U_{\eta}}[t]}{(t^{\ell}-e)}$ for some unit $e\in \widehat{A_{U}}^*$. Therefore $E\otimes F_P = \prod F_P$ or $L_P$, the unique field extension of $F_P$ of degree $\ell$ unramified at $\widehat{A_P}$. If $E\otimes F_P$ is a nonsplit field extension, then $D\otimes E \otimes F_P$ is split. Thus $E\otimes F_P \simeq \prod F_P$. Therefore $E_N\otimes F_P/N\otimes F_P \simeq \prod_i \brac{\prod_{\ell} F_P}/F_P$. Let us look at the $i$-th component $\brac{\prod_{\ell} F_P}/F_P$ in $E_N\otimes F_P/N\otimes F_P$. We will prescribe $f_P$ by prescribing each of its components $f_i \in F_P$.

Let $b_{i} = \left(b_{i,1}, b_{i,2}, \ldots, b_{i,\ell}\right)\in \prod F_{P}$. By Proposition \ref{prop-f-atcodimensiononepoints}, we have $f_{P,\eta}\in \widehat{A_{P,\eta}}^*$ such that for each $j$, we have $\left(b_{i,j} f_{P,\eta}\right)\left(u_P, \pi_P\right) = 0 \in \HH^3\left(F_{P,\eta}, \mu_{\ell}\right)$. Let $b_{i,1}$ have valuation $m_1$ in $F_{P,\eta}$ and let $b'_{i,1}:= b_{i,1}f_{P,\eta}\pi_P^{-m_1}\in \widehat{A_{P,\eta}}^*$. Thus $\left(b'_{i,1}\right)\left(u_P, \pi_P\right) = 0$ also and taking residues, we get $\left(\overline{b'_{i,1}}, \overline{u_P} \right) = 0 \in \HH^2\left(k_{P,\eta}, \mu_{\ell} \right)$. Since $\left(b'_{i,1}, u_P\right)$ is unramified over $F_{P,\eta}$, it is also split over $F_{P,\eta}$ and we see that there exists $\theta_1\in F_{P,\eta}\left(\sqrt[\ell]{u_P}\right)$ such that $\N\left(\theta_1\right) = b'_{i,1}$.

Since we are in the case when $D\otimes E\otimes F_P$ is not split, $u_P\not\in F_P^{*\ell}$. Therefore $F_{P,\eta}\brac{\sqrt[\ell]{u_P}}$ is an unramified field extension of $F_{P,\eta}$ as also its residue field $k_{P,\eta}\brac{\sqrt[\ell]{\overline{u_P}}}/k_{P,\eta}$. As $b'_{i,1}\in \widehat{A_{P,\eta}}^*$, clearly $\theta_1 \in \mathcal{O}_{F_{P,\eta}\left(\sqrt[\ell]{u_P}\right)}^*$. Let $\overline{\theta_1} = \theta' \overline{\delta_P}^m$ where $\theta'\in \mathcal{O}_{k_{P,\eta}\left(\sqrt[\ell]{\overline{u_P}}\right)}$ and $m\in \mathbb{Z}$. Find $\tilde{\theta'}\in \mathcal{O}_{F_P\left(\sqrt[\ell]{u_P}\right)}^*$ such that its image matches that of $\theta'$. Set $\tilde{\theta_1} = \tilde{\theta'}\delta_P^m\in F_P\brac{\sqrt[\ell]{u_P}}$. Thus $\tilde{\theta_1}\cong \theta_1$ up to $\ell^{\mrm{th}}$ powers in $F_{P,\eta}\left(\sqrt[\ell]{u_P}\right)$. Set $f_{i,P} = b_{i,1}^{-1}\N\left(\tilde{\theta_1}\right)\pi_P^{m_1}\in F_P$.

Thus by construction $\left(b_{i,1}f_{i,P}\right)\left(u_P,\pi_P\right)=0\in \HH^3\left(F_P, \mu_{\ell}\right)$ and $f_{i,P}\cong f_{P,\eta}$ up to $\ell^{\mathrm{th}}$ powers in $F_{P,\eta}$. Finally, since $b^{-1}\sigma(b)$ is a reduced norm from $D\otimes E_P$, we have that for every $j$, the cup-products $\left(b_{i,j}\right)\left(u_P, \pi_P\right)$ are all equal in $\HH^3\left(F_P, \mu_{\ell}\right)$. Thus, we also have that for each $j$, $\left(b_{i,j}f_{i,P}\right)\left(u_P,\pi_P\right)=0\in \HH^3\left(F_P, \mu_{\ell}\right)$. Again using injectivity of Suslin's invariant for index $\ell$ algebras (\cite{Suslin}, Theorem 12.2), we can argue as before that this implies $b_{i,j}f_{i,P}$ is a reduced norm from $D\otimes F_P$ for each $j$ and hence that $b_if_{i,P}$ is a reduced norm from $D\otimes E\otimes F_P$.
\end{proof}

We are now in a position to find $f\in N$ satisfying the hypothesis of Proposition \ref{propositionf}. 
\begin{proposition}
\label{propositionfpatching}
There exists $f\in N$ such that $bf\in \Nrd_{E_N}\left(C_{D_N}\left(E_N\right)\right)$.
\end{proposition}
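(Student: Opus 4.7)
The plan is to assemble the local data from Propositions \ref{propositionfatUs} and \ref{propositionfatnewpoints} into a single global element $f\in N^*$ using the patching set-up $\mathcal{P}'$ of Remark \ref{patchingsetupp'}, and then verify the required reduced-norm property. The argument proceeds in three steps: rectifying the branch data via simultaneous factorization, patching the rectified data, and globalizing the reduced-norm condition.

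First I would factor out the branch $\ell$-th powers. For each branch $(U'_\eta, P)$ of $\mathcal{P}'$, Proposition \ref{propositionfatnewpoints} produces $\psi_{P,\eta}\in (N\otimes F_{P,\eta})^*$ with $f_P = f_{U'_\eta}\psi_{P,\eta}^{\ell}$. Since $N/F$ is a finite separable field extension, the Weil restriction $R_{N/F}(\mathbb{G}_m)$ is a connected rational linear algebraic $F$-group, so the simultaneous factorization theorem (\cite{HHK09}, Theorem 3.6) applies and yields $\psi_{U'_\eta}\in (N\otimes F_{U'_\eta})^*$ for each $\eta\in N_0$ and $\psi_P\in (N\otimes F_P)^*$ for each closed point $P\in \mathcal{P}'_\eta\cup\mathcal{R}'_\eta\cup\mathcal{T}'_\eta$ such that $\psi_{P,\eta} = \psi_{U'_\eta}\psi_P^{-1}$ in $N\otimes F_{P,\eta}$ on every branch. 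Setting $\tilde{f}_{U'_\eta}:= f_{U'_\eta}\psi_{U'_\eta}^\ell$ and $\tilde{f}_P := f_P\psi_P^\ell$, a direct computation gives $\tilde{f}_{U'_\eta} = \tilde{f}_P$ in $N\otimes F_{P,\eta}$ on every branch.

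Second, the matching data $\{\tilde{f}_{U'_\eta},\tilde{f}_P\}$ patch via the patching equivalence for coherent algebras on a regular proper model (\cite{HH}, Proposition 6.3 and Theorem 6.4) applied to the finite separable $F$-algebra $N$, producing $f\in N^*$ with $f = \tilde{f}_{U'_\eta}$ in $N\otimes F_{U'_\eta}$ for every $\eta$ and $f = \tilde{f}_P$ in $N\otimes F_P$ for every closed point $P$ in the patching system. Then for every patch $x\in\mathcal{P}'$, $bf = bf_x\cdot\psi_x^{\ell}$; since $\psi_x\in (N\otimes F_x)^*$ sits inside the center $E_N\otimes F_x$ of the degree-$\ell$ algebra $C_{D_N}(E_N)\otimes F_x$, its $\ell$-th power equals $\Nrd(\psi_x\cdot 1)$ and is automatically a reduced norm. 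Combined with the fact that $bf_x$ is a reduced norm at every patch (Propositions \ref{propositionfatUs} and \ref{propositionfatnewpoints}), this shows $bf$ is a reduced norm from $C_{D_N}(E_N)\otimes F_x$ at every $x\in\mathcal{P}'$.

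Finally, to globalize the reduced-norm condition, one views $E_N$ as the function field of a regular proper curve obtained by normalizing $\mathcal{X}$ in $E_N$, along which the patching set-up $\mathcal{P}'$ pulls back to a patching set-up on this new model. Since $C_{D_N}(E_N)$ is a central simple algebra of prime index $\ell$ over $E_N$, the group $\SL1(C_{D_N}(E_N))$ is rational, so the local-global principle for $H^1$ of rational connected linear algebraic groups (\cite{HHK09}, Corollary 5.2, applied after the base change to $E_N$) forces $bf\in\Nrd_{E_N}(C_{D_N}(E_N))$, completing the proof. I expect the main technical point to be verifying that the normalization of $\mathcal{X}$ in $E_N$ inherits a patching set-up compatible with $\mathcal{P}'$, so that the local-global principle for reduced norms applies uniformly at every place of the resulting model.
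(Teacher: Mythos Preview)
Your first two steps---simultaneous factorization for $R_{N/F}\mathbb{G}_m$ followed by patching the rectified data via \cite{HH}---coincide exactly with the paper's argument, and your observation that $\ell$-th powers in the center are automatically reduced norms is the right way to see that the local reduced-norm condition survives the correction by $\psi_x^\ell$.

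The gap is in your third step. You assert that $\SL_1\!\left(C_{D_N}(E_N)\right)$ is rational because the algebra has prime index $\ell$, and then invoke the local-global principle for $H^1$ of rational connected groups. But rationality of $\SL_1(A)$ for $A$ of prime degree $\ell$ is only known for $\ell=2$ (where the group is a pointed quadric) and $\ell=3$ (Voskresenski\u{\i}); for $\ell\ge 5$ it is an open problem. Since the paper treats an arbitrary odd prime $\ell\neq p$, this route is not available, and your appeal to \cite{HHK09} does not go through. The technical point you flag---pulling back $\mathcal{P}'$ along the normalization in $E_N$---is genuine but handled elsewhere in the paper (cf.\ the proof of Proposition \ref{propositionEpatching}); it is the rationality claim, not the patching compatibility, that breaks.

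The paper sidesteps rationality entirely by passing to cohomology. From $bf\in\Nrd\!\left(D_N\otimes E_x\right)$ at every $x\in\mathcal{P}'$ one gets $(bf)\cup[D_N\otimes E]=0$ in $\HH^3\!\left(E_N\otimes F_x,\mu_\ell\right)$ for all $x$; a local-global principle for $\HH^3$ with $\mu_\ell$-coefficients (extracted from the proofs of \cite{PPS}, Propositions 7.1 and 7.4) then forces $(bf)\cup[D_N\otimes E]=0$ in $\HH^3(E_N,\mu_\ell)$. Finally, since $D_N\otimes_N E_N$ has square-free index $\ell$, Suslin's invariant $\lambda\mapsto(\lambda)\cup[D_N\otimes E]$ is injective on $E_N^*/\Nrd$ (\cite{Suslin}, Theorem 12.2), and one concludes $bf\in\Nrd_{E_N}\!\left(C_{D_N}(E_N)\right)$. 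You could repair your argument by replacing the rationality claim with exactly this cohomological detour.
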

\begin{proof} By Propositions \ref{propositionfatUs} and \ref{propositionfatnewpoints}, we have $f_{x}\in N\otimes F_x$ for $x\in \left\{U'_{\eta}, \mathcal{P}'_{\eta} \cup \mathcal{R}'_{\eta}\cup \mathcal{T}'_{\eta}\right\}_{\eta\in N_0}$ in the patching set-up $\mathcal{P}'$ defined in Remark \ref{patchingsetupp'} such that for $b f_{ x} \in \Nrd D_N\otimes E_{ x}$. Further for each branch in the patching set-up corresponding to a pair $\left(U'_{\eta}, P\right)$, we have $f_{ P} = f_{ U'_{\eta}}\psi_{ P,\eta}^{\ell}$ for some $\psi_{ P,\eta}\in N\otimes F_{P,\eta}^*$.

By \textit{simultaneous factorization for curves} for the rational group $R_{N/F}\mathbb{G}_m$ (\cite{HHK09}, Theorem 3.6), we can find $\psi_{ x}\in \brac{N\otimes F_x}^*$ for each $x\in\mathcal{P}'$ such that for every branch defined by $\left(U'_{\eta}, P\right)$, we have $\psi_{ P,\eta} = \psi_{ U'_{\eta}}\psi_{ P}^{-1}$. Thus we have $f_{ U'_{\eta}}\psi_{ U'_{\eta}}^{\ell} = f_{ P}\psi_{ P}^{\ell}$ for every branch $\left(U'_{\eta}, P\right)$. Therefore there exists $f \in N$ such that $f  = f_{ x}\psi_{ x}^{\ell} \in N\otimes F_x$ for each $x\in \left\{U'_{\eta}, P \right\}$ (\cite{HH}, Proposition 6.3 \& Theorem 6.4). Thus $b f \in \Nrd \brac{D_N\otimes E_{x}}$ and therefore $\brac{bf}\cup [D_N\otimes E_x]=0\in \HH^3\left(N\otimes E_x, \mu_{\ell}\right)$  for each $x\in \mathcal{P}'$. This implies $\brac{bf}\cup [D_N\otimes E]=0\in \HH^3\left(E_N, \mu_{\ell}\right)$ (\cite{PPS}, proofs of Proposition 7.1 \& 7.4). Injectivity of Suslin's invariant for index $\ell$ algebras (\cite{Suslin}) shows that $bf \in \Nrd \brac{D_N\otimes E}$ which proves the proposition. \end{proof}

Thus we have our main theorem:

\begin{theorem}
\label{theoremSK1}
Let $F$ be the function field of a curve over a $p$-adic field. Let $D/F$ be a central division algebra of prime exponent $\ell$ which is different from $p$. Assume that $F$ contains a primitive ${{\ell}^2}^{\mathrm{th}}$ root of unity. Then $\SK1(D)$ is trivial.
\end{theorem}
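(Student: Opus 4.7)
The plan is to assemble the machinery built in the preceding sections. First, following the initial reductions in Section~3.1, I would assume without loss of generality that the index of $D$ equals $\ell^2$ (using \cite{S97,S98,W}) and that $\ell$ is an odd prime. Pick an arbitrary element $z\in\SL_1(D)$; it lies in some maximal subfield $M$ of $D$ of degree $\ell^2$ over $F$. By a coprime-to-$\ell$ base change (which is harmless for deciding membership in $[D^*,D^*]$, by \cite{Plat76}, Lemma 2.2), one reduces to the case where $M$ contains a cyclic degree $\ell$ subfield $Y/F$. Setting $a:=\N_{M/Y}(z)$, Hilbert 90 gives $a=b^{-1}\psi(b)$ for some $b\in Y^*$, where $\langle\psi\rangle=\Gal(Y/F)$. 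The goal then reduces, via Proposition \ref{propositionEconsequence}, to exhibiting elements $a_1,a_2\in Y$ and degree-$\ell$ subfields $E_1,E_2\subseteq D$ commuting with $Y$ so that $a_1a_2=a$, $D\otimes Y\otimes E_j$ is split, $a_j=\N_{YE_j/Y}(\theta_j)$ for some $\theta_j\in YE_j$, and moreover each $\theta_j$ lies in $[D^*,D^*]$.

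The existence of $(E_j,a_j,\theta_j)$ satisfying all but the last of these properties is exactly what Proposition \ref{propositionEpatching} provides; it is obtained by the elaborate patching construction carried out in Sections 4--12 on the regular proper model $\mathcal{X}$ fixed in Section \ref{subsectionfinalmodel}. Concretely, one prescribes local patching data $(a_{j,P},E_{j,P})$ at marked points $P\in S_0$ (Propositions \ref{propositionEatpoints}, \ref{propositionEatpoints-A00}) and $(a_{j,\eta},E_{j,\eta})$ at generic points $\eta\in N_0$ (propositions in Sections \ref{sectionpatchingdataatN0} and \ref{section-patchingdataatuncolouredN0}), then spreads these to open sets $U_\eta$ (Proposition \ref{propositionEaatUsinpatching}), verifies compatibility on branches via simultaneous factorization for $R_{Y/F}\mathbb{G}_m$, and applies the HHK patching theorems to produce global $E_j/F$ and $a_j\in Y$. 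The degree-$\ell^2$ splitting field $Y\otimes E_j$ embeds in $D$ as a maximal subfield, which up to conjugation may be arranged to contain $Y$ itself, producing the required $\theta_j$.

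To handle the remaining property $\theta_j\in[D^*,D^*]$, I would follow the Platonov-style argument of Section \ref{section-f}: after a further base change to a field $N/F$ of degree coprime to $\ell$ that makes $E_N=E_j\otimes_F N$ cyclic over $N$, setting $c_j:=\N_{E_NY_N/E_N}(\theta_j)$ one writes $c_j=b^{-1}\sigma(b)$ for some $b\in E_N$ by Hilbert 90. By Proposition \ref{propositionf} it then suffices to produce $f\in N^*$ with $bf\in\Nrd_{E_N}(C_{D_N}(E_N))$. This $f$ is constructed by yet another round of patching: Propositions \ref{prop-f-atclosedpoints} and \ref{prop-f-atcodimensiononepoints} give local data $f_P,f_\eta$ (whose existence relies critically on the class-field-theoretic input of Lemma \ref{lemmainvariantalgebras}, and on the injectivity of Suslin's invariant for index $\ell$ algebras), Proposition \ref{propositionfatUs} spreads $f_\eta$ to neighbourhoods, Proposition \ref{propositionfatnewpoints} re-patches at curve points, and Proposition \ref{propositionfpatching} produces the global $f$.

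The hard part, by far, is the construction of $(E_j,a_j)$ themselves: it requires the nine-colour refinement of the partial dual graph $\Delta$ in Section \ref{section-finalcolouring}, which in turn dictates compatible local prescriptions at every configuration of marked points, and the careful elimination of bad closed-point types via iterated blowups (Propositions \ref{propnospecialpoints}--\ref{propblowupchilly}). Once the colouring-and-patching data is in place, the final assembly is, as described above, essentially a routine application of Propositions \ref{propositionEpatching}, \ref{propositionfpatching}, \ref{propositionf}, and \ref{propositionEconsequence}, which together yield $z\in[D^*,D^*]$ and hence $\SK_1(D)=\{0\}$.
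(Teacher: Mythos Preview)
Your proposal is correct and follows precisely the paper's own approach: the main theorem is the culmination of the entire machinery, assembled via Proposition \ref{propositionEpatching} (to produce $E_j$, $a_j$, $\theta_j$), then Propositions \ref{propositionf} and \ref{propositionfpatching} (to show each $\theta_j\in[D_N^*,D_N^*]$), and finally Proposition \ref{propositionEconsequence}. The paper itself gives no separate proof block for Theorem \ref{theoremSK1}, simply stating ``Thus we have our main theorem'' after Proposition \ref{propositionfpatching}, so your summary of how the pieces fit together is exactly the intended argument.
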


\textit{Acknowledgements} : The author was partially supported by National Science Foundation grants DMS-1401319 and DMS-1463882 during the period in which this paper was written. She thanks Professors Parimala Raman and Suresh Venapally for very many wonderful discussions and valuable suggestions. Without their immense help and encouragement at each and every step, indeed this paper could not have been completed. She also thanks Bastian Haase for several helpful comments on the text.

\end{document}